\documentclass[10pt]{extarticle}
\usepackage[letterpaper,margin=1.0in]{geometry}
\usepackage{bm} 
\usepackage{graphicx}
\usepackage{subfig}
\usepackage{xfrac} 
\usepackage{mathtools} 
\usepackage{fancyhdr} 
\pagestyle{fancy} 
\usepackage{tabulary} 
\usepackage{enumitem} 
\usepackage[colorlinks]{hyperref} 
\usepackage{float} 
\usepackage{color} 
\usepackage[titletoc,title]{appendix} 
\usepackage{amsthm}
\usepackage{amsmath,amssymb,amsfonts}
\newtheorem{thm}{Theorem}[section]

\newtheorem{cor}[thm]{Corollary}
\newtheorem{lem}[thm]{Lemma}

\usepackage{algorithm}
\usepackage{algorithmic}

\DeclareGraphicsExtensions{.eps,.pdf,.png,.jpg,.JPG,.gif}
%
\fancyhf{}

%
%
\setcounter{MaxMatrixCols}{20}
%
\usepackage{etoolbox}
\patchcmd{\appendices}{\quad}{: }{}{}
%
\makeatletter
\newcommand{\customlabel}[2]{%
   \protected@write \@auxout {}{\string \newlabel {#1}{{#2}{\thepage}{#2}{#1}{}} }%
   \hypertarget{#1}{#2}
}
\makeatother
%
%


\newcommand{\norm}[1]{\left\lVert#1\right\rVert}
\newcommand\scalemath[2]{\scalebox{#1}{\mbox{\ensuremath{\displaystyle #2}}}}
\begin{document}

\LARGE
\begin{center}
	Observability Analysis of Aided INS with Heterogeneous Features of Points, Lines and Planes
\end{center}

\normalsize
{
	\begin{center}
		Yulin Yang - \href{yuyang@udel.edu}{yuyang@udel.edu} \\
		Guoquan Huang - \href{mailto:ghuang@udel.edu}{ghuang@udel.edu}
	\end{center}
}
\begin{center}
	Department of Mechanical Engineering \\
	University of Delaware, Delaware, USA
\end{center}



\begin{abstract}

In this paper, we perform a thorough observability analysis for linearized inertial navigation systems (INS) aided by exteroceptive range and/or bearing sensors (such as cameras, LiDAR and sonars) 
with different geometric features (points, lines and planes). 
While the observability of vision-aided INS (VINS) with point features has been extensively studied in the literature,
we analytically show that the general aided INS with point features preserves the same observability property -- that is, 4 unobservable directions, corresponding to the global yaw  and the global position of the sensor platform. 
We further prove that there are at least 5 (and 7) unobservable directions for the linearized aided INS with a single line (and plane) feature;
and, for the first time, analytically derive the unobservable subspace for the case of multiple lines/planes. 
%
Building upon this,  
we examine the system observability of the linearized aided INS with different combinations of points, lines and planes, and show that, in general, the system preserves at least 4 unobservable directions,
while if global measurements are available, as expected, some unobservable directions diminish. 
In particular, when using plane features, we propose to use a minimal, closest point (CP) representation;
and we also study in-depth the effects of 5 degenerate motions identified on  observability.
To numerically validate our analysis, we develop and evaluate both EKF-based visual-inertial SLAM  and visual-inertial odometry (VIO) using heterogeneous geometric features 
in Monte Carlo simulations.

\end{abstract}


\section{Introduction}
%
%
%
%

Inertial navigation systems (INS) have been widely used for providing 6 degrees-of-freedom (DOF) pose estimation \textsl{•}when navigating in 3D~\cite{Chatfield1997}. 
However, due to the noises and biases that corrupt the IMU readings, simple integration of the local angular velocity and linear acceleration measurements can cause large drifts in a short period of time, in particular, when using cheap MEMS IMUs. 
To mitigate  this issue, additional sensors (e.g., optical camera\cite{Mourikis2007ICRA,Mourikis2009TRO}, LiDAR\cite{Zhang2017ICRA,Geneva2018IROS}, and imaging sonar\cite{Yang2017icra}) are often used, i.e., aided INS.
%
Among  possible exteroceptive sensors, optical cameras -- which are low-cost and energy-efficient while providing rich environmental information -- are ideal aiding sources for INS and thus, vision-aided INS (i.e., VINS) have recently prevailed, in particular, when navigating in GPS-denied environments (e.g., indoors)~\cite{Hesch2013TRO,Hesch2014IJRR,Li2013IJRR,Leutenegger2014IJRR,Huang2014ICRA,Qin2017TRO,Mur2017ICRA}.
While many different VINS algorithms were developed in last decade, the extended Kalman filter (EKF)-based methods are still among the most popular ones, 
such as multi-state constraint Kalman filter (MSCKF) \cite{Mourikis2007ICRA}, observability-constrained (OC)-EKF~\cite{Hesch2013TRO,Huang2010IJRR}, optimal-state constraint (OSC)-EKF~\cite{Huang2015ISRR}, and right invariant error (RI)-EKF~\cite{Zhang2017ICRAa}. 

System observability plays an important role in consistent state estimation \cite{Huang2012thesis} and thus, 
significant research efforts have been devoted to the observability analysis of VINS.
For example, it has been proved in \cite{Martinelli2012TRO} that biases, velocity, and roll and pitch angles in VINS are observable;
in \cite{Hesch2013TRO,Kottas2012ISER} the null space of observability matrix (unobservable subspace) of linearized VINS was analytically derived;
and in \cite{Hesch2014IJRR,Guo2013ICRA} the Lie-derivative based nonlinear observability analysis was presented. 
However, since most of the current VINS algorithms (e.g., \cite{Hesch2013TRO,Hesch2014IJRR,Li2013IJRR,Leutenegger2014IJRR,Huang2014ICRA,Qin2017TRO})
are developed based on point features, the observability analysis is performed primarily using point measurements.
Very few have yet studied the observability properties of the aided INS with heterogeneous geometric features of points, lines and planes 
that are extracted from range and/or bearing sensor measurements. 

In this paper, 
building upon our recent conference publications~\cite{Yang2018ICRA,Yang2018IROS},
we perform a thorough  observability analysis for the linearized aided INS using points, lines, planes features and their different combinations.
In particular, we propose to use the closest point (CP) from the plane to the origin to represent plane features, 
because this parameterization is not only minimal but also can directly be used for the plane error state update in vector space.
Moreover, we perform in-depth study to identify 5 degenerate motions, which is of practical significance,
as these motions may negatively impact the system observability properties by causing more unobservable directions and thus exacerbate the VINS estimators (e.g., see \cite{Martinelli2013IROS,Wu2017ICRA}). 
The insights obtained from the observability analysis are leveraged when developing our EKF-based VINS algorithms (including visual-inertial odometry (VIO) and visual-inertial SLAM (VI-SLAM)) 
using heterogeneous geometric features, which are evaluated in simulations to validate our analysis.

In particular, the main contributions of this work include:
\begin{itemize}

	\item In the case of point features, we generalize the VINS observability analysis to encompass any type of aiding sensors (such as 3D LiDAR, 2D imaging sonar and stereo cameras) and analytically show that the same observability properties remain (i.e., 4 unobservable directions). 
	
	\item In the case of line (or plane) features, we perform observability analysis for the linearized aided INS with line (or plane) features and show that there exist at least 5 (or 7) unobservable directions for a single line (or plane) feature. Moreover, we analytically derive the unobservable subspaces for multiple lines (or planes) in the state vector, without  any assumption about features.  
	In particular, we advocate to use the closest point (CP) representation, which is simple and compact, for both plane feature parameterization and error state propagation. 
	
	\item In the case of different combinations of point, line and plane features, we show that in general there are at least 4 unobservable directions that are  analytically derived, for the linearized aided INS with heterogeneous features.
	
	\item By employing the spherical coordinates for the point feature, 
	we identify 5 degenerate motions that cause the aided INS to have more unobservable directions. 
	On the other hand, we study in-depth the effects of global measurements on the system observability, 
	and show that they, as expected, will greatly improve the observability.  
		
	\item To validate our observability analysis of linearized aided INS,
	we develop the EKF-based VI-SLAM and MSCKF-based VIO  using {\em heterogeneous} geometric features (i.e., points, lines, planes, and their different combinations) 
	and perform extensive Monte-Carlo simulations by comparing the standard and the benchmark (ideal) filters.

\end{itemize}

%

\section{Related Work}

Aided INS is a classical research topic with significant body of literature~\cite{Farrell2008}
and has recently been re-emerging in part due to the advancement of sensing and computing technologies.
In this section, we briefly review the related literature closest to this work by focusing on the vision-aided INS.

\subsection{Aided INS with Points, Lines, and Planes}

As mentioned earlier, vision-aided INS (VINS) arguably is among the most popular localization methods in particular for resource-constrained sensor platforms such as mobile devices and micro aerial vehicles (MAVs) navigating  in GPS-denied environments (e.g., see~\cite{Li2013ICRAb,Guo2014RSS,Leutenegger2014IJRR,Shen2013ICRA}).
While most current VINS algorithms focus on using point features (e.g., \cite{Hesch2013TRO,Hesch2014IJRR,Li2013IJRR,Leutenegger2014IJRR}), 
line and plane features may not be blindly discarded in structured environments~\cite{Kottas2013icra,Yu2015IROS,Zheng2018ICRA,He2018Sensors,Guo2016ICRA,Guo2013iros,Hesch2010ICRA,Kaess2015icra,Wu2017ICRA}, 
in part because: 
(i) they are ubiquitous and compact  in many urban or indoor environments (e.g., doors, walls, and stairs),
(ii) they can be detected and tracked  over a relatively long time period,
and (iii) they are more robust in texture-less environments compared to point features.

In the case of utilizing line features, 
Kottas et al. \cite{Kottas2013icra} represented the line with a quaternion and a distance scalar and studied the observability properties for linearized VINS with this line parameterization. 
Yu et al. \cite{Yu2015IROS} proposed a minimal four-parameter representation of line features for VIO using rolling-shutter cameras,
while Zheng et al. \cite{Zheng2018ICRA} used two end points to represent a line and designed point/line VIO based on MSCKF. 
Recently, He et al. \cite{He2018Sensors} employed the Pl\"ucker representation for line parameterization and orthogonal representation~\cite{Bartoli2005CVIU} for line error states, and developed a tightly-couple keyframe-based inertial-aided mono SLAM system. 
%


In the case of exploiting plane features,
Guo et al. \cite{Guo2013iros} analyzed the observability of VINS using both point and plane features, 
while assuming the plane orientation is \textit{a priori} known. 
The authors have shown that VINS with only plane bearing measurements have 12 unobservable directions but 4 if both point and plane measurements are present. 
Hesch et al.~\cite{Hesch2010ICRA} developed a  2D LiDAR-aided INS algorithm that jointly estimates the perpendicular structural planes associated with buildings, along with the IMU states.  
However, one particular  challenge of using plane features is the plane parameterization~\cite{Kaess2015icra}. 
A conventional method is to use the plane normal direction and a distance scalar, which, however, is over-parameterized,
resulting in singular information matrix in least-squares optimization if not treated carefully. 
Alternatively, one may use a spherical parametrization (two angles and one distance scalar)~\cite{Servant2008ICPR}, 
which is minimal but might suffer from singularities similar to gimbal lock for Euler angles. 
Kaess~\cite{Kaess2015icra} used a unit quaternion   for plane representation by leveraging the quaternion error states for propagation,
while  Wu et al.~\cite{Wu2017ICRA} employed both a quaternion and a distance scalar  for planes but assuming 2D quaternion error states.

\subsection{VINS Observability Analysis}

As system observability is important for consistent estimation~\cite{Huang2012thesis}, 
in our prior work~\cite{Huang2008ICRA,Huang2008ISER,Huang2010IJRR,Huang2009ICRA,Huang2013TRO,Huang2011AURO,Huang2011IROS,Huang2014ICRA}, 
we have been the first to design observability-constrained consistent estimators for robot localization and mapping problems. 
Since then, significant research efforts have been devoted to the observability analysis of VINS. 
In particular, 
in~\cite{Jones2011IJRR,Hernandez2015ICRA}  the system's indistinguishable trajectories were examined from the observability perspective. 
By employing the concept of continuous symmetries as in~\cite{Martinelli2011TRO}, 
Martinelli~\cite{Martinelli2012TRO} analytically derived the closed-form solution of VINS 
and identified that IMU biases, 3D velocity, global roll and pitch angles  are observable. 
He has also examined the effects of degenerate motion~\cite{Martinelli2013IROS}, minimum available sensors~\cite{Martinelli2014ICRA}, cooperative VIO~\cite{Martinelli2018CoRR} and unknown inputs~\cite{Martinelli2017BOOK,Martinelli2018TAC} on the system observability. 
Based on the Lie derivatives and observability matrix rank test~\cite{Hermann1977TAC}, 
Hesch et al. \cite{Hesch2014IJRR} analytically showed that the monocular VINS has 4 unobservable directions, i.e.,  the global yaw and the global position of the exteroceptive sensor. 
Guo et al. \cite{Guo2013iros} extended this method to the RGBD-camera aided INS that preserves the same unobservable directions if both point and plane measurements are available.
With the similar idea, 
in~\cite{Mirzaei2007IROS,Kelly2011IJRR,Guo2013ICRA}, the observability of IMU-camera (monocular, RGBD) calibration was analytically studied, which shows  that the extrinsic transformation between the IMU and  camera is observable given generic motions.
Additionally, 
in~\cite{Panahandeh2013IROS,Panahandeh2016JIRS}, the system with a downward-looking camera measuring point features from horizontal planes was shown to have the observable global $z$ position of the sensor.

More importantly, as in practice VINS estimators are built upon the linearized system,
it necessitates to perform  observability analysis for the linearized VINS whose observability properties can be exploited when designing an estimator.
For instance, 
Li et al. \cite{Li2012ICRA,Li2013IJRR} performed observability analysis for the linearized VINS (without considering biases) and adopted the idea of first-estimates Jacobian~\cite{Huang2008ISER} to improve filter consistency. 
Analogously, 
in \cite{Hesch2013WAFR,Hesch2013TRO,Kottas2012ISER}, the authors conducted observability analysis for the linearized VINS with full states (including IMU biases) and analytically showed the system unobservable directions by finding the right null space of the observability matrix~\cite{Hesch2013TRO}. 
Based on this analysis, the observability-constrained (OC)-VINS algorithm was developed. 
%
%
In this work, we thus primarily focus on the observability analysis of the linearized aided INS with heterogeneous features and developing estimation algorithms to validate it.


\section{Problem Statement} \label{sec:pb}

In this section, we  describe the system and measurement models for the aided INS with different geometric features,
providing the basis for our ensuing observability  analysis.
The state vector of the aided INS 
contains the current IMU state $\mathbf{x}_{IMU}$ and the feature state $^G\mathbf{x}_{\mathbf{f}}$:
%
	\begin{align} \label{eq:state000}
	\mathbf{x}
		&=
		\begin{bmatrix}
		\mathbf{x}^{\top}_{IMU} & ^G\mathbf{x}_{\mathbf{f}}^{\top}
		\end{bmatrix}^{\top} \notag\\
		&=
		\begin{bmatrix}
		{^I_G\bar{q}^{\top}} & 
		\mathbf{b}^{\top}_{g} & 
		{^G\mathbf{V}^{\top}_{I}} & 
		\mathbf{b}^{\top}_{a}  & 
		{^G\mathbf{P}^{\top}_{I}}  & 
		{^G\mathbf{x}^{\top}_{\mathbf{f}}}
		\end{bmatrix}^{\top}
	\end{align}

In the above expressions, $^I_G\bar{q}$ is a unit quaternion represents the rotation from  the global frame $\{G\}$ to the current IMU frame $\{I\}$, whose  corresponding rotation matrix is $^I_G\mathbf{R}(\bar{q})$. $\mathbf{b}_{g}$ and $\mathbf{b}_{a}$ represent the gyroscope and accelerometer biases, respectively, while $^G\mathbf{V}_{I}$ and $^G\mathbf{P}_{I}$ denote the current IMU velocity and position in the global frame. $^G\mathbf{x}_{\mathbf{f}}$  denotes the generic features, which can be points, lines, planes or their different combinations. 
%

\subsection{IMU Propagation Model}

The IMU kinematic model is given by~\cite{Trawny2005_Q_TR}:
	\begin{align}
	{^I_G\dot{\bar{q}}(t)} &= \frac{1}{2}\boldsymbol{\Omega}\left({^I\boldsymbol{\omega}}(t)\right){^I_G\bar{q}(t)} 	\notag \\
	{^G\dot{\mathbf{P}}_I(t)} &= {^G\mathbf{V}_{I}(t)}, \quad 	{^G\dot{\mathbf{V}}_I(t)} = {^G\mathbf{a}(t)}  \notag\\
	\dot{\mathbf{b}}_g(t) &= \mathbf{n}_{wg}(t), \quad ~~~	\dot{\mathbf{b}}_a(t) = \mathbf{n}_{wa}(t)  \notag\\
	^G\dot{\mathbf{x}}_{\mathbf{f}}(t) &= \mathbf{0}_{m_{\mathbf{f}}\times 1}  
	\label{eq:imu-system}
	\end{align} 
where $\boldsymbol{\omega}$ and $\mathbf{a}$ are the angular velocity and linear acceleration, respectively. $\mathbf{n}_{wg}$ and $\mathbf{n}_{wa}$ are the zero-mean Gaussian noises driving the gyroscope and accelerometer biases. 
$m_{\mathbf{f}}$ is the dimension of $^G\mathbf{x}_{\mathbf{f}}$, and $
\scalemath{0.9}{
	\boldsymbol{\Omega}(\boldsymbol{\omega}) \triangleq
	\begin{bmatrix}
	-\lfloor \boldsymbol{\omega}\times \rfloor   &   \boldsymbol{\omega} \\
	-\boldsymbol{\omega}^{\top}  &  0
	\end{bmatrix}}$, 
$
\scalemath{0.9}{
	\lfloor\boldsymbol{\omega}\times\rfloor
	\triangleq 
	\begin{bmatrix}
	0     &        -\omega_3      &  \omega_2 \\
	\omega_3     &       0        &   -\omega_1 \\
	-\omega_2   &  \omega_1      &     0    
	\end{bmatrix}}$. 
The continuous-time linearized error-state equation is given by:\footnote{Throughout this paper
$\hat x$ is used to denote the estimate of a random variable $x$,
while $\tilde x = x-\hat x$ is  the error in this estimate.
$\mathbf 0_{m\times n}$  and $\mathbf 0_n$ denote $m\times n$ and $n \times n$
matrices of zeros, respectively, and  $\mathbf I_n$ is the identity matrix.
}
%
\begin{align}
\scalemath{1}{
	\dot{\tilde{\mathbf{x}}}(t)}
& \simeq
	\begin{bmatrix}
	\mathbf{F}_c(t)   &  \mathbf{0}_{15\times m_{\mathbf{f}}}  \\
	\mathbf{0}_{m_{\mathbf{f}}\times 15}  & \mathbf{0}_{m_{\mathbf{f}}}
	\end{bmatrix}
	\tilde{\mathbf{x}}(t) + 
	\begin{bmatrix}
	\mathbf{G}_c(t) \\
	\mathbf{0}_{m_{\mathbf{f}}\times 12}
	\end{bmatrix}
	\mathbf{n}(t)
	=:
	\mathbf{F}(t)\tilde{\mathbf{x}}(t) + \mathbf{G}(t)\mathbf{n}(t)
\end{align}
%
where $\mathbf{F}_c(t)$ and $\mathbf{G}_c(t)$ are the continuous-time error-state transition matrix and noise Jacobian matrix, respectively. $\scalemath{1}{\mathbf{n}(t)=\begin{bmatrix}\mathbf{n}^{\top}_{g}  &  \mathbf{n}^{\top}_{wg}  & \mathbf{n}^{\top}_a & \mathbf{n}^{\top}_{wa}\end{bmatrix}^{\top}}$
are modeled as zero-mean Gaussian noise with autocorrelation 
$\scalemath{1}{\mathbb{E} \left[\mathbf{n}(t)\mathbf{n}^{\top}(\tau) \right] =\mathbf{Q}_c\delta(t-\tau)}$. 
Note that $\mathbf{n}_g(t)$ and $\mathbf{n}_a(t)$ are the Gaussian noises contaminating the angular velocity and linear acceleration measurements.
The discrete-time state transition matrix $\boldsymbol{\Phi}_{(k+1,k)}$ from time $t_k$ to $t_{k+1}$, 
can be derived from $\dot{\boldsymbol{\Phi}}_{(k+1,k)} = \mathbf{F}(t_k) \boldsymbol{\Phi}_{(k+1,k)}$  
with the identity as the initial condition, which is given by~\cite{Hesch2013TRO} (see Appendix \ref{apd_state_transition}):
%
\begin{align} \label{eq:PHI-matrix}
\scalemath{0.9}{
\boldsymbol{\Phi}_{(k+1,k)} = 
\begin{bmatrix}
\boldsymbol{\Phi}_{11} & 
\boldsymbol{\Phi}_{12} & 
\mathbf{0}_3 & 
\mathbf{0}_3 & 
\mathbf{0}_3 & 
\mathbf{0}_{m_{\mathbf{f}}\times 3}
\\
\mathbf{0}_3 & 
\mathbf{I}_3 &  
\mathbf{0}_3 & 
\mathbf{0}_3 & 
\mathbf{0}_3 & 
\mathbf{0}_{m_{\mathbf{f}}\times 3}
\\
\boldsymbol{\Phi}_{31} & 
\boldsymbol{\Phi}_{32} & 
\mathbf{I}_3 & 
\boldsymbol{\Phi}_{34} & 
\mathbf{0}_3 & 
\mathbf{0}_{m_{\mathbf{f}}\times 3}
\\
\mathbf{0}_3 & 
\mathbf{0}_3 &  
\mathbf{0}_3 & 
\mathbf{I}_3 & 
\mathbf{0}_3 & 
\mathbf{0}_{m_{\mathbf{f}}\times 3}
\\
\boldsymbol{\Phi}_{51} & 
\boldsymbol{\Phi}_{52} & 
\boldsymbol{\Phi}_{53} &
\boldsymbol{\Phi}_{54} & 
\mathbf{I}_3 & 
\mathbf{0}_{m_\mathbf{f}\times 3}
\\
\mathbf{0}_{3\times m_{\mathbf{f}}} & 
\mathbf{0}_{3\times m_{\mathbf{f}}} &  
\mathbf{0}_{3\times m_{\mathbf{f}}} & 
\mathbf{0}_{3\times m_{\mathbf{f}}} & 
\mathbf{0}_{3\times m_{\mathbf{f}}} & 
\mathbf{I}_{m_{\mathbf{f}}}					
\end{bmatrix}}	
\end{align}
where
$\bm\Phi_{ij}$ is the $(i,j)$ block of this matrix, and in particular, $\bm\Phi_{54}$ will be useful for the ensuring analysis for pure translation and its expression can be analytically given by (see \cite{Hesch2013TRO,Wu2017ICRA}) with pure translation:
\begin{align} \label{eq:PHI54}
\scalemath{.9}{
    \boldsymbol{\Phi}_{54}
    =
    -\int_{t_1}^{t_k} \int_{t_1}^{t_s}
    {^G_{I_{\tau}}\mathbf{R}}\mathbf{d}{\tau}\mathbf{d}_{t_s}
    =
    -{^G_{I_1}\mathbf{R}}
    \int_{t_1}^{t_k} \int_{t_1}^{t_s}
    \mathbf{d}{\tau}\mathbf{d}_{t_s}
    =
    -\frac{1}{2}{^G_{I_1}\mathbf{R}}\delta t^2_k
    }
\end{align}
where $\delta t_k=t_{k} -t_1$ is the time elapse from the beginning.
With the state transition matrix~\eqref{eq:PHI-matrix},
we can also analytically or numerically compute the discrete-time noise covariance: 
%
\begin{align}
	\mathbf{Q}_k =
	\int_{t_k}^{t_{k+1}}\boldsymbol{\Phi}_{(k,\tau)}
	\mathbf{G}_{c}(\tau)
	\mathbf{Q}_c
	\mathbf{G}^{\top}_{c}(\tau)
	\boldsymbol{\Phi}^{\top}_{(k,\tau)}\mathbf{d}\tau
\end{align}

\subsection{Point Measurements}

Note that point measurements from different exteroceptive sensors (such as monocular/stereo camera, acoustic sonar, and LiDAR)  in the aided INS 
 generally can be modeled as range and/or bearing observations which are functions of the relative  position of the point feature expressed in the sensor frame ${^I\mathbf{P}_{\mathbf{f}}} =\begin{bmatrix} ^Ix_{\mathbf{f}} & ^Iy_{\mathbf{f}} & ^Iz_{\mathbf{f}}\end{bmatrix}^{\top}$(see~\cite{Yang2018ICRA}):\footnote{Note that  in this work we assume the frame of the aiding exteroceptive sensor coincides with the IMU frame in order to keep our analysis concise.}
%
\begin{align}\label{eq_x_meas}
	\mathbf{z}_{p} &=
	\begin{bmatrix}
	z^{(r)} \\ \mathbf{z}^{(b)}
	\end{bmatrix}
	=
	\begin{bmatrix}
	\sqrt{{^{I}\mathbf{P}_{\mathbf{f}}}^{\top}{^{I}\mathbf{P}_{\mathbf{f}}}} + n^{(r)} \\
	\mathbf{h}_{\mathbf{b}}\left({^I\mathbf{P}_{\mathbf{f}}},\mathbf{n}^{(b)}\right)
	\end{bmatrix} \\
	{\rm with}~~
	{^I\mathbf{P}_{\mathbf{f}}} &= {^I_G\mathbf{R}}\left(^G\mathbf{P}_{\mathbf{f}}-{^G\mathbf{P}_I}\right)
	\label{eq_feat}
\end{align}
where $\mathbf{h}_{\mathbf{b}}(\cdot)$ is a generic bearing measurement function whose concrete form depends on the particular sensor used (more comprehensive cases can be found in Appendix \ref{apd_meas}).
%
In~\eqref{eq_x_meas}, ${n}^{(r)}$ and $\mathbf{n}^{(b)}$ 
are zero-mean Gaussian noises (inferred from sensor raw data) for the range and bearing measurements. 
We then linearize these measurements about the current state estimate:
\begin{align} \label{eq:linearized-point-meas}
	\tilde{\mathbf z}_p 
	&\simeq 
	\mathbf H_I  \tilde{\mathbf x} + \mathbf H_n \mathbf n
	= 	
	\begin{bmatrix}
	\frac{\partial \tilde z^{(r)}}{\partial {^I\tilde{\mathbf{P}}_{\mathbf{f}}}} \frac{\partial {^I\tilde{\mathbf{P}}_{\mathbf{f}}}}{\partial  \tilde{\mathbf x}} \tilde{\mathbf x} + n^{(r)}  \\
	\frac{\partial \tilde {\mathbf z}^{(b)}}{\partial {^I\tilde{\mathbf{P}}_{\mathbf{f}}}} \frac{\partial {^I\tilde{\mathbf{P}}_{\mathbf{f}}}}{\partial  \tilde{\mathbf x}} \tilde{\mathbf x} + \frac{\partial \tilde {\mathbf z}^{(b)}}{\partial \mathbf n^{(b)}} \mathbf n^{(b)}
	\end{bmatrix}   
	=: \begin{bmatrix}
	\mathbf{H}_{r} \mathbf H_{\mathbf f} \tilde{\mathbf x}+ n^{(r)}  \\
	\mathbf{H}_{b}\mathbf H_{\mathbf f} \tilde{\mathbf x}+\mathbf{H}_{n}\mathbf{n}^{(b)}
	\end{bmatrix}
	=: \underbrace{\begin{bmatrix} \mathbf{H}_{r} \\ \mathbf{H}_{b}  \end{bmatrix}}_{\mathbf{H}_{proj}} \mathbf H_{\mathbf f} \tilde{\mathbf x}  + 
	\begin{bmatrix}  
	n^{(r)} \\ \mathbf{H}_{n}\mathbf{n}^{(b)}
	\end{bmatrix} 
\end{align}
%
%
The interested readers are referred to Appendix \ref{apd_meas} 
 for detailed derivations of these Jacobians when using different sensors.

\subsection{Line Measurements}

We propose to use the Pl{\"u}cker representation for the line feature in the state vector but the minimal \textit{orthonormal} representation for the error state,
which was introduced  in~\cite{Bartoli2005CVIU}. 
Specifically, the Pl{\"u}cker representation can be initialized by the two end points $\mathbf{P}_{1}$ and $\mathbf{P}_{2}$ of a line segment $\mathbf{L}$, as:
	\begin{equation}\label{eq_line_rep}
	\mathbf{L}
	=
	\begin{bmatrix}
	\lfloor\mathbf{P}_{1}\times\rfloor\mathbf{P}_{2} \\
	\mathbf{P}_{2}-\mathbf{P}_{1}
	\end{bmatrix}
	=
	\begin{bmatrix}
	\mathbf{n}_{L} \\ \mathbf{v}_{L}
	\end{bmatrix}
	\end{equation}
where $\mathbf{n}_{L}$ and $\mathbf{v}_{L}$ are the normal and directional vectors (which are {\em not} normalized to be unit)  for the line $\mathbf{L}$, 
which clearly is over parameterized for 4 DOF lines.
A minimal parameterization of the error state is desirable for covariance propagation and update. 
To this end,  we have:
	\begin{equation} \label{eq:def-L}
	\scalemath{0.9}{
		\mathbf{L}^\top = 
		\begin{bmatrix}
		\mathbf{n}_{L}^\top \ \  \mathbf{v}_{L}^\top
		\end{bmatrix}
		=
		\begin{bmatrix}
		\frac{\mathbf{n}_{L}}{\norm{\mathbf{n}_{L}}} &
		\frac{\mathbf{v}_{L}}{\norm{\mathbf{v}_{L}}} &
		\frac{\mathbf{n}_{L}\times \mathbf{v}_{L}}{\norm{\mathbf{n}_{L}\times\mathbf{v}_{L}}} 
		\end{bmatrix}
		\begin{bmatrix}
		\norm{\mathbf{n}_{L}} & 0 \\
		0 & \norm{\mathbf{v}_{L}} \\
		0 & 0 
		\end{bmatrix}	}
	\end{equation}
Based on this, we  define:
\begin{align}\label{eq:def-RL}
\scalemath{1}{
	\mathbf{R}_{L}(\bm\theta_L)}
&
\scalemath{1}{
	=
	\exp(-\lfloor\boldsymbol{\theta}_{L}\times \rfloor)
	=
	\begin{bmatrix}
	\frac{\mathbf{n}_{L}}{\norm{\mathbf{n}_{L}}} &
	\frac{\mathbf{v}_{L}}{\norm{\mathbf{v}_{L}}} &
	\frac{\mathbf{n}_{L}\times \mathbf{v}_{L}}{\norm{\mathbf{n}_{L}\times\mathbf{v}_{L}}}
	\end{bmatrix}
}
\\
\scalemath{1}{
	\mathbf{W}_{L}(\phi_L)}
&
\scalemath{.9}{=
	\eta
	\begin{bmatrix}
	w_1 & -w_2 \\
	w_2 &  w_1
	\end{bmatrix}
	=
	\frac{1}{\sqrt{\norm{\mathbf{n}_{L}}^2+\norm{\mathbf{v}_{L}}^2}}
	\begin{bmatrix}
	\norm{\mathbf{n}_{L}} & -\norm{\mathbf{v}_{L}} \\
	\norm{\mathbf{v}_{L}} & \norm{\mathbf{n}_{L}} 
	\end{bmatrix}
}\label{eq:def-WL}
\end{align}
where $w_1 = \norm{\mathbf{n}_{L}}$, $w_2 = \norm{\mathbf{v}_{L}}$ and $\eta = \frac{1}{\sqrt{w^2_1+ w^2_2}}$.
Since $\scalemath{0.9}{\mathbf{R}_{L}\in \mathbf{SO}(3)}$ and $\scalemath{0.9}{\mathbf{W}_{L}\in \mathbf{SO}(2)}$, we  define the error states for these parameters as $\delta \boldsymbol{\theta}_{L}$ and $\delta \phi_{L}$ corresponding to $\mathbf{R}_L$ and $\mathbf{W}_L$, respectively. 
With that, the state vector with the line feature can be written as [see \eqref{eq:state000}]:
\begin{align}
\mathbf{x}
&
\scalemath{1}{
	=
	\begin{bmatrix}
	{^I_G\bar{q}}^{\top} & \mathbf{b}^{\top}_{\mathbf{g}} & {^G\mathbf{V}}^{\top}_{I} & \mathbf{b}^{\top}_{\mathbf{a}} &  {^G{\mathbf{P}}}^{\top}_{I}  &  {^G{\mathbf{L}}}^{\top}
	\end{bmatrix}^{\top}
}
\end{align}
\noindent
where $
\scalemath{1}{
	{^G\mathbf{L}}
	=
	\begin{bmatrix}
	^G\mathbf{n}^{\top}_{L} & ^G\mathbf{v}^{\top}_{L}
	\end{bmatrix}^{\top}
}
$
and
$
\scalemath{1}{
	{^G\tilde{\mathbf{L}}}
	=
	\begin{bmatrix}
	\delta \boldsymbol{\theta}^{\top}_{L} &
	\delta \phi_{L}
	\end{bmatrix}^{\top}
}$.

In particular, the visual line measurement is given by the distance from two ending points $\mathbf{x}_s$ and $\mathbf{x}_e$ of line segment 
to the line  in the image (also see our prior work \cite{Zuo2017IROS}):
\begin{align} \label{line-meas-im}
\mathbf{z}_{l} &= 
\begin{bmatrix}
\frac{\mathbf{x}^{\top}_{s}\mathbf{l}'}{\sqrt{l^2_1+ l^2_2}} 
&
\frac{\mathbf{x}^{\top}_{e}\mathbf{l}'}{\sqrt{l^2_1+ l^2_2}} 
\end{bmatrix}^{\top}  
\\
\mathbf{l}'
&=
\scalemath{1}{
	\mathbf{K} {^I\mathbf{n}_{L}}
	=
	\begin{bmatrix}
	l_1 & l_2 &l_3
	\end{bmatrix}^{\top}
} \label{eq:def-l-prime}
\\
\mathbf{K}&=\begin{bmatrix}
f_2  &  0   &  0 \\
0    &  f_1 &  0 \\
-f_2c_1  & -f_1c_2 & f_1f_2
\end{bmatrix}
\\
\scalemath{1}{
	^{I}\mathbf{L}}  &= \begin{bmatrix} {^I\mathbf{n}_{L}} \\ {^I\mathbf{v}_{L}} \end{bmatrix} 
=
\scalemath{1}
{
	\begin{bmatrix}
	^I_G\mathbf{R}  & -{^I_G\mathbf{R}}\lfloor {^G\mathbf{P}_I}\times \rfloor \\
	\mathbf{0}_3 & ^I_G\mathbf{R} 
	\end{bmatrix}
	{^G\mathbf{L}} 
}\label{eq:line-L-in-I}
\end{align}
where $\mathbf{K}$ is the projection matrix for line  (not point) features (see Appendix \ref{apd_line_proj}), with $f_1$, $f_2$, $c_1$ and $c_2$ as the camera intrinsic parameters. 
The relationship~\eqref{eq:line-L-in-I} is derived based on the geometry ${^G\mathbf{P}_i} = {^G\mathbf{P}_I} + {^G_I\mathbf{R}} {^I\mathbf{P}_i}$ ($i=1,2$) [see \eqref{eq_line_rep}].
Moreover, the measurement Jacobian can be computed as follows (Appendix~\ref{apd_line_jacob}):
\begin{align} \label{eq:H_I-line}
\mathbf H_I &= \frac{\partial \tilde{\mathbf{z}}_l}{\partial \tilde{\mathbf{l}}'}   \frac{\partial \tilde{\mathbf{l}}'}{\partial  \tilde{\mathbf x}} =: \mathbf{H}_{l}  \mathbf{H}_{\mathbf f} \\
\mathbf{H}_{l} &= 
\frac{1}{l_n}
\begin{bmatrix}
u_1 - \frac{l_1e_1}{l^2_n}   &  v_1-\frac{l_2e_1}{l^2_n}   &  1 \\
u_2 - \frac{l_1e_2}{l^2_n}   &  v_2-\frac{l_2e_2}{l^2_n}   &  1 
\end{bmatrix} \label{eq:def-H_l}
\end{align}
where we have used the following identities:
\begin{align}
	e_1 &= \mathbf{x}^{\top}_s\mathbf{l}', \ e_2 = \mathbf{x}^{\top}_e\mathbf{l}', \ l_n = \sqrt{\left(l^2_1 + l^2_2\right)}
	\nonumber \\
	\mathbf{x}_s &=\begin{bmatrix}u_1, v_1, 1\end{bmatrix}^{\top}, \ \mathbf{x}_e=\begin{bmatrix}u_2, v_2, 1\end{bmatrix}^{\top}
	\nonumber
\end{align}
%


\subsection{Plane Measurements}

\subsubsection{Overview of plane representations}

\begin{table} 
	\centering
	\caption{Different Plane Representations}
	\label{tab_plane}
	\renewcommand{\arraystretch} {1.25}
	\begin{tabular}{|l|l|l|}
		\hline
		\# & Models                                                                     & Parameters                                                           \\ \hline
		1  & $\pi_1 p_x+ \pi_2 p_y + \pi_3 p_z + \pi_4 = 0$                                                             & $\pi_1$, $\pi_2$, $\pi_3$, $\pi_4$                                                         \\ \hline
		2  & $\mathbf{n}^{\top}_{\pi}\mathbf{P}_{\mathbf{f}}-d = 0$                                          & $\mathbf{n}_{\pi}$, $\norm{\mathbf{n}_{\pi}}=1$, $d$ \\ \hline
		3  & $ \left[\cos\phi\cos \theta \   \cos \phi \sin \theta \ \sin \phi  \right] \mathbf{P}_{\mathbf f}  - d = 0 $      & $\theta, \phi, d$                                                          \\ \hline
		4  & $p_z = a p_x + b p_y + c$                                                                & $a,b,c$ with $\mathbf{e}^{\top}_3\mathbf{n}_{\pi} \neq 0$                                                                    \\ \hline
		5  & $ap_x+bp_y+cp_z+1 =0$                                                              & $a,b,c$ with $d\neq 0$                                                                    \\ \hline
		6  & $\bar{q}=\frac{1}{\sqrt{1+d^2}}\begin{bmatrix}\mathbf{n}_{\pi} \\ d \end{bmatrix}$ & $\bar{q}$, $ \norm{\bar{q}}=1$                                                         \\ \hline
		7  & $\boldsymbol{\Pi} = d \mathbf{n}$                                                         & $\boldsymbol{\Pi} =[x_{\pi}, y_{\pi}, z_{\pi}]^{\top}$                                  \\ \hline
	\end{tabular}
\end{table}

Different representations have been developed for plane estimation~\cite{Weingarten2006PHD}
and we summarize in Table~\ref{tab_plane} the most commonly used plane parameterizations along with the new representations introduced in this paper.
Note that $\mathbf{p}_{\mathbf{f}}=[p_x, p_y,p_z]^{\top}$ represents the point in the plane $\boldsymbol{\Pi}$. 
Model 1 \cite{Hartley2004} is the most general representation using the homogeneous coordinates $(\pi_i,i\in{1\ldots 4})$. 
Model 2 (Hesse form) uses the unit normal direction $\mathbf{n}_{\pi}=\left[n_x, n_y, n_z\right]^{\top}$  and the shortest distance from the origin to the plane $d$. 
Both models are {\em not} minimal, so the information matrix will become singular if directly using them for least-squares optimization.  
Model~3 is similar to the spherical coordinate, which parameterizes a plane with 2 angles (horizontal angle $\theta$ and elevation angle $\phi$) and distance~$d$. 
Model~3 is appealing since it is a minimal parameterization while suffering from singularities when $\phi = \pm\frac{\pi}{2}$ which is similar to the gimbal lock issue for Euler angles. 
Model 4 \cite{Weingarten2004ICRA} and Model 5 \cite{Proenca2017CORR} are used under certain conditions. 
Model 6~\cite{Kanatani1996,Kaess2015icra} uses a unit 4-dimensional vector for plane representation.
In~\cite{Kaess2015icra}   it was treated as a unit quaternion and thus quaternion error states to represent the plane error propagation.
In~\cite{Wu2017ICRA}  both quaternion and distance $d$ were used for plane parameterization while the  error states contain 2D quaternion errors and the distance error. 
The error states for both quaternion related plane representations lack physical interpretation. 
Since different representations have their own advantages, some work has combined these models. 
For example, both Models 2 and 4 are used in \cite{Weingarten2004ICRA}, and 
in \cite{Proenca2017CORR}  Model 5 was used for plane fitting and Model 7 for formulating the cost function of plane matching. 
Our recent work \cite{Yang2018ICRA} employed Models 2 and 3 for plane and its error state, respectively.

\subsubsection{Closest point (CP) parameterization}

A plane $\boldsymbol{\Pi}$ is  often represented by the normal direction $\mathbf{n}_{\pi}$ and the shortest distance $d$ from the origin.
However, we propose to use Model 7, the closest point (CP) of plane to the origin, for our plane parameterization. 
%
%
This is due to the facts: (i) it is a minimal representation; and (ii) its error states are in vector space can be interpreted  geometrically.
Note that there is one degenerate case associated with this representation, i.e., when $d=0$, which, however, can be easily avoided in practice.  
%
%
As in practice, we can extract plane features, e.g., from 3D point clouds,  direct measurements of plane features are given by:
%
\begin{equation} \label{plane-meas-model}
	\mathbf{z}_{\pi} ={d{\mathbf{n}_{\pi}}} + \mathbf {n}^{(\pi)} =: {^I\boldsymbol{\Pi}} + \mathbf {n}^{(\pi)}
\end{equation}
where $^I\boldsymbol{\Pi}$ represents the plane in the sensor's local frame and $\mathbf {n}^{(\pi)}$ represents the plane measurement noise.
%


To compute the measurement Jacobian of this plane measurement~\eqref{plane-meas-model},
note first that the plane parameters (Model 2) in the global frame can be transformed to the local frame as:
\begin{eqnarray}
\begin{bmatrix}
{^I\mathbf{n}_{\pi}} \\
{^Id}
\end{bmatrix}
=
\begin{bmatrix}
{^I_G\mathbf{R}}  & \mathbf{0}_{3\times 1} \\
-{^G\mathbf{P}^{\top}_I}  &   1
\end{bmatrix}
\begin{bmatrix}
{^G\mathbf{n}_{\pi}} \\
{^Gd}
\end{bmatrix}
\end{eqnarray}
The corresponding closest points  (Model 7) have:
\begin{align} 
{^I\boldsymbol{\Pi}} &= {^Id}{^I\mathbf{n}_{\pi}}
=
\left( -{^G\mathbf{P}^{\top}_I} {^G\mathbf{n}_{\pi}} + {^Gd} \right) {^I_G\mathbf{R}} {^G\mathbf{n}_{\pi}}
=
-  {^G\mathbf{n}^{\top}_{\pi}} {^G\mathbf{P}_I}{^I_G\mathbf{R}} {^G\mathbf{n}_{\pi}} + {^Gd}{^I_G\mathbf{R}} {^G\mathbf{n}_{\pi}}
\label{eq:local-global-plane}
\end{align}
We now can compute the local-plane measurement Jacobian w.r.t. the plane feature using the chain rule:
\begin{align} \label{eq:plane-jac-feat}
\frac{\partial {^I\tilde{\boldsymbol{\Pi}}}}{\partial {^G\tilde{\boldsymbol{\Pi}}}} 
=
\frac{\partial {^I\tilde{\boldsymbol{\Pi}}}}{\partial {^G\tilde{\mathbf{n}}_{\pi}}} 
\frac{\partial {^G\tilde{\mathbf{n}}_{\pi}}} {\partial {^G\tilde{\boldsymbol{\Pi}}}}
+
\frac{\partial {^I\tilde{\boldsymbol{\Pi}}}}{\partial {^G\tilde{d}}} 
\frac{\partial {^G\tilde{d}}} {\partial {^G\tilde{\boldsymbol{\Pi}}}} 
\end{align}
where the pertinent intermedian Jacobians are computed as:
\begin{align}
\frac{\partial {^I\tilde{\boldsymbol{\Pi}}}}{\partial {^G\tilde{\mathbf{n}}_{\pi}}} 
&=
{^I_G\hat{\mathbf{R}}}
\left(
\left(
{^G\hat{d}} - {^G\hat{\mathbf{n}}^{\top}_{\pi}}{^G\hat{\mathbf{P}}_{I}}
\right)\mathbf{I}_{3} 
- {^G\hat{\mathbf{n}}_{\pi}}{^G\hat{\mathbf{P}}^{\top}_{I}}
\right)
\\
\frac{\partial {^L\tilde{\boldsymbol{\Pi}}}}{\partial {^G\tilde{d}}} 
&=
{^I_G\hat{\mathbf{R}}} {^G\hat{\mathbf{n}}_{\pi}}\\
\frac{\partial {^G\tilde{\mathbf{n}}_{\pi}}} {\partial {^G\tilde{\boldsymbol{\Pi}}}}
&=
\frac{
	\left(
	{^G\hat{\mathbf{n}}^{\top}_{\pi}}{^G\hat{\mathbf{n}}_{\pi}}\mathbf{I}_3 - {^G\hat{\mathbf{n}}_{\pi}}{^G\hat{\mathbf{n}}^{\top}_{\pi}}
	\right)
	}
{^G\hat{d}} 
\\
\frac{\partial {^G\tilde{d}}} {\partial {^G\tilde{\boldsymbol{\Pi}}}} 
&=
\frac{
	\begin{bmatrix}
	{x_{\pi}}  &  {y_{\pi}} & {z_{\pi}}
	\end{bmatrix}
	}{\sqrt{x^2_{\pi} + y^2_{\pi}+ z^2_{\pi}}}
=
{^G\hat{\mathbf{n}}^{\top}_{\pi}}
\end{align}
Substitution of the above expressions into \eqref{eq:plane-jac-feat} yields:
\begin{align} \label{eq:plane-jac0}
\frac{\partial {^I\tilde{\boldsymbol{\Pi}}}}{\partial {^G\tilde{\boldsymbol{\Pi}}}} 
= &
\frac{
\scalemath{1}	
{
{^I_G\hat{\mathbf{R}}} 
\bigg(
\left(
{^G\hat{d}} - {^G\hat{\mathbf{n}}^{\top}_{\pi}}{^G\hat{\mathbf{P}}_{I}}
\right)\mathbf{I}_{3}
- {^G\hat{\mathbf{n}}_{\pi}}{^G\hat{\mathbf{P}}^{\top}_{I}}
+
2 {^G\hat{\mathbf{n}}^{\top}_{\pi}}{^G\hat{\mathbf{P}}_{I}}
{^G\hat{\mathbf{n}}_{\pi}}{^G\hat{\mathbf{n}}^{\top}_{\pi}}
\bigg)
}	
}{^G\hat{d}}	
\end{align}
%
%
%
%
%
%
%
We now compute the measurement Jacobian w.r.t. to the IMU states by applying perturbation of $\delta \boldsymbol{\theta}$ and $^G\tilde{\mathbf{P}}_I$ on~\eqref{eq:local-global-plane}:
\begin{align}
{^I\boldsymbol{\Pi}} =& 
\scalemath{1}{
-  {^G\hat{\mathbf{n}}^{\top}_{\pi}} 
{^G\hat{\mathbf{P}}_I}
{^I_G\hat{\mathbf{R}}} 
{^G\hat{\mathbf{n}}_{\pi}} 
+ {^G\hat{d}}
{^I_G\hat{\mathbf{R}}} 
{^G\hat{\mathbf{n}}_{\pi}}
} 
+
\bigg(
{^G\hat{d}}
- {^G\hat{\mathbf{n}}^{\top}_{\pi}} 
{^G\hat{\mathbf{P}}_I}
\bigg)
\lfloor {^I_G\hat{\mathbf{R}}} 
{^G\hat{\mathbf{n}}_{\pi}}  \times  \rfloor 
\delta \boldsymbol{\theta} 
\\
=& 
-  {^G\hat{\mathbf{n}}^{\top}_{\pi}} 
{^G\hat{\mathbf{P}}_I}
{^I_G\hat{\mathbf{R}}} 
{^G\hat{\mathbf{n}}_{\pi}} 
+ {^G\hat{d}}
{^I_G\hat{\mathbf{R}}} 
{^G\hat{\mathbf{n}}_{\pi}}
-   
{^L_G\hat{\mathbf{R}}} 
{^G\hat{\mathbf{n}}_{\pi}} {^G\hat{\mathbf{n}}^{\top}_{\pi}}{^G\tilde{\mathbf{P}}_I}
\end{align}
which immediately provides the desired Jacobians:
\begin{align} \label{eq:plane-jac1}
\frac{\partial {^I\tilde{\boldsymbol{\Pi}}}}{\partial \delta\boldsymbol{\theta}} 
&=
\left(
{^G\hat{d}}
- {^G\hat{\mathbf{n}}^{\top}_{\pi}} 
{^G\hat{\mathbf{P}}_I}
\right)
\lfloor {^I_G\hat{\mathbf{R}}} 
{^G\hat{\mathbf{n}}_{\pi}}  \times  \rfloor 
\\
\frac{\partial {^I\tilde{\boldsymbol{\Pi}}}}{\partial {^G\tilde{\mathbf{P}}_{I}}}
&=
-   
{^I_G\hat{\mathbf{R}}} 
{^G\hat{\mathbf{n}}_{\pi}} {^G\hat{\mathbf{n}}^{\top}_{\pi}} 
\label{eq:plane-jac2}
\end{align}
Stacking \eqref{eq:plane-jac0}, \eqref{eq:plane-jac1} and \eqref{eq:plane-jac2} yields the complete the measurement Jacobian of the plane measurement w.r.t. the state~\eqref{eq:state000}:
\begin{align} \label{eq:H_I-plane}
\mathbf H_I = 
\begin{bmatrix}
\frac{\partial {^I\tilde{\boldsymbol{\Pi}}}}{\partial \delta\boldsymbol{\theta}} & \mathbf 0_{3\times 9} & \frac{\partial {^I\tilde{\boldsymbol{\Pi}}}}{\partial {^G\tilde{\mathbf{P}}_{I}}} & \frac{\partial {^I\tilde{\boldsymbol{\Pi}}}}{\partial {^G\tilde{\boldsymbol{\Pi}}}} 
\end{bmatrix}
\end{align}
%

\subsection{Observability Analysis}

Observability analysis for the linearized aided INS can be performed in a similar way as in~\cite{Huang2010IJRR,Hesch2013TRO}. 
In particular,  the observability matrix $\mathbf{M}(\mathbf{x})$ is given by:
\begin{equation}\label{eq_obs_equation}
\scalemath{1}{
	\mathbf{M}{(\mathbf{x})} = 
	\begin{bmatrix}
	\mathbf{H}_{I_1} \boldsymbol{\Phi}_{(1,1)}\\
	\mathbf{H}_{I_2} \boldsymbol{\Phi}_{(2,1)} \\
	\vdots \\
	\mathbf{H}_{I_k} \boldsymbol{\Phi}_{(k,1)}
	\end{bmatrix}
}
\end{equation}
where $\mathbf{H}_{I_k}$ is the measurement Jacobian at time step~$k$. 
The unobservable directions  span the right null space of this matrix.
%


\section{Observability Analysis of Aided INS with Homogeneous Features}
\label{sec:homogeneous-features}

In this section, we perform observability analysis for the linearized systems of aided INS using {\em homogeneous} geometric features including only points, lines and planes;
and the observability analysis for aided INS with {\em heterogeneous} geometric features will be presented in next section.

\subsection{Aided INS with Point Features} \label{sec:points}

We first consider the aided INS with point features and conduct the observability analysis in a similar way as in~\cite{Huang2010IJRR,Hesch2013TRO}.
In particular,
as the unobservable directions of this aided INS span the right null space of $\mathbf{M}(\mathbf{x})$~\eqref{eq_obs_equation},
we compute the measurement Jacobians $\mathbf{H}^{(p)}_{I_k}$  based on \eqref{eq_x_meas} as follows [see~\eqref{eq:linearized-point-meas}]: 
\begin{align}
	\scalemath{1}{
	\mathbf{H}^{(p)}_{I_k}}
	&=
	\scalemath{1}{
	\begin{bmatrix}
	\mathbf{H}_{r,k}\\
	\mathbf{H}_{b,k}
	\end{bmatrix}
\underbrace{
\begin{bmatrix}
\lfloor {^{I_k}\hat{\mathbf{P}}_{\mathbf{f}}} \times\rfloor
& 
\mathbf{0}_3 
& 
\mathbf{0}_3
&
\mathbf{0}_3
&
-^{I_k}_G\hat{\mathbf{R}}
&
^{I_k}_G\hat{\mathbf{R}}
\end{bmatrix}}_{\mathbf H_{\mathbf f, k}}
}
\nonumber
\\
&=
\scalemath{1}{
\mathbf{H}_{proj,k}~{^{I_k}_G\hat{\mathbf{R}}}
\begin{bmatrix}
\mathbf{H}_{p1}& 
\mathbf{0}_3 & 
\mathbf{0}_3 & 
\mathbf{0}_3 & 
-\mathbf{I}_3 & 
\mathbf{I}_3
\end{bmatrix}}	\label{eq:H_I-point}
\end{align}
where 
we have used \eqref{eq_feat} and \eqref{eq:linearized-point-meas} as well as the following matrix:
%
\begin{align}
\mathbf{H}_{p1}  
	&= 
	\scalemath{1}{
		\lfloor \left({^G\hat{\mathbf{P}}_{\mathbf{f}}}-{^{G}\hat{\mathbf{P}}_{I_k}}\right)\times \rfloor 
		{^{I_k}_G\hat{\mathbf{R}}}^{\top} 
	}
\end{align}
Specifically, 
for each block row of $\mathbf{M}(\mathbf{x})$ [see \eqref{eq_obs_equation}], we  have:
\begin{equation}\label{eq:block-row-single-point}
\scalemath{1}{
	\mathbf{H}^{(p)}_{I_k}\boldsymbol{\Phi}_{(k,1)}
	=
	\mathbf{H}_{proj,k}~
	{^{I_k}_G\hat{\mathbf{R}}} 
	\begin{bmatrix}
	\boldsymbol{\Gamma}_1 & \boldsymbol{\Gamma}_2 & \boldsymbol{\Gamma}_3 & \boldsymbol{\Gamma}_4
	& -\mathbf{I}_3  & \mathbf{I}_3
	\end{bmatrix}
}
\end{equation}
where
\begin{align}
	\boldsymbol{\Gamma}_1
	&=
\scalemath{1}{
	\left\lfloor \left( {^G\hat{\mathbf{P}}_{\mathbf{f}}} -
	{^G\hat{\mathbf{P}}_{I_1}} 
	-
	{^G\hat{\mathbf{V}}_{I_1}}\delta t_k
	+
	\frac{1}{2}{^G\mathbf{g}}(\delta t_k)^2 \right) \times \right\rfloor
	{^G_{I_1}}\hat{\mathbf{R}}
}
	\nonumber
	\\
	\boldsymbol{\Gamma}_2
	&=
\scalemath{1}{	
	\left\lfloor \left({^G\hat{\mathbf{P}}_{\mathbf{f}}}-{^G\hat{\mathbf{P}}_{I_k}}\right)\times \right\rfloor
	{^{I_k}_G\hat{\mathbf{R}}^{\top}}
	\boldsymbol{\Phi}_{12}
	-\boldsymbol{\Phi}_{52}
}
\nonumber
	\\
	\boldsymbol{\Gamma}_3
	&=-
\scalemath{1}{	
	\mathbf{I}_3\delta t_k
}
	,\ 
	\boldsymbol{\Gamma}_4
	=
	-\boldsymbol{\Phi}_{54}
	\label{eq:GAMMA4}
\end{align}
%
where $\scalemath{1}{g=\norm{^G\mathbf{g}}}$ and ${^G\mathbf{g}}=\left[0,0,-g\right]^{\top}$, 
Note that for the analysis purpose, we assume that in computing different Jacobians the linearization points for the same state variables remain the same. 
By inspection, it is not difficult to see that the null space of $\mathbf{M}(\mathbf{x})$ in this case is given by:
\begin{align} \label{eq_point_obs}
&\scalemath{1}{
	\mathbf{N}
}
 = 
\scalemath{1}{
	\begin{bmatrix}
	\mathbf{N}_g  & \mathbf{0}_{12\times 3}   \\
	-\lfloor ^G\hat{\mathbf{P}}_{I_1}\times\rfloor {^G\mathbf{g}}  &  \mathbf{I}_3   \\
	-\lfloor ^G\hat{\mathbf{P}}_{\mathbf{f}}\times\rfloor {^G\mathbf{g}}  & \mathbf{I}_3   \\
	\end{bmatrix}
}
=:
\begin{bmatrix}
\mathbf{N}_{r}   &  \mathbf{N}_{p}  
\end{bmatrix}
\end{align}
where $\mathbf{N}_{g}$ is defined by: 
\begin{align}
	&
	\scalemath{1}{
	\mathbf{N}_{g} = 
	\begin{bmatrix}
	  \left(^{I_1}_G\hat{\mathbf{R}}{^G\mathbf{g}}\right)^{\top} &
	 \mathbf{0}_{1\times 3} &
	 -\left(\lfloor ^G\hat{\mathbf{V}}_{I_1}\times\rfloor {^G\mathbf{g}}\right)^{\top} &
	 \mathbf{0}_{1\times 3} &
	\end{bmatrix}^{\top}
	}
\end{align} 

It is interesting to notice that in \eqref{eq_point_obs}, $\mathbf{N}_{p}$ corresponds to the sensor's global translation, while $\mathbf{N}_{r}$ relates to the global rotation around the gravity direction.
We thus see that the system has at least 4 unobservable directions ($\mathbf{N}_{p}$ and $\mathbf{N}_r$).
%
Moreover, in analogy to \cite{Huang2010IJRR,Hesch2014IJRR,Guo2013ICRA}, 
we have further performed the nonlinear observability analysis based on Lie derivatives~\cite{Hermann1977TAC} for the continuous-time nonlinear aided INS, which is summarized  as follows:
\begin{lem}
	The continuous-time nonlinear aided INS  with point features (detected from generic range and/or bearing measurements), 
	has 4 unobservable directions. 
\end{lem}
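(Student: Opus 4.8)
The plan is to apply the nonlinear observability rank condition of Hermann and Krener~\cite{Hermann1977TAC} directly to the continuous-time system~\eqref{eq:imu-system} with the point measurement model~\eqref{eq_x_meas}--\eqref{eq_feat}, and to show that the observability codistribution has corank exactly~$4$. First I would set up the system in control-affine form, treating the gyroscope reading ${^I\boldsymbol\omega}$ and the linear acceleration $^G\mathbf{a}$ (equivalently the accelerometer reading) as the inputs, so that the drift and input vector fields $\mathbf{f}_0,\mathbf{f}_1,\dots$ are read off from~\eqref{eq:imu-system} after substituting $\boldsymbol\omega = {^I\boldsymbol\omega}_m - \mathbf{b}_g$ and $^G\mathbf{a} = {^I_G\mathbf{R}}^\top({^I\mathbf{a}}_m - \mathbf{b}_a) + {^G\mathbf{g}}$. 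The measurement functions are the range $h^{(r)} = \|{^I\mathbf{P}_{\mathbf f}}\|$ and the bearing components collected in $\mathbf{h}_{\mathbf b}$; since both are functions of ${^I\mathbf{P}_{\mathbf f}} = {^I_G\mathbf{R}}({^G\mathbf{P}_{\mathbf f}} - {^G\mathbf{P}_I})$ alone, a convenient device is to first establish that ${^I\mathbf{P}_{\mathbf f}}$ itself is locally observable (its three components can be recovered from $h^{(r)}$ together with generic bearing measurements), and then work with the augmented ``measurement'' $\mathbf{y} = {^I\mathbf{P}_{\mathbf f}}$ whose Jacobian w.r.t. the full state is $\mathbf{H}_{\mathbf f,k}$ as in~\eqref{eq:H_I-point}.

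Next I would compute the span of the zeroth- and higher-order Lie derivatives $\nabla L^n \mathbf{y}$ along the drift and input vector fields, mirroring the construction in~\cite{Hesch2014IJRR,Guo2013ICRA}. The key intermediate facts I expect to need are: (i) $\mathcal{L}_{\mathbf f_0} \mathbf{y}$ brings in $^G\mathbf{V}_I$ and, through the $\mathbf{R}$-dynamics, ${^I\boldsymbol\omega}$-dependent terms, so that velocity enters the observable codistribution; (ii) the second-order derivative along the drift, using $^G\dot{\mathbf V}_I = {^G\mathbf a}$, pulls in the accelerometer bias $\mathbf{b}_a$ and the gravity-rotated direction ${^I_G\mathbf R}\,{^G\mathbf g}$, which combined with the known $g = \|{^G\mathbf g}\|$ fixes two rotational DOF (roll and pitch); (iii) Lie derivatives along the input vector fields associated with $\boldsymbol\omega$ expose $\mathbf{b}_g$. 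Assembling these gradients into an observability matrix, I would show by direct inspection that its right null space is spanned exactly by the four columns
\begin{align}
\mathbf{N}_r =
\begin{bmatrix}
({^{I}_G\mathbf{R}}\,{^G\mathbf{g}})^\top & \mathbf{0}_{1\times3} & -({\lfloor {^G\mathbf{V}}_{I}\times\rfloor}\,{^G\mathbf{g}})^\top & \mathbf{0}_{1\times3} & -({\lfloor {^G\mathbf{P}}_{I}\times\rfloor}\,{^G\mathbf{g}})^\top & -({\lfloor {^G\mathbf{P}}_{\mathbf f}\times\rfloor}\,{^G\mathbf{g}})^\top
\end{bmatrix}^\top
\end{align}
and
\begin{align}
\mathbf{N}_p =
\begin{bmatrix}
\mathbf{0}_{3\times3} & \mathbf{0}_{3\times3} & \mathbf{0}_{3\times3} & \mathbf{0}_{3\times3} & \mathbf{I}_3 & \mathbf{I}_3
\end{bmatrix}^\top,
\end{align}
which are precisely the continuous-time analogues of the linearized null space~\eqref{eq_point_obs} and correspond to global position and global yaw.

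For the final step I would verify the two directions are genuinely unobservable --- i.e., that they lie in the null space of \emph{every} Lie derivative gradient, not just the low-order ones --- by the standard symmetry argument: a global translation of $({^G\mathbf P}_I, {^G\mathbf P}_{\mathbf f})$ and a global rotation about ${^G\mathbf g}$ of $({^I_G\mathbf R}, {^G\mathbf V}_I, {^G\mathbf P}_I, {^G\mathbf P}_{\mathbf f})$ both leave ${^I\mathbf P}_{\mathbf f}$, and hence all measurements and their derivatives, invariant, so these transformations generate indistinguishable trajectories. Combined with the rank computation showing corank at most $4$, this pins the number of unobservable directions at exactly $4$. The main obstacle I anticipate is the bookkeeping in showing the \emph{upper} bound, i.e., that the accumulated Lie derivatives actually reach rank $15 + m_{\mathbf f} - 4$: one must be careful that generic motion (non-degenerate $\boldsymbol\omega$ and $\mathbf a$ profiles) is assumed so that the relevant skew-symmetric and rotation-matrix blocks are full rank, and that the bearing model $\mathbf{h}_{\mathbf b}$ is rich enough (together with range) to recover all three components of ${^I\mathbf P}_{\mathbf f}$ --- degenerate sensor configurations or motions would shrink the observable space, which is exactly the phenomenon deferred to the later sections on degenerate motions.
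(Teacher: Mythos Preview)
Your proposal is correct in spirit and would reach the same conclusion, but it takes a genuinely different route from the paper. The paper does not compute Lie derivatives of the measurements directly against the full state; instead it applies the basis-function factorization of~\cite{Guo2013ICRA} (Theorem~\ref{thm_observability} and Corollary~\ref{cor_observability} in the appendix): one introduces the observable quantities $\boldsymbol\beta_1 = {^I_G\mathbf R}({^G\mathbf P_{\mathbf f}} - {^G\mathbf P_I})$, $\boldsymbol\beta_2 = \mathbf b_g$, $\boldsymbol\beta_3 = {^I_G\mathbf R}\,{^G\mathbf V_I}$, $\boldsymbol\beta_4 = \mathbf b_a$, $\boldsymbol\beta_5 = {^I_G\mathbf R}\,{^G\mathbf g}$, verifies that both the measurement model and the propagated dynamics close over $\boldsymbol\beta$, and thereby factors the (infinite) observability matrix as $\mathbf O = \Xi\,\Omega$ with $\Omega = \partial\boldsymbol\beta/\partial\mathbf x$. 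The rank test then splits into (i) showing $\Xi$ has full column rank---done separately for the range part $\Xi^{(r)}$ and the bearing part $\Xi^{(b)}$ by exhibiting enough Lie-derivative rows in the \emph{reduced} $\boldsymbol\beta$-system---and (ii) computing $\mathrm{null}(\Omega)$, which is a finite $15\times 18$ linear-algebra problem yielding exactly the four directions you wrote down.

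What each approach buys: your direct route is conceptually transparent, and your symmetry argument (global translation and rotation about ${^G\mathbf g}$ leave ${^I\mathbf P_{\mathbf f}}$ invariant) is a clean way to certify the lower bound on corank without any Lie-derivative bookkeeping. The paper's factorization, on the other hand, tames precisely the obstacle you flag at the end---showing the \emph{upper} bound. Because the $\boldsymbol\beta$-system is lower-dimensional and its dynamics are polynomial in $\boldsymbol\beta$, the full-column-rank check on $\Xi$ is a finite, explicit computation rather than an open-ended accumulation of derivatives against the original $18$-dimensional state; and the treatment of ``generic range and/or bearing'' sensors is modular, since only the $\Xi$ part changes with the sensor while $\Omega$ (hence the null space) does not. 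If you want to match the paper, introduce the $\boldsymbol\beta$ variables first and then do your rank test there; if you prefer your route, your symmetry argument already handles the lower bound, so the remaining work is just to exhibit $14$ independent Lie-derivative gradients.
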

\begin{proof}
	See Appendix \ref{apd_proof}.
\end{proof}

\subsection{Aided INS with Line Features}
\label{sec:lines}

When navigating in structured environments, line features might be ubiquitous and should be exploited in the aided INS to improve performance.
In the following, we perform observability analysis for the aided INS with line features to provide insights for building consistent estimators.

\subsubsection{Single Line}

With the line measurements \eqref{line-meas-im}, the  measurement Jacobian is computed by (see~\eqref{eq:H_I-line} and Appendix~\ref{apd_line_jacob}):
	\begin{align} \label{eq:H_I-line}
	&
	\scalemath{1}{
		\mathbf{H}^{(l)}_{I_k}
		=
		\mathbf{H}_{l,k}
		\underbrace{
		\mathbf{K}{^{I_k}_G\hat{\mathbf{R}}}
		\begin{bmatrix}
		\mathbf{H}_{l1} &
		\mathbf{0}_{3\times 9} &
		\lfloor {^G\hat{\mathbf{v}}_L}\times\rfloor &
		\mathbf{H}_{l2} &
		\mathbf{H}_{l3}
		\end{bmatrix}
		}_{\mathbf H_{\mathbf f, k}}
	}
	\end{align}
where we have employed the following identities:
\begin{align}
\mathbf{H}_{l1}	 &=
\scalemath{1}{
	\left(
	\lfloor {^G\hat{\mathbf{n}}_{L}}\times \rfloor
	- \lfloor \lfloor {^G\hat{\mathbf{P}}_{I_k}}\times \rfloor {^G\hat{\mathbf{v}}_{L}}\times \rfloor
	\right) {^{I_k}_G\hat{\mathbf{R}}}^{\top} 
}
\\ 
\mathbf{H}_{l2}	 &= 
\scalemath{1}{
	{\lfloor {^G\hat{\mathbf{n}}_{L}}\times \rfloor
		-	\lfloor {^G\hat{\mathbf{P}}_{I_k}}\times \rfloor\lfloor {^G\hat{\mathbf{v}}_{L}}\times \rfloor}
}
\\ 
\mathbf{H}_{l3} &= 
\scalemath{1}{
	{-\left(
		\frac{w_2}{w_1}{^G\hat{\mathbf{n}}_{L}}+\frac{w_1}{w_2}\lfloor {^G\hat{\mathbf{P}}_{I_k}}\times \rfloor{^G\hat{\mathbf{v}}_{L}}
		\right)} 
} 
\end{align}
With this, the block row of the observability matrix $\mathbf{M}(\mathbf{x})$~\eqref{eq_obs_equation}  at time step $k$ can be written as:
\begin{align}
&\scalemath{1}{
	\mathbf{H}^{(l)}_{I_k}\boldsymbol{\Phi}_{(k,1)}}
	=
	\mathbf{H}_{l,k}
	\mathbf{K}{^{{I_k}}_G\hat{\mathbf{R}}} 
	\begin{bmatrix}
	\boldsymbol{\Gamma}_{l1} &
	\boldsymbol{\Gamma}_{l2} &
	\lfloor {^G\hat{\mathbf{v}}_L\times}\rfloor\delta t_k &
	\boldsymbol{\Gamma}_{l3} &
	\lfloor {^G\hat{\mathbf{v}}_L\times}\rfloor &
	\boldsymbol{\Gamma}_{l4} &
	\boldsymbol{\Gamma}_{l5}
	\end{bmatrix}
\end{align}
where 
\begin{align}
\scalemath{1}{
	\boldsymbol{\Gamma}_{l1}
}
&
=
\scalemath{1}{
	(\lfloor {^G\hat{\mathbf{n}}_{L}\times}\rfloor
	-
	\lfloor \lfloor{^G\hat{\mathbf{P}}_{I_k}}\times \rfloor
	{^G\hat{\mathbf{v}}_{L}}\times \rfloor
	+
	\lfloor {^G\hat{\mathbf{v}}_{L}\times}\rfloor
	\lfloor{^G\hat{\mathbf{P}}_{I_1}}\times \rfloor +
}
	\lfloor {^G\hat{\mathbf{v}}_{L}\times}\rfloor
	\lfloor{^G\hat{\mathbf{V}}_{I_1}}\times \rfloor\delta t_k	
	- 
	\frac{1}{2}\lfloor {^G\hat{\mathbf{v}}_{L}\times}\rfloor
	\lfloor {^G\hat{\mathbf{g}}\times}\rfloor\delta t^2_k
\nonumber
\\
&\hspace{-15pt}
\scalemath{1}{	
	-
	\lfloor {^G\hat{\mathbf{v}}_{L}\times}\rfloor
	\lfloor{^G\hat{\mathbf{P}}_{I_k}}\times \rfloor)
	{^{G}_{I_1}\hat{\mathbf{R}}}
}	\nonumber
\\
\scalemath{1}{
	\boldsymbol{\Gamma}}_{l2}
&
=
\scalemath{1}{
	\left(\lfloor {^G\hat{\mathbf{n}}_{L}\times}\rfloor
	-
	\lfloor \lfloor{^G\hat{\mathbf{P}}_{I_k}}\times \rfloor
	{^G\hat{\mathbf{v}}_{L}}\times \rfloor\right)
	{^{I_k}_G\hat{\mathbf{R}}}^{\top}
	\boldsymbol{\Phi}_{12} + \lfloor {^G\hat{\mathbf{v}}_{L}\times}\rfloor\boldsymbol{\Phi}_{52}
}
\nonumber
\\
\boldsymbol{\Gamma}_{l3}
&=
\lfloor {^G\hat{\mathbf{v}}_{L}\times}\rfloor\boldsymbol{\Phi}_{54}	
, \
\boldsymbol{\Gamma}_{l4} = \mathbf{H}_{l2}
, \
\boldsymbol{\Gamma}_{l5} = \mathbf{H}_{l3}
\nonumber
\end{align}
Therefore, we  have the following result:
\begin{lem}
\label{lem_single_line}
	The aided INS with a single line feature has at least 5 unobservable directions denoted by $\mathbf{N}_l$:
	\begin{align}\label{eq_line_obs}
	\scalemath{1}{
		\mathbf{N}_{l}}
	&
	=\!\!
	\scalemath{1}{
		\begin{bmatrix}
		\mathbf{N}_g & \mathbf{0}_{12\times 3}  &	\mathbf{N}_{\mathbf{v}}  \\
		-{\lfloor{^G\hat{\mathbf{P}}_{I_1}}\times \rfloor{^G\mathbf{g}}} &	{^{G}_{L}\hat{\mathbf{R}}} &  \mathbf{0}_{3\times 1}\\
		-{^{G}\mathbf{g}} & {\frac{w_2}{w_1}}{^{G}\hat{\mathbf{v}}_{\mathbf{e}}} {\mathbf{e}^{\top}_1}  & \mathbf{0}_{3\times 1} \\
		0  & \eta^2w^2_2{\mathbf{e}^{\top}_3} & 0 
		\end{bmatrix}}
	\scalemath{1}{=:
		\begin{bmatrix}
		\mathbf{N}_{l1} & \mathbf{N}_{l2:5} 
		\end{bmatrix}
	}	
	\end{align}\noindent
	where $^G\hat{\mathbf{n}}_{\mathbf{e}}$ and $^G\hat{\mathbf{v}}_{\mathbf{e}}$ are the normalized unit vectors of $^G\hat{\mathbf{n}}_L$ and $^G\hat{\mathbf{v}}_L$, respectively, and $\mathbf{N}_{\mathbf{v}}$ and  ${^G_{L}\hat{\mathbf{R}}}$ are defined by:
	\begin{align}
	\mathbf{N}_{\mathbf{v}} 
	&= 
	\begin{bmatrix}
	\mathbf{0}_{1\times 3} &\mathbf{0}_{1\times 3} & {^G\hat{\mathbf{v}}^{\top}_{\mathbf{e}}} & \mathbf{0}_{1\times 3} 
	\end{bmatrix}^{\top}
	\\
	{^G_{L}\hat{\mathbf{R}}} 
	&=
	\begin{bmatrix}
	{^G\hat{\mathbf{n}}_{\mathbf{e}}} & {^G\hat{\mathbf{v}}_{\mathbf{e}}} & \lfloor{^G\hat{\mathbf{n}}_{\mathbf{e}}}\times \rfloor {^G\hat{\mathbf{v}}_{\mathbf{e}}}
	\end{bmatrix}
	\end{align}
\end{lem}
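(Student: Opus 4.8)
The plan is to verify directly that the columns of $\mathbf{N}_l$ in \eqref{eq_line_obs} lie in the right null space of every block row $\mathbf{H}^{(l)}_{I_k}\boldsymbol{\Phi}_{(k,1)}$, and then argue these five columns are linearly independent, so the unobservable subspace has dimension at least five. Concretely, since each block row has the form $\mathbf{H}_{l,k}\mathbf{K}\,{^{I_k}_G\hat{\mathbf R}}\begin{bmatrix}\boldsymbol{\Gamma}_{l1} & \boldsymbol{\Gamma}_{l2} & \lfloor{^G\hat{\mathbf v}_L}\times\rfloor\delta t_k & \boldsymbol{\Gamma}_{l3} & \lfloor{^G\hat{\mathbf v}_L}\times\rfloor & \boldsymbol{\Gamma}_{l4} & \boldsymbol{\Gamma}_{l5}\end{bmatrix}$, and the left factors $\mathbf{H}_{l,k}\mathbf{K}\,{^{I_k}_G\hat{\mathbf R}}$ are common to all columns, it suffices to show that the bracketed $3\times(\text{state dim})$ matrix annihilates each column of $\mathbf{N}_l$. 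So the first step is to split $\mathbf{N}_l$ into its column groups as indicated by the notation $\begin{bmatrix}\mathbf{N}_{l1} & \mathbf{N}_{l2:5}\end{bmatrix}$ and check them one block at a time.

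For the rotational direction $\mathbf{N}_{l1}$ (first column), I would substitute the stacked entries — $\left(^{I_1}_G\hat{\mathbf R}{^G\mathbf g}\right)$ in the $\delta\boldsymbol\theta$ slot, $\mathbf 0$ in the bias slots, $-\lfloor{^G\hat{\mathbf V}_{I_1}}\times\rfloor{^G\mathbf g}$ in the velocity slot, $-\lfloor{^G\hat{\mathbf P}_{I_1}}\times\rfloor{^G\mathbf g}$ in the position slot, and $\begin{bmatrix}-{^G\mathbf g};\,0\end{bmatrix}$ in the line-feature slot — into $\boldsymbol{\Gamma}_{l1}\left(^{I_1}_G\hat{\mathbf R}{^G\mathbf g}\right) + \boldsymbol{\Gamma}_{l3}\cdot\mathbf 0 - \lfloor{^G\hat{\mathbf v}_L}\times\rfloor\lfloor{^G\hat{\mathbf V}_{I_1}}\times\rfloor{^G\mathbf g} - \lfloor{^G\hat{\mathbf v}_L}\times\rfloor\lfloor{^G\hat{\mathbf P}_{I_1}}\times\rfloor{^G\mathbf g} + \boldsymbol{\Gamma}_{l4}$-contribution, and show the result is $\mathbf 0$ after expanding $\boldsymbol{\Gamma}_{l1}$ and using $\boldsymbol{\Phi}_{54} = -\tfrac12{^G_{I_1}\mathbf R}\delta t_k^2$ together with the analogous closed forms for $\boldsymbol{\Phi}_{12},\boldsymbol{\Phi}_{52}$; the terms involving $\lfloor\cdot\times\rfloor\lfloor{^G\hat{\mathbf v}_{L}}\times\rfloor$ and the $\delta t_k$, $\delta t_k^2$ polynomials in $\delta t_k$ must cancel identically. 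The $\mathbf{N}_{l2:5}$ block — the line-feature's own $3+1$ dimensional gauge freedom plus the translation columns $\begin{bmatrix}{^G_L\hat{\mathbf R}};\,\tfrac{w_2}{w_1}{^G\hat{\mathbf v}_{\mathbf e}}\mathbf e_1^\top;\,\eta^2 w_2^2\mathbf e_3^\top\end{bmatrix}$ — reduces to checking that $\lfloor{^G\hat{\mathbf v}_L}\times\rfloor$ times the position part plus $\boldsymbol{\Gamma}_{l4},\boldsymbol{\Gamma}_{l5}$ times the line part vanish, which is where the specific structure of $\mathbf{R}_L,\mathbf{W}_L$ and the orthonormal-representation error state $(\delta\boldsymbol\theta_L,\delta\phi_L)$ enters; here one needs $\mathbf{H}_{l2}\,{^G_L\hat{\mathbf R}}\mathbf e_i$ and $\mathbf{H}_{l3}\eta^2 w_2^2$ to combine to zero using $\lfloor{^G\hat{\mathbf n}_L}\times\rfloor{^G\hat{\mathbf n}_{\mathbf e}}=\mathbf 0$, $\lfloor{^G\hat{\mathbf n}_L}\times\rfloor{^G\hat{\mathbf v}_{\mathbf e}}\parallel\lfloor{^G\hat{\mathbf n}_{\mathbf e}}\times\rfloor{^G\hat{\mathbf v}_{\mathbf e}}$, and the scale identities $w_1=\norm{\mathbf n_L}$, $w_2=\norm{\mathbf v_L}$.

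Finally I would establish linear independence of the five columns: the translation columns span a $3$-dimensional space visible through the bottom (position) block $\begin{bmatrix}-\lfloor{^G\hat{\mathbf P}_{I_1}}\times\rfloor{^G\mathbf g} & {^G_L\hat{\mathbf R}} & \mathbf 0\end{bmatrix}$ plus the rotational column, and the $\delta\phi_L$-scale column is caught by the last row $\begin{bmatrix}0 & \eta^2 w_2^2\mathbf e_3^\top & 0\end{bmatrix}$, so a rank argument on a well-chosen set of rows (the $\delta\boldsymbol\theta$ row for the rotation, the three position rows, and the last scalar row) shows the Jacobian stack cannot have full column rank. I expect the main obstacle to be the $\mathbf{N}_{l1}$ verification: it requires carefully expanding $\boldsymbol{\Gamma}_{l1}$ — which already contains five additive terms with $\delta t_k$-polynomial coefficients — and tracking the cancellations against $\boldsymbol{\Gamma}_{l2}\boldsymbol{\Phi}_{12}$-type contributions and the $\boldsymbol{\Phi}_{52}$ term, using commutation identities for skew-symmetric matrices like $\lfloor\mathbf a\times\rfloor\lfloor\mathbf b\times\rfloor - \lfloor\mathbf b\times\rfloor\lfloor\mathbf a\times\rfloor = \lfloor(\mathbf a\times\mathbf b)\times\rfloor$ and $\lfloor\mathbf R\mathbf a\times\rfloor = \mathbf R\lfloor\mathbf a\times\rfloor\mathbf R^\top$; this is the step where an algebra slip is most likely and where the "same linearization points" assumption noted after \eqref{eq:GAMMA4} is essential to make the telescoping work.
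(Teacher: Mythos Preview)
Your overall strategy---verify $\mathbf{H}^{(l)}_{I_k}\boldsymbol{\Phi}_{(k,1)}\mathbf{N}_{li}=\mathbf 0$ column by column, then check linear independence---matches the paper. But there is a real gap in the reduction you propose.

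You assert that since the left factor $\mathbf{H}_{l,k}\mathbf{K}\,{^{I_k}_G\hat{\mathbf R}}$ is common, ``it suffices to show that the bracketed $3\times(\text{state dim})$ matrix annihilates each column of $\mathbf{N}_l$.'' This is sufficient but not necessary, and in fact it \emph{fails} for $\mathbf{N}_{l4}$ (the third translation column, with position entry $\lfloor{^G\hat{\mathbf n}_{\mathbf e}}\times\rfloor{^G\hat{\mathbf v}_{\mathbf e}}$ and $\delta\phi_L$ entry $\eta^2 w_2^2$). Carrying out your computation there gives
\[
\lfloor{^G\hat{\mathbf v}_L}\times\rfloor\bigl(\lfloor{^G\hat{\mathbf n}_{\mathbf e}}\times\rfloor{^G\hat{\mathbf v}_{\mathbf e}}\bigr)+\mathbf{H}_{l3}\,\eta^2 w_2^2
=\frac{w_1 w_2}{w_1^2+w_2^2}\Bigl({^G\hat{\mathbf n}_L}-\lfloor{^G\hat{\mathbf P}_{I_k}}\times\rfloor{^G\hat{\mathbf v}_L}\Bigr),
\]
which is generically nonzero (it depends on ${^G\hat{\mathbf P}_{I_k}}$). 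The paper's proof resolves this by observing that after left-multiplying by ${^{I_k}_G\hat{\mathbf R}}$ this vector is proportional to ${^{I_k}\hat{\mathbf n}_L}$ (see \eqref{eq:line-L-in-I}), and then $\mathbf{K}\,{^{I_k}\hat{\mathbf n}_L}=\mathbf l'$ lies in the null space of $\mathbf{H}_{l,k}$ (one checks directly from \eqref{eq:def-H_l} that $\mathbf{H}_{l,k}\mathbf l'=\mathbf 0$ identically). So for $\mathbf{N}_{l4}$ you \emph{must} use the specific structure of the projective line Jacobian $\mathbf{H}_{l,k}$, not just the inner block.

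A smaller point: the closed form $\boldsymbol{\Phi}_{54}=-\tfrac12{^G_{I_1}\hat{\mathbf R}}\,\delta t_k^2$ you invoke is valid only under pure translation (cf.\ \eqref{eq:PHI54}); it is not available here. Fortunately it is also unnecessary: $\mathbf{N}_{l1}$ has zeros in both bias slots, so $\boldsymbol{\Gamma}_{l2}$ and $\boldsymbol{\Gamma}_{l3}$ (hence $\boldsymbol{\Phi}_{12},\boldsymbol{\Phi}_{52},\boldsymbol{\Phi}_{54}$) never enter. The $\mathbf{N}_{l1}$ verification reduces cleanly to the Jacobi identity
$\lfloor{^G\mathbf g}\times\rfloor\lfloor{^G\hat{\mathbf P}_{I_k}}\times\rfloor{^G\hat{\mathbf v}_L}
+\lfloor{^G\hat{\mathbf v}_L}\times\rfloor\lfloor{^G\mathbf g}\times\rfloor{^G\hat{\mathbf P}_{I_k}}
+\lfloor{^G\hat{\mathbf P}_{I_k}}\times\rfloor\lfloor{^G\hat{\mathbf v}_L}\times\rfloor{^G\mathbf g}=\mathbf 0$,
with no state-transition integrals needed.
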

\begin{proof}
	See Appendix \ref{apd_single_line}. 
\end{proof}
It is not difficult to see that $\mathbf{N}_{l1}$ relates to the sensor rotation around the gravitational direction, $\mathbf{N}_{l2:4}$ associates with the sensor's global translation, and $\mathbf{N}_{l5}$ corresponds to the sensor motion along the line direction.  
Note also that the above analysis is based on the projective line measurement model \eqref{line-meas-im}. 
Additionally, in Appendix \ref{apd_direct_line}, we have also considered the direct line measurement model,
for example, by extracting lines from point clouds, and show that the same unobservable subspace $\mathbf{N}_{l}$~\eqref{eq_line_obs} holds.

\subsubsection{Multiple Lines}

We extend the analysis to the case where  $l>1$ general, unparallel lines are included in the state vector.
To this end, by noting that the rotation between line $i$ and line $j$ ($i,j\in\{1,\ldots l\}$) is given by:
${^{L_i}_{L_j}\hat{\mathbf{R}}}={^G_{L_i}\hat{\mathbf{R}}}^{\top}{^G_{L_j}\hat{\mathbf{R}}}$,
 we can prove that in this case there are 4 unobservable directions.
\begin{lem} \label{lem_multiple_lines}
	The aided INS with $l>1$ general, unparallel line features has at least 4 unobservable  directions:
\begin{align}
\scalemath{1}{
	\mathbf{N}_{L}
}
&
\scalemath{1}{
	=
	\begin{bmatrix}
	\mathbf{N}_g &  \mathbf{0}_{12\times 3} \\
	-\lfloor{^G\hat{\mathbf{P}}_{I_1}}\times \rfloor{^G\mathbf{g}} & {^G_{L_i}\hat{\mathbf{R}}} \\
	-{^{G}\mathbf{g}} &  \frac{w_{1,2}}{w_{1,1}}{^G\hat{\mathbf{v}}_{\mathbf{e} 1}}{\mathbf{e}^{\top}_1}{^{L_1}_{L_i}\hat{\mathbf{R}}}\\
	0 & \eta^2_1w^2_{1,2}\mathbf{e}^{\top}_3{^{L_1}_{L_i}\hat{\mathbf{R}}} \\
	\vdots &\vdots \\
	-{^{G}\mathbf{g}} & \frac{w_{l,2}}{w_{l,1}}{^G\hat{\mathbf{v}}_{\mathbf{e} {l}}}{\mathbf{e}^{\top}_1}{^{L_l}_{L_i}\hat{\mathbf{R}}}\\
	0 & \eta^2_{l}w^2_{l,2}\mathbf{e}^{\top}_3{^{L_l}_{L_i}\hat{\mathbf{R}}} \\
	\end{bmatrix}
}
\scalemath{1}{
	=:
	\begin{bmatrix}
	\mathbf{N}_{L1} & \mathbf{N}_{L2:4}
	\end{bmatrix}}
\end{align}	
where $\eta_i$, $w_{1,i}$, $w_{2,i}$ and $^G\hat{\mathbf{v}}_{\mathbf{e}i}$ 
are the parameters related to line $i$ ($i\in \{1\ldots l\}$) [see~\eqref{eq:def-WL}].
\end{lem}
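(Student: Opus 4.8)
The plan is to reduce the $l$-line problem to Lemma~\ref{lem_single_line}. Order the state as $\mathbf{x}=\big(\mathbf{x}^\top_{IMU},\,{^G\mathbf{L}^\top_1},\,\ldots,\,{^G\mathbf{L}^\top_l}\big)^\top$. Because every line feature is static [see~\eqref{eq:imu-system}], the transition matrix $\boldsymbol{\Phi}_{(k,1)}$ is block diagonal across the feature slots with the $15\times 15$ IMU block of~\eqref{eq:PHI-matrix} shared by all lines; and because the line-$i$ measurement depends only on $\mathbf{x}_{IMU}$ and ${^G\mathbf{L}_i}$, the Jacobian $\mathbf{H}^{(l)}_{I_k,i}$ is nonzero only in those slots. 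Hence the block row of $\mathbf{M}(\mathbf{x})$~\eqref{eq_obs_equation} contributed by line $i$ at $t_k$, restricted to the $(\mathbf{x}_{IMU},{^G\mathbf{L}_i})$ coordinates, is precisely the $k$-th block row of the single-line observability matrix built for line $i$ alone. Therefore $\mathbf{n}=\big(\mathbf{n}^\top_{IMU},\,\mathbf{n}^\top_{L_1},\,\ldots,\,\mathbf{n}^\top_{L_l}\big)^\top$ lies in the right null space of $\mathbf{M}(\mathbf{x})$ if and only if $\big(\mathbf{n}^\top_{IMU},\,\mathbf{n}^\top_{L_i}\big)^\top\in\operatorname{span}(\mathbf{N}_{l,i})$ for every $i$, where $\mathbf{N}_{l,i}$ is the matrix~\eqref{eq_line_obs} instantiated with the parameters $\eta_i,w_{i,1},w_{i,2},{^G\hat{\mathbf{v}}_{\mathbf{e}i}},{^G_{L_i}\hat{\mathbf{R}}}$ of line $i$.

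Given this reduction, the lower bound ``at least $4$'' is obtained by checking that each of the four columns of $\mathbf{N}_L$ meets the criterion. Restricted to the $(\mathbf{x}_{IMU},{^G\mathbf{L}_i})$ coordinates, the first column of $\mathbf{N}_L$ coincides with $\mathbf{N}_{l1}$ (the first column of $\mathbf{N}_{l,i}$), as it carries no line-dependent entries. For columns two through four, the line-$j$ sub-block equals the translational part $\mathbf{N}_{l2:4}$ of $\mathbf{N}_{l,j}$ right-multiplied by ${^{L_j}_{L_i}\hat{\mathbf{R}}}$; since ${^G_{L_j}\hat{\mathbf{R}}}\,{^{L_j}_{L_i}\hat{\mathbf{R}}}={^G_{L_j}\hat{\mathbf{R}}}\,{^G_{L_j}\hat{\mathbf{R}}}^\top\,{^G_{L_i}\hat{\mathbf{R}}}={^G_{L_i}\hat{\mathbf{R}}}$, this is again a linear combination of the columns of $\mathbf{N}_{l,j}$ (and for $j=i$ it is $\mathbf{N}_{l2:4}$ itself, as ${^{L_i}_{L_i}\hat{\mathbf{R}}}=\mathbf{I}_3$). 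Thus every column of $\mathbf{N}_L$ restricts, on each line's coordinates, to an element of $\operatorname{span}(\mathbf{N}_{l,i})$ and so is annihilated by all block rows; the four columns are linearly independent because the first has nonzero $\delta\boldsymbol{\theta}_I$-entry ${^{I_1}_G\hat{\mathbf{R}}}{^G\mathbf{g}}$ (where the others vanish) while the last three carry the full-rank rotation ${^G_{L_i}\hat{\mathbf{R}}}$ in the $^G\tilde{\mathbf{P}}_I$ slot.

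To see that the count is tight when the lines are pairwise non-parallel (and the motion is generic, so that each single-line null space is exactly $5$-dimensional), I would intersect the projections of $\operatorname{span}(\mathbf{N}_{l,i})$ onto the common $\mathbf{x}_{IMU}$ coordinates. Each such projection is $\operatorname{span}(\mathbf{u}_1)\oplus\mathcal{P}\oplus\operatorname{span}(\mathbf{u}_{5,i})$, with $\mathbf{u}_1$ the (line-independent) first column restricted to the IMU block, $\mathcal{P}$ the three-dimensional $^G\tilde{\mathbf{P}}_I$ subspace (spanned by the columns of ${^G_{L_i}\hat{\mathbf{R}}}$, hence the same for all $i$), and $\mathbf{u}_{5,i}$ the vector placing ${^G\hat{\mathbf{v}}_{\mathbf{e}i}}$ in the $^G\mathbf{V}_I$ slot. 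Writing $\mathbf{n}_{IMU}=\alpha_i\mathbf{u}_1+\mathbf{p}_i+\beta_i\mathbf{u}_{5,i}$ and matching the $\delta\boldsymbol{\theta}_I$ entry forces $\alpha_i\equiv\alpha$; matching the $^G\mathbf{V}_I$ entry then forces $\beta_i\,{^G\hat{\mathbf{v}}_{\mathbf{e}i}}$ to equal one fixed vector for all $i$, which by non-parallelism of the line directions must be $\mathbf{0}$, so all $\beta_i=0$. Hence $\mathbf{n}_{IMU}\in\operatorname{span}(\mathbf{u}_1)\oplus\mathcal{P}$, a $4$-dimensional space, and each admissible $\mathbf{n}_{IMU}$ determines the line components $\mathbf{n}_{L_i}$ uniquely via $\operatorname{span}(\mathbf{N}_{l,i})$; selecting the three translational generators so that $\mathbf{p}$ ranges over the columns of ${^G_{L_i}\hat{\mathbf{R}}}$ reproduces the displayed $\mathbf{N}_L$, the factors ${^{L_j}_{L_i}\hat{\mathbf{R}}}={^G_{L_j}\hat{\mathbf{R}}}^\top{^G_{L_i}\hat{\mathbf{R}}}$ arising as the change of basis between line frames.

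I expect the main obstacle to be the bookkeeping in this reconstruction: threading the orthonormal error-state parameterization of Lemma~\ref{lem_single_line} correctly through the change of basis, so that ${^{L_j}_{L_i}\hat{\mathbf{R}}}$ multiplies on the proper side in both the $\delta\boldsymbol{\theta}_{L_j}$ and $\delta\phi_{L_j}$ blocks, and confirming that no extra direction survives the intersection already for $l=2$ rather than only for $l>2$.
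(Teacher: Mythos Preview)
Your approach is correct and essentially identical to the paper's: both reduce the $l$-line observability matrix to a stack of single-line blocks and then verify that the columns of $\mathbf{N}_L$, restricted to each line's coordinates, lie in the span of the single-line null space $\mathbf{N}_{l,i}$ of Lemma~\ref{lem_single_line}, with the key observation that the translational columns for line $j$ are obtained from those of line $i$ by right-multiplication with the invertible rotation ${^{L_i}_{L_j}\hat{\mathbf{R}}}$. The paper's proof is terser (it simply asserts the change-of-basis relation $\mathbf{N}^{(j)}_{L2:4}=\mathbf{N}_{L2:4}\,{^{L_i}_{L_j}\hat{\mathbf{R}}}$ and invokes invertibility), whereas you spell out the block structure, the linear-independence check, and additionally sketch a tightness argument via intersecting the IMU projections---material the paper does not include for this lemma, since it only asserts ``at least $4$.'' One small caution: your ``if and only if'' reduction presumes each single-line null space is \emph{exactly} $\operatorname{span}(\mathbf{N}_{l,i})$, which Lemma~\ref{lem_single_line} states only as a lower bound; for the ``at least $4$'' claim you need only the ``if'' direction, so this does not affect correctness, but it is worth flagging when you invoke it for the upper bound.
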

\begin{proof}
	See Appendix \ref{apd_line}.
\end{proof}

We want to point out again that 
the above Lemma \ref{lem_multiple_lines} holds under the assumption that not all the lines are parallel;
if  all  parallel lines, the system will have one more unobservable direction, 
which coincides with the line direction $^G\hat{\mathbf{v}}_{\mathbf{e}}$.

\subsection{Aided INS with Plane Features} \label{sec:planes} 

Now we perform observability analysis of the aided INS with plane features 
that are important geometric features commonly seen in structured environments.
In particular, our analysis is based on the CP parameterization of plane features.

\subsubsection{Single Plane}

We first consider a single plane feature $^G\boldsymbol{\Pi}$ included in the state vector:
\begin{equation}
\scalemath{1}{
\mathbf{x}
}
= 
\begin{bmatrix}
{^I_G\bar{q}^{\top}} &
\mathbf{b}^{\top}_{g} &
{^G\mathbf{V}^{\top}_{I}} &
\mathbf{b}^{\top}_{a} &
{^G\mathbf{p}^{\top}_{I}} &
{^G\boldsymbol{\Pi}^{\top}}
\end{bmatrix}^{\top}
\end{equation}
Based on the plane measurements \eqref{plane-meas-model}, we can compute the Jacobians as [see~\eqref{eq:plane-jac0}, \eqref{eq:plane-jac1} and \eqref{eq:plane-jac2}]: 
%
\begin{align}
&\scalemath{1}{
	\mathbf{H}^{(\pi)}_{I_k}
	=
	{^{I_k}_G\hat{\mathbf{R}}}
	\begin{bmatrix}
	\mathbf{H}_{\pi 1} &
	\mathbf{0}_{3} & \mathbf{0}_{3}  & \mathbf{0}_{3} & 
	-    
	{^G\hat{\mathbf{n}}_{\pi}} {^G\hat{\mathbf{n}}^{\top}_{\pi}} &
	\mathbf{H}_{\pi 2}
	\end{bmatrix}
}
\end{align}
where:
\begin{align}
\mathbf{H}_{\pi 1} &=
{\left(
	{^G\hat{d}}
	- {^G\hat{\mathbf{n}}^{\top}_{\pi}} 
	{^G\hat{\mathbf{P}}_{I_k}}
	\right)
	\lfloor {^G\hat{\mathbf{n}}_{\pi}} \times   \rfloor{^{I_k}_G\hat{\mathbf{R}}^{\top}} } 
\\
\mathbf{H}_{\pi 2}	&=
\scalemath{1}
{
	\frac{
		\left(
		{^G\hat{d}} - {^G\hat{\mathbf{n}}^{\top}_{\pi}}{^G\hat{\mathbf{P}}_{I_k}}
		\right)\mathbf{I}_{3}
		- {^G\hat{\mathbf{n}}_{\pi}}{^G\hat{\mathbf{P}}^{\top}_{I_k}}
		+
		2 {^G\hat{\mathbf{n}}^{\top}_{\pi}}{^G\hat{\mathbf{P}}_{I_k}}
		{^G\hat{\mathbf{n}}_{\pi}}{^G\hat{\mathbf{n}}^{\top}_{\pi}}
		}{^G\hat{d}} 
		}
\end{align}
The block row of the observability matrix is computed by:
\begin{align}
&
\scalemath{1}{
\mathbf{H}^{(\pi)}_{I_k}\boldsymbol{\Phi}_{(k,1)}
}
=
{^{I_k}_G\hat{\mathbf{R}}} 
\begin{bmatrix}
\boldsymbol{\Gamma}_{\pi 1} & \boldsymbol{\Gamma}_{\pi 2}  & 
-{^G\hat{\mathbf{n}}_{\pi}} {^G\hat{\mathbf{n}}^{\top}_{\pi}}\delta t_k &
\boldsymbol{\Gamma}_{\pi 3} & 
-   
{^G\hat{\mathbf{n}}_{\pi}} {^G\hat{\mathbf{n}}^{\top}_{\pi}} &
\boldsymbol{\Gamma}_{\pi 4}
\end{bmatrix}
\end{align}
where 
\begin{align}
\boldsymbol{\Gamma}_{\pi 1}
&=
\scalemath{1}{
\big[\left(
{^Gd} - {^G\hat{\mathbf{n}}^{\top}_{\pi}}{^G\hat{\mathbf{P}}_{I_k}}
\right)
\lfloor {^G\hat{\mathbf{n}}_{\pi}}\times\rfloor
- {^G\hat{\mathbf{n}}_{\pi}}{^G\hat{\mathbf{n}}^{\top}_{\pi}}
\lfloor {^G\hat{\mathbf{P}}_{I_1}} 
+ {^G\hat{\mathbf{V}}_{I_1}}{\delta t_k}}
- \frac{1}{2}{^G\mathbf{g}}{\delta t^2_k}
- {^G\hat{\mathbf{P}}_{I_k}} \times\rfloor \big]
{^G_{I_1}\hat{\mathbf{R}}} 
\\
\scalemath{1}{
\boldsymbol{\Gamma}_{\pi 2} 
}
&=
\scalemath{1}
{
\left(
{^G\hat{d}} - {^G\hat{\mathbf{n}}^{\top}_{\pi}}{^G\hat{\mathbf{P}}_{I_k}}
\right)
\lfloor  {^G\hat{\mathbf{n}}_{\pi}} \times\rfloor
{^{I_k}_G\hat{\mathbf{R}}^{\top}}
\boldsymbol{\Phi}_{12}
- 
{^G\hat{\mathbf{n}}_{\pi}} {^G\hat{\mathbf{n}}^{\top}_{\pi}}
\boldsymbol{\Phi}_{52}
}
\\ 
\scalemath{1}{
\boldsymbol{\Gamma}_{\pi 3} 
}
&=
\scalemath{1}{
- {^G\hat{\mathbf{n}}_{\pi}} {^G\hat{\mathbf{n}}^{\top}_{\pi}}
\boldsymbol{\Phi}_{54} 
}
\\
\boldsymbol{\Gamma}_{\pi 4} 
&= \mathbf{H}_{\pi2}
\end{align}
With that, we have the following result:
\begin{lem} \label{lem_plane_obs}
	The aided INS with a single plane feature has at least 7 unobservable directions:
	\begin{align} \label{eq_plane_obs}
	\mathbf{N}_{\boldsymbol{\pi}}
	&=
	\begin{bmatrix}
	\mathbf{N}_g   & \mathbf{0}_{12\times 3}    & \mathbf{N}_{123} \\
	-\lfloor {^G\hat{\mathbf{P}}_{I_1}\times} \rfloor {^G\mathbf{g}} & {^G_{\Pi}\hat{\mathbf{R}}}   &\mathbf{0}_{3} &   \\
	-\lfloor {{^G\hat{\boldsymbol{\Pi}}}}\times \rfloor {^G\mathbf{g}} &  {^G\hat{\mathbf{n}}_{\pi}}{\mathbf{e}^{\top}_3} &
	\mathbf{0}_{3}  \\
	\end{bmatrix} 
	=:
	\begin{bmatrix}
	\mathbf{N}_{\boldsymbol{\pi} 1}  &  \mathbf{N}_{\boldsymbol{\pi} 2:4}  & \mathbf{N}_{\boldsymbol{\pi} 5:7} 
	\end{bmatrix}
	\end{align}
In above expression,   
given $^G\hat{\mathbf{n}}^{\perp}_1$ and $^G\hat{\mathbf{n}}^{\perp}_2$ that are the unit vector orthonormal to each other and perpendicular to $^G\hat{\mathbf{n}}_{\pi}$,
we have defined $\mathbf{N}_{123}$ and the plane orientation $^G_{\Pi}\mathbf{R}$ as follows:
\begin{align}
\mathbf{N}_{123} 
&=
\begin{bmatrix}
\mathbf{0}_{3\times 1} & \mathbf{0}_{3\times 1} &   {^{I_1}_G\hat{\mathbf{R}}}{^G\hat{\mathbf{n}}_{\pi}} \\
\mathbf{0}_{3\times 1} & \mathbf{0}_{3\times 1} & \mathbf{0}_{3\times 1} \\
{^G\hat{\mathbf{n}}^{\perp}_1} &  {^G\hat{\mathbf{n}}^{\perp}_2} & \mathbf{0}_{3\times 1} \\
\mathbf{0}_{3\times 1} & \mathbf{0}_{3\times 1} & \mathbf{0}_{3\times 1}
\end{bmatrix}
\\
{^G_{\Pi}\hat{\mathbf{R}}} 
&=
\begin{bmatrix}
{^G\hat{\mathbf{n}}^{\perp}_{ 1}} & {^G\hat{\mathbf{n}}^{\perp}_{ 2}} &{^G\hat{\mathbf{n}}_{\pi}}
\end{bmatrix}
\end{align}

\end{lem}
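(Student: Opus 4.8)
The plan is to verify directly that each of the seven columns of $\mathbf{N}_{\boldsymbol{\pi}}$ in~\eqref{eq_plane_obs} lies in the right null space of the observability matrix~\eqref{eq_obs_equation}, i.e., that $\mathbf{H}^{(\pi)}_{I_k}\boldsymbol{\Phi}_{(k,1)}\,\mathbf{N}_{\boldsymbol{\pi}}=\mathbf{0}$ for every time step $k$, and then to check that the seven columns are linearly independent. Since the $k$-th block row equals ${^{I_k}_G\hat{\mathbf{R}}}$ times the $3\times 18$ matrix $\big[\,\boldsymbol{\Gamma}_{\pi 1}\ \ \boldsymbol{\Gamma}_{\pi 2}\ \ -{^G\hat{\mathbf{n}}_{\pi}}{^G\hat{\mathbf{n}}^{\top}_{\pi}}\delta t_k\ \ \boldsymbol{\Gamma}_{\pi 3}\ \ -{^G\hat{\mathbf{n}}_{\pi}}{^G\hat{\mathbf{n}}^{\top}_{\pi}}\ \ \boldsymbol{\Gamma}_{\pi 4}\,\big]$ and ${^{I_k}_G\hat{\mathbf{R}}}$ is invertible, it suffices to annihilate this inner matrix against $\mathbf{N}_{\boldsymbol{\pi}}$. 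I would treat the three column groups $\mathbf{N}_{\boldsymbol{\pi}1}$, $\mathbf{N}_{\boldsymbol{\pi}2:4}$, $\mathbf{N}_{\boldsymbol{\pi}5:7}$ separately, using repeatedly the elementary facts $\lfloor\mathbf{a}\times\rfloor\mathbf{a}=\mathbf{0}$, $\mathbf{a}^{\top}(\mathbf{b}\times\mathbf{a})=0$, the antisymmetry $\mathbf{a}^{\top}(\mathbf{b}\times\mathbf{c})=-\mathbf{b}^{\top}(\mathbf{a}\times\mathbf{c})$, the orthogonality ${^G\hat{\mathbf{n}}^{\top}_{\pi}}\,{^G_{\Pi}\hat{\mathbf{R}}}=\mathbf{e}^{\top}_3$, and the collapse ${^G_{I_1}\hat{\mathbf{R}}}\,{^{I_1}_G\hat{\mathbf{R}}}=\mathbf{I}_3$ whenever the trailing rotation factor inside a $\boldsymbol{\Gamma}_{\pi i}$ meets a direction of the form ${^{I_1}_G\hat{\mathbf{R}}}\mathbf{u}$.

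The ``rotation-about-normal / in-plane-translation'' group $\mathbf{N}_{\boldsymbol{\pi}5:7}$ and the ``translation'' group $\mathbf{N}_{\boldsymbol{\pi}2:4}$ are straightforward. For columns $5$ and $6$ (only the velocity block is nonzero) the sole surviving term is $-{^G\hat{\mathbf{n}}_{\pi}}{^G\hat{\mathbf{n}}^{\top}_{\pi}}\delta t_k\,{^G\hat{\mathbf{n}}^{\perp}_i}$, which vanishes by ${^G\hat{\mathbf{n}}^{\top}_{\pi}}\,{^G\hat{\mathbf{n}}^{\perp}_i}=0$. For column $7$ the only surviving term is $\boldsymbol{\Gamma}_{\pi 1}\,{^{I_1}_G\hat{\mathbf{R}}}\,{^G\hat{\mathbf{n}}_{\pi}}$; after the $\mathbf{R}$-collapse, the $\lfloor{^G\hat{\mathbf{n}}_{\pi}}\times\rfloor$ piece of $\boldsymbol{\Gamma}_{\pi 1}$ kills ${^G\hat{\mathbf{n}}_{\pi}}$ while the ${^G\hat{\mathbf{n}}_{\pi}}{^G\hat{\mathbf{n}}^{\top}_{\pi}}\lfloor\cdot\times\rfloor$ piece contracts to a degenerate triple product ${^G\hat{\mathbf{n}}^{\top}_{\pi}}(\cdot\times{^G\hat{\mathbf{n}}_{\pi}})=0$. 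For $\mathbf{N}_{\boldsymbol{\pi}2:4}$ the position-block contribution is $-{^G\hat{\mathbf{n}}_{\pi}}{^G\hat{\mathbf{n}}^{\top}_{\pi}}\,{^G_{\Pi}\hat{\mathbf{R}}}=-{^G\hat{\mathbf{n}}_{\pi}}\mathbf{e}^{\top}_3$, and the plane-block contribution is $\boldsymbol{\Gamma}_{\pi 4}\,{^G\hat{\mathbf{n}}_{\pi}}\mathbf{e}^{\top}_3=\big(\mathbf{H}_{\pi 2}\,{^G\hat{\mathbf{n}}_{\pi}}\big)\mathbf{e}^{\top}_3$ (recalling $\boldsymbol{\Gamma}_{\pi 4}=\mathbf{H}_{\pi 2}$); a one-line computation with the explicit $\mathbf{H}_{\pi 2}$ (the $1/{^G\hat{d}}$ factor cancelling the leading ${^G\hat{d}}$) gives $\mathbf{H}_{\pi 2}\,{^G\hat{\mathbf{n}}_{\pi}}={^G\hat{\mathbf{n}}_{\pi}}$, so the two contributions cancel.

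\textbf{The main obstacle is the first column} $\mathbf{N}_{\boldsymbol{\pi}1}$, which excites the orientation, velocity, position and plane blocks at once; essentially all of the work sits in that cancellation. Because the gyroscope- and accelerometer-bias entries of $\mathbf{N}_g$ vanish, the $\boldsymbol{\Gamma}_{\pi 2}$ and $\boldsymbol{\Gamma}_{\pi 3}$ contributions drop out and what remains is $\boldsymbol{\Gamma}_{\pi 1}\,{^{I_1}_G\hat{\mathbf{R}}}\,{^G\mathbf{g}} + {^G\hat{\mathbf{n}}_{\pi}}{^G\hat{\mathbf{n}}^{\top}_{\pi}}\big(\delta t_k\lfloor{^G\hat{\mathbf{V}}_{I_1}}\times\rfloor+\lfloor{^G\hat{\mathbf{P}}_{I_1}}\times\rfloor\big){^G\mathbf{g}}-\boldsymbol{\Gamma}_{\pi 4}\lfloor{^G\hat{\boldsymbol{\Pi}}}\times\rfloor{^G\mathbf{g}}$. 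I would expand $\boldsymbol{\Gamma}_{\pi 1}\,{^{I_1}_G\hat{\mathbf{R}}}\,{^G\mathbf{g}}$ via the $\mathbf{R}$-collapse; in the cross product with ${^G\mathbf{g}}$ the $-\tfrac12{^G\mathbf{g}}\delta t_k^2$ term of its position argument disappears, leaving only the ${^G\hat{\mathbf{P}}_{I_1}}$, ${^G\hat{\mathbf{V}}_{I_1}}\delta t_k$ and $-{^G\hat{\mathbf{P}}_{I_k}}$ pieces. Then, substituting ${^G\hat{\boldsymbol{\Pi}}}={^G\hat{d}}\,{^G\hat{\mathbf{n}}_{\pi}}$ and the explicit $\mathbf{H}_{\pi 2}$ into the last term, the $\big({^G\hat{d}}-{^G\hat{\mathbf{n}}^{\top}_{\pi}}{^G\hat{\mathbf{P}}_{I_k}}\big)\big({^G\hat{\mathbf{n}}_{\pi}}\times{^G\mathbf{g}}\big)$ contributions cancel between $\boldsymbol{\Gamma}_{\pi 1}$ and $\boldsymbol{\Gamma}_{\pi 4}$; the ${^G\hat{\mathbf{P}}_{I_1}}$ and ${^G\hat{\mathbf{V}}_{I_1}}\delta t_k$ pieces cancel against the position- and velocity-block contributions; and the surviving ${^G\hat{\mathbf{n}}_{\pi}}{^G\hat{\mathbf{n}}^{\top}_{\pi}}\big({^G\hat{\mathbf{P}}_{I_k}}\times{^G\mathbf{g}}\big)$ together with the residual ${^G\hat{\mathbf{n}}_{\pi}}\big({^G\hat{\mathbf{P}}^{\top}_{I_k}}({^G\hat{\mathbf{n}}_{\pi}}\times{^G\mathbf{g}})\big)$ from $\boldsymbol{\Gamma}_{\pi 4}$ cancel via the antisymmetry ${^G\hat{\mathbf{n}}^{\top}_{\pi}}\big({^G\hat{\mathbf{P}}_{I_k}}\times{^G\mathbf{g}}\big)=-{^G\hat{\mathbf{P}}^{\top}_{I_k}}\big({^G\hat{\mathbf{n}}_{\pi}}\times{^G\mathbf{g}}\big)$, leaving $\mathbf{0}$. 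The delicacy is entirely in tracking signs and in using the precise position combination that the INS kinematics bakes into $\boldsymbol{\Gamma}_{\pi 1}$, since the cancellation closes only with that combination; the ${^G\mathbf{g}}$-column cancellation in the single-line Lemma~\ref{lem_single_line} is a useful sanity check of the same flavor.

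Finally I would check linear independence: in the orientation block columns $1$ and $7$ read ${^{I_1}_G\hat{\mathbf{R}}}{^G\mathbf{g}}$ and ${^{I_1}_G\hat{\mathbf{R}}}{^G\hat{\mathbf{n}}_{\pi}}$, which are independent whenever the plane normal is not gravity-aligned; columns $5,6$ are the only ones with a nonzero velocity block and are independent there; and columns $2,3,4$ carry the three columns of the rotation ${^G_{\Pi}\hat{\mathbf{R}}}$ in the position block. Peeling the blocks off in this order forces all seven coefficients to zero for a generic plane and trajectory, so the linearized aided INS has at least $7$ unobservable directions, as claimed (the parameterization assumes ${^G\hat{d}}\neq0$, the single easily-avoided degeneracy noted before~\eqref{plane-meas-model}, and configurations such as a gravity-aligned normal fall under the degenerate cases treated separately).
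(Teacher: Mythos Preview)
Your proposal is correct and follows essentially the same approach as the paper: direct column-by-column verification that $\mathbf{H}^{(\pi)}_{I_k}\boldsymbol{\Phi}_{(k,1)}\mathbf{N}_{\boldsymbol{\pi}}=\mathbf{0}$ for each of the groups $\mathbf{N}_{\boldsymbol{\pi}1}$, $\mathbf{N}_{\boldsymbol{\pi}2:4}$, $\mathbf{N}_{\boldsymbol{\pi}5:6}$, $\mathbf{N}_{\boldsymbol{\pi}7}$, followed by a linear-independence check. The only cosmetic difference is emphasis: the paper calls the verification of $\mathbf{N}_{\boldsymbol{\pi}1}$ ``straightforward'' and condenses it to a single displayed line, whereas you (rightly) flag it as the column requiring the most bookkeeping and spell out the cancellations explicitly; your detailed treatment of that case, and your slightly more explicit independence argument, are in fact more complete than what the appendix records.
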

\begin{proof}
	See Appendix \ref{apd_plane_single_obs}. 
\end{proof}

Note that as compared to \cite{Guo2013iros} where it was shown that the VINS with  bearing measurements to planes has 12 unobservable directions, 
we analytically show here that, given the direct plane measurements~\eqref{plane-meas-model}, the aided INS with with a single plane feature  has at least 7 unobservable directions:
(i) $\mathbf{N}_{\pi 1}$ that relates to the rotation around the gravity, 
(ii) $\mathbf{N}_{\pi 2:4}$ that associate with the position of the sensor platform, 
(iii) $\mathbf{N}_{\pi5:6}$ that correspond to the motions parallel to the plane, 
and (iv) $\mathbf{N}_{\pi 7}$  that corresponds to the rotation around the plane normal direction. This analysis is directly verified by numerical simulation results (see Fig. \ref{fig_all_rank}).

\subsubsection{Multiple Planes}

Assuming that there are $s>1$ plane features in the state vector, 
we first note that the rotation between plane $i$ and plane $j$ ($i,j\in \{1,\ldots s\}$) is	given by:
${^{\Pi_i}_{\Pi_j}\hat{\mathbf{R}}}	=	{^G_{\Pi_i}\hat{\mathbf{R}}}^{\top}{^G_{\Pi_j}\hat{\mathbf{R}}}$.
We then can prove the following result:
\begin{lem}\label{lem_multiple_plane}
	The aided INS system with $s>1$ plane features in the state vector has the following unobservable directions:
\begin{align}
\mathbf{N}_{\Pi}
&=
\begin{bmatrix}
\mathbf{N}_g   & \mathbf{0}_{12\times 1}   & \mathbf{N}_{i\times j}    \\
-\lfloor {^G\hat{\mathbf{P}}_{I_1}} \times \rfloor {^G\mathbf{g}} & {^G_{\Pi_i}\hat{\mathbf{R}}} &    \mathbf{0}_{3\times 1} \\
-\lfloor {^G\hat{\boldsymbol{\Pi}}_1} \times\rfloor {^G\mathbf{g}} &  {^G\hat{\mathbf{n}}_{\pi_1}}{\mathbf{e}^{\top}_3}{^{\Pi_1}_{\Pi_i}\hat{\mathbf{R}}}
&  \mathbf{0}_{3\times 1}  \\
\vdots  &   \vdots &  \vdots \\
-\lfloor {^G\hat{\boldsymbol{\Pi}}_{s}} \times\rfloor {^G\mathbf{g}} &  {^G\hat{\mathbf{n}}_{\pi_{s}}}{\mathbf{e}^{\top}_3}{^{\Pi_{s}}_{\Pi_{i}}\hat{\mathbf{R}}}
&  \mathbf{0}_{3\times 1} 
\end{bmatrix} 
=:
\begin{bmatrix}
\mathbf{N}_{\Pi 1} &  \mathbf{N}_{\Pi {2:4}} &  \mathbf{N}_{\Pi {5}}
\end{bmatrix}
\end{align}
where $^G\mathbf{n}_{\pi_i}$ is the normal direction vector for plane $i$ ($i, j\in \left\{1\ldots s\right\}$) and $\mathbf{N}_{i\times j}$ is defined by:
%
\begin{equation}
\mathbf{N}_{i\times j}=
	\begin{bmatrix}
	\mathbf{0}_{1\times 6} & 
	\left(\lfloor {^G\hat{\mathbf{n}}^{\perp}_{\pi_i}} \times \rfloor {^G\hat{\mathbf{n}}^{\perp}_{\pi_j}}\right)^{\top} & 
	\mathbf{0}_{1\times 3}
	\end{bmatrix}^{\top}
\end{equation}
Depending on the number of planes and their properties, we have the following observatioins:
	\begin{itemize}
		\item If $s=2$ and the planes are not parallel, the system will have at least 5 unobservable directions given by $\mathbf{N}_{\Pi 1:5}$.
		\item If $s\geq 3$ and these planes' intersections are not parallel, the system will have at least 4 unobservable directions given by $\mathbf{N}_{\Pi 1:4}$. 
	\end{itemize}

\end{lem}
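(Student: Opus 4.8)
The plan is to exhibit the columns of $\mathbf{N}_{\Pi}$ as right null vectors of the stacked observability matrix $\mathbf{M}(\mathbf{x})$ whose block rows now range over all time steps $k$ \emph{and} all planes $i\in\{1,\dots,s\}$, and then to count them. Each plane $i$ contributes a block row $\mathbf{H}^{(\pi_i)}_{I_k}\boldsymbol{\Phi}_{(k,1)}$ of exactly the single-plane form derived in Section~\ref{sec:planes}, with ${}^G\hat{\mathbf{n}}_{\pi}$, ${}^G\hat{\boldsymbol{\Pi}}$ and the blocks $\boldsymbol{\Gamma}_{\pi\ast}$ replaced by ${}^G\hat{\mathbf{n}}_{\pi_i}$, ${}^G\hat{\boldsymbol{\Pi}}_i$ and $\boldsymbol{\Gamma}_{\pi_i\ast}$, so I would reuse that computation block by block. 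For the yaw column $\mathbf{N}_{\Pi 1}$ and the three translation columns $\mathbf{N}_{\Pi 2:4}$ the annihilation against plane $i$'s block row is essentially the proof of Lemma~\ref{lem_plane_obs} applied to plane $i$; the only new element is the inter-plane rotation identity ${}^{\Pi_j}_{\Pi_i}\hat{\mathbf{R}}={}^G_{\Pi_j}\hat{\mathbf{R}}^{\top}\,{}^G_{\Pi_i}\hat{\mathbf{R}}$, which is precisely what lets the feature blocks of those columns be expressed in one common frame $\{\Pi_i\}$ so that the cancellation goes through for every $i$ at once. These four columns reproduce, as expected, the always present global-yaw and global-position ambiguities, the latter coupled with the induced shift of each plane's closest point through ${}^G\hat{\mathbf{n}}_{\pi_j}\mathbf{e}_3^{\top}\,{}^{\Pi_j}_{\Pi_i}\hat{\mathbf{R}}$.

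The substantive new content is the last column $\mathbf{N}_{i\times j}$ and the case split, and the argument there is geometric. For a single plane, Lemma~\ref{lem_plane_obs} supplies two ``motion parallel to the plane'' unobservable directions (the velocity-slot tangent vectors ${}^G\hat{\mathbf{n}}^{\perp}_1$, ${}^G\hat{\mathbf{n}}^{\perp}_2$) and one ``rotation about the plane normal'' direction; with $s$ planes these survive only if annihilated by \emph{all} $s$ block rows. I would show that (i) a fixed-axis rotational perturbation is killed by plane $i$'s rows only when its axis equals ${}^G\hat{\mathbf{n}}_{\pi_i}$, so no common rotational direction can survive once two normals differ; and (ii) a velocity-slot perturbation is annihilated by the rows of planes $i$ and $j$ exactly when the underlying vector lies in the intersection of their tangent spaces, i.e.\ along the line of intersection of the two planes, which, after choosing the free tangent vectors of each plane orthogonal to that intersection line, is precisely $\lfloor{}^G\hat{\mathbf{n}}^{\perp}_{\pi_i}\times\rfloor\,{}^G\hat{\mathbf{n}}^{\perp}_{\pi_j}$, the vector appearing in $\mathbf{N}_{i\times j}$. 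Hence for $s=2$ non-parallel planes this intersection is one dimensional and yields the fifth direction $\mathbf{N}_{\Pi 5}$, whereas for $s\ge 3$ planes whose pairwise intersection lines are not all parallel the common tangent collapses to $\{\mathbf{0}\}$ and only $\mathbf{N}_{\Pi 1:4}$ remain. The CP parameterization rules out the degenerate case ${}^Gd_i=0$ throughout.

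Finally I would verify that the exhibited columns are linearly independent: $\mathbf{N}_{\Pi 1}$ carries ${}^G\mathbf{g}$-dependent entries in the position and feature blocks that are absent from the full-rank translation block ${}^G_{\Pi_i}\hat{\mathbf{R}}$ of $\mathbf{N}_{\Pi 2:4}$, and $\mathbf{N}_{i\times j}$ is nonzero and supported on a block disjoint from the others. This upgrades ``spans a subspace of the right null space'' to the stated ``at least $5$'' ($s=2$) and ``at least $4$'' ($s\ge 3$); these remain lower bounds, since we only display null-space directions rather than establish the complementary rank of $\mathbf{M}(\mathbf{x})$. I expect the main obstacle to be the bookkeeping behind (i) and (ii): each $\boldsymbol{\Gamma}_{\pi_i\ast}$ couples the shared transition blocks $\boldsymbol{\Phi}_{12}$, $\boldsymbol{\Phi}_{52}$, $\boldsymbol{\Phi}_{54}$ with the plane-specific rank-one projector ${}^G\hat{\mathbf{n}}_{\pi_i}{}^G\hat{\mathbf{n}}_{\pi_i}^{\top}$, so one must track how the common columns $\mathbf{N}_{\Pi 1}$, $\mathbf{N}_{\Pi 2:4}$ (whose feature parts are written per plane through ${}^{\Pi_j}_{\Pi_i}\hat{\mathbf{R}}$) collapse under these differing projectors, and then pin down exactly which tangent survives in the $s=2$ versus $s\ge 3$ geometry. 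The routine identities $\lfloor\mathbf{a}\times\rfloor\mathbf{b}=-\lfloor\mathbf{b}\times\rfloor\mathbf{a}$ and $\lfloor\mathbf{R}\mathbf{a}\times\rfloor=\mathbf{R}\lfloor\mathbf{a}\times\rfloor\mathbf{R}^{\top}$ for $\mathbf{R}\in\mathbf{SO}(3)$ are used throughout but need not be spelled out.
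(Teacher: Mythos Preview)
Your proposal is correct and follows essentially the same route as the paper's proof: stack the per-plane block rows $\mathbf{H}^{(\pi_i)}_{I_k}\boldsymbol{\Phi}_{(k,1)}$, verify $\mathbf{N}_{\Pi 1}$ directly, use the inter-plane rotation ${}^{\Pi_j}_{\Pi_i}\hat{\mathbf{R}}={}^G_{\Pi_j}\hat{\mathbf{R}}^{\top}{}^G_{\Pi_i}\hat{\mathbf{R}}$ to show that the single-plane translation null vectors for different planes are related by an invertible change of basis and hence span a common three-dimensional null space $\mathbf{N}_{\Pi 2:4}$, and then argue geometrically that for $s=2$ the surviving velocity-slot direction is the intersection line of the two tangent spaces. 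Your write-up is in fact more explicit than the paper's on two points the paper leaves implicit---why the single-plane ``rotation about the normal'' direction cannot survive once two normals differ, and the linear-independence check---so nothing is missing.
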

\begin{proof}
	See Appendix \ref{apd_plane}.
\end{proof}
%
%

Up to this point, 
we have  shown from the system observability perspective that the minimal CP representation is an appealing parameterization 
in part because  it preserves the observability properties for aided INS with plane features. 
Note that if all the planes are parallel, the linearized system will have three more unobservable directions corresponding to the motion perpendicular to the plane normal direction and rotation around the normal direction. 
If all the planes' intersections are parallel, then the motion along the plane intersection direction is unobservable. 
For completeness and comparison, in Appendix \ref{apd_hesse_plane} we also present the observability analysis for the aided INS with plane feature using the Hesse form (i.e., Model 2 in Table \ref{tab_plane}). 

\section{Observability Analysis for  Aided INS  with Heterogeneous Features} 
\label{sec:heterogeneous-features}

In this section, we  study  the observability properties for the aided INS with different combinations of geometrical features including points, lines and planes. 
To keep presentation concise, in the following we first consider that case of one feature of each type included in the state vector, and then extend to the general case of multiple heterogeneous features.

\subsection{Point and Line Measurements}

Consider a point feature $^G\mathbf{P}_{\mathbf{f}}$  and a line feature $^G\mathbf{L}$  in the state vector, yielding
$ ^G\mathbf{x}_{\mathbf{f}} = \begin{bmatrix}
{^G\mathbf{P}^{\top}_{\mathbf{f}}}  &  {^G\mathbf{L}^{\top}}
\end{bmatrix}^{\top}$ [see~\eqref{eq:state000}].
If the sensor measures both the point and line features,  we have the following measurement model [see~\eqref{eq_x_meas} and \eqref{line-meas-im}]:
\begin{eqnarray} \label{eq_point_line}
	\mathbf{z}_{pl}=
	\begin{bmatrix}
	\mathbf{z}_{p} \\
	\mathbf{z}_{l}
	\end{bmatrix}
\end{eqnarray}
For  observability analysis, we  compute the measurement Jacobians w.r.t.  the point and line feature as [see~\eqref{eq:H_I-point} and \eqref{eq:H_I-line}]:
%
\begin{align}
\scalemath{1}{\mathbf{H}^{(pl)}_{I_k}}
=&
\scalemath{1}{
	\begin{bmatrix}
	\mathbf{H}_{proj,k} 
	&  \mathbf{0}_{2\times 3}  \\
	\mathbf{0}_{2\times 3} & 
	\mathbf{H}_{l,k}
	\mathbf{K}
	\\
	\end{bmatrix} 
	\begin{bmatrix}
	{^{I_k}_G\hat{\mathbf{R}}}   & \mathbf{0}_3 \\	\mathbf{0}_3  & {^{I_k}_G\hat{\mathbf{R}}}
	\end{bmatrix}
	\times}
\begin{bmatrix}
\mathbf{H}_{p1}  &  \mathbf{0}_{3\times 9}   &  -\mathbf{I}_{3}  &  \mathbf{I}_{3} & \mathbf{0}_3 &  \mathbf{0}_{3\times 1}  \\
\mathbf{H}_{l1} & \mathbf{0}_{3\times 9} & \lfloor {^G\hat{\mathbf{v}}_{L}}\times \rfloor & \mathbf{0}_3 & \mathbf{H}_{l2}  & \mathbf{H}_{l3} 
\end{bmatrix}
\end{align}\noindent
The $k$-th block row of the observability matrix $\mathbf{M}(\mathbf{x})$ becomes:
\begin{align}
&\scalemath{1}{
\mathbf{H}^{(pl)}_{I_k}\boldsymbol{\Phi}_{(k,1)}
}
=
\scalemath{1}{
	\begin{bmatrix}
	\mathbf{H}_{proj,k} 
	&  \mathbf{0}_{2\times 3}  \\
	\mathbf{0}_{2\times 3} & 
	\mathbf{H}_{l,k}
	\mathbf{K}
	\end{bmatrix}}
	\begin{bmatrix}
	{^{I_k}_G\hat{\mathbf{R}}}   & \mathbf{0}_3 \\	\mathbf{0}_3  & {^{I_k}_G\hat{\mathbf{R}}}
	\end{bmatrix}
	\scalemath{1}{
\begin{bmatrix}
\boldsymbol{\Gamma}_1  &   \boldsymbol{\Gamma}_2  &  \boldsymbol{\Gamma}_3  &  \boldsymbol{\Gamma}_4  & -\mathbf{I}_3  & \mathbf{I}_3 & \mathbf{0}_3 & \mathbf{0}_{3\times 1} \\
\boldsymbol{\Gamma}_{l1}  &   \boldsymbol{\Gamma}_{l2} &  \lfloor {^G\hat{\mathbf{v}}_L}\times\rfloor\delta t_k   &\boldsymbol{\Gamma}_{l3} & 
\lfloor {^G\hat{\mathbf{v}}_L}\times\rfloor  & \mathbf{0}_{3}  &\boldsymbol{\Gamma}_{l4} & \boldsymbol{\Gamma}_{l5} 
\end{bmatrix}}
\end{align}
It is not difficult to find the unobservable directions as:
\begin{align}
\mathbf{N}_{pl}
&=
\begin{bmatrix}
\mathbf{N}_g	&		\mathbf{0}_{12\times 3} \\
-\lfloor{^G\hat{\mathbf{P}}_{I_1}}\times \rfloor {^G\mathbf{g}} 	& 		\mathbf{I}_3 \\ 
-\lfloor{^G\hat{\mathbf{P}}_{\mathbf{f}}}\times \rfloor {^G\mathbf{g}} 	& 		\mathbf{I}_3 \\ 
-{^G\mathbf{g}}																&		\frac{w_{2}}{w_{1}}{^G\hat{\mathbf{v}}_{\mathbf{e}}}\mathbf{e}^{\top}_{1} {^L_{G}\hat{\mathbf{R}}}		\\	
0																			&	\eta^2{w^2_{2}}\mathbf{e}^{\top}_{3} {^L_{G}\hat{\mathbf{R}}}  
\end{bmatrix} 
=:
\begin{bmatrix}
\mathbf{N}_{pl 1} & \mathbf{N}_{pl2:4}
\end{bmatrix}
\end{align}
Clearly, for the aided INS with combination features of point and line, there are also at least 4 unobservable directions: 
one is $\mathbf{N}_{pl1}$ which relates the rotation around the gravity direction, 
and the other three are $\mathbf{N}_{pl2:4}$ which correspond to the global position of the sensor platform.

Moreover, we can readily extend to the case of multiple points and lines.
Assuming $m$ points and $l$ lines in the state vector, 
we can show that there are also at least 4 unobservable directions: 
\begin{align}
	\mathbf{N}_{PL}
	&
=
	\begin{bmatrix}
	\mathbf{N}_g		&		\mathbf{0}_{12\times 3} \\
	-\lfloor{^G\hat{\mathbf{P}}_{I_1}}\times \rfloor {^G\mathbf{g}} 	& 		   \mathbf{I}_3 \\ 
	-\lfloor{^G\hat{\mathbf{P}}_{\mathbf{f}_1}}\times \rfloor {^G\mathbf{g}} 	& 	\mathbf{I}_3 \\ 
	\vdots 																		&			\vdots 					\\
	-\lfloor{^G\hat{\mathbf{P}}_{\mathbf{f}_m}}\times \rfloor {^G\mathbf{g}} 	& 	\mathbf{I}_3 \\ 
	-{^G\mathbf{g}}																&		\frac{w_{1,2}}{w_{1,1}}{^G\hat{\mathbf{v}}_{\mathbf{e}1}}\mathbf{e}^{\top}_{1}{^{L_1}_{G}\hat{\mathbf{R}}}		\\	
	0																			&	\eta^2_{1}{w^2_{1,2}}\mathbf{e}^{\top}_{3}{^{L_1}_{G}\hat{\mathbf{R}}}  \\
	\vdots 																		&	\vdots																\\													\\
	-{^G\mathbf{g}}																&		\frac{w_{l,2}}{w_{l,1}}{^G\hat{\mathbf{v}}_{\mathbf{e}l}}\mathbf{e}^{\top}_{1}{^{L_l}_{G}\hat{\mathbf{R}}^{\top}}		\\	
	0																			&	\eta^2_{l}{w^2_{l,2}}\mathbf{e}^{\top}_{3} {^{L_l}_{G}\hat{\mathbf{R}}^{\top}} 	
	\end{bmatrix} 
	=:
	\begin{bmatrix}
	\mathbf{N}_{PL 1} & \mathbf{N}_{PL 2:4}
	\end{bmatrix}
\end{align}
%

\subsection{Point and Plane Measurements} 

Consider the case where we have a point and a plane in the state vector~\eqref{eq:state000},
and thus the feature state  
$\mathbf{x}_{\mathbf{f}}=\begin{bmatrix}
^G\mathbf{P}^{\top}_{\mathbf{f}} & {^G\boldsymbol{\Pi}}
\end{bmatrix}^{\top}$. 
Therefore, in this case, the measurement model consists of the point measurement and plane measurement [see~\eqref{eq_x_meas} and \eqref{plane-meas-model}]:
\begin{equation}
	\mathbf{z}_{p \pi}=
	\begin{bmatrix}
	 \mathbf{z}_{p} \\ \mathbf{z}_{\pi}
	\end{bmatrix}
\end{equation}
The measurement Jacobian is computed as [see~\eqref{eq:H_I-point} and \eqref{eq:H_I-plane}]:
\begin{align}
\scalemath{1}{\mathbf{H}^{(p\pi)}_{I_k}}
=&
\scalemath{1}{
	\begin{bmatrix}
	\mathbf{H}_{proj,k} 
	 & \mathbf{0}_{2\times 3} \\
	\mathbf{0}_{3}  & \mathbf{I}_3
	\end{bmatrix}
	\begin{bmatrix}
	{^{I_k}_G\hat{\mathbf{R}}}   & \mathbf{0}_3 \\	\mathbf{0}_3  & {^{I_k}_G\hat{\mathbf{R}}}
	\end{bmatrix}\times 
}
\scalemath{1}{
\begin{bmatrix}
\mathbf{H}_{p1}  &  \mathbf{0}_{3\times 9}   &  -\mathbf{I}_{3}  &  \mathbf{I}_{3}  & \mathbf{0}_3 \\
\mathbf{H}_{\pi 1} & \mathbf{0}_{3\times 9} & -{^G\hat{\mathbf{n}}_{\pi}}{^G\hat{\mathbf{n}}^{\top}_{\pi}} & \mathbf{0}_3  & \mathbf{H}_{\pi 2}
\end{bmatrix}
}
\end{align}\noindent
The $k$-th block row of the observability matrix $\mathbf{M}(\mathbf{x})$ is:
\begin{align}
&\scalemath{1}{
\mathbf{H}^{(p\pi)}_{I_k}\boldsymbol{\Phi}_{(k,1)}}
=
\scalemath{1}{
	\begin{bmatrix}
	\mathbf{H}_{proj,k} 
	& \mathbf{0}_{2\times 3} \\
	\mathbf{0}_{3}  & \mathbf{I}_3
	\end{bmatrix}
	\begin{bmatrix}
	{^{I_k}_G\hat{\mathbf{R}}} &  \mathbf{0}_3 \\ \mathbf{0}_3 & {^{I_k}_G\hat{\mathbf{R}}}
	\end{bmatrix}
} 
\scalemath{1}{	
\begin{bmatrix}
\boldsymbol{\Gamma}_1  &   \boldsymbol{\Gamma}_2  &  \boldsymbol{\Gamma}_3  &  \boldsymbol{\Gamma}_4  & -\mathbf{I}_3  & \mathbf{I}_3  & \mathbf{0}_3 \\
\boldsymbol{\Gamma}_{\pi 1} & \boldsymbol{\Gamma}_{\pi 2}  & 
-{^G\hat{\mathbf{n}}_{\pi}} {^G\hat{\mathbf{n}}^{\top}_{\pi}}\delta t_k &
\boldsymbol{\Gamma}_{\pi 3} & 
-   
{^G\hat{\mathbf{n}}_{\pi}} {^G\hat{\mathbf{n}}^{\top}_{\pi}} &
\mathbf{0}_3  &  
\boldsymbol{\Gamma}_{\pi 4}
\end{bmatrix}
}
\end{align}
where $\boldsymbol{\Gamma}_i, \boldsymbol{\Gamma}_{\pi i}$, $i\in 1\ldots 4$, are the same as in the previous sections. 
The unobservable directions can hence be found as:
\begin{align}
\mathbf{N}_{p\pi}
&=
\begin{bmatrix}
\mathbf{N}_g											&		\mathbf{0}_{12\times 3} \\
-\lfloor{^G\hat{\mathbf{P}}_{I_1}}\times \rfloor {^G\mathbf{g}} 	& 		\mathbf{I}_3 \\ 
-\lfloor{^G\hat{\mathbf{P}}_{\mathbf{f}}}\times \rfloor {^G\mathbf{g}} 	& 	\mathbf{I}_3 \\ 
-\lfloor {^G\hat{\boldsymbol{\Pi}}\times} \rfloor {^G\mathbf{g}} &  {^G\hat{\mathbf{n}}_{\Pi}}{\mathbf{e}^{\top}_3} {^{\Pi}_G\hat{\mathbf{R}}}
\end{bmatrix} 
=:
\begin{bmatrix}
\mathbf{N}_{p\pi 1} & \mathbf{N}_{p\pi 2:4} 
\end{bmatrix}
\end{align}
Clearly, in this case, 
the unobservable directions of the aided INS 
consist of the rotation around the gravity direction $\mathbf{N}_{p\pi 1}$, and the global position of the sensor platform $\mathbf{N}_{p\pi 2:4}$.

Similarly, we can extend this analysis to multiple features. 
Given $m$ points and $s$ planes in the state vector, the null space of the observability matrix can be obtained as follows: 
\begin{align}
\mathbf{N}_{P\Pi}
&=
\begin{bmatrix}
\mathbf{N}_g									&		\mathbf{0}_{12\times 3} \\
-\lfloor{^G\hat{\mathbf{P}}_{I_1}}\times \rfloor {^G\mathbf{g}} 	& 		\mathbf{I}_3 \\ 
-\lfloor{^G\hat{\mathbf{P}}_{\mathbf{f}_1}}\times \rfloor {^G\mathbf{g}} 	& 		\mathbf{I}_3 \\ 
\vdots 																		&			\vdots 					\\
-\lfloor{^G\hat{\mathbf{P}}_{\mathbf{f}_m}}\times \rfloor {^G\mathbf{g}} 	& 		\mathbf{I}_3 \\ 
-\lfloor {^G\boldsymbol{\Pi}_1} \rfloor {^G\mathbf{g}} &  {^G\hat{\mathbf{n}}_{\pi_1}}{\mathbf{e}^{\top}_3}{^{\Pi_1}_{G}\hat{\mathbf{R}}}  \\
\vdots 																		&	\vdots																\\															\\
-\lfloor {^G\boldsymbol{\Pi}_s} \rfloor {^G\mathbf{g}} &  {^G\hat{\mathbf{n}}_{\pi_s}}{\mathbf{e}^{\top}_3}{^{\Pi_s}_{G}\hat{\mathbf{R}}}  	
\end{bmatrix} 
=:
\begin{bmatrix}
\mathbf{N}_{P\Pi 1} & \mathbf{N}_{P\Pi 2:4} 
\end{bmatrix}
\end{align}
%

\subsection{Line and Plane Measurements}

Now consider the case where a line and a plane is in the state vector, 
i.e., 
$\mathbf{x}_{\mathbf{f}}=\begin{bmatrix}
^G\mathbf{L}^{\top} & {^G\boldsymbol{\Pi}^{\top}}
\end{bmatrix}^{\top}$. 
The measurement model is given by [see~\eqref{line-meas-im} and \eqref{plane-meas-model}]:
%
\begin{equation}
	\mathbf{z}_{l\pi} = 
	\begin{bmatrix}
	\mathbf{z}_{l} \\
	\mathbf{z}_{\pi}
	\end{bmatrix}
\end{equation}
%
The measurement Jacobian becomes [see~\eqref{eq:H_I-line} and \eqref{eq:H_I-plane}]:
\begin{align}
\scalemath{0.9}{\mathbf{H}^{(l\pi)}_{I_k}}
=&
\scalemath{1}{
	\begin{bmatrix}
	\mathbf{H}_{l,k}
	\mathbf{K}
	& \mathbf{0}_{2\times 3}  \\
	 \mathbf{0}_{3} & \mathbf{I}_3
	\end{bmatrix} 
	\begin{bmatrix}
	{^{I_k}_G\hat{\mathbf{R}}}  &  \mathbf{0}_3  \\ \mathbf{0}_3 & {^{I_k}_G\hat{\mathbf{R}}} 
	\end{bmatrix}
}
\scalemath{1}{
\begin{bmatrix}
\mathbf{H}_{l1} & \mathbf{0}_{3\times 9} & \lfloor {^G\hat{\mathbf{v}}_{L}}\times \rfloor &  \mathbf{H}_{l2}  & \mathbf{H}_{l3} & \mathbf{0}_3 \\
\mathbf{H}_{\pi 1} & \mathbf{0}_{3\times 9} & -{^G\hat{\mathbf{n}}_{\pi}}{^G\hat{\mathbf{n}}^{\top}_{\pi}} &  \mathbf{0}_3 & \mathbf{0}_{3\times 1} & \mathbf{H}_{\pi 2}
\end{bmatrix}
}
\end{align}\noindent
%
The $k$-th block row of the observability matrix $\mathbf{M}(\mathbf{x})$ is:
\begin{align}\label{eq_m_line_plane}
&\scalemath{1}{
\mathbf{H}^{(l\pi)}_{I_k}\boldsymbol{\Phi}_{(k,1)}
}
= 
\scalemath{1}{
	\begin{bmatrix}
	\mathbf{H}_{l,k} 
	\mathbf{K}
	& \mathbf{0}_{2\times 3}  \\
 \mathbf{0}_{ 3} & \mathbf{I}_3
	\end{bmatrix}
	\begin{bmatrix}
	{^{I_k}_G\hat{\mathbf{R}}}  &  \mathbf{0}_3  \\ \mathbf{0}_3 & {^{I_k}_G\hat{\mathbf{R}}} 
	\end{bmatrix}
}
\scalemath{0.9}{
\begin{bmatrix}
\boldsymbol{\Gamma}_{l1}  &   \boldsymbol{\Gamma}_{l2} &  \lfloor {^G\hat{\mathbf{v}}_L}\times\rfloor\delta t_k   &\boldsymbol{\Gamma}_{l3} & 
\lfloor {^G\hat{\mathbf{v}}_L}\times\rfloor   &\boldsymbol{\Gamma}_{l4} & \boldsymbol{\Gamma}_{l5} & \mathbf{0}_3\\
\boldsymbol{\Gamma}_{\pi 1} & \boldsymbol{\Gamma}_{\pi 2}  & 
-{^G\hat{\mathbf{n}}_{\pi}} {^G\hat{\mathbf{n}}^{\top}_{\pi}}\delta t_k &
\boldsymbol{\Gamma}_{\pi 3} & 
-   
{^G\hat{\mathbf{n}}_{\pi}} {^G\hat{\mathbf{n}}^{\top}_{\pi}} &
  \mathbf{0}_3 & \mathbf{0}_{3\times 1} &
\boldsymbol{\Gamma}_{\pi 4}
\end{bmatrix}
}
\end{align}
Based on that, we have the following result:
\begin{lem}\label{lem_line_plane}
	The aided INS with a single line and plane feature generally has the following unobservable directions:
\begin{align}
\mathbf{N}_{l\pi}
&=
\begin{bmatrix}
\mathbf{N}_g			&		\mathbf{0}_{12\times 3}   &      \mathbf{N}_{\mathbf{v}}   \\
-\lfloor{^G\hat{\mathbf{P}}_{I_1}}\times \rfloor {^G\mathbf{g}} 	& 		\mathbf{I}_3 & \mathbf{0}_{3\times 1}\\ 
-{^G\mathbf{g}}																&		\frac{w_{2}}{w_{1}}{^G\hat{\mathbf{v}}_{\mathbf{e}}}\mathbf{e}^{\top}_{1}		  &      \mathbf{0}_{3\times 1}   \\	
0																			&	\eta^2{w^2_{2}}\mathbf{e}^{\top}_{3} {^L_{G}\hat{\mathbf{R}}}   &      {0}    \\
-\lfloor {^G\hat{\boldsymbol{\Pi}}\times} \rfloor {^G\mathbf{g}} &  {^G\hat{\mathbf{n}}_{\pi}}{\mathbf{e}^{\top}_3}{^{\Pi}_{G}\hat{\mathbf{R}}}  &      \mathbf{0}_{3\times 1}   
\end{bmatrix} 
=:
\begin{bmatrix}
\mathbf{N}_{l\pi 1} & \mathbf{N}_{l\pi 2:4} & \mathbf{N}_{l\pi 5}
\end{bmatrix}
\end{align}
In particular,  depending on the feature properties, we have:
\begin{itemize}
		\item If the line is parallel to the plane, the linearized system will have at least 5 unobservable directions given by $\mathbf{N}_{l\pi 1:5}$. 
		\item If the line is not parallel to the plane, the linearized system will have at least 4 unobservable directions given by $\mathbf{N}_{l\pi 1:4}$.
	\end{itemize}
\end{lem}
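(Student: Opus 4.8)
The plan is to prove the lemma by directly exhibiting the claimed columns of $\mathbf{N}_{l\pi}$ as elements of the right null space of the observability matrix $\mathbf{M}(\mathbf{x})$, and then to determine how many of them are independent as a function of the relative geometry of the line and the plane. Since $\mathbf{M}(\mathbf{x})$ is obtained by vertically stacking the block rows $\mathbf{H}^{(l\pi)}_{I_k}\boldsymbol{\Phi}_{(k,1)}$ written out in \eqref{eq_m_line_plane}, it suffices to verify $\mathbf{H}^{(l\pi)}_{I_k}\boldsymbol{\Phi}_{(k,1)}\,\mathbf{N}_{l\pi}=\mathbf{0}$ for every $k$. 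The crucial structural fact, visible in \eqref{eq_m_line_plane}, is that the two measurement types are effectively decoupled: the line block row has zero Jacobian on the plane state ${^G\boldsymbol{\Pi}}$, and the plane block row has zero Jacobian on the line state ${^G\mathbf{L}}$. Consequently, the line block row applied to a column of $\mathbf{N}_{l\pi}$ depends only on that column's IMU and line sub-blocks, and the plane block row only on its IMU and plane sub-blocks.

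First I would treat $\mathbf{N}_{l\pi 1}$ and $\mathbf{N}_{l\pi 2:4}$. Restricted to the IMU and line coordinates, these columns satisfy the null condition of the line subsystem by essentially the computation of Lemma~\ref{lem_single_line} (Appendix~\ref{apd_single_line}); restricted to the IMU and plane coordinates, they satisfy the null condition of the plane subsystem by the computation of Lemma~\ref{lem_plane_obs} (Appendix~\ref{apd_plane_single_obs}). Because the line and plane block rows are decoupled in the sense above, this shows that $\mathbf{N}_{l\pi 1}$ (the rotation about gravity) and $\mathbf{N}_{l\pi 2:4}$ (the platform position) lie in the right null space of the full $\mathbf{M}(\mathbf{x})$ for \emph{any} configuration, which already yields the claimed ``at least $4$'' unobservable directions in general.

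It then remains to decide when the fifth column $\mathbf{N}_{l\pi 5}=\begin{bmatrix}\mathbf{N}_{\mathbf{v}}^{\top} & \mathbf{0} & \mathbf{0} & 0 & \mathbf{0}\end{bmatrix}^{\top}$, whose only nonzero entry is the velocity component ${^G\hat{\mathbf{v}}_{\mathbf{e}}}$, is also unobservable. The line block row annihilates it because $\lfloor {^G\hat{\mathbf{v}}_L}\times\rfloor {^G\hat{\mathbf{v}}_{\mathbf{e}}}=\mathbf{0}$ (this is precisely the ``motion along the line'' direction $\mathbf{N}_{l5}$ of Lemma~\ref{lem_single_line}). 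For the plane block row in \eqref{eq_m_line_plane}, the only surviving term is the velocity sub-block $-{^G\hat{\mathbf{n}}_{\pi}}{^G\hat{\mathbf{n}}^{\top}_{\pi}}\delta t_k$ acting on ${^G\hat{\mathbf{v}}_{\mathbf{e}}}$, which gives $-\,{^{I_k}_G\hat{\mathbf{R}}}\,{^G\hat{\mathbf{n}}_{\pi}}\,\bigl({^G\hat{\mathbf{n}}^{\top}_{\pi}}{^G\hat{\mathbf{v}}_{\mathbf{e}}}\bigr)\delta t_k$; this vanishes for all $k$ if and only if ${^G\hat{\mathbf{n}}^{\top}_{\pi}}{^G\hat{\mathbf{v}}_{\mathbf{e}}}=0$, i.e., the line direction is orthogonal to the plane normal, i.e., the line is parallel to the plane. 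Hence, when the line is parallel to the plane, $\mathbf{N}_{l\pi 5}$ joins the null space and we obtain at least $5$ unobservable directions $\mathbf{N}_{l\pi 1:5}$; otherwise a motion along the line moves the platform off the plane, $\mathbf{N}_{l\pi 5}$ is observable, and only $\mathbf{N}_{l\pi 1:4}$ survive. Linear independence of the surviving columns is immediate: columns $2$--$4$ carry the block $\mathbf{I}_3$ in the position rows and column $5$ carries ${^G\hat{\mathbf{v}}_{\mathbf{e}}}$ in the velocity row, while column $1$ is distinguished by its ${^{I_1}_G\hat{\mathbf{R}}}{^G\mathbf{g}}$ entry in the orientation rows; none of these patterns is reproduced by the other columns.

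The step I expect to be the main obstacle is the second one: matching the line- and plane-coordinate couplings appearing inside $\mathbf{N}_{l\pi}$ (the scalars $w_1,w_2,\eta$ and the auxiliary rotations ${^G_L\hat{\mathbf{R}}}$, ${^{\Pi}_G\hat{\mathbf{R}}}$) to the couplings in the homogeneous-feature null spaces $\mathbf{N}_l$ of \eqref{eq_line_obs} and $\mathbf{N}_{\boldsymbol{\pi}}$ of \eqref{eq_plane_obs}, so that each block row genuinely reduces to an already-verified computation rather than forcing a fresh expansion of all the $\boldsymbol{\Gamma}$-terms in \eqref{eq_m_line_plane}. Once that reduction is in place, the remaining work is just the one-line verification for $\mathbf{N}_{l\pi 5}$ above and the trivial independence count; the multi-feature extensions follow the same template by intersecting the per-feature unobservable directions.
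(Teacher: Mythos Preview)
Your proposal is correct and follows essentially the same route as the paper's proof in Appendix~\ref{apd_proof_lem1}: exploit the decoupling visible in \eqref{eq_m_line_plane} to reduce the verification of $\mathbf{N}_{l\pi 1}$ and $\mathbf{N}_{l\pi 2:4}$ to the single-line and single-plane computations of Lemmas~\ref{lem_single_line} and~\ref{lem_plane_obs}, and then check $\mathbf{N}_{l\pi 5}$ directly via ${^G\hat{\mathbf{n}}_{\pi}^{\top}}{^G\hat{\mathbf{v}}_{\mathbf{e}}}$. The ``main obstacle'' you flag is resolved in the paper exactly as you would want: one right-multiplies $\mathbf{N}_{l\pi 2:4}$ by ${^G_L\hat{\mathbf{R}}}$ (respectively ${^G_{\Pi}\hat{\mathbf{R}}}$) to match the single-feature null columns, verifies $\mathbf{H}^{(l)}_{I_k}\boldsymbol{\Phi}_{(k,1)}\mathbf{N}_{l\pi 2:4}{^G_L\hat{\mathbf{R}}}=\mathbf{0}$ and $\mathbf{H}^{(\pi)}_{I_k}\boldsymbol{\Phi}_{(k,1)}\mathbf{N}_{l\pi 2:4}{^G_{\Pi}\hat{\mathbf{R}}}=\mathbf{0}$, and then cancels the invertible rotations.
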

\begin{proof}
See Appendix \ref{apd_proof_lem1}. 	
\end{proof}

Similarly, we can extend the  analysis to the case of multiple lines and planes. 
Given  $l$ lines and $s$ planes, the unobservable directions fo the aided INS with general, unparallel lines and planes are given by:
\begin{align}
\mathbf{N}_{L\Pi}
&=
\begin{bmatrix}
\mathbf{N}_g		&		\mathbf{0}_{12\times 3} \\
-\lfloor{^G\hat{\mathbf{P}}_{I_1}}\times \rfloor {^G\mathbf{g}} 	& 	 \mathbf{I}_3 \\ 
-{^G\mathbf{g}}																&		\frac{w_{1,2}}{w_{1,1}}{^G\hat{\mathbf{v}}_{\mathbf{e}1}}\mathbf{e}^{\top}_{1}{^{L_1}_{G}\hat{\mathbf{R}}}		\\	
0																			&	\eta^2_{1}{w^2_{1,2}}\mathbf{e}^{\top}_{3}{^{L_1}_{G}\hat{\mathbf{R}}}  \\
\vdots 																		&	\vdots																\\														\\
-{^G\mathbf{g}}																&		\frac{w_{l,2}}{w_{l,1}}{^G\hat{\mathbf{v}}_{\mathbf{e}l}}\mathbf{e}^{\top}_{1}{^{L_l}_{G}\hat{\mathbf{R}}}		\\	
0																			&	
\eta^2_{l}{w^2_{l,2}}\mathbf{e}^{\top}_{3} {^{L_l}_{G}\hat{\mathbf{R}}}  \\
-\lfloor {^G\boldsymbol{\Pi}_1} \times\rfloor {^G\mathbf{g}} &  {^G\hat{\mathbf{n}}_{\pi_1}}{\mathbf{e}^{\top}_3}{^{\Pi_1}_{G}\hat{\mathbf{R}}}  \\
\vdots 																		&	\vdots																\\															\\
-\lfloor {^G\boldsymbol{\Pi}_s} \times\rfloor {^G\mathbf{g}} &  {^G\hat{\mathbf{n}}_{\pi_s}}{\mathbf{e}^{\top}_3}{^{\Pi_s}_{G}\hat{\mathbf{R}}} 	
\end{bmatrix} 
=:
\begin{bmatrix}
\mathbf{N}_{L\Pi 1} & \mathbf{N}_{L\Pi 2:4} 
\end{bmatrix}
\end{align}
%

\subsection{Point, Line and Plane Measurements}

Lastly, let us consider the case where  all three types of features (a single point, line, and plane) are in  in the state vector, i.e., 
$\mathbf{x}_{\mathbf{f}}= \begin{bmatrix}
^G\mathbf{P}^{\top}_{\mathbf{f}} &  {^G\mathbf{L}^{\top}} &  {^G\boldsymbol{\Pi}^{\top}}   
\end{bmatrix}^{\top}$.  
The measurement model becomes [see~\eqref{eq_x_meas}, \eqref{line-meas-im} and \eqref{plane-meas-model}]:
\begin{equation}\label{eq_point_line_plane}
	\mathbf{z}_{pl\pi}=
	\begin{bmatrix}
	\mathbf{z}_{p} \\ \mathbf{z}_{l}  \\ \mathbf{z}_{\pi}
	\end{bmatrix}
\end{equation}
The measurement Jacobian can be computed as [see~\eqref{eq:H_I-point}, \eqref{eq:H_I-line} and  \eqref{eq:H_I-plane}]:
%
\begin{align}
	&\scalemath{1}{\mathbf{H}^{(pl\pi)}_{I_k}}
=
\scalemath{1}{
	\begin{bmatrix}
	\mathbf{H}_{proj,k} {^{I_k}_G\hat{\mathbf{R}}}  &  \mathbf{0}_{2\times 3} & \mathbf{0}_{2\times 3} \\
	\mathbf{0}_{2\times 3} & 
	\mathbf{H}_{l,k}
	\mathbf{K}
	{^{I_k}_G\hat{\mathbf{R}}} & \mathbf{0}_{2\times 3}  \\
	\mathbf{0}_{3} & \mathbf{0}_{3} & 
	{^{I_k}_G\hat{\mathbf{R}}} 
	\end{bmatrix} 
}
\scalemath{1}{
\begin{bmatrix}
\mathbf{H}_{p1}  &  \mathbf{0}_{3\times 9}   &  -\mathbf{I}_{3}  &  \mathbf{I}_{3} & \mathbf{0}_3 &  \mathbf{0}_{3\times 1} & \mathbf{0}_3 \\
\mathbf{H}_{l1} & \mathbf{0}_{3\times 9} & \lfloor {^G\hat{\mathbf{v}}_{L}}\times \rfloor & \mathbf{0}_3 & \mathbf{H}_{l2}  & \mathbf{H}_{l3} & \mathbf{0}_3 \\
\mathbf{H}_{\pi 1} & \mathbf{0}_{3\times 9} & -{^G\hat{\mathbf{n}}_{\pi}}{^G\hat{\mathbf{n}}^{\top}_{\pi}} & \mathbf{0}_3 & \mathbf{0}_3 & \mathbf{0}_{3\times 1} & \mathbf{H}_{\pi 2}
\end{bmatrix}
}
\end{align}
The $k$-th block row of the observability matrix $\mathbf{M}(\mathbf{x})$ is:
\begin{align}\label{eq_m_pt_line_plane}
&
\scalemath{1}{	\mathbf{H}^{(pl\pi)}_{I_k}\boldsymbol{\Phi}_{(k,1)}}
	=
\scalemath{1}{
	\begin{bmatrix}
	\mathbf{H}_{proj,k} {^{I_k}_G\hat{\mathbf{R}}}  &  \mathbf{0}_{2\times 3} & \mathbf{0}_{2\times 3} \\
	\mathbf{0}_{2\times 3} & 
	\mathbf{H}_{l,k}
	\mathbf{K}
	{^{I_k}_G\hat{\mathbf{R}}} & \mathbf{0}_{2\times 3}  \\
	\mathbf{0}_{3} & \mathbf{0}_{3} & 
	{^{I_k}_G\hat{\mathbf{R}}} 
	\end{bmatrix}
} 
\times
\\ 
&
\scalemath{1}{
\begin{bmatrix}
\boldsymbol{\Gamma}_1  &   \boldsymbol{\Gamma}_2  &  \boldsymbol{\Gamma}_3  &  \boldsymbol{\Gamma}_4  & -\mathbf{I}_3  & \mathbf{I}_3 & \mathbf{0}_3 & \mathbf{0}_{3\times 1} & \mathbf{0}_3 \\
\boldsymbol{\Gamma}_{l1}  &   \boldsymbol{\Gamma}_{l2} &  \lfloor {^G\hat{\mathbf{v}}_L}\times\rfloor\delta t_k   &\boldsymbol{\Gamma}_{l3} & 
\lfloor {^G\hat{\mathbf{v}}_L}\times\rfloor  & \mathbf{0}_{3}  &\boldsymbol{\Gamma}_{l4} & \boldsymbol{\Gamma}_{l5} & \mathbf{0}_3\\
\boldsymbol{\Gamma}_{\pi 1} & \boldsymbol{\Gamma}_{\pi 2}  & 
-{^G\hat{\mathbf{n}}_{\pi}} {^G\hat{\mathbf{n}}^{\top}_{\pi}}\delta t_k &
\boldsymbol{\Gamma}_{\pi 3} & 
- {^G\hat{\mathbf{n}}_{\pi}} {^G\hat{\mathbf{n}}^{\top}_{\pi}} &
\mathbf{0}_3  &  \mathbf{0}_3 & \mathbf{0}_{3\times 1} &
\boldsymbol{\Gamma}_{\pi 4}
\end{bmatrix}}
\nonumber
\end{align}
The observability properties of this aided INS are given by:
\begin{lem}\label{lem_pt_line_plane}
The aided INS with one point, one line and one plane feature in the state vector, has  at least 4 unobservable directions:
\begin{align} \label{eq:N-plpi-1}
\mathbf{N}_{pl\pi}
&=
\begin{bmatrix}
\mathbf{N}_g											&		\mathbf{0}_{12\times 3} \\
-\lfloor{^G\hat{\mathbf{P}}_{I_1}}\times \rfloor {^G\mathbf{g}} 	& 		\mathbf{I}_3 \\ 
-\lfloor{^G\hat{\mathbf{P}}_{\mathbf{f}}}\times \rfloor {^G\mathbf{g}} 	& 	\mathbf{I}_3 \\ 
-{^G\mathbf{g}}																&		\frac{w_{2}}{w_{1}}{^G\hat{\mathbf{v}}_{\mathbf{e}}}\mathbf{e}^{\top}_{1}{^L_{G}\hat{\mathbf{R}}}		\\	
0		&	\eta^2{w^2_{2}}\mathbf{e}^{\top}_{3} {^L_{G}\hat{\mathbf{R}}}  \\
-\lfloor {^G\hat{\boldsymbol{\Pi}}\times} \rfloor {^G\mathbf{g}} &  {^G\hat{\mathbf{n}}_{\pi}}{\mathbf{e}^{\top}_3}{^{\Pi}_{G}\hat{\mathbf{R}}}
\end{bmatrix} 
=:
\begin{bmatrix}
\mathbf{N}_{pl\pi 1} & \mathbf{N}_{pl\pi 2:4}
\end{bmatrix}
\end{align}
\end{lem}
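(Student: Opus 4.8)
The plan is to show that the columns of $\mathbf{N}_{pl\pi}$ span a $4$-dimensional subspace contained in the right null space of the observability matrix $\mathbf{M}(\mathbf{x})$ in \eqref{eq_obs_equation}; this yields the ``at least $4$'' claim, since additional unobservable directions can still appear under degenerate feature configurations. Concretely, two things have to be verified: that $\mathbf{N}_{pl\pi}$ has full column rank $4$, and that $\mathbf{H}^{(pl\pi)}_{I_k}\boldsymbol{\Phi}_{(k,1)}\,\mathbf{N}_{pl\pi}=\mathbf{0}$ for every block row $k$ in \eqref{eq_m_pt_line_plane}.

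First I would exploit the block structure of the stacked Jacobian in \eqref{eq_m_pt_line_plane}: its left factor is block diagonal (modulo the common ${^{I_k}_G\hat{\mathbf{R}}}$), so it suffices to annihilate $\mathbf{N}_{pl\pi}$ with each of the three ``core'' rows separately. The key observation is that the point core row vanishes on the columns associated with the line and plane error states, the line core row vanishes on the columns associated with the point and plane error states, and the plane core row vanishes on the columns associated with the point and line error states. Consequently each product collapses onto the shared IMU/pose block together with that feature's own block, and there it reduces verbatim to a single-feature computation already carried out: the point core row against $\mathbf{N}_{pl\pi}$ reduces to the point-only identity of Section~\ref{sec:points} [see \eqref{eq:block-row-single-point}--\eqref{eq_point_obs}], the line core row reduces to the single-line identity of Lemma~\ref{lem_single_line} (Appendix~\ref{apd_single_line}), and the plane core row reduces to the single-plane identity of Lemma~\ref{lem_plane_obs} (Appendix~\ref{apd_plane_single_obs}).

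The crucial bookkeeping step is then to check that the IMU/pose entries of $\mathbf{N}_{pl\pi}$ --- namely $\mathbf{N}_g$ and $-\lfloor {^G\hat{\mathbf{P}}_{I_1}}\times \rfloor {^G\mathbf{g}}$ in the first column, and $\mathbf{0}_{12\times 3}$ stacked over $\mathbf{I}_3$ in the last three --- coincide simultaneously with the IMU/pose parts of $\mathbf{N}$ in \eqref{eq_point_obs}, of $\mathbf{N}_{l}$ in \eqref{eq_line_obs}, and of $\mathbf{N}_{\boldsymbol{\pi}}$ in \eqref{eq_plane_obs}; they do, which is precisely what allows the three single-feature null spaces to be ``glued'' into one $4$-column matrix. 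I would then match the feature blocks individually: the point block of $\mathbf{N}_{pl\pi}$ is $\mathbf{N}_{r}$/$\mathbf{N}_{p}$ of \eqref{eq_point_obs}; the line block is $\mathbf{N}_{l1}$/$\mathbf{N}_{l2:4}$ of \eqref{eq_line_obs} (the direction $\mathbf{N}_{\mathbf{v}}$ along the line, present for a lone line, drops out because it is killed by the point row); and the plane block is $\mathbf{N}_{\boldsymbol{\pi}1}$/$\mathbf{N}_{\boldsymbol{\pi}2:4}$ of \eqref{eq_plane_obs} (the directions $\mathbf{N}_{\boldsymbol{\pi}5:7}$ parallel to and around the plane likewise drop out). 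Full column rank of $\mathbf{N}_{pl\pi}$ then follows by inspection: the last three columns are independent owing to the $\mathbf{I}_3$ block in the $^G\tilde{\mathbf{P}}_I$ rows, while the first column is independent of them because its leading entry $({^{I_1}_G\hat{\mathbf{R}}}{^G\mathbf{g}})^{\top}$ inside $\mathbf{N}_g$ is nonzero, whereas those rows are zero in the last three columns.

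I expect the main obstacle to be exactly this consistency check. One must treat the error-state conventions of the orthonormal line parameterization $(\delta\boldsymbol{\theta}_L,\delta\phi_L)$ and of the CP plane parameterization carefully, in order to confirm that a single infinitesimal global-yaw perturbation $\delta\boldsymbol{\theta}\propto{^G\mathbf{g}}$ induces exactly the feature-block entries written in $\mathbf{N}_{pl\pi 1}$, and that a single infinitesimal global-translation perturbation induces exactly those in $\mathbf{N}_{pl\pi 2:4}$. Once that alignment is in hand, the rest is the already-established single-feature algebra, and the ``at least'' qualifier merely records that the count can only increase --- e.g.\ a line lying in the plane contributes the extra line-direction, and sets of mutually parallel features contribute still more unobservable directions.
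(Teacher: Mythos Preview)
Your plan is essentially the same decomposition the paper uses in Appendix~\ref{apd_proof_lem2}: split the block row \eqref{eq_m_pt_line_plane} into its point, line, and plane pieces and reduce each to the corresponding single-feature identity already established (Section~\ref{sec:points}, Lemma~\ref{lem_single_line}, Lemma~\ref{lem_plane_obs}).

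There is, however, one inaccuracy in your bookkeeping that the paper resolves explicitly. You assert that the IMU/position rows of $\mathbf{N}_{pl\pi\,2:4}$ (namely $\mathbf{0}_{12\times 3}$ over $\mathbf{I}_3$) ``coincide simultaneously'' with those of $\mathbf{N}_{l\,2:4}$ in \eqref{eq_line_obs} and $\mathbf{N}_{\boldsymbol{\pi}\,2:4}$ in \eqref{eq_plane_obs}. They do not: the position row of $\mathbf{N}_{l\,2:4}$ is ${^G_{L}\hat{\mathbf{R}}}$, not $\mathbf{I}_3$, and likewise the position row of $\mathbf{N}_{\boldsymbol{\pi}\,2:4}$ is ${^G_{\Pi}\hat{\mathbf{R}}}$. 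Consequently the line (resp.\ plane) block of $\mathbf{N}_{pl\pi\,2:4}$ is not literally $\mathbf{N}_{l\,2:4}$ (resp.\ $\mathbf{N}_{\boldsymbol{\pi}\,2:4}$) restricted to the relevant rows, but that matrix right-multiplied by ${^L_{G}\hat{\mathbf{R}}}$ (resp.\ ${^{\Pi}_{G}\hat{\mathbf{R}}}$). The paper handles this by verifying
\[
\mathbf{H}^{(l)}_{I_k}\boldsymbol{\Phi}_{(k,1)}\,\mathbf{N}_{pl\pi\,2:4}\,{^G_{L}\hat{\mathbf{R}}}=\mathbf{0},
\qquad
\mathbf{H}^{(\pi)}_{I_k}\boldsymbol{\Phi}_{(k,1)}\,\mathbf{N}_{pl\pi\,2:4}\,{^G_{\Pi}\hat{\mathbf{R}}}=\mathbf{0},
\]
which \emph{are} the single-feature identities, and then cancelling the invertible rotations from the right. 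Once you insert this change-of-basis step, your argument goes through and matches the paper's.
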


\begin{proof}
	See Appendix \ref{apd_proof_lem2}. 
\end{proof}
%
%

We also extend this analysis to multiple points, lines and planes. 
Given  $m$ points, $l$ lines and $s$ planes for the estimation, the unobservable directions can be found  as follows: 
\begin{align}
\scalemath{1}{\mathbf{N}_{PL\Pi}}
&=
\begin{bmatrix}
\mathbf{N}_g					&		\mathbf{0}_{12\times 3} \\
-\lfloor{^G\hat{\mathbf{P}}_{I_1}}\times \rfloor {^G\mathbf{g}} 	& 		\mathbf{I}_3  \\ 
-\lfloor{^G\hat{\mathbf{P}}_{\mathbf{f}_1}}\times \rfloor {^G\mathbf{g}} 	& 	\mathbf{I}_3  \\ 
\vdots 																		&			\vdots 					\\
-\lfloor{^G\hat{\mathbf{P}}_{\mathbf{f}_m}}\times \rfloor {^G\mathbf{g}} 	& 	\mathbf{I}_3  \\ 
-{^G\mathbf{g}}																&		\frac{w_{1,2}}{w_{1,1}}{^G\hat{\mathbf{v}}_{\mathbf{e}1}}\mathbf{e}^{\top}_{1}{^{L_1}_{G}\hat{\mathbf{R}}}		\\	
0																			&	\eta^2_{1}{w^2_{1,2}}\mathbf{e}^{\top}_{3}{^{L_1}_{G}\hat{\mathbf{R}}}  \\
\vdots 																		&	\vdots																\\													\\
-{^G\mathbf{g}}																&		\frac{w_{l,2}}{w_{l,1}}{^G\hat{\mathbf{v}}_{\mathbf{e}l}}\mathbf{e}^{\top}_{1}{^{L_l}_{G}\hat{\mathbf{R}}}		\\	
0	&	\eta^2_{l}{w^2_{l,2}}\mathbf{e}^{\top}_{3} {^{L_l}_{G}\hat{\mathbf{R}}}  \\
-\lfloor {^G\boldsymbol{\Pi}_1} \times\rfloor {^G\mathbf{g}} &  {^G\hat{\mathbf{n}}_{\pi_1}}{\mathbf{e}^{\top}_3}{^{\Pi_1}_{G}\hat{\mathbf{R}}}  \\
\vdots 																		&	\vdots																\\														\\
-\lfloor {^G\boldsymbol{\Pi}_s} \times\rfloor {^G\mathbf{g}} &  {^G\hat{\mathbf{n}}_{\pi_s}}{\mathbf{e}^{\top}_3}{^{\Pi_s}_{G}\hat{\mathbf{R}}}
\end{bmatrix}  
=:
\begin{bmatrix}
	\mathbf{N}_{PL\Pi 1} & \mathbf{N}_{PL\Pi 2:4} 
	\end{bmatrix}
\end{align}
%

\section{Observability Analysis of Aided INS with Global Measurements}
\label{sec:global-meas}

Aided INS may also have access to (partially) global measurements provided by, for example, GPS receivers, sun/star sensors, 
barometers and compasses.
Intuitively, such measurements would alter the system observability properties, even if only partial (not full 6 DOF pose) information is available.
In this section, we systematically examine the impacts of such measurements on the system observability.

\subsection{Global Position Measurements}

We consider the case where besides the point, line and plane measurements,  
global position measurements are also available from, for example, a GPS receiver or a barometer.
In the following, we use such a global measurement individually along $x$, $y$ and $z$-axis.

\subsubsection{Global x Measurement}
If sensor's translation along $x$ direction is known, then the additional global $x$-axis measurement is given by $z^{(x)} = \mathbf{e}^{\top}_1{^G\mathbf{P}_{I}}$.
The measurement Jacobian and the block row of observability matrix   can be computed as [see \eqref{eq_m_pt_line_plane}]:
\begin{align}
&\scalemath{1}{
	\mathbf{H}_{I_k}\boldsymbol{\Phi}_{(k,1)}
	=
	\begin{bmatrix}
	\mathbf{H}^{(pl\pi)}_{I_k}\boldsymbol{\Phi}_{(k,1)} \\
	\mathbf{H}^{(x)}_{I_k}\boldsymbol{\Phi}_{(k,1)} 
	\end{bmatrix}
}
\end{align}
where  $\mathbf{H}^{(x)}_{I_k}$ is the global $x$ measurement Jacobian, yielding: 
\begin{equation}
	\mathbf{H}^{(x)}_{I_k}  \boldsymbol{\Phi}_{(k,1)} 
	=
	\begin{bmatrix}
	\mathbf{0}_{1\times 12} & 
	\mathbf{e}^{\top}_1 & 
	\mathbf{0}_{1\times 3} &
	\mathbf{0}_{1\times 4} &
	\mathbf{0}_{1\times 3}
	\end{bmatrix}	
\end{equation}
%
%
%
We can show that the unobservable subspace becomes:
\begin{align}
\mathbf{N}_{x}
&=
\begin{bmatrix}
\mathbf{0}_{12\times 2}    \\
\mathbf{A}_x \\
\mathbf{A}_x \\
\frac{w_{2}}{w_{1}}{^G\hat{\mathbf{v}}_{\mathbf{e}}}\mathbf{e}^{\top}_{1}{^L_{G}\hat{\mathbf{R}}}
\mathbf{A}_x		\\	
\eta^2{w^2_{2}}\mathbf{e}^{\top}_{3} {^L_{G}\hat{\mathbf{R}}}   
\mathbf{A}_x  \\
{^G\hat{\mathbf{n}}_{\pi}}{\mathbf{e}^{\top}_3}{^{\Pi}_{G}\hat{\mathbf{R}}}
\mathbf{A}_x
\end{bmatrix}
\end{align}
where $\mathbf{A}_x = \begin{bmatrix} \mathbf{0}_{2\times 1} & \mathbf{I}_2 \end{bmatrix}^{\top}$. 
As compared to $\mathbf{N}$ in \eqref{eq:N-plpi-1} without global $x$ measurements, 
both the global translation in $x$ direction and the rotation around the gravity direction become observable.
\subsubsection{Global y Measurement}
If sensor's translation along $y$ direction is known, then the additional global $y$-axis measurement is given by $z^{(y)} = \mathbf{e}^{\top}_2{^G\mathbf{P}_{I}}$.
The measurement Jacobian and the block row of observability matrix  can be computed as [see \eqref{eq_m_pt_line_plane}]:
\begin{align}
&\scalemath{1}{
	\mathbf{H}_{I_k}\boldsymbol{\Phi}_{(k,1)}
	=
	\begin{bmatrix}
	\mathbf{H}^{(pl\pi)}_{I_k}\boldsymbol{\Phi}_{(k,1)} \\
	\mathbf{H}^{(y)}_{I_k}\boldsymbol{\Phi}_{(k,1)} 
	\end{bmatrix}
}
\end{align}
where  $\mathbf{H}^{(y)}_{I_k}$ is the global $y$ measurement Jacobian, yielding: 
\begin{align}
&\mathbf{H}^{(y)}_{I_k}  \boldsymbol{\Phi}_{(k,1)} 
=
\begin{bmatrix}
\mathbf{0}_{1\times 12} & 
\mathbf{e}^{\top}_1 & 
\mathbf{0}_{1\times 3} &
\mathbf{0}_{1\times 4} &
\mathbf{0}_{1\times 3}
\end{bmatrix}	
\end{align}
%
%
%
We can show that the unobservable subspace becomes:
\begin{align}
\mathbf{N}_{y}
&=
\begin{bmatrix}
\mathbf{0}_{12\times 2}    \\
\mathbf{A}_y \\
\mathbf{A}_y \\
\frac{w_{2}}{w_{1}}{^G\hat{\mathbf{v}}_{\mathbf{e}}}\mathbf{e}^{\top}_{1}{^L_{G}\hat{\mathbf{R}}}
\mathbf{A}_y		\\	
\eta^2{w^2_{2}}\mathbf{e}^{\top}_{3} {^L_{G}\hat{\mathbf{R}}}   
\mathbf{A}_y  \\
{^G\hat{\mathbf{n}}_{\pi}}{\mathbf{e}^{\top}_3}{^{\Pi}_{G}\hat{\mathbf{R}}}
\mathbf{A}_y
\end{bmatrix}
\end{align}
where $\mathbf{A}_y = \begin{bmatrix}  1 & 0 & 0 \\ 0 & 0 & 1 \end{bmatrix}^{\top}$. 
As compared to $\mathbf{N}$ in \eqref{eq:N-plpi-1} without global $y$ measurements, 
both the global translation in $y$ direction and the rotation around the gravity direction become observable.
%

%

%
\subsubsection{Global z Measurement}

Proceeding similarly, 
if the global translation in $z$ direction is directly measured, e.g., by a barometer,  
we have an additional global $z$-axis measurement  $z^{(z)} = \mathbf{e}^{\top}_3{^G\mathbf{P}_{I}}$. 
In this case, the block row of the observability matrix  becomes:
\begin{align}
&\scalemath{1}{
	\mathbf{H}_{I_k}\boldsymbol{\Phi}_{(k,1)}
	=
	\begin{bmatrix}
	\mathbf{H}^{(pl\pi)}_{I_k}\boldsymbol{\Phi}_{(k,1)} \\
	\mathbf{H}^{(z)}_{I_k}\boldsymbol{\Phi}_{(k,1)} 
	\end{bmatrix}
}
\end{align}
Since $\mathbf{e}_3$ is parallel to $^G\mathbf{g}$, we have $ \mathbf{e}^{\top}_3 \lfloor {^G\mathbf{P}_{I_1}} \times \rfloor {^G\mathbf{g}} = 0 $. 
Therefore, the system's unobservable directions become:
\begin{align}
\mathbf{N}_{z}
&=
\begin{bmatrix}
\mathbf{N}_g				&		\mathbf{0}_{12\times 2} \\
-\lfloor{^G\hat{\mathbf{P}}_{I_1}}\times \rfloor {^G\mathbf{g}} 	& 		
\mathbf{A}_z  \\
-\lfloor{^G\hat{\mathbf{P}}_{\mathbf{f}}}\times \rfloor {^G\mathbf{g}} 	& 		
\mathbf{A}_z  \\
-{^G\mathbf{g}}																&		\frac{w_{2}}{w_{1}}{^G\hat{\mathbf{v}}_{\mathbf{e}}}\mathbf{e}^{\top}_{1}{^L_{G}\hat{\mathbf{R}}}
\mathbf{A}_z 		\\	
0																			&	
\eta^2{w^2_{2}}\mathbf{e}^{\top}_{3} {^L_{G}\hat{\mathbf{R}}}   
\mathbf{A}_z  \\
-\lfloor {^G\hat{\boldsymbol{\Pi}}\times} \rfloor {^G\mathbf{g}} &  {^G\hat{\mathbf{n}}_{\pi}}{\mathbf{e}^{\top}_3}{^{\Pi}_{G}\hat{\mathbf{R}}}
\mathbf{A}_z 
\end{bmatrix}
\end{align}
where $\mathbf{A}_z = \begin{bmatrix}  \mathbf{I}_2   & \mathbf{0}_{2\times 1} \end{bmatrix}^{\top}$. 
Clearly, only translation in $z$ becomes observable, 
while, different from the previous case of the global $x$ or $y$ measurements, 
the rotation around the gravity direction remains unobservable.

\subsection{Global Orientation Measurements}

We here consider the case where 
the aided INS has access to global orientation measurements,
for example, provided by a sun sensor, 
or a magnetic compass, 
or by detecting a plane with known orientation~\cite{Guo2013iros,Panahandeh2013IROS}:
$\mathbf z^{(n)} = {^I\mathbf{N}_{n}} = {^{I}_G\mathbf{R}}{^G\mathbf{N}_{n}}$.
In this case, 
the Jacobian and the block row of the observability matrix can be computed as:
\begin{align}
&\scalemath{1}{
	\mathbf{H}_{I_k}\boldsymbol{\Phi}_{(k,1)}
	=
	\begin{bmatrix}
	\mathbf{H}^{(pl\pi)}_{I_k}\boldsymbol{\Phi}_{(k,1)} \\
	\mathbf{H}^{(n)}_{I_k}\boldsymbol{\Phi}_{(k,1)} 
	\end{bmatrix}
}
\end{align}
where $\mathbf{H}^{({n})}_{I_k}$ is the orientation measurement Jacobian, yielding:
\begin{equation}
\mathbf{H}^{({n})}_{I_k} \boldsymbol{\Phi}_{(k,1)}
=
		{^{I_k}_G\hat{\mathbf{R}}} 
		\begin{bmatrix}
		\lfloor {^G\mathbf{N}_{n}}\times \rfloor{^G_{I_1}}\hat{\mathbf{R}} & 
		\boldsymbol{\Gamma}_5 &
		\mathbf{0}_{3\times 19} & 
		\end{bmatrix}	
\end{equation}
where $\scalemath{1}{\boldsymbol{\Gamma}_5=\lfloor {^G\mathbf{N}_{n}}\times\rfloor{^{I_k}_G\hat{\mathbf{R}}^{\top}}\boldsymbol{\Phi}_{12}}$. 
If $^G\mathbf{N}_{n}$ is not parallel to $^G\mathbf{g}$, i.e., $\scalemath{0.85}{\lfloor{^G\mathbf{N}}\times \rfloor {^G\mathbf{g}} \neq 0}$,
 the rotation around the gravity direction becomes observable, and the unobservable directions are:
\begin{align}
\mathbf{N}_{n}
&=
\begin{bmatrix}
  \mathbf{0}_{12\times 3}    \\
\mathbf{I}_{3} \\		
\mathbf{I}_{3} \\		
\frac{w_{2}}{w_{1}}{^G\hat{\mathbf{v}}_{\mathbf{e}}}\mathbf{e}^{\top}_{1}{^L_{G}\hat{\mathbf{R}}}		\\		
\eta^2{w^2_{2}}\mathbf{e}^{\top}_{3} {^L_{G}\hat{\mathbf{R}}}    \\ {^G\hat{\mathbf{n}}_{\pi}}{\mathbf{e}^{\top}_3}{^{\Pi}_{G}\hat{\mathbf{R}}}
\end{bmatrix}
\end{align}
In summary, as expected,  the global measurements will make the aided INS more observable.
In particular,  if a global full position measurements by GPS  or  a prior map are available, the system will become fully observable,
while global orientation measurements can make the rotation around gravitational direction observable, 
as long as this orientation is not parallel to the direction of gravity.
%

%
%

\section{Analysis of Degenerate Motion}
\label{sec:degenerate}


While Wu et al.~\cite{Wu2017ICRA} have recently reported that pure translation and constant acceleration are degenerate for monocular VINS with point features,
in this section,
we here perform a comprehensive study of degeneration motion for the aided INS with heterogeneous features of points, lines and planes,
which is important to identify  in order to keep  estimators healthy.

In particular, to ease our analysis, 
we use the range and bearing parameterization (i.e., spherical coordinates) of a point feature, 
instead of its conventional 3D position:
\begin{equation}\label{eq:point-spherical}
\mathbf {x_f} := \begin{bmatrix}
r_{\mathbf f} \\ \theta \\ \phi
\end{bmatrix} 
\Rightarrow
{r_{\mathbf{f}}}{\mathbf{b}_{\mathbf{f}}}
=
{r_{\mathbf{f}}}
\begin{bmatrix}
\cos \theta \cos \phi \\
\sin \theta \cos \phi \\
\sin \phi
\end{bmatrix}
\!\!=\!\!
\begin{bmatrix}
x_{\mathbf{f}} \\
y_{\mathbf{f}} \\
z_{\mathbf{f}} 
\end{bmatrix} =: \mathbf{P}_{\mathbf{f}}
\end{equation}
where $r_{\mathbf{f}}$ is the range, $\theta$ and $\phi$ are the horizontal and elevation angle of the point.
In this case (point features), the block row of the observability matrix can be computed as  [see~\eqref{eq:block-row-single-point}]:
\begin{align}
\scalemath{1}{}
	&\mathbf{H}^{(p)}_{I_k}
	\boldsymbol{\Phi}_{(k,1)}
	=
	\mathbf{H}_{proj,k}
	{^{I_k}_G\hat{\mathbf{R}}}
	\begin{bmatrix}
	\boldsymbol{\Gamma_1} &
	\boldsymbol{\Gamma_2} & 
	\boldsymbol{\Gamma_3} & 
	\boldsymbol{\Gamma_4} & 
	-\mathbf{I}_3 & 
	\hat{\mathbf{b}}_{\mathbf{f}} &
	{^G\hat{r}_{\mathbf{f}}}\cos \hat{\phi} \hat{\mathbf{b}}^{\perp}_1 & 
	{^G\hat{r}_{\mathbf{f}}}\hat{\mathbf{b}}^{\perp}_2 
	\end{bmatrix}
\end{align}
where 
	\begin{align}
	\hat{\mathbf{b}}^{\perp}_1 
	&
	\scalemath{1}{
		=
		\begin{bmatrix}
		-\sin \hat{\theta}  & \cos \hat{\theta}  & 0 
		\end{bmatrix}^{\top}
	}
	\\
	\hat{\mathbf{b}}^{\perp}_2
	&
	\scalemath{1}{
		=
		\begin{bmatrix}
		-\cos \hat{\theta} \sin \hat{\phi} & -\sin \hat{\theta} \sin \hat{\phi} & \cos \hat{\phi}  
		\end{bmatrix}^{\top}
	}
	\end{align}
By inspection, the unobservable directions can be found as:
	\begin{align}
	&\mathbf{N}_{rb}
	=
	\scalemath{0.9}{
		\begin{bmatrix}
		\mathbf{N}_{g}   &  \mathbf{0}_{12\times 3}   \\
		-\lfloor {^G\hat{\mathbf{P}}_{I_1}}\times \rfloor {^G\mathbf{g}}   &   \mathbf{I}_3     \\
		0  &  \hat{\mathbf{b}}^{\top}_{\mathbf{f}}  \\
		-g  &  \frac{\left(\hat{\mathbf{b}}^{\perp}_{1}\right)^{\top} }{^G\hat{r}_{\mathbf{f}}\cos \hat{\phi}}     \\
		0  &  \frac{\left(\hat{\mathbf{b}}^{\perp}_{2}\right)^{\top}}{^G\hat{r}_{\mathbf{f}}}  
		\end{bmatrix}
	}
	\scalemath{1}{
		=:
		\begin{bmatrix}
		\mathbf{N}_{rb,r} &  \mathbf{N}_{rb,p} 	
		\end{bmatrix}
	}	
	\end{align}
where $\mathbf{N}_{rb,p}$ and $\mathbf{N}_{rb,r}$ are the unobservable directions associated with  the global translation and the global rotation around the gravity direction, which, as expected, agrees with the preceding analysis~\eqref{eq_point_obs}.
 

\subsection{Pure Translation}

Based the above analysis of point measurements, we show that 
given point, line and plane measurements \eqref{eq_point_line_plane}, 
if the sensor undergoes pure translation, the system gains the following additional unobservable directions 
(by noting that the state vector~\eqref{eq:state000} includes the IMU state, one point in spherical coordinates~\eqref{eq:point-spherical}, one line and one plane):
	\begin{equation}
	\scalemath{1}{
		\mathbf{N}_{R}
		=
		\begin{bmatrix}
		{^{I_1}_G\hat{\mathbf{R}}} \\
		 \mathbf{0}_3  \\  
		- \lfloor {^G\hat{\mathbf{V}}_{I_1}} \times \rfloor \\ 
		-{^{I_1}_G\hat{\mathbf{R}}}\lfloor {^G\mathbf{g}} \times \rfloor \\ 
		- \lfloor {^G\hat{\mathbf{P}}_{I_1}} \times \rfloor \\ 
		-\boldsymbol{\Theta} \\
		-\mathbf{I}_{3} \\
		0_{1\times 3}  \\
		-\lfloor{^G\boldsymbol{\Pi}}\times\rfloor
		\end{bmatrix}}
	\end{equation}
where $
\scalemath{0.9}{
	\boldsymbol{\Theta}
	=
	\begin{bmatrix}
	0      &  0          &   0 \\
	{^G\hat{r}_{\mathbf{f}}}\cos \hat{\theta} \tan \hat{\phi}     &     \sin \hat{\theta} \tan \hat{\phi}    &   -1  \\
	-{^G\hat{r}_{\mathbf{f}}}\sin \hat{\theta}     &   \cos \hat{\theta}   &  0
	\end{bmatrix}
	}\noindent$.
%
%
%
Similar to \cite{Wu2017ICRA},  this unobservable direction can be easily verified [see~\eqref{eq_m_pt_line_plane}]:
\begin{equation}\label{eq:degen-translation}
\scalemath{1}{
	\mathbf{H}^{(pl\pi)}_{I_k}\boldsymbol{\Phi}_{(k,1)}\mathbf{N}_{R}
	=
	\begin{bmatrix}
	\mathbf{H}^{(p)}_{I_k}\boldsymbol{\Phi}_{(k,1)}\mathbf{N}_{R} \\
	\mathbf{H}^{(l)}_{I_k}\boldsymbol{\Phi}_{(k,1)}\mathbf{N}_{R} \\
	\mathbf{H}^{(\pi)}_{I_k}\boldsymbol{\Phi}_{(k,1)}\mathbf{N}_{R} 
	\end{bmatrix}} = \mathbf 0
\end{equation}
Specifically, 
using \eqref{eq:PHI54} and \eqref{eq:GAMMA4}, we have this useful identity: 
$\boldsymbol{\Gamma_4}{^{I_1}_G\hat{\mathbf{R}}} - \frac{1}{2}\delta t^2_k \mathbf{I}_3 = - \bm\Phi_{54}{^{I_1}_G\hat{\mathbf{R}}} - \frac{1}{2}\delta t^2_k \mathbf{I}_3 =\mathbf 0$.
With this, we can easily verify each block row of \eqref{eq:degen-translation} as follows:
\begin{align}
	&\scalemath{0.9}{
	\mathbf{H}^{(p)}_{I_k}\boldsymbol{\Phi}_{(k,1)}\mathbf{N}_{R}
	=
	-\mathbf{H}_{proj,k}{^{I_k}_G\hat{\mathbf{R}}}
	\left(\boldsymbol{\Gamma_4}{^{I_1}_G\hat{\mathbf{R}}} - \frac{1}{2}\delta t^2_k \mathbf{I}_3\right)
	\lfloor {^G\mathbf{g}}\times \rfloor
	=
	\mathbf{0}} \nonumber
	\\
	&\scalemath{0.85}{
	\mathbf{H}^{(l)}_{I_k}\boldsymbol{\Phi}_{(k,1)}\mathbf{N}_{R}
	=
	\mathbf{H}_{l,k}{^{I_k}_G\hat{\mathbf{R}}}
	\Big[
	\lfloor {^G\hat{\mathbf{v}}_{L}} \times \rfloor
	\left(-\boldsymbol{\Phi}_{54}{^{I_1}_G\hat{\mathbf{R}}} - \frac{1}{2}\delta t^2_k \mathbf{I}_3\right)
	\lfloor {^G\mathbf{g}}\times \rfloor
	- } 
	\scalemath{0.85}
	{
	\lfloor {^G\hat{\mathbf{v}}_{L}} \times \rfloor \lfloor {^G\hat{\mathbf{P}}_{I_k}} \times \rfloor  
	- \lfloor \lfloor {^G\hat{\mathbf{P}}_{I_k}} \times \rfloor {^G\hat{\mathbf{v}}_{L}} \times \rfloor
	+ \lfloor {^G\hat{\mathbf{P}}_{I_k}} \times \rfloor\lfloor {^G\hat{\mathbf{v}}_{L}} \times \rfloor
	\Big]
	= \mathbf{0} \nonumber
	}
	\\
	&\scalemath{0.9}{
	\mathbf{H}^{(\pi)}_{I_k}\boldsymbol{\Phi}_{(k,1)}\mathbf{N}_{R}
	=
	{^{I_k}_G\hat{\mathbf{R}}}
	{^G\hat{\mathbf{n}}_{\pi}}{^G\hat{\mathbf{n}}^{\top}_{\pi}}
	\left(\boldsymbol{\Phi}_{54}{^{I_1}_G\hat{\mathbf{R}}} + \frac{1}{2}\delta t^2_k \mathbf{I}_3\right)
	\lfloor {^G\mathbf{g}}\times \rfloor
	=
	\mathbf{0}} \nonumber	
\end{align}
%
%
where  we have also employed the identities: $\lfloor (\mathbf a \times \mathbf b) \times \rfloor = \mathbf b \mathbf a^\top - \mathbf a \mathbf b^\top$ and $\lfloor \mathbf a\times \rfloor \lfloor \mathbf b \times \rfloor = \mathbf b\mathbf a^\top -\mathbf a^\top \mathbf b\mathbf I$.

We see from  $\boldsymbol{\Theta}$  that its first row corresponding to the range of the point feature [see~\eqref{eq:point-spherical}] are all zeros 
and thus this unobservable direction~\eqref{eq:degen-translation}  relates to the bearing of the feature.
Note also that the global rotation of the sensor becomes unobservable, rather than only the global yaw is unobservable for  general motions [see \eqref{eq:degen-translation} and \eqref{eq:N-plpi-1}]. 
It is important to notice that  {\em no} assumption is made about the type of sensors used, and thus, 
 the aided INS  with generic sensors (not including global sensors) with pure translation will all gain additional unobservable directions of $\mathbf{N}_{R}$.  

\subsection{Constant Acceleration}

As it is not straightforward to have direct plane measurements~\eqref{plane-meas-model} for INS aided by a monocular camera, 
to ease our analysis of VINS, from now on we focus on the point and line measurements \eqref{eq_point_line}. 
In particular, if the camera moves with constant local acceleration,
%
%
i.e., $^{I}\mathbf{a}$ is constant, then the system will have one more unobservable direction given by:
	\begin{equation} \label{eq:Na}
	\scalemath{.95}{
	\mathbf{N}_{a}=
	\begin{bmatrix}
	\mathbf{0}_{1\times 6} & 
	{^G\hat{\mathbf{V}}^{\top}_{I_1}} &
	-{^{I}\hat{\mathbf{a}}^{\top}} & 
	{^G\hat{\mathbf{P}}^{\top}_{I_1}} & 
	{^G\hat{r}_{\mathbf{f}}}\mathbf{e}^{\top}_1 & 
	\mathbf{0}_{1\times 3} &
	-\frac{w_2}{w_1}
	\end{bmatrix}^{\top}
}
	\end{equation}
Since a monocular amera provides only bearing measurements, 
$\mathbf{H}_{proj,k}=\mathbf{H}_{b,k} = \begin{bmatrix}
	^I\hat{\mathbf{b}}^{\top}_{\perp1,k} \\
	^I\hat{\mathbf{b}}^{\top}_{\perp2,k} 
	\end{bmatrix}$, 
	where $^I\hat{\mathbf b}_{\perp i,k}$ ($i=1,2$) are  orthogonal  to $^I\hat{\mathbf{b}}_{\mathbf f}$
(see Appendix~\ref{apd_meas}). 
In this case, we have:
\begin{equation}
	\mathbf{H}_{I_k}\boldsymbol{\Phi}_{(k,1)}\mathbf{N}_a
	=
	\begin{bmatrix}
	\mathbf{H}^{(p)}_{I_k}\boldsymbol{\Phi}_{(k,1)}\mathbf{N}_a \\
	\mathbf{H}^{(l)}_{I_k}\boldsymbol{\Phi}_{(k,1)}\mathbf{N}_a
	\end{bmatrix} = \mathbf 0
\end{equation}
which can be verified by using the identity shown in~\cite{Wu2017ICRA}:
$\boldsymbol{\Gamma_4}{^I\mathbf{a}}={^G\hat{\mathbf{P}}_{I_k}} - {^G\hat{\mathbf{P}}_{I_1}} -{^G\hat{\mathbf{V}}_{I_1}}\delta t_k$:
\begin{align}
	&\mathbf{H}^{(p)}_{I_k}\boldsymbol{\Phi}_{(k,1)}\mathbf{N}_{a} 
	=
	\mathbf{H}_{b,k}{^{I_k}_G\hat{\mathbf{R}}} 
	\left(
	-{^G\hat{\mathbf{V}}_{I_1}}\delta t_k - \boldsymbol{\Gamma_4}{^I\hat{\mathbf{a}}} - {^G\hat{\mathbf{P}}_{I_1}} + {^Gr_{\mathbf{f}}}{^G\hat{\mathbf{b}}_{\mathbf{f}}}
	\right)
	\nonumber
	\\
	&
	=
	\mathbf{H}_{b,k}{^{I_k}_G\hat{\mathbf{R}}}
	\left(
	{^G\hat{\mathbf{P}}_{\mathbf{f}}} - {^G\hat{\mathbf{P}}_{I_k}} 
	\right)
	=
	\mathbf{H}_{b,k}
	{^{I_k}\hat{\mathbf{P}}_{\mathbf{f}}}
	\overset{\eqref{eq_mono_meas}}{=}\mathbf{0}
	\label{eq:degenerate-temp1}
	\\
	&\mathbf{H}^{(l)}_{I_k}\boldsymbol{\Phi}_{(k,1)}\mathbf{N}_{a} 
	=
	\scalemath{1}{}
	\mathbf{H}_{l,k}\mathbf{K}{^{I_k}_G\hat{\mathbf{R}}} 
	\left(\lfloor {^G\hat{\mathbf{v}}_{L}} \times \rfloor {^G\hat{\mathbf{P}}_{I_k}}
	+ \frac{w^2_2}{w^2_1} {^G\hat{\mathbf{n}}_{L}} + \lfloor {^G\hat{\mathbf{P}}_{I_k}} \times \rfloor {^G\hat{\mathbf{v}}_{L}}
	\right)
	\nonumber
	\\
	&=
	\frac{w^2_2}{w^2_1}
	\mathbf{H}_{l,k}\mathbf{K}{^{I_k}_G\hat{\mathbf{R}}}
	{^G\hat{\mathbf{n}}_{L}} \overset{\eqref{eq:def-H_l},\eqref{eq:def-l-prime}}{=} \mathbf{0}
	\label{eq:degenerate-temp2}
\end{align}
%
%
%
Note that this unobservable subspace~\eqref{eq:Na}  relates only to the scale as its nonzeros all appear on the scale-sensitive states.
If using a sensor  that can provide the scale (such as stereo and RGBD cameras), 
this unobservable direction will vanish.

\subsection{Pure Rotation}

If the sensor has only rotational motion, then $^G\mathbf{P}_{I_k}=\mathbf{0}_{3\times 1}$. 
For monocular-camera based point and line measurements~\eqref{eq_point_line}, 
the system will  gain the following extra unobservable directions corresponding to the feature scale:
\begin{align}
\mathbf{N}_s
&=
\begin{bmatrix}
\mathbf{0}_{1\times 15}  & \mathbf{e}^{\top}_1 & \mathbf{0}_{1\times 3} & 0 \\
\mathbf{0}_{1\times 15}  & \mathbf{0}_{1\times 3} & \mathbf{0}_{1\times 3} & 1 
\end{bmatrix}^{\top} \\
\Rightarrow~
	\mathbf{H}_{I_k}\boldsymbol{\Phi}_{(k,1)}\mathbf{N}_s
	&=
	\begin{bmatrix}
	\mathbf{H}^{(p)}_{I_k}\boldsymbol{\Phi}_{(k,1)}\mathbf{N}_s \\
	\mathbf{H}^{(l)}_{I_k}\boldsymbol{\Phi}_{(k,1)}\mathbf{N}_s
	\end{bmatrix}	= \mathbf 0
\end{align}
which can be seen as follows [see~\eqref{eq:degenerate-temp1} and \eqref{eq:degenerate-temp2}]:
\begin{align}
&\mathbf{H}^{(p)}_{I_k}\boldsymbol{\Phi}_{(k,1)}\mathbf{N}_s	
=
\begin{bmatrix}
	\mathbf{H}_{b,k}
	{^{I}_G\hat{\mathbf{R}}}
	{^G\hat{\mathbf{b}}_{\mathbf{f}}} \\
		\mathbf{0}
\end{bmatrix}
=	
\begin{bmatrix}
\mathbf{0}  \\
\mathbf{0}
\end{bmatrix}
\\
&\mathbf{H}^{(l)}_{I_k}\boldsymbol{\Phi}_{(k,1)}\mathbf{N}_s	
=
\begin{bmatrix}
\mathbf{0} \\
-\frac{w_2}{w_1}
\mathbf{H}_{l,k}\mathbf{K}{^{I_k}_G\hat{\mathbf{R}}}
{^G\hat{\mathbf{n}}_L}
\end{bmatrix}
=	
\begin{bmatrix}
\mathbf{0}  \\
\mathbf{0}
\end{bmatrix}
\end{align}
Note that the first row of $\mathbf{N}_s$ relates to the scale of the point feature (range $r_{\mathbf f}$), and the second row  to the scale of the line (the shortest distance from the origin to the line),
which implies that we have more unobservable directions related to the feature scales. 

\subsection{Moving Towards Point Feature}

With the point and line measurements~\eqref{eq_point_line}, if the camera moving towards the point feature, 
 the system will  gain one more unobservable direction related to the point scale (range):
\begin{equation}
\scalemath{1}{
\mathbf{N}_1
=
\begin{bmatrix}
\mathbf{0}_{1\times 15}   & \mathbf{e}^{\top}_1 & \mathbf{0}_{1\times 3} & 0 \end{bmatrix}^{\top}}
\end{equation}\noindent
This degenerate motion indicates that the sensor is moving along the direction of the point feature's bearing direction, that is: $^G\mathbf{P}_{I_k}=\alpha {^G\mathbf{b}_{\mathbf{f}}}$, where $\alpha$ denotes the scale of the sensor's motion. 
Then, we can arrive at:
\begin{align}
^{I_k}\mathbf{P}_{\mathbf{f}} 
=
{^{I_k}r}_{\mathbf{f}}{^{I_k}\mathbf{b}_{\mathbf{f}}}
=
{^{I_k}_G}\mathbf{R}\left({^Gr_{\mathbf{f}}}-\alpha\right){^G\mathbf{b}_{\mathbf{f}}}
\end{align}
Similar to the case of pure rotation, 
we can verify the additional unobservable direction $\mathbf{N}_1$ as follows [see~\eqref{eq:degenerate-temp1}]:
%
\begin{align}
\mathbf{H}_{I_k}\boldsymbol{\Phi}_{(k,1)}\mathbf{N}_1
&=
\begin{bmatrix}
\mathbf{H}^{(p)}_{I_k}\boldsymbol{\Phi}_{(k,1)}\mathbf{N}_1 \\
\mathbf{H}^{(l)}_{I_k}\boldsymbol{\Phi}_{(k,1)}\mathbf{N}_1
\end{bmatrix}
\!\!=\!\!
\begin{bmatrix}
\frac{{^{I_k}\hat{r}_{\mathbf{f}}}}{{^G\hat{r}_{\mathbf{f}}}-\alpha}
\mathbf{H}_{b,k}
{^{I_k}\hat{\mathbf{b}}_{\mathbf{f}}} \\
\mathbf{0}
\end{bmatrix}
\!\!=\!\!
\begin{bmatrix}
\mathbf{0} \\
\mathbf{0}
\end{bmatrix}	
\end{align}
%
%
%

%
%
%

\subsection{Moving in Parallel to Line Feature}

Similarly, 
if the camera is moving in parallel to the line feature,  the system will also gain one more unobservable direction related to this line feature's scale (distance):
\begin{align}
\scalemath{1}{
	\mathbf{N}_1
	=
	\begin{bmatrix}
	\mathbf{0}_{1\times 15}   & \mathbf{0}_{1\times 3} & \mathbf{0}_{1\times 3} & 1 \end{bmatrix}^{\top}}
\end{align}
This degenerate motion indicates that the sensor is moving parallel to  the line direction, that is, $\lfloor ^G\mathbf{P}_{I_k} \times \rfloor {^G\mathbf{v}_{L}} =\mathbf{0}$. 
Then, we have the following verification [see~\eqref{eq:degenerate-temp2}]:
\begin{align}
&\mathbf{H}_{I_k}\boldsymbol{\Phi}_{(k,1)}\mathbf{N}_1
= 
\begin{bmatrix}
\mathbf{H}^{(p)}_{I_k}\boldsymbol{\Phi}_{(k,1)}\mathbf{N}_1 \\
\mathbf{H}^{(l)}_{I_k}\boldsymbol{\Phi}_{(k,1)}\mathbf{N}_1
\end{bmatrix}
=
\begin{bmatrix}
\mathbf{0} \\
-\frac{w_2}{w_1}
\mathbf{H}_{l,k}\mathbf{K}{^{I_k}_G\hat{\mathbf{R}}}
{^G\hat{\mathbf{n}}_L}
\end{bmatrix}
=	
\begin{bmatrix}
\mathbf{0}  \\
\mathbf{0}
\end{bmatrix}	
\end{align}
%

\section{Monte Carlo Simulations}
\label{sec:sim}

To validate our observability analysis of aided INS using heterogeneous geometric features, 
we perform extensive Monte Carlo simulations of vision-aided INS:\footnote{Note that similar results as presented in this section would be expected if other aiding sensors are used, 
for example, acoustic-aided INS was developed in our recent work~\cite{Yang2017icra}.} 
(i) visual-inertial SLAM (VI-SLAM),
and (ii) visual-inertial odometry (VIO),
which are among the most popular localization technologies in part due to their ubiquitous availability and  complementary sensing modality.
To this end, we have adapted both the EKF-based VI-SLAM and MSCKF-based VIO algorithms to fuse measurements of points, lines, planes and their different combinations.
To the best of our knowledge, algorithmically, we, for the first time, introduce and evaluate the EKF-based VI-SLAM/VIO approaches with heterogeneous features (which are common in structured environments).
In particular, we have compared two different EKFs in both VI-SLAM and VIO: 
(i) the {\em ideal} EKF that uses true states as the linearization points in computing filter Jacobians and has been shown to have correct observability properties and expected to be consistent, 
thus being used as the benchmark in simulations as in the literature (e.g., \cite{Huang2008ICRA,Huang2010IJRR,Huang2014ICRA,Hesch2013TRO,Hesch2014IJRR});
and 
(ii) the {\em standard} EKF that uses current state estimates as the linearization points in computing filter Jacobians and has been shown to be overconfident (inconsistent)~\cite{Huang2014ICRA,Hesch2013TRO,Hesch2014IJRR}.
The metrics used to evaluate estimation performance are the root mean squared error (RMSE)
and the average normalized (state) estimation error squared (NEES)~\cite{Bar-Shalom2001}.
The RMSE provides a measure of accuracy, while the NEES
is a standard criterion for evaluating estimator consistency, which (implicitly) indicates the correctness of the EKF system observability.
%
%

The simulated trajectories and different geometric features are shown in the left of Figs.~\ref{fig_all_slam} and~\ref{fig_all_vio},
where we simulate a camera/IMU sensor suite is moving on the sinusoidal trajectories to collect measurements to different features.
For the results in Fig.~\ref{fig_all_slam}, we developed the EKF-based VI-SLAM algorithm, 
which simultaneously  preforms visual-inertial localization and mapping by keeping different features in the state vector.
In contrast,
for the results in Fig.~\ref{fig_all_vio}, we adapated the MSCKF-based VIO~\cite{Mourikis2007ICRA}, 
which estimates only the sensor poses while marginalizing out different (not only points) features with null space operation.
%
%
%
It is clear from these results of both VI-SLAM and VIO in Figs. \ref{fig_all_slam} and \ref{fig_all_vio} 
that the standard EKF/MSCKF performs worse than the benchmark ideal filter, 
which agrees with the literature (with point features only)~\cite{Huang2014ICRA,Hesch2013TRO,Hesch2014IJRR}.
This  again reflects the importance of system observability for consistent state estimation.

Moreover, in order to directly validate the unobservable subspace of the aided INS found in our analysis,
using the same simulation setup as above but with a single feature,
we have  constructed the observability matrix of the ideal EKF-based VI-SLAM with a single point (or line or plane) 
and numerically computed the dimension of its null space, which is shown in Fig. \ref{fig_all_rank}. 
Clearly, the dimension of the unobservable subspace for the (ideal) VI-SLAM with a single point (line or plane) is 4 (5 or 7), 
which agrees with our analysis.

\begin{figure*}
	\centering
	\includegraphics[scale = 0.38]	{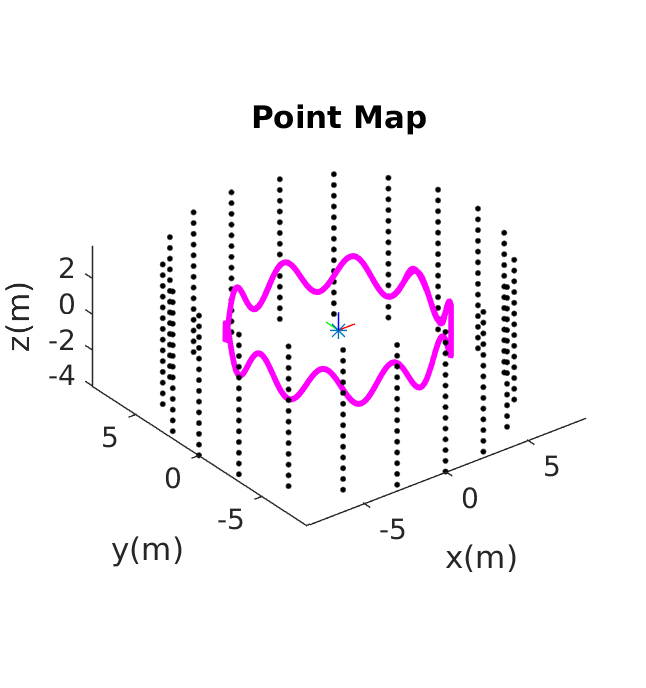}
	\includegraphics[scale = 0.35]	{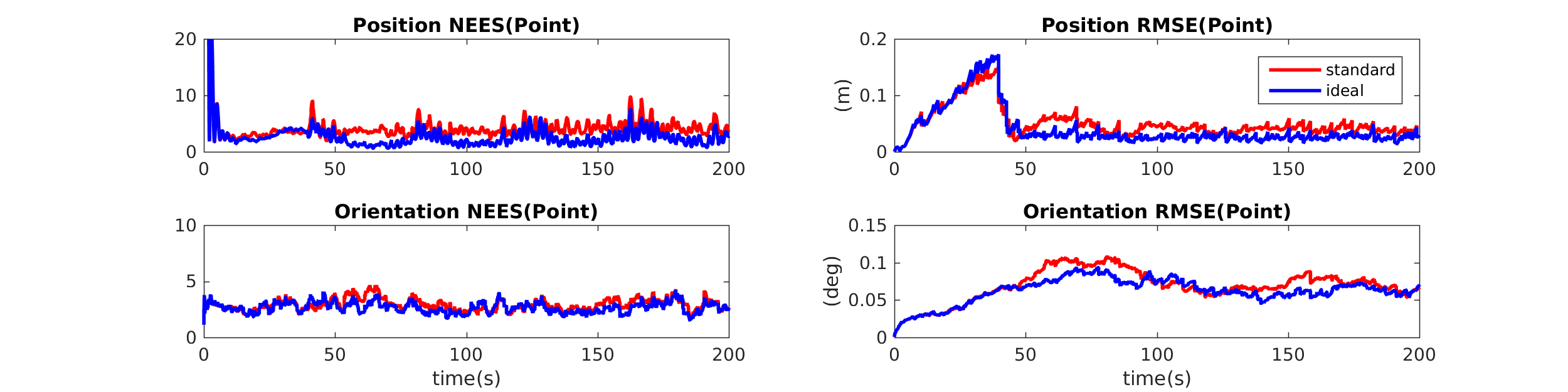}
	\includegraphics[scale = 0.38]	{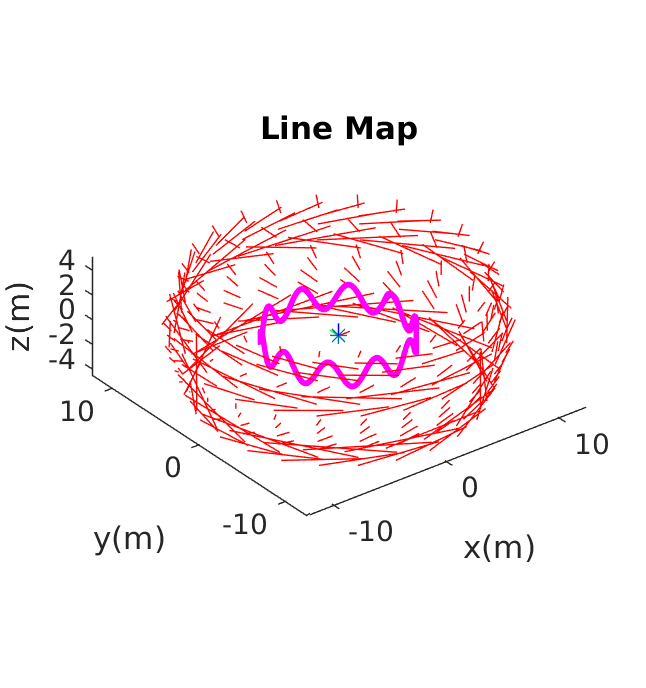}
	\includegraphics[scale = 0.35]	{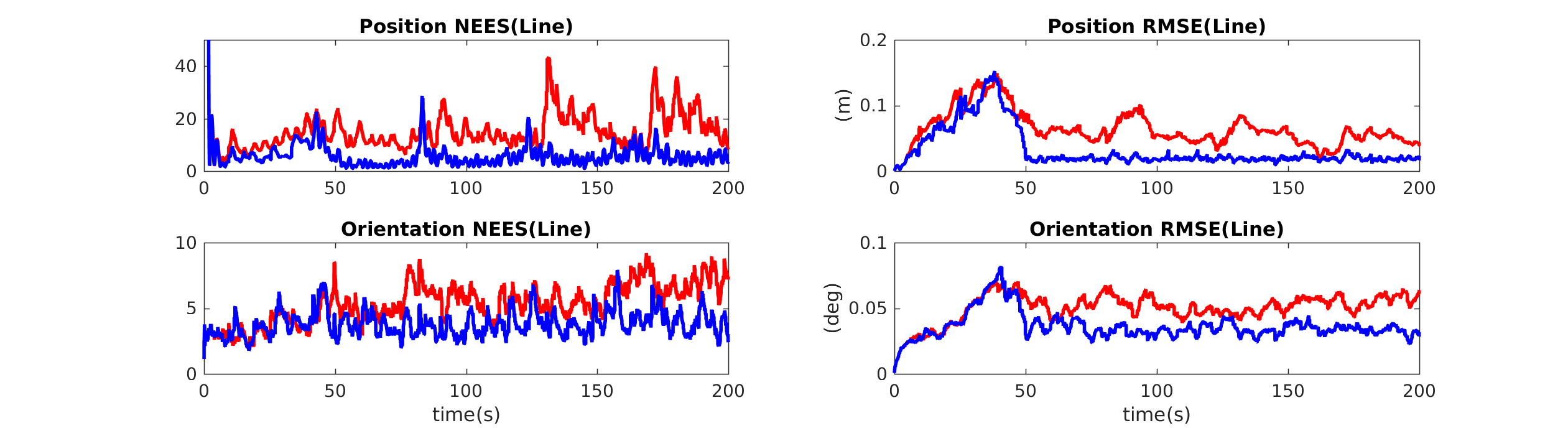}
	\includegraphics[scale = 0.38]	{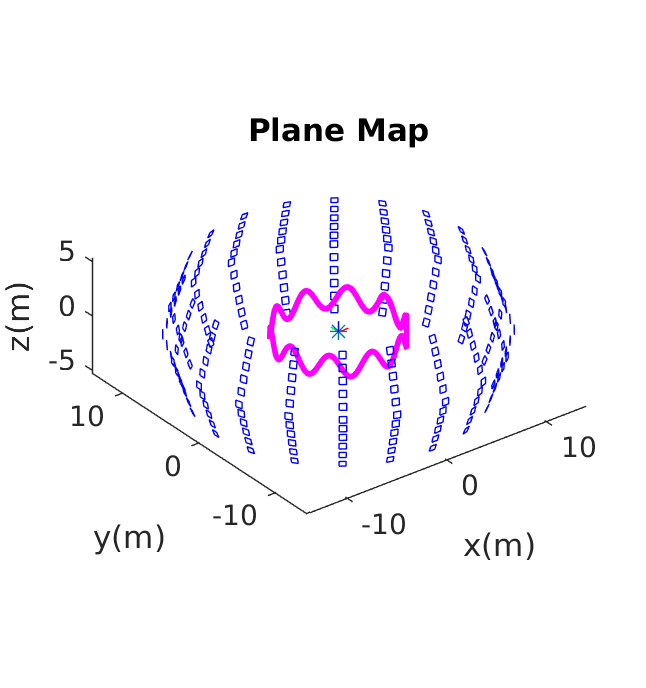}
	\includegraphics[scale = 0.35]	{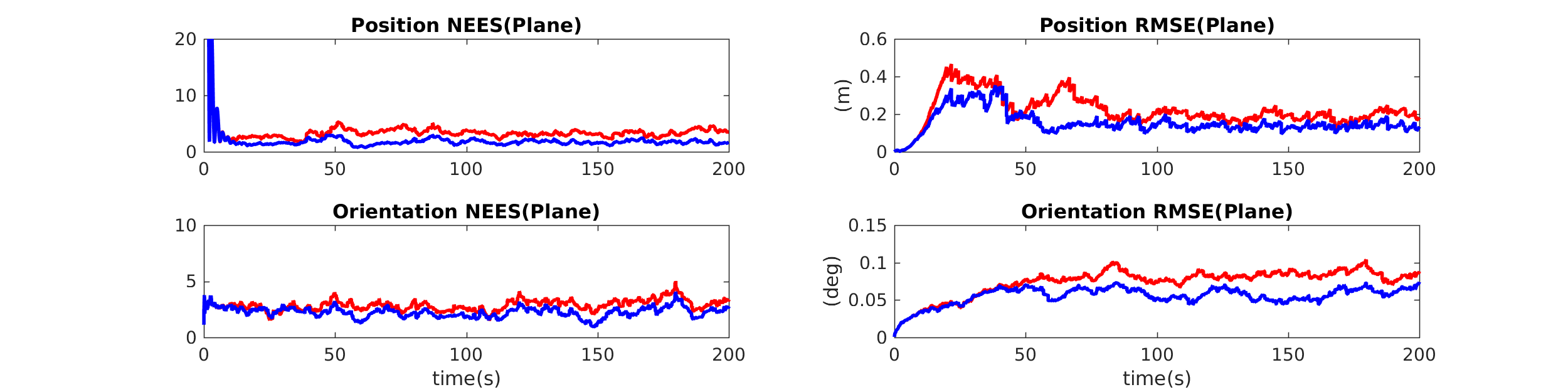}
	\includegraphics[scale = 0.38]	{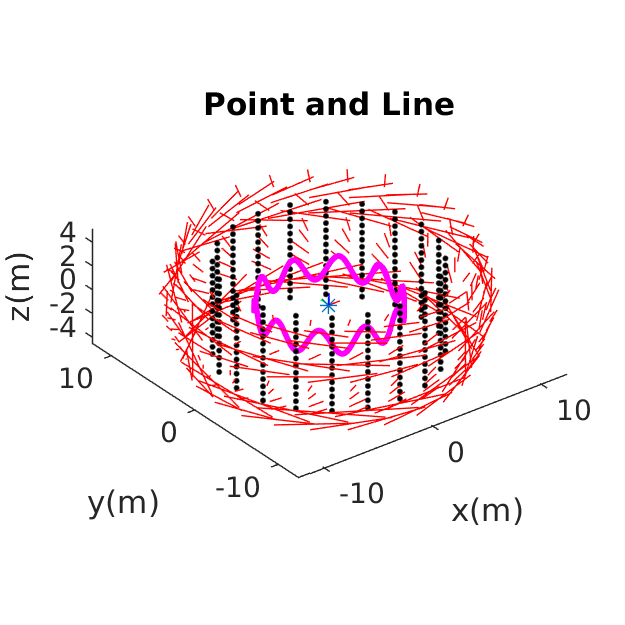}
	\includegraphics[scale = 0.35]	{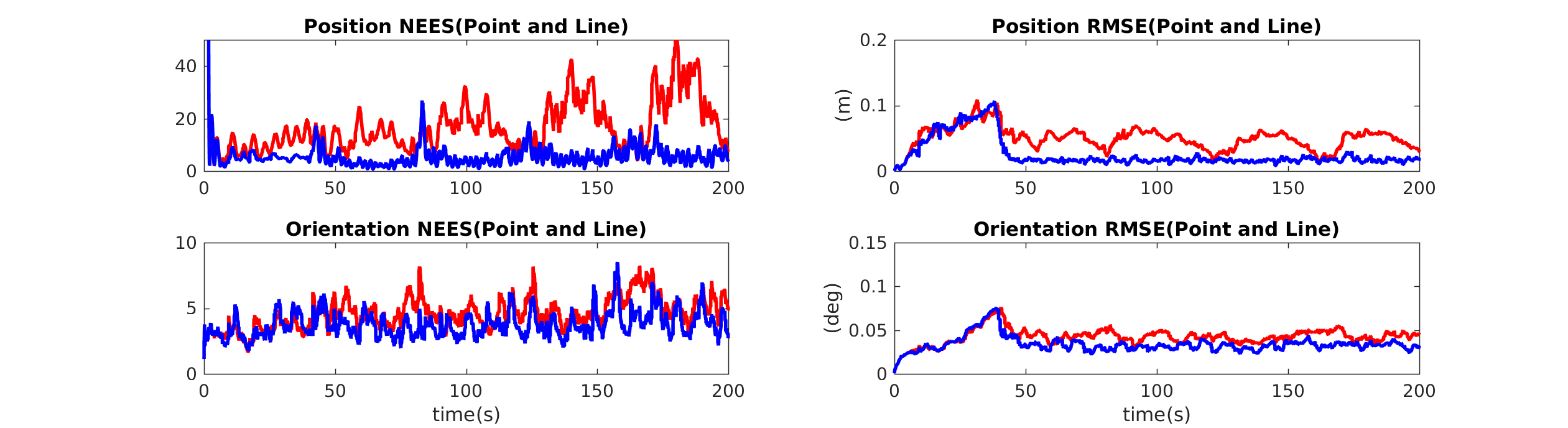}
	\includegraphics[scale = 0.38]	{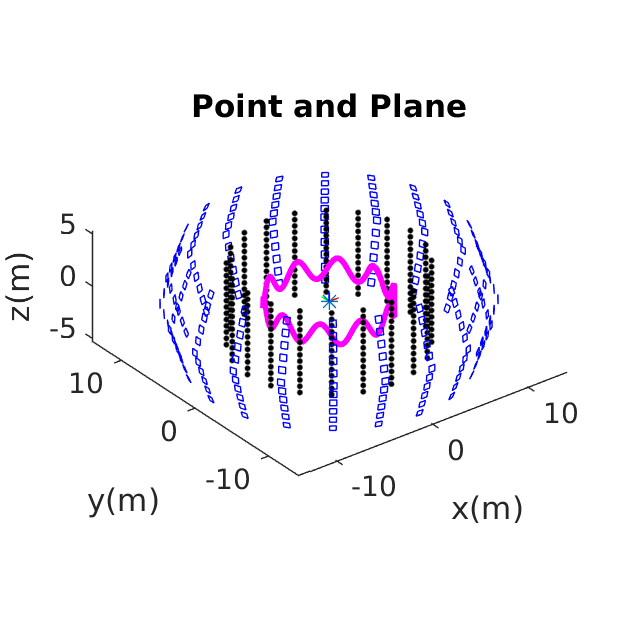}
	\includegraphics[scale = 0.35]	{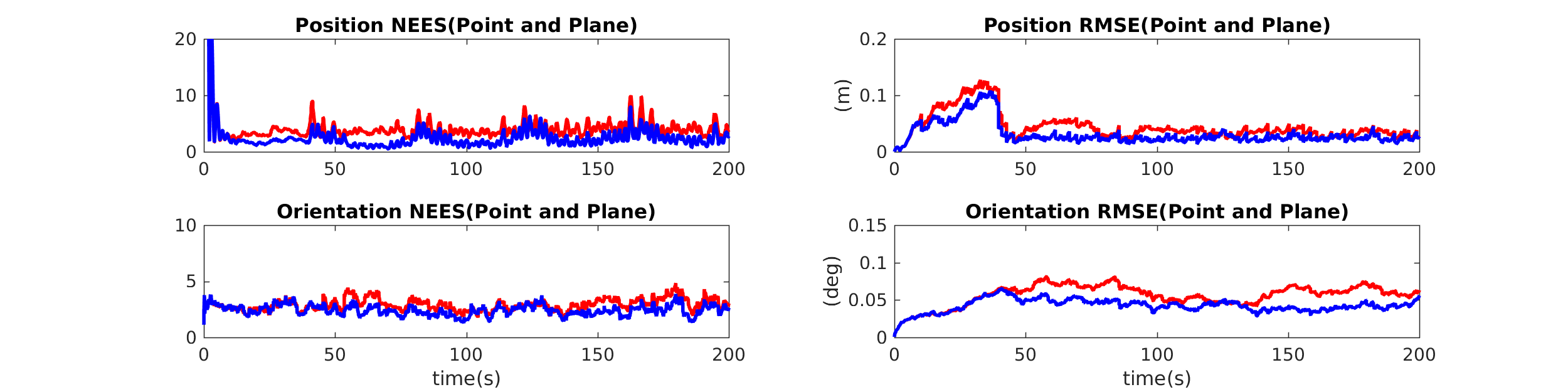}
	\includegraphics[scale = 0.38]	{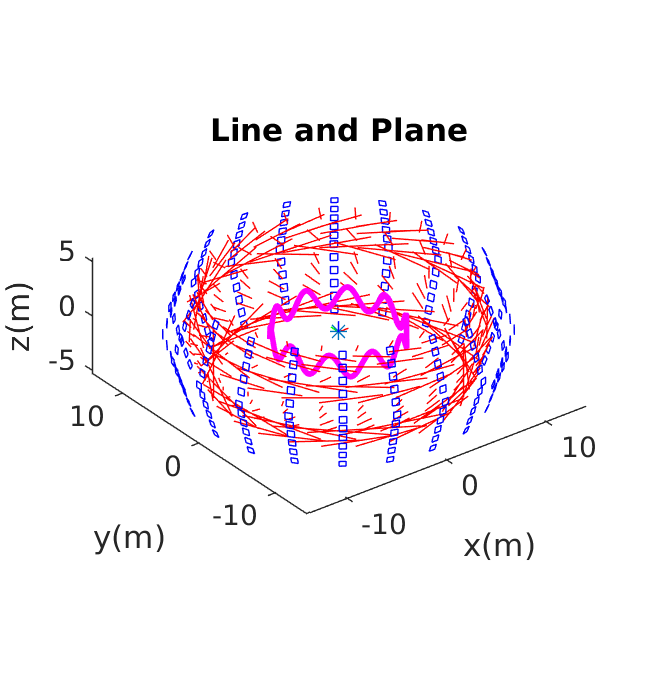}
	\includegraphics[scale = 0.35]	{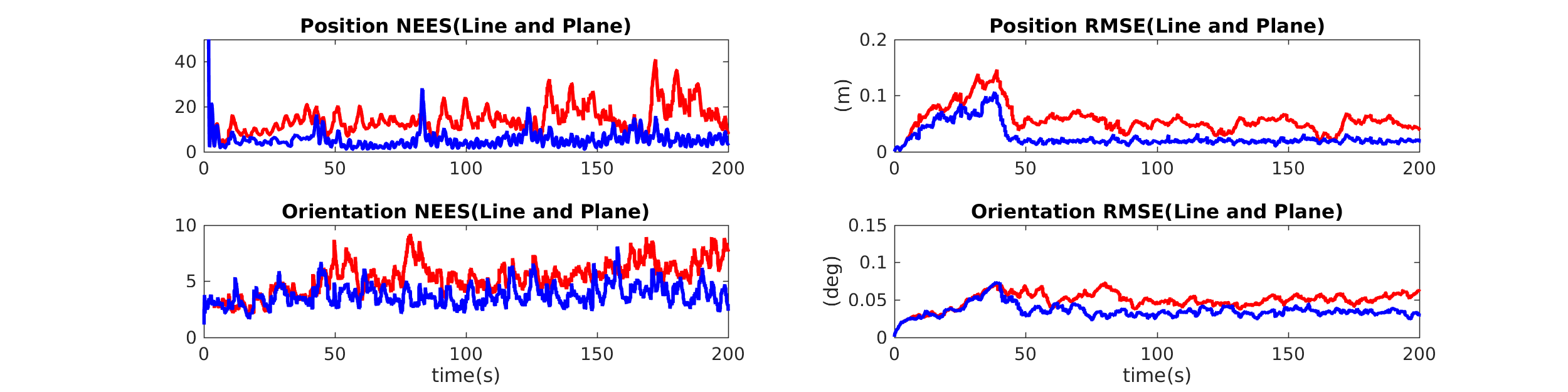}
	\includegraphics[scale = 0.38]	{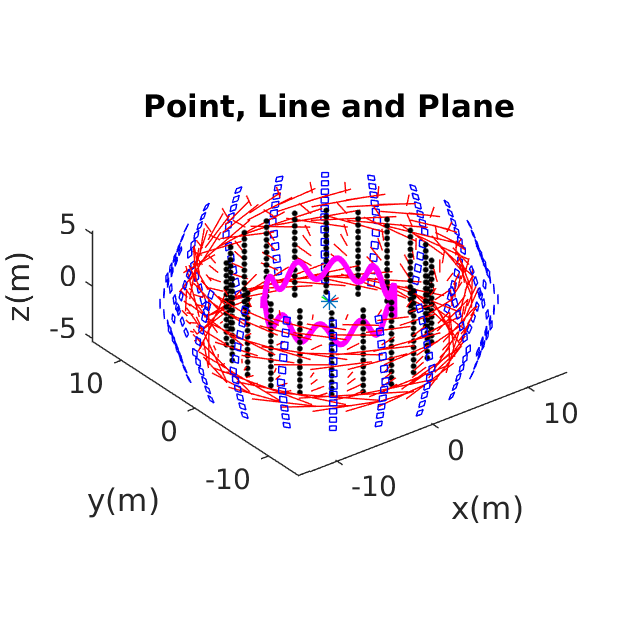}
	\includegraphics[scale = 0.35]	{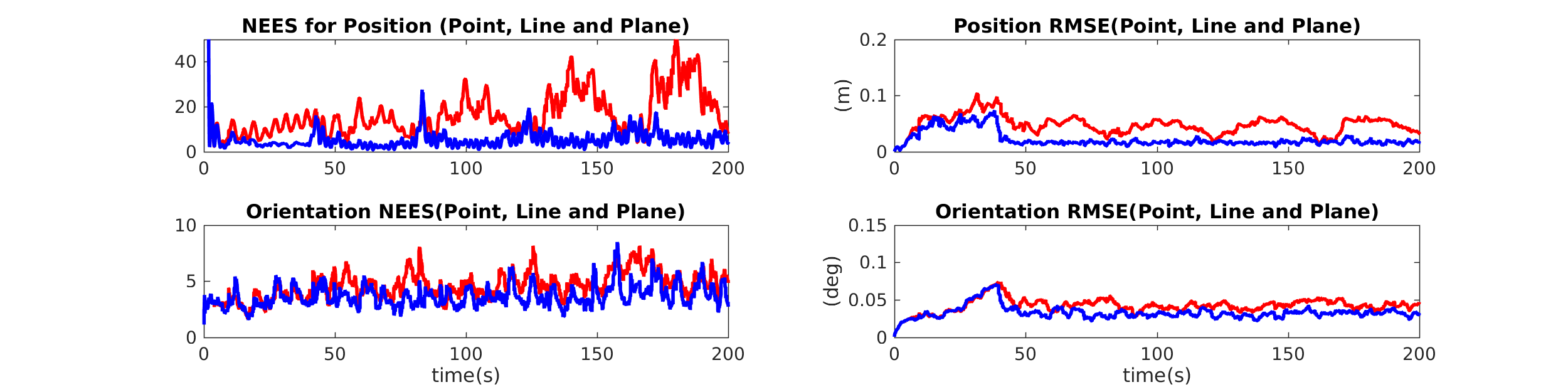}	
	\caption{Monte-Carlo results of EKF-based VI-SLAM using different geometric features.
		}
	\label{fig_all_slam}
\end{figure*}

\begin{figure*}
	\centering
	\includegraphics[scale = 0.35]	{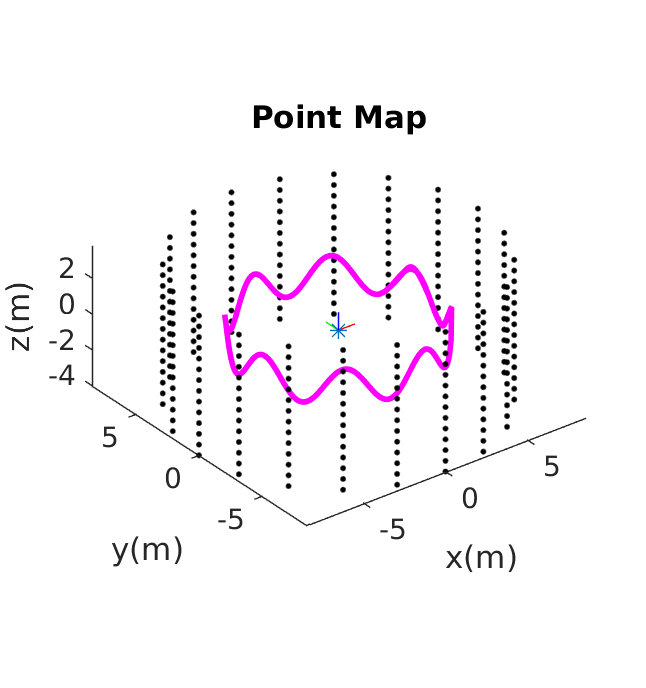}
	\includegraphics[scale = 0.35]	{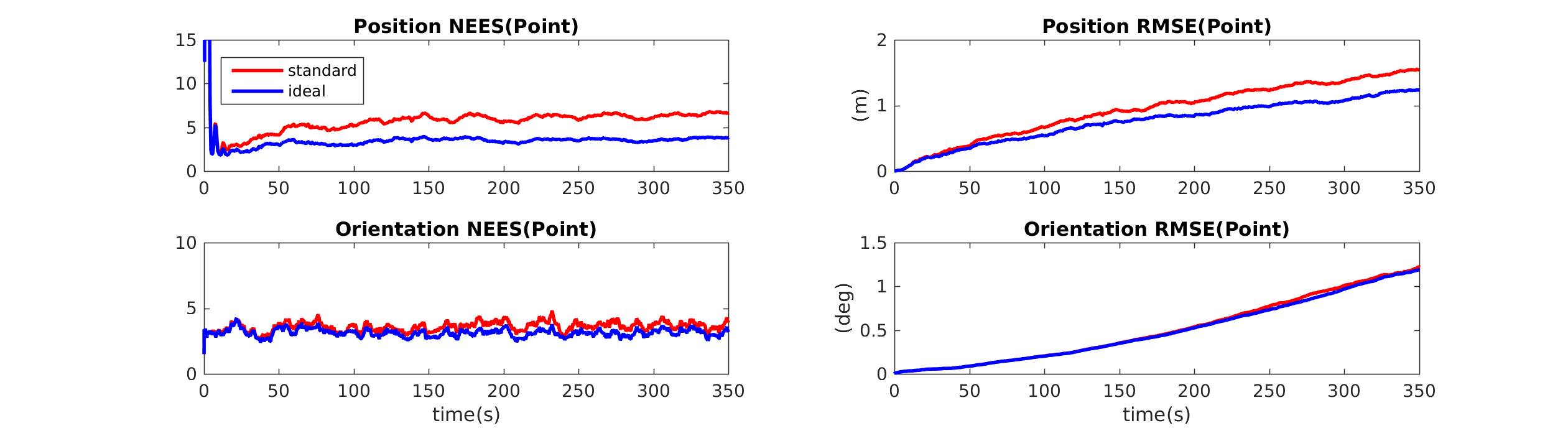}
	\includegraphics[scale = 0.35]	{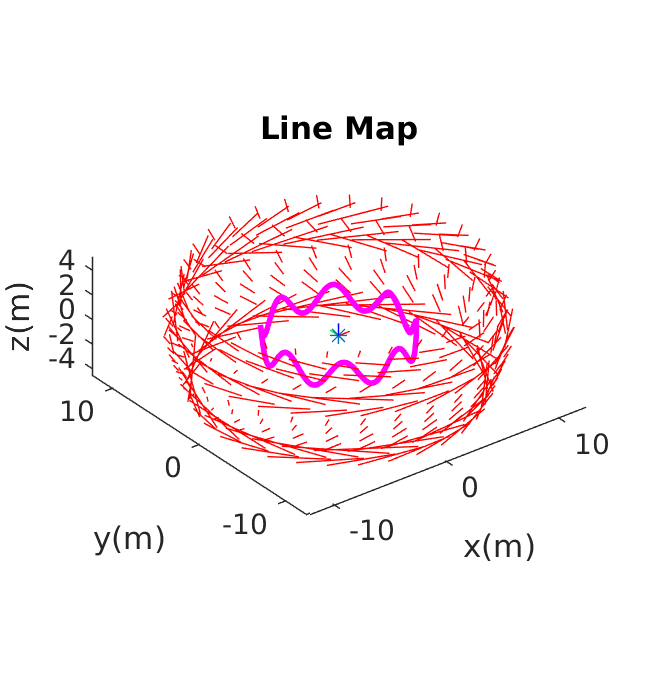}
	\includegraphics[scale = 0.35]	{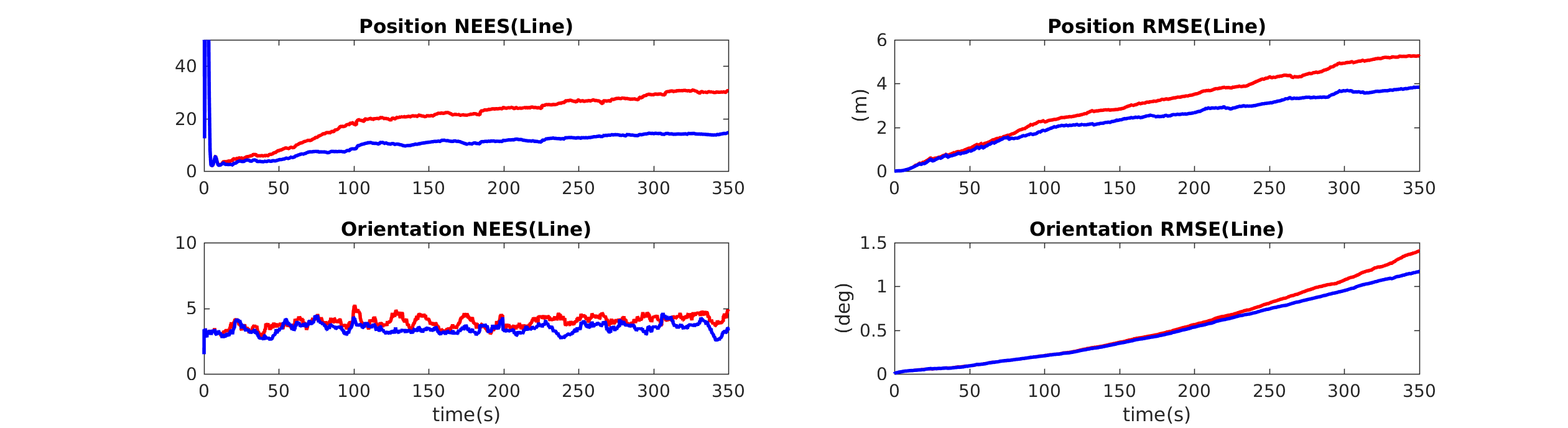}
	\includegraphics[scale = 0.35]	{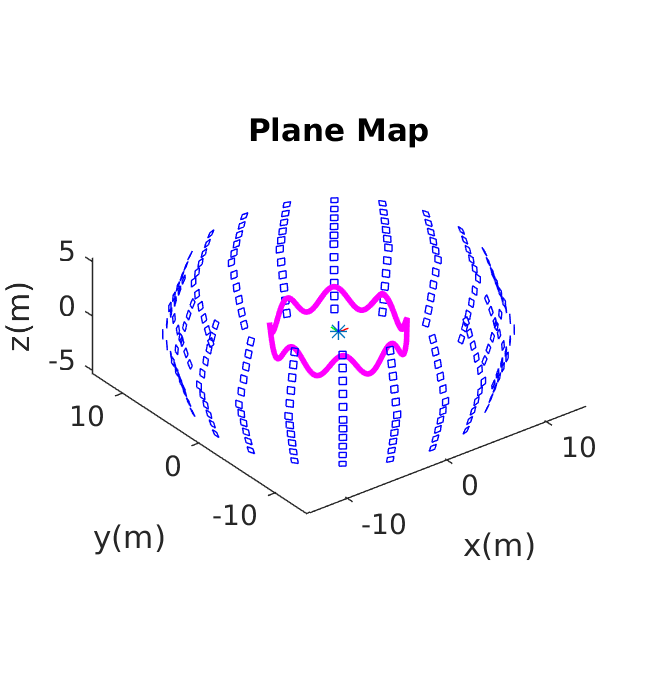}
	\includegraphics[scale = 0.35]	{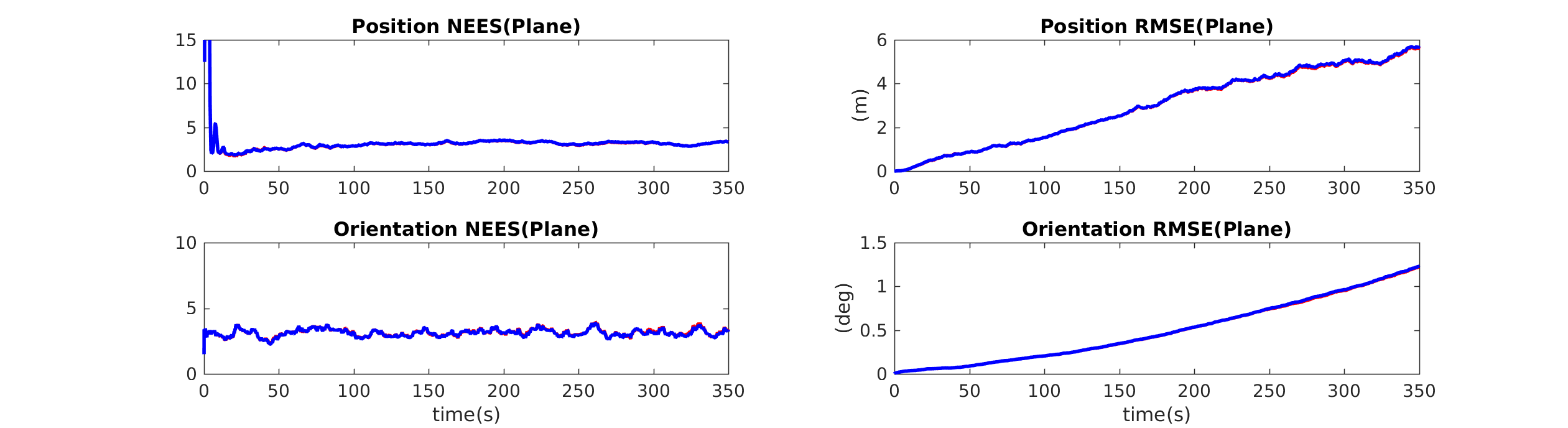}
	\includegraphics[scale = 0.35]	{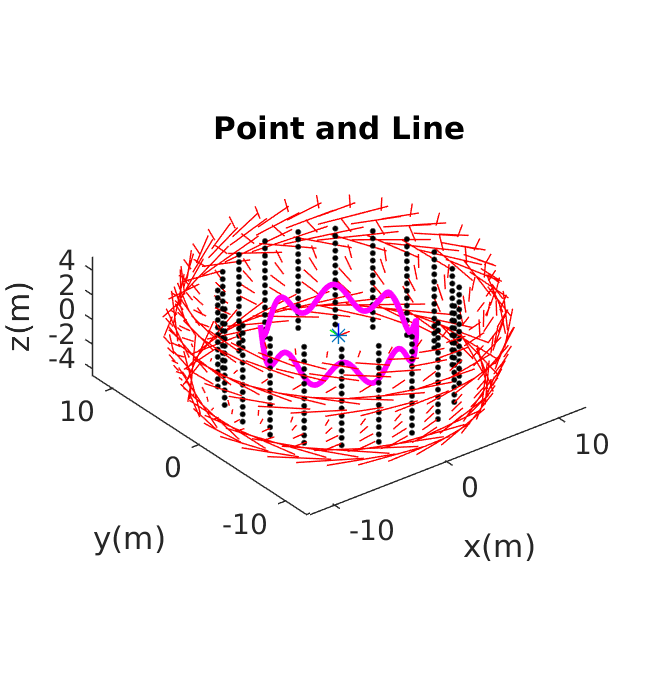}
	\includegraphics[scale = 0.35]	{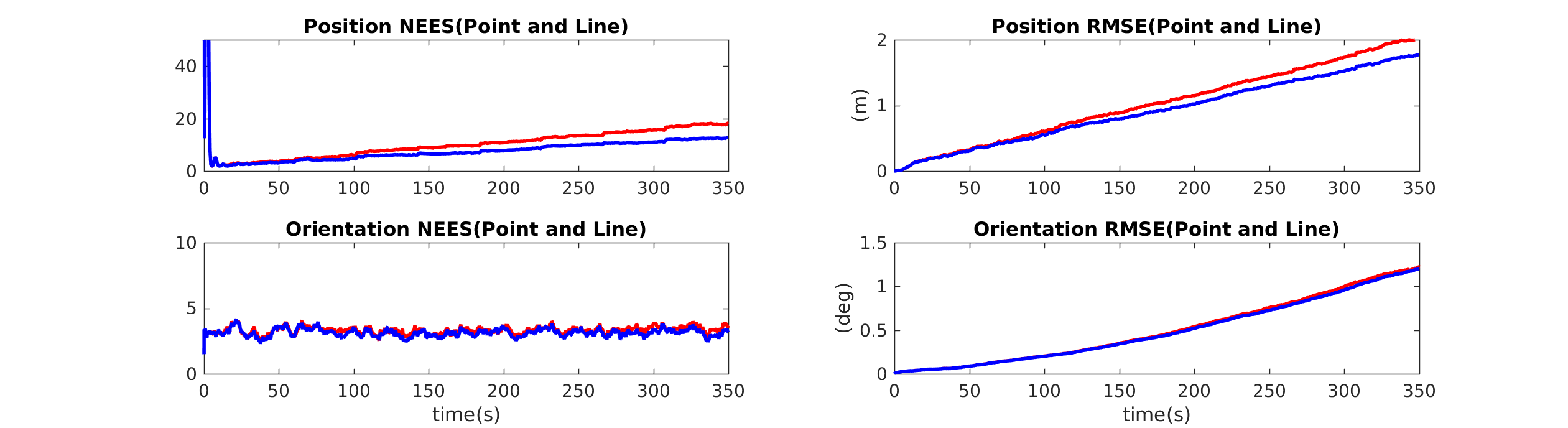}
	\includegraphics[scale = 0.35]	{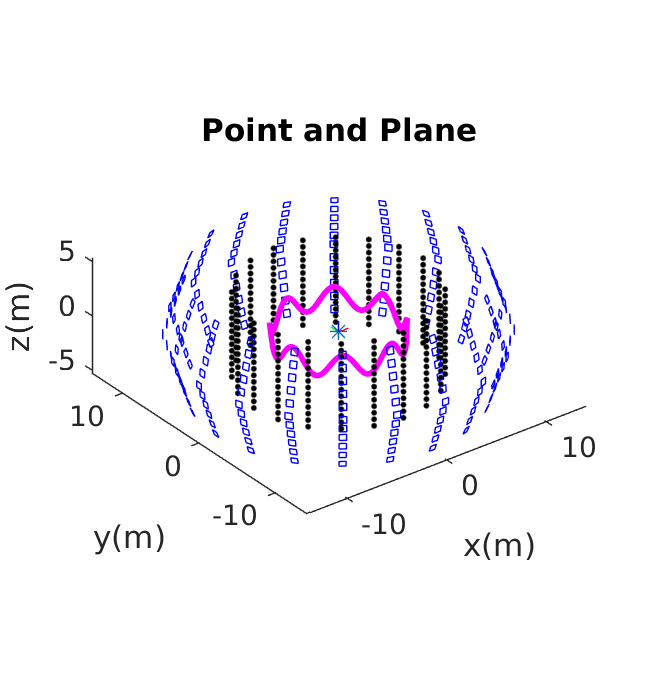}
	\includegraphics[scale = 0.35]	{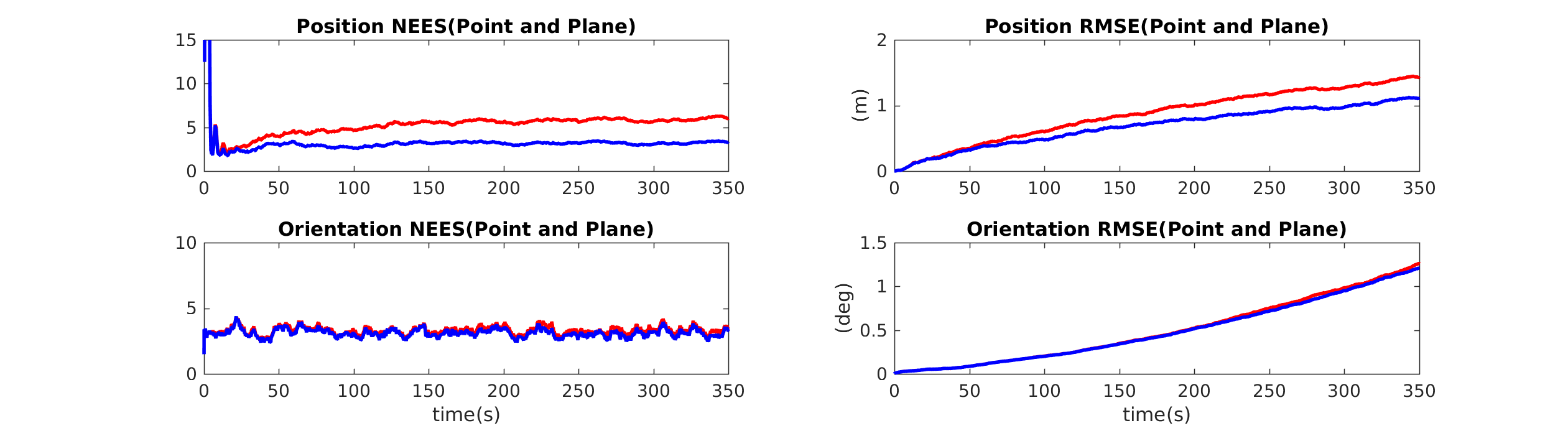}
	\includegraphics[scale = 0.35]	{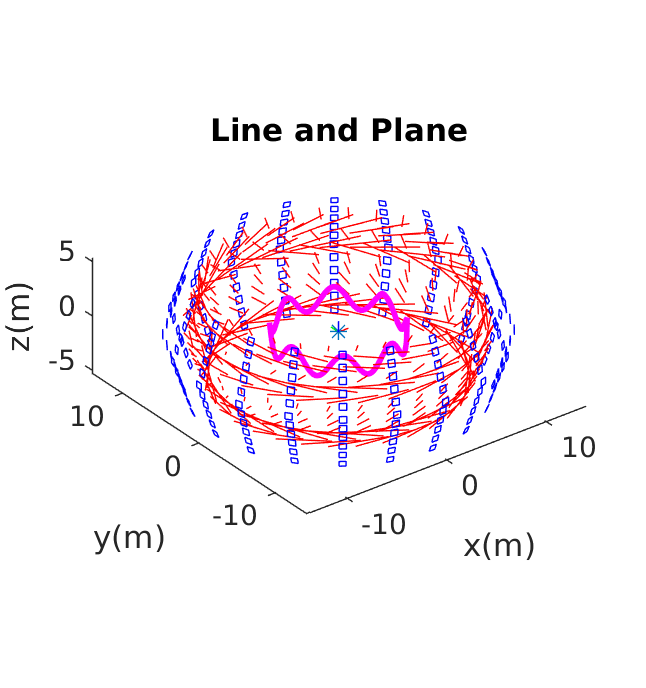}
	\includegraphics[scale = 0.35]	{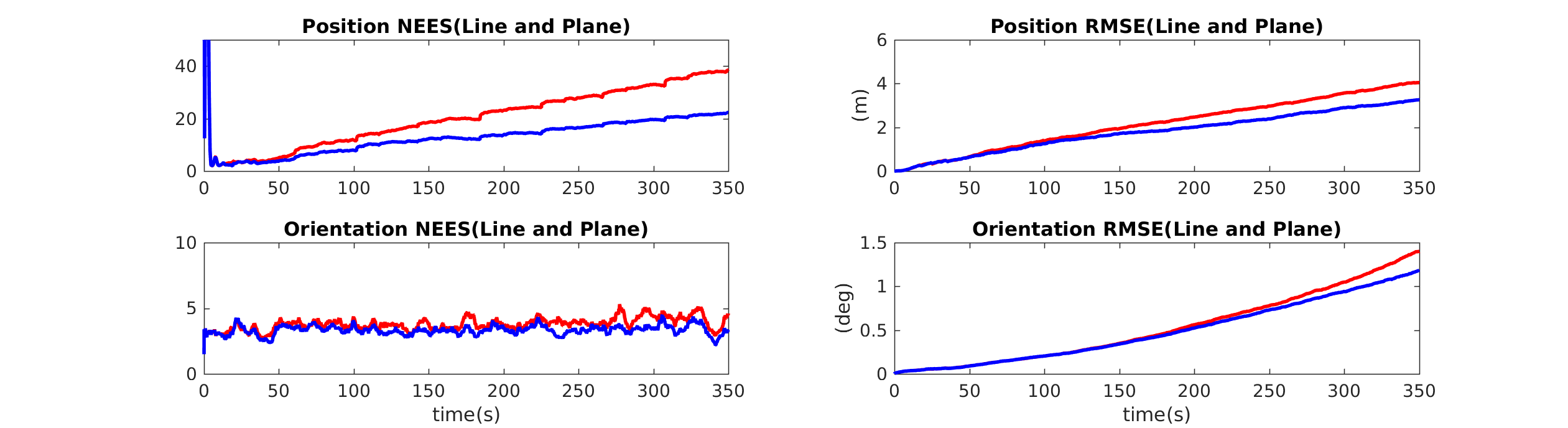}
	\includegraphics[scale = 0.35]	{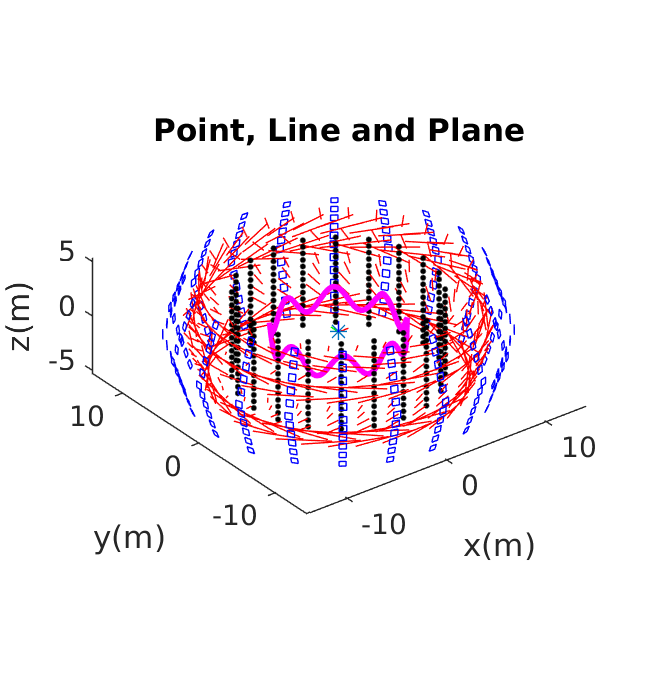}
	\includegraphics[scale = 0.35]	{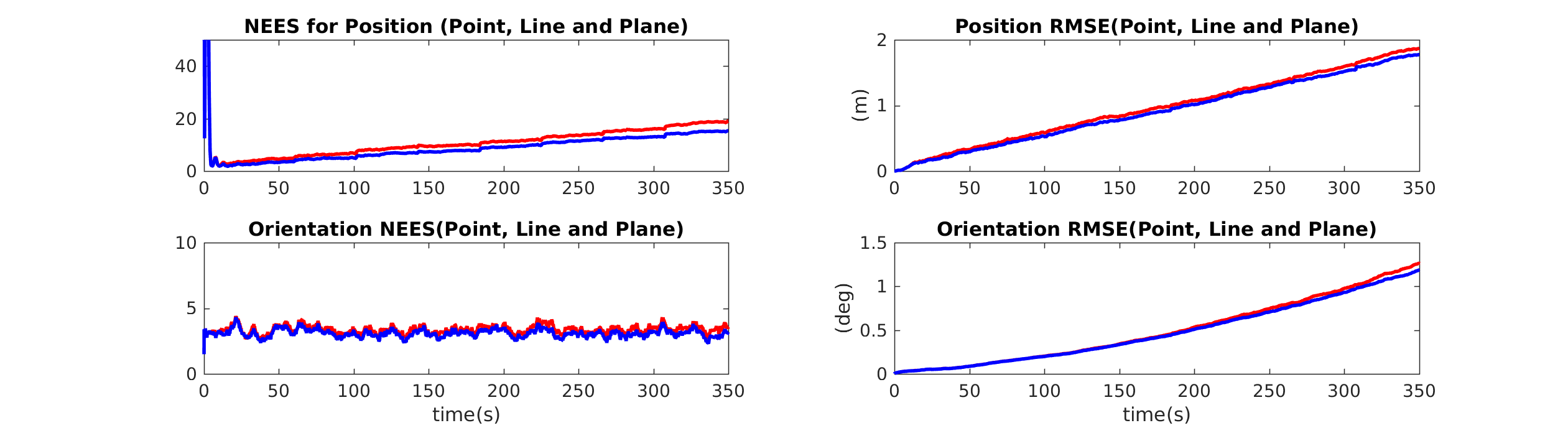}	
	\caption{Monte-Carlo results of MSCKF-based VIO using different geometric features. 
		}
	\label{fig_all_vio}
\end{figure*}

\begin{figure}
	\centering
	\includegraphics[scale = 0.5]{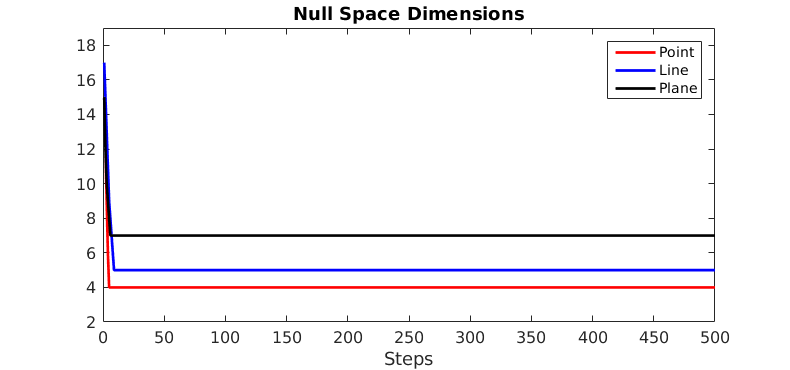}
	\caption{The dimension of the null space of the observability matrix (i.e., unobservable subspace) that is numerically computed during the simulation of the ideal-EKF based VI-SLAM with a single point, line or plane.}
	\label{fig_all_rank}
\end{figure}

\section{Conclusions and Future Work} \label{sec:concl}

In this paper, we have performed observability analysis for aided INS with different geometric features including points, lines and planes, which are detected from generic range and/or bearing measurements. 
encompassing vision-aided INS (VINS) as a special case. 
As in practice, most aided-INS estimators are built based on the linearized systems, whose observability properties directly impact the estimation performance, 
this work has primarily focused on observability analysis of the linearized aided INS with points, lines, planes and their combinations.
In particular, 
in the case of point features, we analytically show that the aided INS (both linearized and nonlinear)  using generic range and/or bearing measurements has 4 unobservable directions.
In the case of lines (planes), we prove that there exist at least 5 (7) unobservable directions with a single line (plane) feature, and for the first time, analytically derived the unobservable directions for multiple lines and planes. 
We have generalized this observability analysis for linearized aided INS with different combinations of point, line and plane features and summarized important results in Table~\ref{tab_summary}. 
Based on this analysis, we have also systematically investigated the effects of global measurements on the observability of aided INS, 
and found, as expected, that global information improves the system observability.  
Moreover, we have identified comprehensively 5 types of degenerate motions that negatively impact the system observability and should be avoided if possible (otherwise, extra sensors may be needed).
Additionally, it should be noted that, during our analysis,
we have employed 2 different point representation (Euclidean and spherical coordinates), 2 different plane representation (CP and Hesse form) and 2 different line measurement models (projective  and direct), to  analytically show that the systems have the same observability properties regardless these different representations.
To validate our analysis, 
we have developed EKF-based VI-SLAM and MSCKF-based VIO using heterogeneous geometric features of points, lines and planes,
and evaluated their performance extensively in Monte-Carlo simulations.

In the future, we will leverage the insights gained from this observability analysis to design consistent estimators for aided INS with different geometric features by enforcing proper observability constraints as in our prior work~\cite{Huang2012thesis}. 
We will also investigate the (stochastic) observability of aided INS under adversarial attacks \cite{Yang2017ISRR} or unknown inputs \cite{Martinelli2018TAC} 
in order to design secure estimators for robot navigation.

\begin{table} 
	\centering
	\caption{Summary of Observability Analysis of Aided INS}
	\label{tab_summary}
	\renewcommand{\arraystretch} {1.25}
	\begin{tabular}{|l|l|}
		\hline
		Features                        & Unobservable Directions \\ \hline
		Single or multiple points                 & 4 $(\mathbf{N})$                                    \\ 
		Unparallel lines                         & 4 $(\mathbf{N}_{L})$                                  \\ 
		Planes with unparallel intersections     & 4 $(\mathbf{N}_{\Pi 1:4})$                           \\ 
		Point and line                            & 4 $(\mathbf{N}_{PL})$                                \\ 
		Point and plane                           & 4 $(\mathbf{N}_{P\Pi})$                               \\ 
		Single line unparallel to planes 		& 4 $(\mathbf{N}_{l\pi 1:4})$                            \\ 
		Plane Intersections unparallel to lines  & 4 $(\mathbf{N}_{L\Pi})$                                \\ 
		Point, line and plane                     & 4 $(\mathbf{N}_{PL\Pi})$                                \\ \hline
		Single line                               & 5 $(\mathbf{N}_{l})$                                 \\ 
		Single line parallel to single plane   & 5 $(\mathbf{N}_{l\pi 1:5})$                            \\ 
		Two un-parallel planes                    & 5 $(\mathbf{N}_{\Pi})$                               \\ \hline
		Single plane                              & 7 $(\mathbf{N}_{\pi})$                              \\ \hline
	\end{tabular}
\end{table}




%
%
\begin{appendices}
\section{State Transition Matrix} \label{apd_state_transition}
Follow the reference \cite{Hesch2013TRO}, the state transition matrix $\boldsymbol{\Phi}_{(k,1)}$ is defined as:
\begin{equation}
	\boldsymbol{\Phi}_{(k,1)}
	=
	\begin{bmatrix}
	\boldsymbol{\Phi}_{11} & 
	\boldsymbol{\Phi}_{12} & 
	\mathbf{0}_3 & 
	\mathbf{0}_3 & 
	\mathbf{0}_3 & 
	\mathbf{0}_3
	\\
	\mathbf{0}_3 & 
	\mathbf{I}_3 &  
	\mathbf{0}_3 & 
	\mathbf{0}_3 & 
	\mathbf{0}_3 & 
	\mathbf{0}_3
	\\
	\boldsymbol{\Phi}_{31} & 
	\boldsymbol{\Phi}_{32} & 
	\mathbf{I}_3 & 
	\boldsymbol{\Phi}_{34} & 
	\mathbf{0}_3 & 
	\mathbf{0}_3
	\\
	\mathbf{0}_3 & 
	\mathbf{0}_3 &  
	\mathbf{0}_3 & 
	\mathbf{I}_3 & 
	\mathbf{0}_3 & 
	\mathbf{0}_3
	\\
	\boldsymbol{\Phi}_{51} & 
	\boldsymbol{\Phi}_{52} & 
	\boldsymbol{\Phi}_{53} &
	\boldsymbol{\Phi}_{54} & 
	\mathbf{I}_3 & 
	\mathbf{0}_3
	\\
	\mathbf{0}_3 & 
	\mathbf{0}_3 &  
	\mathbf{0}_3 & 
	\mathbf{0}_3 & 
	\mathbf{0}_3 & 
	\mathbf{I}_3					
	\end{bmatrix}	
\end{equation}
where we have:
\begin{eqnarray}
	\boldsymbol{\Phi}_{11}
	&=&
	^{I_k}_{I_1}\mathbf{R}\\
	\boldsymbol{\Phi}_{31}
	&=&
	-\lfloor\left({^G\mathbf{V}_{I_k}} - {^G\mathbf{V}_{I_1}}\right)+ {^G\mathbf{g}\delta t_k}\times\rfloor
	{^G_{I_1}}\mathbf{R}\\
	\boldsymbol{\Phi}_{51}
	&=&
	\lfloor {^G\mathbf{P}_{I_1}} + {^G\mathbf{V}_{I_1}}\delta t_k -\frac{1}{2}{^G\mathbf{g}}\delta t^2_k
	-{^G\mathbf{P}_{I_k}}\times\rfloor {^G_{I_1}}\mathbf{R}
	\\
	\boldsymbol{\Phi}_{12}
	&=&
	-
	\int_{t_1}^{t_k} {^{I_{\tau}}_{I_k}}\mathbf{R}^{\top}\mathbf{d}\tau
	\\
	\boldsymbol{\Phi}_{32}
	&=&
	\int_{t_1}^{t_k}
	{^{I_s}_G}\mathbf{R}^{\top}
	\lfloor {^{I_s}\mathbf{a}}\times\rfloor
	\int_{t_1}^{s}
	{^{I_{\tau}}_{I_s}}\mathbf{R}^{\top}
	\mathbf{d}\tau
	\mathbf{d}s
	\\
	\boldsymbol{\Phi}_{52}
	&=&
	\int_{t_1}^{t_k}
	\int_{t_1}^{\theta}
	{^{I_s}_G}\mathbf{R}^{\top}
	\lfloor {^{I_s}\mathbf{a}}\times\rfloor
	\int_{t_1}^{t_s}
	{^{I_{\tau}}_{I_s}}\mathbf{R}^{\top}
	\mathbf{d}\tau
	\mathbf{d}s	
	\mathbf{d}\theta
	\\
	\boldsymbol{\Phi}_{53}
	&=&
	\mathbf{I}_{3}\delta t_k
	\\
	\boldsymbol{\Phi}_{34}
	&=&
	-
	\int_{t_1}^{t_k}
	{^{I_{\tau}}_G\mathbf{R}^{\top}}
	\mathbf{d}\tau
	\\
	\boldsymbol{\Phi}_{54}
	&=&
	-
	\int_{t_1}^{t_k}
	\int_{t_1}^{t_s}
	{^{I_{\tau}}_G\mathbf{R}^{\top}}
	\mathbf{d}\tau	
	\mathbf{d}s
\end{eqnarray}

\section{Sensor Measurements for Point Features} \label{apd_meas}
In this section, we will analyze the measurement model for lase sensors, camera sensors and 2D imaging sonars. In this section, we refer to frame $\{X\}$ as the sensor frame. 

\subsection{1D Range Finder}
1D range Finder can only get the range measurement of the point feature, and the measurement model can be described as:
\begin{equation}\label{eq_1D_range}
z^{(r)}   
= 
\sqrt{^x\mathbf{P}_{\mathbf{f}}^{\top}{^x\mathbf{P}_{\mathbf{f}}}} + \mathbf{n}^{(r)} 
= 
\sqrt{(^xx_{\mathbf{f}})^2+(^xy_{\mathbf{f}})^2+(^xz_{\mathbf{f}})^2} + \mathbf{n}^{(r)} 
\end{equation}
where $^xr_{\mathbf{f}}=\sqrt{^x\mathbf{p}_f^{\top}{^x\mathbf{p}_f}}$ represents the range for the point feature in the frame $\{X\}$. And we can linearizing the measurement model at $^x\hat{\mathbf{P}}_{\mathbf{f}}$ as:
\begin{equation}\label{eq_1D_range_linear}
	\tilde{z}^{(r)}
	\simeq
	\mathbf{H}_{r}{^x\tilde{\mathbf{P}}_{\mathbf{f}}}+\mathbf{n}^{(r)}
	=
	\frac{^x\hat{\mathbf{P}}^{\top}_{\mathbf{f}}}{^x\hat{r}_{\mathbf{f}}}{^x\tilde{\mathbf{P}}_{\mathbf{f}}}+\mathbf{n}^{(r)}
\end{equation}

\subsection{Mono-camera}
Mono-camera can only get the bearing measurements of the point feature, and the measurement model can be represented as:
\begin{equation}\label{eq_mono_cam}
\mathbf{z}^{(b)}  
= 
\left[
\begin{array}{c}
\frac{\mathbf{e}_1^{\top}{^x\mathbf{P}_{\mathbf{f}}}}{\mathbf{e}_3^{\top}{^x\mathbf{P}_{\mathbf{f}}}} \\
\frac{\mathbf{e}_2^{\top}{^x\mathbf{P}_{\mathbf{f}}}}{\mathbf{e}_3^{\top}{^x\mathbf{P}_{\mathbf{f}}}} 
\end{array}
\right] + \mathbf{n}^{(b)}
= 
\left[
\begin{array}{c}
\frac{^xx_{\mathbf{f}}}{^xz_{\mathbf{f}}}  \\
\frac{^xy_{\mathbf{f}}}{^xz_{\mathbf{f}}}
\end{array}
\right]  + \mathbf{n}^{(b)}
\end{equation}
where $\mathbf{e}_i\in \mathbb{R}^{3\times1}$ for $i=1,2,3$ and $\mathbf{e}_1=[1\quad0\quad0]^{\top}$, $\mathbf{e}_2=[0\quad1\quad0]^{\top}$ and $\mathbf{e}_3=[0\quad0\quad1]^{\top}$. Inspired by \cite{Yang2017SSRR}, we use a more universal measurement model for point feature with Mono-camera as:
\begin{equation}\label{eq_mono_meas}
	\mathbf{z}^{(b)} =
	\mathbf{h}_{b}\left({^x\mathbf{P}_{\mathbf{f}}}, {\mathbf{n}^{(b)}}\right)
	=
	\begin{bmatrix}
	^x\mathbf{b}^{\top}_{\perp1} \\
	^x\mathbf{b}^{\top}_{\perp2} 
	\end{bmatrix}
	{^x\mathbf{P}_{\mathbf{f}}}
	+
	{^xz_{\mathbf{f}}}
	\begin{bmatrix}
	^x\mathbf{b}^{\top}_{\perp1} \\
	^x\mathbf{b}^{\top}_{\perp2} 
	\end{bmatrix}
	\begin{bmatrix}
	\mathbf{I}_2 \\
	\mathbf{0}_{1\times 2}
	\end{bmatrix}
	\mathbf{n}^{(b)}
\end{equation}
where $\mathbf{b}_{\perp i},i\in\{1,2\}$ are two perpendicular vectors to the bearing $^x\mathbf{b}_{\mathbf
	f}$, and they can be constructed from \cite{Yang2017SSRR}. The advantage of this model is that it is suitable for both fish eye and normal projective camera model. And the linearized model with $^x\hat{\mathbf{P}}_{\mathbf{f}}$ is:
\begin{equation}\label{eq_mono_meas_linear}
	\tilde{\mathbf{z}}^{(b)}
	\simeq
	\mathbf{H}_{b}{^x\tilde{\mathbf{P}}_{\mathbf{f}}} +
	\mathbf{H}_{n}\mathbf{n}^{(b)}
	=
	\begin{bmatrix}
	^x\mathbf{b}^{\top}_{\perp1} \\
	^x\mathbf{b}^{\top}_{\perp2} 
	\end{bmatrix}
	{^x\tilde{\mathbf{P}}_{\mathbf{f}}}
	+
	{^x\hat{z}_{\mathbf{f}}}
	\begin{bmatrix}
	^x\hat{\mathbf{b}}^{\top}_{\perp1} \\
	^x\hat{\mathbf{b}}^{\top}_{\perp2} 
	\end{bmatrix}
	\begin{bmatrix}
	\mathbf{I}_2 \\
	\mathbf{0}_{1\times 2}
	\end{bmatrix}
	\mathbf{n}^{(b)}
\end{equation}

\subsection{2D Imaging Sonar}
2D imaging sonar's measurement contains the range and horizontal bearing measurement of a point \cite{Yang2017icra}, and the  model can be represented as:
\begin{equation}\label{eq_son}
\mathbf{z} = 
\begin{bmatrix}
z^{(r)}  \\
z^{(b)}
\end{bmatrix}
+
\begin{bmatrix}
\mathbf{n}^{(r)} \\
\mathbf{n}^{(b)}
\end{bmatrix}
=
\left[
\begin{array}{c}
\sqrt{{^x\mathbf{P}_{\mathbf{f}}^{\top}}{^x\mathbf{P}_{\mathbf{f}}}}  \\
^x\theta _{\mathbf{f}}
\end{array}
\right] + 
\begin{bmatrix}
\mathbf{n}^{(r)} \\
\mathbf{n}^{(b)}
\end{bmatrix}
=
\left[
\begin{array}{c}
\sqrt{(^xx_{\mathbf{f}})^2+(^xy_{\mathbf{f}})^2+(^xz_{\mathbf{f}})^2}       \\
^x\theta _{\mathbf{f}}
\end{array}
\right] + 
\begin{bmatrix}
\mathbf{n}^{(r)} \\
\mathbf{n}^{(b)}
\end{bmatrix}
\end{equation}
and similar to the case of the Mono-camera, we rewrite the bearing measurement as:
\begin{equation}
	z^{(b)}
	=
	\mathbf{h}_{b}\left({^x\mathbf{P}_{\mathbf{f}}}, \mathbf{n}^{(b)}\right)
	=
	{^xr_{\mathbf{f}}}
	\begin{bmatrix}
	\cos \left({^x\theta_{\mathbf{f}}+ n^{(b)}}\right) \cos \phi  &
	\sin \left({^x\theta_{\mathbf{f}}+ n^{(b)}}\right) \cos \phi  & 
	\sin \phi
	\end{bmatrix}
	{^x\mathbf{b}_{\perp}}
\end{equation}
where $^x\mathbf{b}_{\perp} = \begin{bmatrix}-\sin \theta & \cos \theta &  0\end{bmatrix}^{\top}$. Therefore, the linearized sonar measurement model with $^x\hat{\mathbf{P}}_{\mathbf{f}}$ as:
\begin{equation}
	\tilde{\mathbf{z}}
	=
	\begin{bmatrix}
	\tilde{z}^{(r)} \\
	\tilde{z}^{(b)}
	\end{bmatrix}
	\begin{bmatrix}
	\mathbf{H}_{r}{^x\tilde{\mathbf{P}}_{\mathbf{f}}}+                  \mathbf{n}^{(r)}  \\
	\mathbf{H}_{b}{^x\tilde{\mathbf{P}}_{\mathbf{f}}} +
	\mathbf{H}_{n}\mathbf{n}^{(b)}
	\end{bmatrix}
\end{equation}
\begin{eqnarray}
	\mathbf{H}_{b}  &=& {^x\mathbf{b}^{\top}_{\perp}}  \\
	\mathbf{H}_{n}  &=& {^x\mathbf{b}^{\top}_{\perp}}{^x\hat{\mathbf{P}}_{\mathbf{f}}}
	\left(-sin \left(^x\hat{\theta}_{\mathbf{f}}\right)\cos \left(^x\hat{\phi}_{\mathbf{f}}\right) + \cos \left(^x\hat{\theta}_{\mathbf{f}}\right)\cos \left(^x\hat{\phi}_{\mathbf{f}}\right)\right)	
\end{eqnarray}
where $^x\hat{\theta}_{\mathbf{f}}=\arctan\frac{^x\hat{y}_{\mathbf{f}}}{^x\hat{x}_{\mathbf{f}}}$, $^x\hat{\phi}_{\mathbf{f}}=\arctan\frac{^x\hat{z}_{\mathbf{f}}}{\sqrt{^x\hat{x}^2_{\mathbf{f}}+^x\hat{y}^2_{\mathbf{f}}}}$.

\subsection{2D LiDAR}
2D lidar measurement is quite similar to sonar measurement, except for an extra constraint that $^xz_f=\mathbf{e}_3^{\top}{^x\mathbf{p}_f}=0$ (or we can see as $\phi=0$). In order to distinguish with Eq. \eqref{eq_son}, we add this constrain to the model, and hence:
\begin{equation}
\mathbf{z} =
\begin{bmatrix}
z^{(r)}  \\
\mathbf{z}^{(b)}
\end{bmatrix}
=
\left[
\begin{array}{c}
\sqrt{{^x\mathbf{P}_{\mathbf{f}}^{\top}}{^x\mathbf{P}_{\mathbf{f}}}}  \\
^x\theta_{\mathbf{f}} \\
\mathbf{e}_3^{\top}{^x\mathbf{P}_{\mathbf{f}}}		
\end{array}
\right] + 
\begin{bmatrix}
n^{(r)}  \\
\mathbf{n}^{(b)}_{2\times 1}
\end{bmatrix}
=
\left[
\begin{array}{c}
\sqrt{(^xx_{\mathbf{f}})^2+(^xy_{\mathbf{f}})^2+(^xz_{\mathbf{f}})^2}       	\\
^x\theta_{\mathbf{f}}						\\
^xz_{\mathbf{f}}
\end{array}
\right] +
\begin{bmatrix}
n^{(r)}  \\
\mathbf{n}^{(b)}_{2\times 1}
\end{bmatrix}
\end{equation}
where $\mathbf{n}^{(b)}=\begin{bmatrix} n^{(b)}_1 & n^{(b)}_2\end{bmatrix}^{\top}$. 
Similarly, we can rewrite the bearing measurement as:
\begin{equation}
	\mathbf{z}^{(b)}
	=
	\mathbf{h}_b\left(^x\mathbf{P}_{\mathbf{f}},\mathbf{n}^{(b)}\right)
	=
	\begin{bmatrix}
	{^x\mathbf{b}^{\top}_{\perp1}}
	{^xr_{\mathbf{f}}}
	\begin{bmatrix}
	\cos \left({^x\theta_{\mathbf{f}}+ n^{(b)}_1}\right) \cos \phi  &
	\sin \left({^x\theta_{\mathbf{f}}+ n^{(b)}_1}\right) \cos \phi  & 
	\sin \phi
	\end{bmatrix}^{\top} \\
	^x\mathbf{b}_{\perp2}^{\top}{^x\mathbf{P}_{\mathbf{f}}} + n^{(b)}_2
	\end{bmatrix}
\end{equation}
where $\mathbf{b}_{\perp1}=\begin{bmatrix}-\sin \left(^x\theta_{\mathbf{f}}\right) & \cos \left(^x\theta_{\mathbf{f}}\right) &  0\end{bmatrix}^{\top}$, $\mathbf{b}_{\perp2}=\begin{bmatrix}
0 & 0 & 1\end{bmatrix}^{\top}$. Therefore, the linearized system with $^x\hat{\mathbf{P}}_{\mathbf{f}}$ can be described as:
\begin{equation}
	\tilde{\mathbf{z}}
	=
	\begin{bmatrix}
	\tilde{z}^{(r)} \\
	\tilde{\mathbf{z}}^{(b)}
	\end{bmatrix}
	=
	\begin{bmatrix}
	\mathbf{H}_{r}{^x\tilde{\mathbf{P}}_{\mathbf{f}}} + n^{(r)}  \\
	\mathbf{H}_{b}{^x\tilde{\mathbf{P}}_{\mathbf{f}}} + \mathbf{H}_{n}\mathbf{n}^{(b)}
	\end{bmatrix}
\end{equation}
where:
\begin{eqnarray}
	\mathbf{H}_{b}
	&=&
	\begin{bmatrix}
	\mathbf{b}^{\top}_{\perp1} \\
	\mathbf{b}^{\top}_{\perp2} 	
	\end{bmatrix}  \\
	\mathbf{H}_{n}
	&=&
	\begin{bmatrix}
	{^x\mathbf{b}^{\top}_{\perp1}}{^x\hat{\mathbf{P}}_{\mathbf{f}}}
	\left(-sin \left(^x\hat{\theta}_{\mathbf{f}}\right)\cos \left(^x\hat{\phi}_{\mathbf{f}}\right) + \cos \left(^x\hat{\theta}_{\mathbf{f}}\right)\cos \left(^x\hat{\phi}_{\mathbf{f}}\right)\right) \\
	^x\mathbf{b}^{\top}_{\perp2}
	\end{bmatrix}
\end{eqnarray}
where $^x\hat{\theta}_{\mathbf{f}}=\arctan\frac{^x\hat{y}_{\mathbf{f}}}{^x\hat{x}_{\mathbf{f}}}$, $^x\hat{\phi}_{\mathbf{f}}=\arctan\frac{^x\hat{z}_{\mathbf{f}}}{\sqrt{^x\hat{x}^2_{\mathbf{f}}+^x\hat{y}^2_{\mathbf{f}}}}$.

\subsection{3D LiDAR}
3D LiDAR can directly get the range and bearing information of the feature, therefore the measurement model can be denoted as: 
\begin{equation}\label{eq_3d_lidar}
\mathbf{z} = 
\begin{bmatrix}
z^{(r)}  \\
\mathbf{z}^{(b)}
\end{bmatrix}
=
\begin{bmatrix}
\sqrt{{^x\mathbf{p}_{\mathbf{f}}^{\top}}{^x\mathbf{p}_{\mathbf{f}}}}  \\
^x\theta_{\mathbf{f}} \\
^x\phi_{\mathbf{f}}
\end{bmatrix}
+
\begin{bmatrix}
{n}^{(r)}  \\
\mathbf{n}^{(b)}
\end{bmatrix}
=
\left[
\begin{array}{c}
\sqrt{(^xx_{\mathbf{f}})^2+(^xy_{\mathbf{f}})^2+(^xz_{\mathbf{f}})^2}       \\
^x\theta_{\mathbf{f}} \\
^x\phi_{\mathbf{f}}
\end{array}
\right] + 
\begin{bmatrix}
{n}^{(r)}  \\
\mathbf{n}^{(b)}
\end{bmatrix}
\end{equation}
where $\mathbf{n}^{(b)}=\begin{bmatrix} n^{(b)}_1 & n^{(b)}_2\end{bmatrix}^{\top}$. 
Similarly, we can rewrite the bearing measurement as:
\begin{equation}
\scalemath{0.9}{
\mathbf{z}^{(b)}
=
\mathbf{h}_b\left(^x\mathbf{P}_{\mathbf{f}},\mathbf{n}^{(b)}\right)
=
\begin{bmatrix}
{^x\mathbf{b}^{\top}_{\perp1}}
{^xr_{\mathbf{f}}}
\begin{bmatrix}
\cos \left({^x\theta_{\mathbf{f}}+ n^{(b)}_1}\right) \cos \phi  &
\sin \left({^x\theta_{\mathbf{f}}+ n^{(b)}_1}\right) \cos \phi  & 
\sin \phi
\end{bmatrix}^{\top} \\
{^x\mathbf{b}^{\top}_{\perp2}}
{^xr_{\mathbf{f}}}
\begin{bmatrix}
\cos \left({^x\theta_{\mathbf{f}}}\right) \cos \left({^x\phi_{\mathbf{f}}+ n^{(b)}_2}\right)  &
\sin \left({^x\theta_{\mathbf{f}}}\right) \cos \left({^x\phi_{\mathbf{f}}+ n^{(b)}_2}\right)  & 
\sin \left({^x\phi_{\mathbf{f}}+ n^{(b)}_2}\right)
\end{bmatrix}^{\top} 
\end{bmatrix}
}
\end{equation}
where $\mathbf{b}_{\perp i},i\in\{1,2\}$ are two perpendicular vectors to the bearing $^x\mathbf{b}_{\mathbf
	f}$, and they can be constructed from \cite{Yang2017SSRR}. Therefore, the linearized system with $^x\hat{\mathbf{P}}_{\mathbf{f}}$ can be described as:
\begin{equation}
\tilde{\mathbf{z}}
=
\begin{bmatrix}
\tilde{z}^{(r)} \\
\tilde{\mathbf{z}}^{(b)}
\end{bmatrix}
=
\begin{bmatrix}
\mathbf{H}_{r}{^x\tilde{\mathbf{P}}_{\mathbf{f}}} + n^{(r)}  \\
\mathbf{H}_{b}{^x\tilde{\mathbf{P}}_{\mathbf{f}}} + \mathbf{H}_{n}\mathbf{n}^{(b)}
\end{bmatrix}
\end{equation}
where:
\begin{eqnarray}
\mathbf{H}_{b}
&=&
\begin{bmatrix}
\mathbf{b}^{\top}_{\perp1} \\
\mathbf{b}^{\top}_{\perp2} 	
\end{bmatrix}  \\
\mathbf{H}_{n}
&=&
\begin{bmatrix}
{^x\mathbf{b}^{\top}_{\perp1}}{^x\hat{\mathbf{P}}_{\mathbf{f}}}
\left(-\sin \left(^x\hat{\theta}_{\mathbf{f}}\right)\cos \left(^x\hat{\phi}_{\mathbf{f}}\right) + \cos \left(^x\hat{\theta}_{\mathbf{f}}\right)\cos \left(^x\hat{\phi}_{\mathbf{f}}\right)\right) \\
{^x\mathbf{b}^{\top}_{\perp2}}{^x\hat{\mathbf{P}}_{\mathbf{f}}}
\left(-\cos \left(^x\hat{\theta}_{\mathbf{f}}\right)\sin \left(^x\hat{\phi}_{\mathbf{f}}\right) -
 \sin \left(^x\hat{\theta}_{\mathbf{f}}\right)\sin \left(^x\hat{\phi}_{\mathbf{f}}\right) +
 \cos \left(^x\hat{\phi}_{\mathbf{f}}\right)\right)
\end{bmatrix}
\end{eqnarray}
where $^x\hat{\theta}_{\mathbf{f}}=\arctan\frac{^x\hat{y}_{\mathbf{f}}}{^x\hat{x}_{\mathbf{f}}}$, $^x\hat{\phi}_{\mathbf{f}}=\arctan\frac{^x\hat{z}_{\mathbf{f}}}{\sqrt{^x\hat{x}^2_{\mathbf{f}}+^x\hat{y}^2_{\mathbf{f}}}}$.

\subsection{RGBD Camera}
RGBD camera can also get the range and bearing information of the feature, therefore:
\begin{equation}\label{eq_rgbd}
\mathbf{z} = 
\begin{bmatrix}
z^{(r)}  \\
\mathbf{z}^{(b)}
\end{bmatrix}
=
\left[
\begin{array}{c}
\sqrt{{^x\mathbf{P}_{\mathbf{f}}^{\top}}{^x\mathbf{Pp}_{\mathbf{f}}}}  \\
\frac{\mathbf{e}_1^{\top}{^x\mathbf{P}_{\mathbf{f}}}}{\mathbf{e}_3^{\top}{^x\mathbf{P}_{\mathbf{f}}}} \\
\frac{\mathbf{e}_2^{\top}{^x\mathbf{P}_{\mathbf{f}}}}{\mathbf{e}_3^{\top}{^x\mathbf{P}_{\mathbf{f}}}}
\end{array}
\right] + 
\begin{bmatrix}
n^{(r)} \\
\mathbf{n}^{(b)}
\end{bmatrix}
=
\left[
\begin{array}{c}
\sqrt{(^xx_{\mathbf{f}})^2+(^xy_{\mathbf{f}})^2+(^xz_{\mathbf{f}})^2}       \\
\frac{^xx_{\mathbf{f}}}{^xz_{\mathbf{f}}}							\\
\frac{^xy_{\mathbf{f}}}{^xz_{\mathbf{f}}}
\end{array}
\right] + 
\begin{bmatrix}
n^{(r)} \\
\mathbf{n}^{(b)}
\end{bmatrix}
\end{equation}
Therefore, we can rewrite the measurement model as:
\begin{equation}
    \mathbf{z}
    =
    \begin{bmatrix}
    z^{(r)} \\
    \mathbf{z}^{(b)}
    \end{bmatrix}
	=
	\begin{bmatrix}
	\sqrt{^x\mathbf{P}_{\mathbf{f}}^{\top}{^x\mathbf{P}_{\mathbf{f}}}} + \mathbf{n}^{(r)}  \\
	\mathbf{h}_{b}\left({^x\mathbf{P}_{\mathbf{f}}, \mathbf{n}^{(b)}}\right)
	\end{bmatrix}
	=
	\begin{bmatrix}
	\sqrt{^x\mathbf{P}_{\mathbf{f}}^{\top}{^x\mathbf{P}_{\mathbf{f}}}} + \mathbf{n}^{(r)}  \\
	\begin{bmatrix}
	^x\mathbf{b}^{\top}_{\perp1} \\
	^x\mathbf{b}^{\top}_{\perp2} 
	\end{bmatrix}
	{^x\mathbf{P}_{\mathbf{f}}}
	+
	{^xz_{\mathbf{f}}}
	\begin{bmatrix}
	^x\mathbf{b}^{\top}_{\perp1} \\
	^x\mathbf{b}^{\top}_{\perp2} 
	\end{bmatrix}
	\begin{bmatrix}
	\mathbf{I}_2 \\
	\mathbf{0}_{1\times 2}
	\end{bmatrix}
	\mathbf{n}^{(b)}
	\end{bmatrix}	
\end{equation}
And we can linearize the system with $^x\hat{\mathbf{P}}_{\mathbf{f}}$ as:
\begin{equation}
	\tilde{\mathbf{z}}
	=
	\begin{bmatrix}
	\tilde{z}^{(r)}  \\
	\tilde{\mathbf{z}}^{(b)}
	\end{bmatrix}
	\simeq
	\begin{bmatrix}
	\mathbf{H}_{r}{^x\tilde{\mathbf{P}}_{\mathbf{f}}} + n^{(r)} \\
	\mathbf{H}_{b}{^x\tilde{\mathbf{P}}_{\mathbf{f}}} + \mathbf{H}_{n}\mathbf{n}^{(b)}
	\end{bmatrix}
	=
	\begin{bmatrix}
	\frac{^x\hat{\mathbf{P}}^{\top}_{\mathbf{f}}}{^x\hat{r}_{\mathbf{f}}}{^x\tilde{\mathbf{P}}_{\mathbf{f}}}+\mathbf{n}^{(r)} \\
	\begin{bmatrix}
	^x\mathbf{b}^{\top}_{\perp1} \\
	^x\mathbf{b}^{\top}_{\perp2} 
	\end{bmatrix}
	{^x\tilde{\mathbf{P}}_{\mathbf{f}}}
	+
	{^x\hat{z}_{\mathbf{f}}}
	\begin{bmatrix}
	^x\hat{\mathbf{b}}^{\top}_{\perp1} \\
	^x\hat{\mathbf{b}}^{\top}_{\perp2} 
	\end{bmatrix}
	\begin{bmatrix}
	\mathbf{I}_2 \\
	\mathbf{0}_{1\times 2}
	\end{bmatrix}
	\mathbf{n}^{(b)}
	\end{bmatrix}
\end{equation}

\subsection{Stereo Camera}
Stereo-camera are two mono-cameras with known extrinsic transformations. Without lost of generalities, we assume input images have already been rectified, thus the measurement model can be described as:
\begin{equation}\label{eq_stereo}
\mathbf{z}  = 
\left[
\begin{array}{c}
\frac{\mathbf{e}_1^{\top}{^x\mathbf{P}_{\mathbf{f}}}}{\mathbf{e}_3^{\top}{^x\mathbf{P}_{\mathbf{f}}}} \\
\frac{\mathbf{e}_1^{\top}{^x\mathbf{P}_{\mathbf{f}}}-b_s}{\mathbf{e}_3^{\top}{^x\mathbf{P}_{\mathbf{f}}}} \\
\frac{\mathbf{e}_2^{\top}{^x\mathbf{P}_{\mathbf{f}}}}{\mathbf{e}_3^{\top}{^x\mathbf{P}_{\mathbf{f}}}} 
\end{array}
\right] + 
\begin{bmatrix}
n^{(b)}_1 \\
n^{(b)}_1 \\
n^{(b)}_2 \\
\end{bmatrix}
= 
\left[
\begin{array}{c}
\frac{^xx_{\mathbf{f}}}{^xz_{\mathbf{f}}}  \\
\frac{^xx_{\mathbf{f}}-b_s}{^xz_{\mathbf{f}}}  \\
\frac{^xy_{\mathbf{f}}}{^xz_{\mathbf{f}}}
\end{array}
\right]  + 
\begin{bmatrix}
n^{(b)}_1 \\
n^{(b)}_1 \\
n^{(b)}_2 \\
\end{bmatrix}
\end{equation}
where $b$ is the baseline for the stereo-camera, which is a known scalar. Similar to the case of Mono-camera, we can rewrite the Stereo camera measurement as:
\begin{equation}
	\mathbf{z}
	=
	\begin{bmatrix}
	\mathbf{z}^{(b)}_{L} \\
	\mathbf{z}^{(b)}_{R}
	\end{bmatrix}
	=
	\begin{bmatrix}
	\mathbf{h}_{b_L}\left(^x\mathbf{P}_{\mathbf{f}},\mathbf{n}^{(b)}\right) \\
	\mathbf{h}_{b_R}\left(^x\mathbf{P}_{\mathbf{f}},\mathbf{n}^{(b)}\right) \\
	\end{bmatrix}
	=
	\begin{bmatrix}
	\begin{bmatrix}
	^x\mathbf{b}^{\top}_{\perp1L} \\
	^x\mathbf{b}^{\top}_{\perp2L} 
	\end{bmatrix}
	{^x\mathbf{P}_{\mathbf{f}}}
	+
	{^xz_{\mathbf{f}}}
	\begin{bmatrix}
	^x\mathbf{b}^{\top}_{\perp1L} \\
	^x\mathbf{b}^{\top}_{\perp2L} 
	\end{bmatrix}
	\begin{bmatrix}
	\mathbf{I}_2 \\
	\mathbf{0}_{1\times 2}
	\end{bmatrix}
	\mathbf{n}^{(b)} \\
	\begin{bmatrix}
	^x\mathbf{b}^{\top}_{\perp1R} \\
	^x\mathbf{b}^{\top}_{\perp2R} 
	\end{bmatrix}
	{^x\mathbf{P}'_{\mathbf{f}}}
	+
	{^xz_{\mathbf{f}}}
	\begin{bmatrix}
	^x\mathbf{b}^{\top}_{\perp1R} \\
	^x\mathbf{b}^{\top}_{\perp2R} 
	\end{bmatrix}
	\begin{bmatrix}
	\mathbf{I}_2 \\
	\mathbf{0}_{1\times 2}
	\end{bmatrix}
	\mathbf{n}^{(b)}
	\end{bmatrix}
\end{equation}
where $\mathbf{z}^{(b)}_{L}$ and $\mathbf{z}^{(b)}_{R}$ represents the bearing measurement from the left-and right camera, respectively, $^x\mathbf{P}'_{\mathbf{f}}=\begin{bmatrix}{^xx_{\mathbf{f}}-b_s} & ^xy_{\mathbf{f}} & {^xz_{\mathbf{f}}}\end{bmatrix}^{\top}$. With $^x\hat{\mathbf{P}}_{\mathbf{f}}$, we can linearize the system as:
\begin{equation}
	\tilde{\mathbf{z}}
	=
	\begin{bmatrix}
	\tilde{\mathbf{z}}^{(b)}_{L}  \\
	\tilde{\mathbf{z}}^{(b)}_{R}
	\end{bmatrix}
	=
	\begin{bmatrix}
	\mathbf{H}_{b_L}{^x\tilde{\mathbf{P}}_{\mathbf{f}}}+\mathbf{H}_{n}\mathbf{n^{(b)}} \\
	\mathbf{H}_{b_R}{^x\tilde{\mathbf{P}}_{\mathbf{f}}}+\mathbf{H}_{n}\mathbf{n^{(b)}} \\
	\end{bmatrix}
	=
		\begin{bmatrix}
		\begin{bmatrix}
		^x\hat{\mathbf{b}}^{\top}_{\perp1_L} \\
		^x\hat{\mathbf{b}}^{\top}_{\perp2_L} 
		\end{bmatrix}
		{^x\tilde{\mathbf{P}}_{\mathbf{f}}}
		+
		{^x\hat{z}_{\mathbf{f}}}
		\begin{bmatrix}
		^x\hat{\mathbf{b}}^{\top}_{\perp1_L} \\
		^x\hat{\mathbf{b}}^{\top}_{\perp2_L} 
		\end{bmatrix}
		\begin{bmatrix}
		\mathbf{I}_2 \\
		\mathbf{0}_{1\times 2}
		\end{bmatrix}
		\mathbf{n}^{(b)} \\
		\begin{bmatrix}
		^x\hat{\mathbf{b}}^{\top}_{\perp1_R} \\
		^x\hat{\mathbf{b}}^{\top}_{\perp2_R} 
		\end{bmatrix}
		{^x\tilde{\mathbf{P}}_{\mathbf{f}}}
		+
		{^x\hat{z}_{\mathbf{f}}}
		\begin{bmatrix}
		^x\hat{\mathbf{b}}^{\top}_{\perp1_R} \\
		^x\hat{\mathbf{b}}^{\top}_{\perp2_R} 
		\end{bmatrix}
		\begin{bmatrix}
		\mathbf{I}_2 \\
		\mathbf{0}_{1\times 2}
		\end{bmatrix}
		\mathbf{n}^{(b)}
		\end{bmatrix}
\end{equation}

To sum up, the measurement model and its linearized model for aided INS can be generalized as \eqref{eq_x_meas}. 

\begin{table}[h]
	\caption{Measurement Model for Different Sensors}
	\label{tab_point_meas}
	\centering
	\begin{tabular}{cccc}
		\hline \hline
		Sensor    			&   Range 	    	&Full Bearing   	&Partial Bearing 	\\
		\hline
		1D range finder  	&  $\checkmark$  	&              		&   				\\
		mono-camera      	&           		&    $\checkmark$   &     				\\
		sonar			 	&  $\checkmark$  	&    				&  	$\checkmark$	\\
		2D lidar         	&  $\checkmark$	 	&    $\checkmark$	&       			\\
		3D lidar 			&  $\checkmark$  	&    $\checkmark$   &   				\\
		RGBD-camera    	 	&  $\checkmark$  	&    $\checkmark$   &   				\\
		stereo-camera    	&  $\checkmark$  	&    $\checkmark$   &   				\\
		\hline
	\end{tabular}
\end{table}

All the sensor measurements to point feature can be seen as the combination of range measurements and bearing measurements (as shown in Table \ref{tab_point_meas}). (Be noted from the table that camera sensors can get the full bearing measurements, which in some sense is equivalent that we get the information of $\theta$ and $\phi$. The sonar can only get partial bearing information ($\theta$), so we label it as partial bearing measurement in the table.) 
Therefore, in order to fully analyze the observability property of aided INS, we will analyze the range only  measurement model and bearing only measurement model in the next section respectively.

\section{Projection Matrix for Line}\label{apd_line_proj}
Assume we have two points $^I\mathbf{P}_{1}= [p_{x1}, p_{y1}, p_{z1}]^{\top}$ and $^I\mathbf{P}_{2}=[p_{x2}, p_{y2}, p_{z2}]^{\top}$ in the line and their related projection points as $\mathbf{x}_1$ and $\mathbf{x}_2$. 
The first step is to compute the line parameters in the image. 
From camera projection model, we have: 
\begin{eqnarray}
	\mathbf{x}_1 = 
	\begin{bmatrix}
	u_1 \\ v_1  \\ 1
	\end{bmatrix}
	=
	\begin{bmatrix}
	f_1   &   0  &  c_1 \\
	0    &   f_2  &  c_2 \\
	0   &  0  &  1
	\end{bmatrix}
	\begin{bmatrix}
	\frac{p_{x1}}{p_{z1}} \\
	\frac{p_{y1}}{p_{z1}} \\
	1
	\end{bmatrix}
\end{eqnarray}
Therefore, we have:
\begin{eqnarray}
	u_1 &=& \frac{p_{x1}}{p_{z1}} f_1 + c_1 \\
	v_1 &=& \frac{p_{y1}}{p_{z1}} f_2 + c_2 
\end{eqnarray}

Assume a 2D line can be parameterized as: $u=av+c$. And $\mathbf{x}_1$ and $\mathbf{x}_2$ are in the line, therefore, we have:
\begin{equation}
	a = \frac{u_1 - u_2}{v_1 -v_2}
\end{equation}
Plug in the $\mathbf{x}_1=[u_1, v_1, 1]^{\top}$, we have:
\begin{equation}
u_1 = 	\frac{u_1 - u_2}{v_1 -v_2}v_1 + c \Rightarrow c= \frac{u_2v_1 - u_1 v_2}{v_1 - v_2}
\end{equation}
Therefore, we can write the projected 2D line as:
\begin{equation}
	(u_1 -u_2)v - (v_1 - v_2)u + (u_2v_1 - u_1v_2) = 0
\end{equation}

Then, we can get:
\begin{small}
\begin{equation}
	\left( \frac{p_{x1}}{p_{z1}}f_1 - \frac{p_{x2}}{p_{z2}}f_1\right)v 
	- \left( \frac{p_{y1}}{p_{z1}}f_2 - \frac{p_{y2}}{p_{z2}}f_2\right)u
	+ \left(
	\left( \frac{p_{x2}}{p_{z2}}f_1+ c_1\right)\left( \frac{p_{y1}}{p_{z1}}f_2+ c_2\right)
	-
	\left( \frac{p_{x1}}{p_{z1}}f_1+ c_1\right)\left( \frac{p_{y2}}{p_{z2}}f_2+ c_2\right)
	\right)=0
\end{equation}
\end{small}\noindent
Finally we can get:
%
\begin{align}
\begin{bmatrix}
u & v & 1
\end{bmatrix}
\begin{bmatrix}
p_{x2}p_{z1} - p_{x1}p_{z2}   \\
- \left(p_{y2}p_{z1} - p_{y1}p_{z2}\right) \\
\left( p_{x1}p_{y2} - p_{x2}p_{y1}\right)f_1f_2
+ \left( p_{x1}p_{z2} - p_{x2}p_{z1}\right)f_1c_2
+ \left( p_{y2}p_{z1} - p_{y1}p_{z2}\right)f_2c_1
\end{bmatrix}
=0
\end{align}

Then we can get the $\mathbf{l}'$ as:
\begin{eqnarray}
	\mathbf{l}' &=& 
	\begin{bmatrix}
	l_1 \\ l_2 \\ l_3
	\end{bmatrix}
	=
	\begin{bmatrix}
	 \left(p_{y1}p_{z2} - p_{y2}p_{z1}\right)f_2\\
	\left(p_{x2}p_{z1} - p_{x1}p_{z2}\right)f_1  \\
	\left( p_{x1}p_{y2} - p_{x2}p_{y1}\right)f_1f_2
	+ \left( p_{x1}p_{z2} - p_{x2}p_{z1}\right)f_1c_2
	+ \left( p_{y2}p_{z1} - p_{y1}p_{z2}\right)f_2c_1
	\end{bmatrix} \\
	&=&
	\begin{bmatrix}
	f_2 & 0  &  0 \\
	0   &  f_1  &  0\\
	-f_2c_1  &  - f_1c_2 &  f_1f_2
	\end{bmatrix}
	\begin{bmatrix}
	\left(p_{y1}p_{z2} - p_{y2}p_{z1}\right) \\
	\left(p_{x2}p_{z1} - p_{x1}p_{z2}\right)  \\
	\left( p_{x1}p_{y2} - p_{x2}p_{y1}\right)
	\end{bmatrix} \\
	&=& 
	\begin{bmatrix}
	f_2 & 0  &  0 \\
	0   &  f_1  &  0\\
	-f_2c_1  &  - f_1c_2 &  f_1f_2
	\end{bmatrix}
	\lfloor {^I\mathbf{P}_{1}} \times \rfloor {^I\mathbf{P}_{2}}
\end{eqnarray}
%

\section{Line Measurement Jacobians} \label{apd_line_jacob}

We compute the measurement Jacobians   by perturbation.
Specifically, based on \eqref{eq:def-L} and \eqref{eq:def-RL} and with a small perturbation $\delta \boldsymbol{\theta}_L$, we can find $\frac{\partial {^G{\mathbf{L}}}}{\partial \delta \boldsymbol{\theta}_L}$  as follows:
\begin{align}
	{^G\mathbf{L}} 
	&\simeq
	\scalemath{0.9}{
	\left(\mathbf{I} - \lfloor \delta \boldsymbol{\theta}_L\rfloor\right)
	\hat{\mathbf{R}}_L
	\begin{bmatrix}
	\norm{^G\hat{\mathbf{n}}_L} &  0 \\
	0  & \norm{^G\hat{\mathbf{v}}_L} \\
	0 & 0
	\end{bmatrix} }
     =
     \scalemath{1}{
	 {^G\hat{\mathbf{L}}} 
	 + 
	 \left[
	 \lfloor {^G\hat{\mathbf{n}}_L}\rfloor  \  
	 \lfloor {^G\hat{\mathbf{v}}_L}\rfloor
	 \right]
	 \delta \boldsymbol{\theta}_L
	}
\end{align}
Similarly, 
we perturb \eqref{eq:def-WL} with $\delta \boldsymbol{\phi}_L$ and obtain:
\begin{align}
	\mathbf{W}_L & 
    \simeq
	\begin{bmatrix}
	\cos \hat{\phi}_L - \delta{\phi}_L  \sin \hat{\phi}_L  &  
	-\sin \hat{\phi}_L - \delta{\phi}_L  \cos \hat{\phi}_L\\
	\sin \hat{\phi}_L + \delta{\phi}_L  \cos \hat{\phi}_L &  
	\cos \hat{\phi}_L  - \delta{\phi}_L  \sin \hat{\phi}_L
	\end{bmatrix}
	\nonumber
\end{align}
With that, we have:
\begin{displaymath}
\scalemath{0.9}{\cos \phi_L = \frac{\norm{^G\mathbf{n}_L}}{\sqrt{\norm{^G\mathbf{n}_L}^2+\norm{^G\mathbf{v}_L}^2}}},\  
\scalemath{0.9}{\sin \phi_L = \frac{\norm{^G\mathbf{v}_L}}{\sqrt{\norm{^G\mathbf{n}_L}^2+\norm{^G\mathbf{v}_L}^2}}}
\end{displaymath}
And then we get:
\begin{align}
	\norm{^G\mathbf{n}_L} 
	&\simeq
	\scalemath{0.9}
	{ {\sqrt{\norm{^G\hat{\mathbf{n}}_L}^2+\norm{^G\hat{\mathbf{v}}_L}^2}}
		\left(\cos \hat{\phi}_L - \delta{\phi}_L  \sin \hat{\phi}_L\right) }
	= \norm{^G\hat{\mathbf{n}}_L} - \norm{^G\hat{\mathbf{v}}_L} \delta \phi_L 
	\\
	\norm{^G\mathbf{v}_L} 
	&\simeq \scalemath{0.9}
	{{\sqrt{\norm{^G\hat{\mathbf{n}}_L}^2+\norm{^G\hat{\mathbf{v}}_L}^2}}
		\left(\sin \hat{\phi}_L + \delta{\phi}_L  \cos \hat{\phi}_L\right) }
	= \norm{^G\hat{\mathbf{v}}_L} + \norm{^G\hat{\mathbf{n}}_L} \delta \phi_L 
\end{align}
Finally, we can read out the Jacobian $\frac{\partial {^G{\mathbf{L}}}}{\partial \delta \boldsymbol{\phi}_L}$ from:
\begin{align}
	{^G\mathbf{L}} 
	&\simeq  
	\begin{bmatrix}
	{^G\hat{\mathbf{n}}_L} -  \frac{w_2}{w_1} {^G\hat{\mathbf{n}}_L} \delta \phi_L  &
	{^G\hat{\mathbf{v}}_L} +  \frac{w_1}{w_2}{^G\hat{\mathbf{v}}_L}  \delta \phi_L 
	\end{bmatrix}
\end{align}
%

\section{Proof of Lemma \ref{lem_single_line}} \label{apd_single_line}

For $\mathbf{N}_{l1}$, we have:
\begin{align}
\mathbf{H}^{(l)}_{I_k}\boldsymbol{\Phi}_{(k,1)}\mathbf{N}_{l1}
&=
\mathbf{H}_{l,k}
\mathbf{K}{^{I_k}_G\hat{\mathbf{R}}}
\big(
\lfloor {^G}\mathbf{g}\times\rfloor \lfloor {^G\hat{\mathbf{P}}_{I_k}}\times\rfloor {^G\hat{\mathbf{v}}_{L}} 
+
\lfloor{^G\hat{\mathbf{v}}_{L}} \rfloor \lfloor{^G}\mathbf{g}\times \rfloor {^G\hat{\mathbf{P}}_{I_k}}
 +
\lfloor {^G\hat{\mathbf{P}}_{I_k}}\times\rfloor \lfloor{^G\hat{\mathbf{v}}_{L}\times} \rfloor {^G\hat{\mathbf{g}}} 
\big)
=
\mathbf{0}
\end{align}
For $\mathbf{N}_{l2}$, we have:
\begin{align} \label{eq_nl_2}
\mathbf{H}^{(l)}_{I_k}\boldsymbol{\Phi}_{(k,1)}\mathbf{N}_{l2}
&=
\mathbf{H}_{l,k}
\mathbf{K}{^{I_k}_G\hat{\mathbf{R}}}
\left(
\lfloor {^G\hat{\mathbf{v}}_{L}} \times \rfloor {^G\hat{\mathbf{n}}_{\mathbf{e}}}
+
\lfloor {^G\hat{\mathbf{n}}_{\mathbf{e}}} \times \rfloor {^G\hat{\mathbf{v}}_{L}}
\right) 
=
\mathbf{0}
\end{align}
For $\mathbf{N}_{l3}$, we have:
\begin{align} \label{eq_nl_3}
\mathbf{H}^{(l)}_{I_k}\boldsymbol{\Phi}_{(k,1)}\mathbf{N}_{l3}
&=
\mathbf{H}_{l,k}
\mathbf{K}{^{I_k}_G\hat{\mathbf{R}}}
\left(\lfloor {^G\hat{\mathbf{v}}_{L}} \times \rfloor {^G\hat{\mathbf{v}}_{\mathbf{e}}}\right)
=
\mathbf{0}
\end{align}
For $\mathbf{N}_{l4}$, we have:
\begin{align}
&\mathbf{H}^{(l)}_{I_k}\boldsymbol{\Phi}_{(k,1)}\mathbf{N}_{l4}=
-\frac{w_1 w^2_2}{w^2_1+w^2_2}
\mathbf{H}_{k,l}
\mathbf{K}
{^{I_k}\mathbf{n}'_{L}}\\
\label{eq_line_norm_vec}
&{^{I_k}\mathbf{n}'_{L}} 
=
{^{I_k}_G\hat{\mathbf{R}}}
\lfloor \left(\frac{w_1}{w_2}\lfloor {^G\hat{\mathbf{n}}_e}\times\rfloor {^G\mathbf{\mathbf{v}}_{e}}  + {^G\mathbf{P}_{I_k}} \right)
\times\rfloor
{^G\hat{\mathbf{v}}_e} 
\end{align}
%
%
By geometry, ${^{I_k}\mathbf{n}'_{L}}$ is parallel to ${^{I_k}\mathbf{n}_{L}}$. 
Since $\mathbf{l}'=\mathbf{K}{^{I_k}\mathbf{n}_{L}}$ is the null space of 
$\mathbf{H}_{l,k}$
, and thus, 
$\mathbf{K}{^{I_k}\mathbf{n}'_{L}}$ is also the null space of 
$\mathbf{H}_{l,k}$. 
As a result, we have $\mathbf{H}^{(l)}_{I_k}\boldsymbol{\Phi}_{(k,1)}\mathbf{N}_{l4}=\mathbf{0}$.
For $\mathbf{N}_{l5}$, we have:
\begin{align} \label{eq_nl_5}
&\mathbf{H}^{(l)}_{I_k}\boldsymbol{\Phi}_{(k,1)}\mathbf{N}_{l5}=
\mathbf{H}_{k,l}
\mathbf{K}
{^{I_k}_G\hat{\mathbf{R}}}
\lfloor {^G\hat{\mathbf{v}}_L}\times\rfloor {^G\hat{\mathbf{v}}_{\mathbf{e}}} \delta t_k
=\mathbf{0}
\end{align}
It should be noted  that the structure of $\mathbf{N}_l$ reveals that $\mathbf{N}_{l1:5}$ are linearly independent. 

\section{Observability with Direct Line Measurements} \label{apd_direct_line}

Given point clouds, we may directly extract a 3D line feature parameterized by its direction $^I\mathbf{v}_L$ and distance to the sensor $^Id=\frac{\norm{^I\mathbf{n}_{L}}}{\norm{^I\mathbf{v}_{L}}}$. 
We describe such measurements as:
%
\begin{equation}
	\mathbf{z} = 
	\begin{bmatrix}
	\lfloor ^I\mathbf{v}_m \times \rfloor {^I\mathbf{v}_{L}} 
	\\
	^Id
	\end{bmatrix}
\end{equation}
where $^I\mathbf{v}_{m}$ and $^Id$ are the line direction measurement and the distance measurement in the sensor's frame, respectively. 
The measurement Jacobian is computed by:
\begin{align}
	\frac{\partial \tilde{\mathbf{z}}}{\partial \tilde{\mathbf{x}}}
	&=
	\frac{\partial \tilde{\mathbf{z}}}{\partial {^I\mathbf{L}}}
	\frac{\partial {^I\mathbf{L}}}{\partial \tilde{\mathbf{x}}} \\
	\frac{\partial \tilde{\mathbf{z}}}{\partial {^I\mathbf{L}}}
	&=
	\begin{bmatrix}
	\mathbf{0}_3   &  \lfloor {^I\mathbf{v}_m}\times\rfloor \\
	\frac{^I\mathbf{n}^{\top}_L}{\norm{^I\mathbf{n}_L}\norm{^I\mathbf{v}_L}}  &
	-\frac{\norm{^I\mathbf{n}_L}}{\norm{^I\mathbf{v}_L}^3}{^I\mathbf{v}^{\top}_L}
	\end{bmatrix}
\\
	\frac{\partial {^I\mathbf{L}}}{\partial \tilde{\mathbf{x}}}
	&=
	\scalemath{1}{
	\begin{bmatrix}
	{^I_G\hat{\mathbf{R}}}  &  \mathbf{0}_3   \\
	\mathbf{0}_3    &   {^I_G\hat{\mathbf{R}}}
	\end{bmatrix}}
	\scalemath{0.9}{
	\begin{bmatrix}
	\mathbf{H}_{l1}& 
	\mathbf{0}_{3\times 9} &
	\lfloor {^G\hat{\mathbf{v}}_L}\times\rfloor &
	\mathbf{H}_{l2} &
	\mathbf{H}_{l3}  \\
	\lfloor {^G\hat{\mathbf{v}}_L}\times\rfloor {^I_G\hat{\mathbf{R}}}^{\top} & 
	\mathbf{0}_{3\times 9} &
	\mathbf{0}_{3}  & 
	\lfloor {^G\hat{\mathbf{v}}_L}\times\rfloor & 
	\frac{w_1}{w_2}{^G\hat{\mathbf{v}}_L}
	\end{bmatrix}
	}
\end{align}
Then, the block row of the observability matrix is given by:
\begin{align}
	&\scalemath{1}{
	\mathbf{H}^{(l)}_{I_k}\boldsymbol{\Phi}_{(k,1)} }
	=
	\scalemath{1}{
	\frac{\partial \tilde{\mathbf{z}}}{\partial {^I\mathbf{L}}}
	\begin{bmatrix}
	{^I_G\hat{\mathbf{R}}}  &  \mathbf{0}_3   \\
	\mathbf{0}_3    &   {^I_G\hat{\mathbf{R}}}
	\end{bmatrix} \times }
	\\
	&	
	\scalemath{1}{
	\begin{bmatrix}
	\boldsymbol{\Gamma}_{l1} &
	\boldsymbol{\Gamma}_{l2} &
	\lfloor {^G\hat{\mathbf{v}}_L\times}\rfloor\delta t_k &
	\boldsymbol{\Gamma}_{l3} &
	\lfloor {^G\hat{\mathbf{v}}_L\times}\rfloor &
	\boldsymbol{\Gamma}_{l4} &
	\boldsymbol{\Gamma}_{l5} 
	\\
	\lfloor {^G\hat{\mathbf{v}}_L}\times\rfloor {^G_{I_1}\hat{\mathbf{R}}}  & 
	\lfloor {^G\hat{\mathbf{v}}_L}\times\rfloor {^G_{I_k}\hat{\mathbf{R}}} \boldsymbol{\Phi}_{12} &
	\mathbf{0}_3  & 
	\mathbf{0}_3  & 
	\mathbf{0}_3 & 
	\lfloor {^G\hat{\mathbf{v}}_L}\times\rfloor & 
	\frac{w_1}{w_2}{^G\hat{\mathbf{v}}_L}
	\end{bmatrix}}
\nonumber
\end{align}
In this case, we show that the linearized aided INS system with a line feature still has at least  5 unobservable directions 
$\mathbf{N}_l$~\eqref{eq_line_obs}.
To see this, we can first verify $\mathbf{N}_{l2:3}$ and $\mathbf{N}_{l5}$ in a similarly way as \eqref{eq_nl_2}, \eqref{eq_nl_3}and \eqref{eq_nl_5}. 
Then, we need to verify $\mathbf{N}_{l1}$ and $\mathbf{N}_{l4}$. 
For $\mathbf{N}_{l1}$, we can have $\mathbf{H}^{(l)}_{I_k}\boldsymbol{\Phi}_{k,1}\mathbf{N}_{l1}=\mathbf{0}$ with:
\begin{align}
	&
	\scalemath{1}{
	\begin{bmatrix}
	\lfloor {^G}\mathbf{g}\times\rfloor \lfloor {^G\hat{\mathbf{P}}_{I_k}}\times\rfloor {^G\hat{\mathbf{v}}_{L}} +
	\lfloor{^G\hat{\mathbf{v}}_{L}} \rfloor \lfloor{^G}\mathbf{g}\times \rfloor {^G\hat{\mathbf{P}}_{I_k}} +
	\lfloor {^G\hat{\mathbf{P}}_{I_k}}\times\rfloor \lfloor{^G\hat{\mathbf{v}}_{L}\times} \rfloor {^G\hat{\mathbf{g}}}  \\
	\lfloor{^G\hat{\mathbf{v}}_{L}\times} \rfloor {^G\hat{\mathbf{g}}} - \lfloor{^G\hat{\mathbf{v}}_{L}\times} \rfloor {^G\hat{\mathbf{g}}}
	\end{bmatrix}} =\mathbf{0}
	\nonumber
\end{align} 
For $\mathbf{N}_{l4}$, we have:
\begin{align} 
	&\frac{\partial \tilde{\mathbf{z}}}{\partial {^I\mathbf{L}}}
	\scalemath{0.9}{
	\begin{bmatrix}
	 -{^{I_k}\mathbf{n}'_{L}} \\
	{^{I_k}_G\hat{\mathbf{R}}}{^G\hat{\mathbf{v}}_e}
	\end{bmatrix}}
	\!=\!
	\scalemath{0.9}{
	\begin{bmatrix}
	\lfloor {^{I_k}\mathbf{v}_m}\rfloor {^{I_k}\mathbf{v}_e}  \\
	\frac{\norm{^{I_k}\mathbf{n}_L}}{\norm{^{I_k}\mathbf{v}_L}^2}{^{I_k}\mathbf{n}^{\top}_e}{^{I_k}\mathbf{n}_e} -
	\frac{\norm{^{I_k}\mathbf{n}_L}}{\norm{^{I_k}\mathbf{v}_L}^2}{^{I_k}\mathbf{v}^{\top}_e}{^{I_k}\mathbf{v}_e}
	\end{bmatrix}}
	\!=\!
	\begin{bmatrix}
	\mathbf{0} \\
	\mathbf{0}
	\end{bmatrix}
%
\end{align}
where $^{I_k}\mathbf{n}_e$ is the unit direction vector of $^{I_k}\mathbf{n}_L$, $^{I_k}d$ is the distance of line to sensor in frame $\{I_k\}$. ${^{I_k}\mathbf{n}'_{L}}$ is given by \eqref{eq_line_norm_vec} and $^{I_k}\mathbf{n}'_L = - {^{I_k}d}{^{I_k}\mathbf{n}_e}$.
With that we see that  $\mathbf{N}_{l4}$ lies in the null space:
\begin{align}
	\mathbf{H}^{(l)}_{I_k}\boldsymbol{\Phi}_{k,1}\mathbf{N}_{l4}
	&=
	\frac{w_1 w^2_2}{w^2_1+w^2_2}
	\frac{\partial \tilde{\mathbf{z}}}{\partial {^I\mathbf{L}}}
	\scalemath{0.9}{
	\begin{bmatrix}
		-{^{I_k}\mathbf{n}'_{L}} \\
		{^{I_k}_G\hat{\mathbf{R}}}{^G\hat{\mathbf{v}}_e}
	\end{bmatrix}}
	=
	\begin{bmatrix}
	\mathbf{0} \\
	\mathbf{0}
	\end{bmatrix}
	%
\end{align}
%
%
We thus conclude that the aided INS with direct line measurements has the same unobservable directions $\mathbf{N}_{l}$ with  projective line measurements.


\section{Proof of Lemma \ref{lem_multiple_lines}} \label{apd_line}

With $l$ line features  in the state vector, the block row of the observability matrix $\mathbf{M}(\mathbf{x})$ can be constructed by:
\begin{equation}
\scalemath{1}{
	\mathbf{H}^{(l)}_{I_k}\boldsymbol{\Phi}_{(k,1)}
	=
	\begin{bmatrix}
	\mathbf{H}^{L_1}_{I_k}\boldsymbol{\Phi}_{(k,1)} \\
	\mathbf{H}^{L_2}_{I_k}\boldsymbol{\Phi}_{(k,1)} \\
	\vdots  \\
	\mathbf{H}^{L_l}_{I_k}\boldsymbol{\Phi}_{(k,1)}
	\end{bmatrix}
	=
	\begin{bmatrix}
	\mathbf{M}^{L_1}_{k} \\
	\mathbf{M}^{L_2}_{k} \\
	\vdots \\
	\mathbf{M}^{L_l}_{k} \\
	\end{bmatrix}
}
\end{equation}\noindent
%
%
where $\mathbf{H}^{L_i}_{I_k}$ and $\mathbf{M}^{L_i}_{I_k}$ are the measurement Jacobians and the $i$-th block row of observability matrix for line feature $i$, respectively. 
It is straightforward to verify $\mathbf{N}_{L1}$, which relates to the rotation around the gravitational direction. 
Since we have multiple unparallel lines, the unobservable direction along the line direction diminishes. 
Therefore, the main task to prove that $\mathbf{N}_{L2:4}$ are the null space of $\mathbf{H}_{I_k}\boldsymbol{\Phi}_{(k,1)}$. 
$\mathbf{N}_{L2:4}$ are related to the sensor position and, from the analysis \eqref{eq_line_obs}, for a single line feature, we can easily find vectors $\boldsymbol{\alpha}_j, \boldsymbol{\beta}_j, \boldsymbol{\gamma}_j$ which are the null space for $\mathbf{M}^{L_j}_{k}$ for feature $j$, and $\boldsymbol{\alpha}_i, \boldsymbol{\beta}_i, \boldsymbol{\gamma}_i$ which are the null space for $\mathbf{M}^{L_i}_{k}$ for feature $i$. 
%
\begin{align}
\scalemath{1}{
	\mathbf{M}^{L_i}_{k}
	\begin{bmatrix}
	\alpha_i & \beta_i & \gamma_i
	\end{bmatrix}
}
	&=
	\mathbf{0}_{3\times 1} \\
\scalemath{1}{	
	\mathbf{M}^{L_j}_{k}
	\begin{bmatrix}
	\alpha_j & \beta_j & \gamma_j
	\end{bmatrix}
}
	&=
	\mathbf{0}_{3\times 1}
\end{align}
If we can show that $\{\alpha_j, \beta_j, \gamma_j\}$ can be linearly dependent on $\{\alpha_i, \beta_i, \gamma_i\}$, i.e., 
\begin{equation} \label{eq_linear_transformation}
\scalemath{1}{
	\begin{bmatrix}
	\alpha_j & \beta_j & \gamma_j
	\end{bmatrix}
	=
	\begin{bmatrix}
	\alpha_i & \beta_i & \gamma_i
	\end{bmatrix}
	\boldsymbol{\Lambda}
}
\end{equation}
%
then both $\alpha_i, \beta_i, \gamma_i$ and $\alpha_j, \beta_j, \gamma_j$ share the same bases if $\boldsymbol{\Lambda}$ is invertible, and they are the null space for both $\mathbf{M}^{L_j}_{k}$ and $\mathbf{M}^{L_i}_{k}$, that is:
\begin{equation}\label{eq_null_line}
\scalemath{1}{
	\begin{bmatrix}
	\mathbf{M}^{L_i}_{k} \\
	\mathbf{M}^{L_j}_{k}
	\end{bmatrix}
	\begin{bmatrix}
	\alpha_i & \beta_i & \gamma_i 
	\end{bmatrix}
	=
	\begin{bmatrix}
	\mathbf{M}^{L_i}_{k} \\
	\mathbf{M}^{L_j}_{k}
	\end{bmatrix}
	\begin{bmatrix}
	\alpha_j & \beta_j & \gamma_j 
	\end{bmatrix}
	=	
	\mathbf{0}
}
\end{equation}
Based on the definition of $\mathbf{N}_{L2:4}$, $\mathbf{N}_{2:4}$ is the null space for line $i$. Then for the null space $\mathbf{N}^{(j)}_{L2:4}$ for line $j$, we can have:
\begin{equation}
\scalemath{1}{
	\mathbf{N}^{(j)}_{L2:4}=
	\begin{bmatrix}
	\alpha_j & \beta_j & \gamma_j
	\end{bmatrix}
	=
	\begin{bmatrix}
	\alpha_i & \beta_i & \gamma_i
	\end{bmatrix}
	{^{L_i}_{L_j}\mathbf{R}}
	=
	\mathbf{N}_{L2:4}
	{^{L_i}_{L_j}\mathbf{R}}
}
\end{equation}
As ${^{L_i}_{L_j}\mathbf{R}}$ is a rotation matrix, it is invertible. 
With \eqref{eq_linear_transformation} and \eqref{eq_null_line}, we have that both $\mathbf{N}_{L2:4}$ and $\mathbf{N}^{(j)}_{L2:4}$ are the null space of $\mathbf{M}^{L_j}_{k}$ and $\mathbf{M}^{L_i}_{k}$. 
We have no assumption for the choice of $i$ and $j$, and thus, $\mathbf{N}_{L2:4}$ is in the null space of $\mathbf{H}^{(l)}_{I_k}\boldsymbol{\Phi}_{(k,1)}$.

\section{Proof of Lemma \ref{lem_plane_obs} } \label{apd_plane_single_obs}

First of all, it is straightforward to verify $\mathbf{N}_{\pi 1}$ as:
%
%
\begin{align}
\mathbf{H}^{(\pi)}_{I_k}\boldsymbol{\Phi}_{(k,1)}\mathbf{N}_{\pi 1}
&=
{^{I_k}_G\hat{\mathbf{R}}}
\left(
{^G\hat{{d}}_{\pi}}
\lfloor {^G\hat{\mathbf{n}}_{\pi}}\times \rfloor
-
\lfloor {^G\hat{\boldsymbol{\Pi}}_{\pi}}\times \rfloor
\right)
=
\mathbf{0}
\end{align}

We here show $\mathbf{N}_{\pi 2:4}$ is in the null space:
\begin{align}
	\mathbf{H}^{(\pi)}_{I_k}\boldsymbol{\Phi}_{(k,1)}\mathbf{N}_{\pi 2:4}
	&=
	{^{I_k}_G\hat{\mathbf{R}}}
	\left(
	-{^G\hat{\mathbf{n}}_{\pi}}{^G\hat{\mathbf{n}}^{\top}_{\pi}}{^G_{\Pi}\hat{\mathbf{R}}}
	+
	\boldsymbol{\Gamma}_{\pi 4} {^G\hat{\mathbf{n}}_{\pi}} {\mathbf{e}^{\top}_3}
	\right)
	\nonumber
	\\
	&=
	{^{I_k}_G\hat{\mathbf{R}}}
	\left(
	-{^G\hat{\mathbf{n}}_{\pi}}{\mathbf{e}^{\top}_3}
	+
	 {^G\hat{\mathbf{n}}_{\pi}} {\mathbf{e}^{\top}_3}
	\right) = \mathbf{0}
\end{align}
We then verify $\mathbf{N}_{\pi 5:6}$ are also in the null space:
\begin{align}
\mathbf{H}^{(\pi)}_{I_k}\boldsymbol{\Phi}_{(k,1)}\mathbf{N}_{\pi 5}
&=
-
{^{I_k}_G\hat{\mathbf{R}}}
\left({^G\hat{\mathbf{n}}_{\pi}}{^G\hat{\mathbf{n}}^{\top}_{\pi}} {^G\hat{\mathbf{n}}^{\perp}_{1}} \delta t_k \right)
\!\! =\!\! 
\mathbf{0}
\\
\mathbf{H}^{(\pi)}_{I_k}\boldsymbol{\Phi}_{(k,1)}\mathbf{N}_{\pi 6}
&=
-
{^{I_k}_G\hat{\mathbf{R}}}
\left({^G\hat{\mathbf{n}}_{\pi}}{^G\hat{\mathbf{n}}^{\top}_{\pi}} {^G\hat{\mathbf{n}}^{\perp}_{2}} \delta t_k \right)
\!\! =\!\! 
\mathbf{0}
\end{align}
To verify $\mathbf{N}_{\pi 7}$, we have:
\begin{align}
&\mathbf{H}^{(\pi)}_{I_k}\boldsymbol{\Phi}_{(k,1)}\mathbf{N}_{\pi 7}
=
{^{I_k}_G\hat{\mathbf{R}}}
\scalemath{0.9}{
	\big[\left(
	{^Gd} - {^G\hat{\mathbf{n}}^{\top}_{\pi}}{^G\hat{\mathbf{P}}_{I_k}}
	\right)
	\lfloor {^G\hat{\mathbf{n}}_{\pi}}\times\rfloor}
	- 
{^G\hat{\mathbf{n}}_{\pi}}{^G\hat{\mathbf{n}}^{\top}_{\pi}}
\lfloor {^G\hat{\mathbf{P}}_{I_1}} 
+ {^G\hat{\mathbf{V}}_{I_1}}{\delta t_k}
- \frac{1}{2}{^G\mathbf{g}}{\delta t^2_k}
- 
{^G\hat{\mathbf{P}}_{I_k}} \times\rfloor \big]
{^G\hat{\mathbf{n}}_{\pi}}
=\mathbf{0}
\end{align}
Finally, from the structure of $\mathbf{N}_{\pi}$, it can be seen that $\mathbf{N}_{\pi 1:7}$ are linearly independent. 
\section{Proof of Lemma \ref{lem_multiple_plane}}\label{apd_plane}

Given $s$ plane features in the state vector,  the block row of the observability matrix becomes:
\begin{equation}
\scalemath{1}{
	\mathbf{H}^{(\pi)}_{I_k}\boldsymbol{\Phi}_{(k,1)}
	=
	\begin{bmatrix}
	\mathbf{H}^{\Pi_1}_{I_k}\boldsymbol{\Phi}_{(k,1)} \\
	\mathbf{H}^{\Pi_2}_{I_k}\boldsymbol{\Phi}_{(k,1)} \\
	\vdots  \\
	\mathbf{H}^{\Pi_s}_{I_k}\boldsymbol{\Phi}_{(k,1)}
	\end{bmatrix}
	=:
	\begin{bmatrix}
	\mathbf{M}^{\Pi_1}_{k} \\
	\mathbf{M}^{\Pi_2}_{k} \\
	\vdots \\
	\mathbf{M}^{\Pi_s}_{k} \\
	\end{bmatrix}
}
\end{equation}
First of all, 
it is straightforward to verify $\mathbf{N}_{\Pi 1}$ that is related to the rotation around the gravitational direction. 
If $s=2$, the state vector has two plane features, whose intersection $\lfloor {^G\mathbf{n}_{\pi_1}}\times\rfloor {^G\mathbf{n}_{\pi_2}}$ is perpendicular to both plane normal directions. 
Therefore, the sensor motion along this intersection line $\mathbf{N}_{\Pi 5}$ will become unobservable. 

Similar to Appendix~\ref{apd_line}, the main task is to prove $\mathbf{N}_{\Pi 2:4}$ are in the null space of $\mathbf{H}^{(\pi)}_{I_k}\boldsymbol{\Phi}_{(k,1)}$. 
To this end, 
we can easily find vectors $\boldsymbol{\alpha}_j, \boldsymbol{\beta}_j, \boldsymbol{\gamma}_j$ which are the null space of $\mathbf{M}^{\Pi_j}_{k}$ for feature $j$, 
and $\boldsymbol{\alpha}_i, \boldsymbol{\beta}_i, \boldsymbol{\gamma}_i$ which are the null space of $\mathbf{M}^{\Pi_i}_{k}$ for feature $i$. 
Therefore, we have:
\begin{align}
\scalemath{1}{
\mathbf{M}^{\Pi_i}_{k}
\begin{bmatrix}
\alpha_i & \beta_i & \gamma_i
\end{bmatrix}
}
&=
\mathbf{0}_{3\times 1} \\
\scalemath{1}{
\mathbf{M}^{\Pi_j}_{k}
\begin{bmatrix}
\alpha_j & \beta_j & \gamma_j
\end{bmatrix}
}
&=
\mathbf{0}_{3\times 1}
\end{align}
If we can show that $\{\alpha_j, \beta_j, \gamma_j\}$ can be linearly represented by $\{\alpha_i, \beta_i, \gamma_i\}$ as \eqref{eq_linear_transformation},   
both $\alpha_i, \beta_i, \gamma_i$ and $\alpha_j, \beta_j, \gamma_j$ share the same bases, 
and thus, they are the null space of both $\mathbf{M}^{\Pi_j}_{k}$ and $\mathbf{M}^{\Pi_i}_{k}$, that is:
\begin{equation}\label{eq_null_plane}
\scalemath{1}{
\begin{bmatrix}
\mathbf{M}^{\Pi_i}_{k} \\
\mathbf{M}^{\Pi_j}_{k}
\end{bmatrix}
\begin{bmatrix}
\alpha_i & \beta_i & \gamma_i
\end{bmatrix}
=
\begin{bmatrix}
\mathbf{M}^{\Pi_i}_{k} \\
\mathbf{M}^{\Pi_j}_{k}
\end{bmatrix}
\begin{bmatrix}
\alpha_j & \beta_j & \gamma_j
\end{bmatrix}
=
\mathbf{0}
}
\end{equation}
Based on the definition, $\mathbf{N}_{\Pi2:4}$ is the null space for feature $i$, and thus, the null space $\mathbf{N}^{(j)}_{\Pi2:4}$ for feature $j$ can be written as:
\begin{equation}
\scalemath{0.9}{
\mathbf{N}^{(j)}_{\Pi2:4}=
\begin{bmatrix}
\alpha_j & \beta_j & \gamma_j
\end{bmatrix}
=
\begin{bmatrix}
\alpha_i & \beta_i & \gamma_i
\end{bmatrix}
{^{\Pi_i}_{\Pi_j}\mathbf{R}}
=
\mathbf{N}_{\Pi2:4}
{^{\Pi_i}_{\Pi_j}\mathbf{R}}
}
\end{equation}
Since ${^{\Pi_i}_{\Pi_j}\mathbf{R}}$ is a rotation matrix, it is invertible. 
Based on~\eqref{eq_linear_transformation} and \eqref{eq_null_plane}, 
both $\mathbf{N}_{\Pi2:4}$ and $\mathbf{N}^{(j)}_{\Pi2:4}$ are the null space of $\mathbf{M}^{\Pi_j}_{k}$ and $\mathbf{M}^{L_i}_{k}$. 
As no assumption is made for the choice of $i$ and $j$, $\mathbf{N}_{\Pi2:4}$ is also in the null space of $\mathbf{H}^{(\pi)}_{I_k}\boldsymbol{\Phi}_{(k,1)}$.

\section{Observability for Plane Features of Hesse Form} \label{apd_hesse_plane}

Recall that a plane feature in Hesse form (Mode 2) is given by: 
%
%
$\boldsymbol{\Pi}
=
\begin{bmatrix}
\mathbf{n}^{\top}_{\pi} & d

\end{bmatrix}^{\top}$,
%
and we have defined: $\mathbf{n}^{\top}_{\pi}\mathbf{P}_{\mathbf{f}}-d = 0$.
To find a minimal representation for the error state,
we use the horizontal angle $\theta$ and the elevation angle $\phi$ represent the normal direction $\mathbf{n}_{\pi}$, i.e., 
\begin{equation}
\scalemath{0.9}{
	\mathbf{n}_{\pi}
	=
	\begin{bmatrix}
	n_1 \\ n_2 \\ n_3
	\end{bmatrix}
	=
	\begin{bmatrix}
	\cos \theta \cos \phi \\
	\sin \theta \cos \phi \\
	\sin \phi
	\end{bmatrix}
}
\end{equation}
Then, the error state of the plane feature is denoted by:
$\scalemath{0.85}{
\tilde{\boldsymbol{\Pi}}
=
\begin{bmatrix}
\tilde{\theta} &
\tilde{\phi} &
\tilde{d}
\end{bmatrix}^{\top}}$.
%
%
We assume a direct plane measurement (e.g., provided by plane detection from point clouds):
\begin{equation}
\scalemath{0.9}{
	\mathbf{z}
	=
	\begin{bmatrix}
	^{I}\theta &
	^{I}\phi &
	^{I}d
	\end{bmatrix}^{\top}
}
\end{equation}\noindent
The measurement Jacobian w.r.t. $^I\mathbf{n}_{\pi}$ and $^Id$ is computed by:
\begin{equation}
\scalemath{1}{
\mathbf{H}_{\Pi}
=
\begin{bmatrix}
-\frac{\hat{n}_2}{\hat{n}^2_1 + \hat{n}^2_2} &  \frac{\hat{n}_1}{\hat{n}^2_1 + \hat{n}^2_2}  &  0   &     0  \\
-\frac{\hat{n}_1\hat{n}_3}{\sqrt{\hat{n}^2_1+\hat{n}^2_2}}  & -\frac{\hat{n}_2\hat{n}_3}{\sqrt{\hat{n}^2_1+\hat{n}^2_2}}  & \sqrt{\hat{n}^2_1+\hat{n}^2_2}   & 0 \\
0 & 0 & 0 & 1
\end{bmatrix}
}
\end{equation}
%
Then, the measurement Jacobian w.r.t. the state and the block row of the observability matrix can be obtained as follows:
\begin{align}
&
\mathbf{H}_{I_k} := \frac{\partial \tilde{\mathbf{z}}}{\partial \tilde{\mathbf{x}}}	=  
\scalemath{0.8}{
	\mathbf{H}_{\Pi}
	\begin{bmatrix}
	\lfloor {^I_G\hat{\mathbf{R}}}{^G\hat{\mathbf{n}}_{\pi}} \times \rfloor & 
	\mathbf{0}_{3\times 9} & 
	\mathbf{0}_3 & 
	{^I_G\hat{\mathbf{R}}}{^G\hat{\mathbf{n}}^{\perp}_1} \cos \hat{\phi}& 
	{^I_G\hat{\mathbf{R}}}{^G\hat{\mathbf{n}}^{\perp}_2} & 
	\mathbf{0}_{3\times 1}
	\\
	\mathbf{0}_{1\times 9} & 
	\mathbf{0}_{1\times 3} & 
	-{^G\hat{\mathbf{n}}^{\top}_{\pi}} & 
	-{^G\hat{\mathbf{P}}^{\top}_I}{^G\hat{\mathbf{n}}^{\perp}_1} \cos \hat{\phi} &
	-{^G\hat{\mathbf{P}}^{\top}_I}{^G\hat{\mathbf{n}}^{\perp}_2}  &
	1
	\end{bmatrix}
} 
\\
&
\mathbf{H}_{I_k}\boldsymbol{\Phi}_{(k,1)}	=  
\scalemath{0.8}{
		\mathbf{H}_{\Pi}
	\begin{bmatrix}
	\boldsymbol{\Gamma}_{\Pi 1}  & 
	\boldsymbol{\Gamma}_{\Pi 2}  & 
	\begin{bmatrix}
	\mathbf{0}_3 \\
	-{^G\hat{\mathbf{n}}^{\top}_{\pi}}\delta t_k
	\end{bmatrix} & 
	\boldsymbol{\Gamma}_{\Pi 3}  & 
	\begin{bmatrix}
	\mathbf{0}_3 \\
	-{^G\hat{\mathbf{n}}^{\top}_{\pi}}
	\end{bmatrix} &
	\boldsymbol{\Gamma}_{\Pi 4} 
	\end{bmatrix}
}
\end{align}
where
\begin{align}
\boldsymbol{\Gamma}_{\Pi 1}
&=
\scalemath{0.85}{
\begin{bmatrix}
{^{I_k}_G\hat{\mathbf{R}}}\lfloor{^G\hat{\mathbf{n}}_{\pi}}\times \rfloor \\
-{^G\mathbf{n}^{\top}_{\pi}}
\lfloor \left({^G\hat{\mathbf{P}}_{I_1}} + {^G\hat{\mathbf{V}}_{I_1}}\delta t_k 
- \frac{1}{2}{^G\mathbf{g}}\delta t^2_k - {^G\hat{\mathbf{P}}_{I_k}}\right)
\times \rfloor
\end{bmatrix}{^G_{I_1}\hat{\mathbf{R}}}
}
\\
\boldsymbol{\Gamma}_{\Pi 2}
&=
\scalemath{0.9}{
\begin{bmatrix}
{^{I_k}_G\hat{\mathbf{R}}}\lfloor{^G\hat{\mathbf{n}}_{\pi}}\times \rfloor 
{^{I_k}_G\hat{\mathbf{R}}}^{\top}\boldsymbol{\Phi}_{12}\\
-{^G\hat{\mathbf{n}}^{\top}_{\pi}}
\boldsymbol{\Phi}_{52}
\end{bmatrix}
}
\\
\boldsymbol{\Gamma}_{\Pi 3}
&=
\scalemath{0.9}{
\begin{bmatrix}
\mathbf{0}_3 \\
-{^G\hat{\mathbf{n}}^{\top}_{\pi}}
\boldsymbol{\Phi}_{54}
\end{bmatrix}
}
\\
\boldsymbol{\Gamma}_{\Pi 4}
&=
\scalemath{0.9}{
\begin{bmatrix}
{^{I_k}_G\hat{\mathbf{R}}}{^G\hat{\mathbf{n}}^{\perp}_1}\cos \hat{\phi} & 
{^{I_k}_G\hat{\mathbf{R}}}{^G\hat{\mathbf{n}}^{\perp}_2} & 
\mathbf{0}_{3\times 1} \\
-{^G\hat{\mathbf{P}}^{\top}_{I_k}}{^G\hat{\mathbf{n}}^{\perp}_1} \cos \hat{\phi}  &
-{^G\hat{\mathbf{P}}^{\top}_{I_k}}{^G\hat{\mathbf{n}}^{\perp}_2}  &
1
\end{bmatrix}
}
\\
{^G\mathbf{n}^{\perp}_1}
&= \begin{bmatrix}
-\sin \hat{\theta} &  \cos\hat{ \theta} &  0
\end{bmatrix}^{\top}
\\
{^G\mathbf{n}^{\perp}_2}
&= \begin{bmatrix}
-\cos \hat{\theta} \sin \hat{\phi} &  -\sin \hat{\theta} \sin \hat{\phi} &  \cos \hat{\phi}
\end{bmatrix}^{\top}
\end{align}
It is not difficult to see that the aided INS with a single plane feature will have at least 7 unobservable directions:
\begin{align}
\scalemath{1}{
	\mathbf{N}_{\pi}}
&=
\scalemath{1}{
	\begin{bmatrix}
	\mathbf{N}_g  & \mathbf{0}_{12\times 3}  & \mathbf{N}_{123}  &   \\
	-\lfloor{^G\hat{\mathbf{P}}_{I_1}}\times \rfloor {^G\mathbf{g}}   & {^G_{\Pi}\hat{\mathbf{R}}}   & \mathbf{0}_{3}\\
	-g \mathbf{e}_1 & \mathbf{e}_3 \mathbf{e}^{\top}_3 & \mathbf{0}_{3}  \\	
	\end{bmatrix}}
	=:
	\begin{bmatrix}
	\mathbf{N}_{\pi 1} & \mathbf{N}_{\pi 2:4} & \mathbf{N}_{\pi 5:7}
	\end{bmatrix}
\end{align}
Note that $\mathbf{N}_{\pi 1}$ relates to the rotation around the gravitational direction, $\mathbf{N}_{\pi 2:4}$ relates to the sensor's global translation, $\mathbf{N}_{\pi 5:6}$ relate to the sensor motion perpendicular to the plane's normal direction, while $\mathbf{N}_{\pi 7}$ relates to the rotation around the normal direction of the plane.

Proceeding similarly, 
we can reach the same conclusion as in Lemma~\ref{lem_multiple_plane} in the case of  $s>1$ plane features in the state vector -- that is, 
(i) 5 unobservable directions ($\mathbf{N}_{\Pi 1:5}$) for $s=2$ unparallel planes, 
and (ii) 4 unobservable directions ($\mathbf{N}_{\Pi 1:4}$) for $s>2$ planes with unparallel intersections.  
	\begin{align}
		\mathbf{N}_{\Pi}
		&=
		\begin{bmatrix}
		\mathbf{N}_g & \mathbf{0}_{12\times 3} & \mathbf{N}_{i\times j}\\
		-\lfloor{^G\hat{\mathbf{P}}_{I_1}}\times \rfloor {^G\mathbf{g}}   & {^G_{\Pi i}\hat{\mathbf{R}}} & \mathbf{0}_{3\times 1}\\
		-g\mathbf{e}_1  & \mathbf{e}_{3}\mathbf{e}^{\top}_3{^{\Pi 1}_{\Pi i}\hat{\mathbf{R}}} & \mathbf{0}_{3\times 1}\\
		\vdots &  \vdots & \vdots \\
		-g\mathbf{e}_1  & \mathbf{e}_{3}\mathbf{e}^{\top}_3{^{\Pi s}_{\Pi i}\hat{\mathbf{R}}} & \mathbf{0}_{3\times 1}\\
		\end{bmatrix} 
		=: 
	\begin{bmatrix}
		\mathbf{N}_{\Pi 1}  &  \mathbf{N}_{\Pi 2:4} & \mathbf{N}_{\Pi 5}
		\end{bmatrix}	
	\end{align}
for $i,j\in \{1\ldots s \}$.	
\section{Proof of Lemma \ref{lem_line_plane}}\label{apd_proof_lem1}

First of all, it is straightforward to verify the null space $\mathbf{N}_{l\pi 1}$ that relates to the rotation around the gravity direction. 
If the line is parallel to the plane, we have the line direction vector $^G\mathbf{v}_{\mathbf{e}}$ is perpendicular to the plane normal direction $^G\mathbf{n}_{\Pi}$, 
i.e., $^G\mathbf{v}^{\top}_{\mathbf{e}}{^G\mathbf{n}_{\Pi}}=0$, 
then we have:
\begin{equation}
	\mathbf{H}^{(l\pi)}_{I_k}\boldsymbol{\Phi}_{(k,1)}\mathbf{N}_{l\pi 5}
	=
	\scalemath{0.85}{
	\begin{bmatrix}
	\mathbf{H}_{l,k} 
	\mathbf{K}
	{^{I_k}_G\hat{\mathbf{R}}} & \mathbf{0}  \\
	\mathbf{0} & 
	{^{I_k}_G\hat{\mathbf{R}}} 
	\end{bmatrix}
	\begin{bmatrix}
	\lfloor {^G\hat{\mathbf{v}}_{L}}\rfloor{^G\hat{\mathbf{v}}_{\mathbf{e}}} \\
	-{^G\hat{\mathbf{n}}_{\pi}}{^G\hat{\mathbf{n}}^{\top}_{\pi}}{^G\hat{\mathbf{v}}^{\top}_{\mathbf{e}}}
	\end{bmatrix}
}
	\delta t_k
	=
	\mathbf{0}
\end{equation}
Clearly, if the line is parallel to the plane, the system will have one more unobservable direction $\mathbf{N}_{l\pi 5}$. 
To verify  $\mathbf{N}_{l\pi 2:4}$ is in the null space, for simplicity, we write~\eqref{eq_m_line_plane} as:
\begin{equation}
	\mathbf{H}^{(l\pi)}_{I_k}\boldsymbol{\Phi}_{(k,1)}=
	\begin{bmatrix}
	\mathbf{H}^{(l)}_{I_k}\boldsymbol{\Phi}_{(k,1)} \\
	\mathbf{H}^{(\pi)}_{I_k}\boldsymbol{\Phi}_{(k,1)} 
	\end{bmatrix}
\end{equation}
where $\mathbf{H}^{l}_{I_k}$ and $\mathbf{H}^{\pi}_{I_k}$ are the Jacobians w.r.t. the IMU state and the line and plane features. 
It is not hard to verify that:
\begin{align}
	\label{eq_pi_lem1}
	&\mathbf{H}^{(l)}_{I_k}\boldsymbol{\Phi}_{(k,1)}\mathbf{N}_{l\pi 2:4} {^G_L\hat{\mathbf{R}}} = \mathbf{0} \\
	\label{eq_pi_lem2}
	&\mathbf{H}^{(\pi)}_{I_k}\boldsymbol{\Phi}_{(k,1)}\mathbf{N}_{l\pi 2:4}{^{G}_{\Pi}\hat{\mathbf{R}}} =\mathbf{0}
\end{align}
Since ${{^{G}_{\Pi}\hat{\mathbf{R}}}}$ and ${{^G_{L}\hat{\mathbf{R}}}}$ are rotation matrices (of full rank), 
by multiplying ${{^G_{L}\hat{\mathbf{R}}^{\top}}}$ and ${{^{G}_{\Pi}\hat{\mathbf{R}}^{\top}}}$ from the left-hand side of \eqref{eq_pi_lem1} and \eqref{eq_pi_lem2}, respectively, 
we have:
\begin{align}
&\mathbf{H}^{(l)}_{I_k}\boldsymbol{\Phi}_{(k,1)}\mathbf{N}_{l\pi 2:4}=\mathbf{0} \\
&\mathbf{H}^{(\pi)}_{I_k}\boldsymbol{\Phi}_{(k,1)}\mathbf{N}_{l\pi 2:4}=\mathbf{0}
\end{align}
which shows: $\mathbf{H}^{(l\pi)}_{I_k}\boldsymbol{\Phi}_{(k,1)}\mathbf{N}_{l\pi 2:4}=\mathbf{0}$.

\section{Proof of Lemma \ref{lem_pt_line_plane}} \label{apd_proof_lem2}

First of all, it is not difficult to verify the null space $\mathbf{N}_{pl\pi 1}$ corresponding to the rotation around the gravity direction. 
What we need to verify is that $\mathbf{N}_{pl\pi 2:4}$ is in the unobservable subspace. 
To this end, for simplicity, we write $k$-th block row of the observability matrix as:
\begin{equation}
\mathbf{H}^{(pl\pi)}_{I_k}\boldsymbol{\Phi}_{(k,1)}=
\begin{bmatrix}
\mathbf{H}^{(p)}_{I_k}\boldsymbol{\Phi}_{(k,1)} \\
\mathbf{H}^{(l)}_{I_k}\boldsymbol{\Phi}_{(k,1)} \\ 
\mathbf{H}^{(\pi)}_{I_k}\boldsymbol{\Phi}_{(k,1)}
\end{bmatrix}
\end{equation}
where $\mathbf{H}^{(p)}_{I_k}$,  $\mathbf{H}^{(l)}_{I_k}$ and $\mathbf{H}^{(\pi)}_{I_k}$ are the Jacobians w.r.t. the IMU state  and the point, line and plane features, respectively. 
We can easily verify the following:
\begin{align}
&\mathbf{H}^{(p)}_{I_k}\boldsymbol{\Phi}_{(k,1)}\mathbf{N}_{pl\pi 2:4} = \mathbf{0} \\
\label{eq_pt_line_plane_lem1}
&\mathbf{H}^{(l)}_{I_k}\boldsymbol{\Phi}_{(k,1)}\mathbf{N}_{pl\pi 2:4}{^G_{L}\hat{\mathbf{R}}} = \mathbf{0} \\
\label{eq_pt_line_plane_lem2}
&\mathbf{H}^{(\pi)}_{I_k}\boldsymbol{\Phi}_{(k,1)}\mathbf{N}_{pl\pi 2:4} {^G_{\Pi}\hat{\mathbf{R}}}=\mathbf{0}
\end{align}
Similarly, since ${{^{G}_{\Pi}\hat{\mathbf{R}}}}$ and ${^G_{L}\hat{\mathbf{R}}}$ are rotation matrices of full rank, by multiplying both sides of \eqref{eq_pt_line_plane_lem1} and \eqref{eq_pt_line_plane_lem2} from left with ${^{G}_{\Pi}\hat{\mathbf{R}}^{\top}}$ and ${^G_{L}\hat{\mathbf{R}}}^{\top}$, respectively, we have:
\begin{align}
&\mathbf{H}^{(l)}_{I_k}\boldsymbol{\Phi}_{(k,1)}\mathbf{N}_{pl\pi 2:4}=\mathbf{0}\\
&\mathbf{H}^{(\pi)}_{I_k}\boldsymbol{\Phi}_{(k,1)}\mathbf{N}_{pl\pi 2:4}=\mathbf{0}
\end{align}
Thus, we reach: $\mathbf{H}^{(pl\pi)}_{I_k}\boldsymbol{\Phi}_{(k,1)}\mathbf{N}_{pl\pi 2:4}=\mathbf{0}$.

\section{Unobservable Directions for Point Features}\label{apd_proof}

\subsection{Nonlinear Observability Analysis}
we first provide an overview of the nonlinear observability rank condition test \cite{Hermann1977TAC} and summarize the method in \cite{Huang2010IJRR}\cite{Guo2013ICRA}\cite{Panahandeh2013ros}\cite{Hesch2014IJRR} for finding the unobservable modes of nonlinear system. 
Consider a nonlinear system:
\begin{equation} \label{eq_general_system}
	\left \{
		\begin{array}{l}
		\dot{\mathbf{x}} = \mathbf{f}_{0}(\mathbf{x}) + \sum_{i=1}^{\ell}\mathbf{f}_{i}(\mathbf{x})u_{i} \\
		\mathbf{z} = \mathbf{h}(\mathbf{x})
		\end{array} 
	\right.
\end{equation}
where $\mathbf{x}\in \mathbb{R}^m$ is the state vector, $\mathbf{u} = [ u_1 \quad \cdots \quad u_{\ell} ]\in \mathbb{R}^{\ell}$ is the system input, $\mathbf{z}\in \mathbb{R}^k$ is the system output, and $\mathbf{f}_i$ for $i\in \{0,\ldots,\ell\}$ is the process function. 

The zeroth order Lie derivative of a measurement function $\mathbf{h}$ is the function itself, i.e., $\mathcal{L}^0\mathbf{h} = \mathbf{h}(\mathbf{x})$. For any $n$-th order Lie derivative, $\mathcal{L}^n\mathbf{h}$, the $n+1$-th order Lie derivative $\mathcal{L}^{n+1}_{\mathbf{f}_i}\mathbf{h}$ with respect to a process function $\mathbf{f}_i$ can be computed as:
\begin{equation}
	\mathcal{L}^{n+1}_{\mathbf{f}_i}\mathbf{h}=\nabla\mathcal{L}^n\mathbf{h}\cdot\mathbf{f}_i
\end{equation}
where $\nabla$ denotes the gradient operator with respect to $\mathbf{x}$ and $"\cdot"$ represents the vector inner product. Similarly, mixed higher order Lie derivatives can be defined as:
\begin{equation}
	\mathcal{L}^n_{\mathbf{f}_i\mathbf{f}_{j\ldots}\mathbf{f}_k}\mathbf{h} = \mathcal{L}_{\mathbf{f}_i}(\mathcal{L}^{n-1}_{\mathbf{f}_{j\ldots}\mathbf{f}_k}\mathbf{h}) =
	\nabla \mathcal{L}^{n-1}_{\mathbf{f}_{j\ldots}\mathbf{f}_k} \mathbf{h}\cdot\mathbf{f}_i 
\end{equation}
where $i,j,k\in \{0,\ldots,\ell\}$.

The observability of a nonlinear system is determined by calculating the dimension of the space spanned by the gradients of Lie derivative of its output functions\cite{Hermann1977TAC}. Hence, the observability matrix $\mathbf{O}$ of system \eqref{eq_general_system} is defined as:
\begin{equation}
	\mathbf{O} \triangleq 
	\left[
		\begin{array}{c}
			\nabla\mathcal{L}^0\mathbf{h}         \\
			\nabla\mathcal{L}^1_{\mathbf{f}_i}\mathbf{h}   \\
			\vdots										\\
			\nabla\mathcal{L}^n_{\mathbf{f}_i\mathbf{f}_{j\ldots}\mathbf{f}_k}\mathbf{h} \\
			\vdots	
		\end{array}
	\right]
\end{equation}
To prove that a system is observable, it suffices to show that $\mathbf{O}$ is of full column rank. However, to prove that a system is unobservable, we have to find the null space of matrix $\mathbf{O}$, which may have infinitely many rows. This can be very challenging especially for high-dimensional systems, such as aided INS. To address this issue, we adopt the method proposed by \cite{Guo2013ICRA} for analyzing observability of nonlinear systems in the form of Eq. \eqref{eq_general_system}.

\begin{thm}\label{thm_observability}
	Assume that there exists a nonlinear transformation $\boldsymbol{\beta}(\mathbf{x}) = [\boldsymbol{\beta}_1({\mathbf{x}})^{\top}\ldots\boldsymbol{\beta}_n(\mathbf{x})^{\top}]^{\top}$(i.e., a set of basis functions) of the variable $\mathbf{x}$, such that:
	 \begin{enumerate}
	 	\item The system measurement equation can be written as a function of $\boldsymbol{\beta}$, i.e., $\mathbf{z}=\mathbf{h}(\mathbf{x})=\overline{\mathbf{h}}(\boldsymbol{\beta})$
	 	\item $\frac{\partial \boldsymbol{\beta}}{\partial \mathbf{x}}\mathbf{f}_j$, for $j=\{0,\ldots,\ell\}$, is a function of $\boldsymbol{\beta}$
	 \end{enumerate}
	 Then the observability matrix of system \eqref{eq_general_system} can be factorized as: $\mathbf{O}=\Xi\Omega$ where $\Xi$ is the observability matrix of the system:
	 \begin{equation} \label{eq_transformed_system}
		 \left \{
			 \begin{array}{l}
				 \dot{\boldsymbol{\beta}} = \mathbf{g}_{0}(\boldsymbol{\beta}) + \sum_{i=1}^{\ell}\mathbf{g}_{i}(\boldsymbol{\beta})u_{i} \\
				 \mathbf{z} = \overline{\mathbf{h}}(\boldsymbol{\beta})
			 \end{array} 
		 \right.
	 \end{equation}
	 and $\Omega$ can be represented as:
	\begin{equation}
		\Omega = \frac{\partial \boldsymbol{\beta}}{\partial \mathbf{x}}
	\end{equation}
\end{thm}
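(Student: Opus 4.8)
The plan is to prove the factorization $\mathbf{O} = \Xi\Omega$ by showing, by induction on the order of Lie differentiation, that every Lie derivative of the original output map is the pull-back through $\boldsymbol{\beta}$ of the corresponding Lie derivative of the reduced system \eqref{eq_transformed_system}, and then differentiating with the chain rule. First I would fix notation: condition (2) asserts that for each $j\in\{0,\ldots,\ell\}$ there is a smooth map $\mathbf{g}_j$ with $\frac{\partial\boldsymbol{\beta}}{\partial\mathbf{x}}\,\mathbf{f}_j(\mathbf{x}) = \mathbf{g}_j\big(\boldsymbol{\beta}(\mathbf{x})\big)$ identically in $\mathbf{x}$; these $\mathbf{g}_j$ are exactly the process functions of the reduced system \eqref{eq_transformed_system}, so its Lie derivatives $\overline{\mathcal{L}}$ are well defined. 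Condition (1) gives $\mathbf{h}(\mathbf{x}) = \overline{\mathbf{h}}\big(\boldsymbol{\beta}(\mathbf{x})\big)$.

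The core claim is that for every finite ordered string $(i_1,\ldots,i_n)$ with entries in $\{0,\ldots,\ell\}$,
\[
\mathcal{L}^{n}_{\mathbf{f}_{i_1}\cdots\mathbf{f}_{i_n}}\mathbf{h}(\mathbf{x})
= \overline{\mathcal{L}}^{n}_{\mathbf{g}_{i_1}\cdots\mathbf{g}_{i_n}}\overline{\mathbf{h}}\big(\boldsymbol{\beta}(\mathbf{x})\big).
\]
The base case $n=0$ is condition (1). For the inductive step, assume $\mathcal{L}^{n}_{\cdots}\mathbf{h}(\mathbf{x}) = \overline{\psi}\big(\boldsymbol{\beta}(\mathbf{x})\big)$ with $\overline{\psi} = \overline{\mathcal{L}}^{n}_{\cdots}\overline{\mathbf{h}}$. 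Then, writing the new leading index as $i_0$,
\[
\mathcal{L}^{n+1}_{\mathbf{f}_{i_0}\mathbf{f}_{i_1}\cdots}\mathbf{h}
= \nabla_{\mathbf{x}}\big[\overline{\psi}\circ\boldsymbol{\beta}\big]\cdot\mathbf{f}_{i_0}
= \big(\nabla_{\boldsymbol{\beta}}\overline{\psi}\big)\,\tfrac{\partial\boldsymbol{\beta}}{\partial\mathbf{x}}\,\mathbf{f}_{i_0}
= \big(\nabla_{\boldsymbol{\beta}}\overline{\psi}\big)\,\mathbf{g}_{i_0}\big(\boldsymbol{\beta}\big)
= \overline{\mathcal{L}}_{\mathbf{g}_{i_0}}\overline{\psi}\big(\boldsymbol{\beta}(\mathbf{x})\big),
\]
where the third equality is precisely condition (2); this is again a function of $\boldsymbol{\beta}$ and equals the $(n{+}1)$-th reduced Lie derivative, closing the induction.

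Given the claim, I would differentiate both sides with respect to $\mathbf{x}$ and apply the chain rule once more:
\[
\nabla_{\mathbf{x}}\,\mathcal{L}^{n}_{\mathbf{f}_{i_1}\cdots}\mathbf{h}
= \Big(\nabla_{\boldsymbol{\beta}}\,\overline{\mathcal{L}}^{n}_{\mathbf{g}_{i_1}\cdots}\overline{\mathbf{h}}\Big)\,\tfrac{\partial\boldsymbol{\beta}}{\partial\mathbf{x}}.
\]
The left-hand side is the block row of $\mathbf{O}$ indexed by $(i_1,\ldots,i_n)$; the first factor on the right is the corresponding block row of the reduced observability matrix $\Xi$; and the common right factor is $\Omega = \frac{\partial\boldsymbol{\beta}}{\partial\mathbf{x}}$. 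Stacking over all strings in the same order used to build $\mathbf{O}$ and $\Xi$ gives $\mathbf{O} = \Xi\,\Omega$. As an immediate corollary, $\mathrm{rank}\,\mathbf{O} \le \mathrm{rank}\,\Xi$, with equality whenever $\Omega$ has full column rank, which is the property exploited later to transfer observability conclusions from the low-dimensional reduced system back to the full aided-INS state.

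The main obstacle is bookkeeping rather than deep mathematics: one must (i) be careful that condition (2) really furnishes maps $\mathbf{g}_j$ of $\boldsymbol{\beta}$ \emph{alone}, so the reduced system and its Lie derivatives are genuinely well defined; (ii) keep the Jacobian/gradient conventions consistent, since $\mathcal{L}^{n}\mathbf{h}$ and $\boldsymbol{\beta}$ are vector-valued and the ``gradients'' are matrices, so that the product $\Xi\Omega$ has the correct shape; and (iii) ensure the block orderings of $\mathbf{O}$ and $\Xi$ coincide so the factorization is literal and not merely up to a row permutation. The analytic prerequisites (smoothness of $\mathbf{f}_i,\mathbf{h},\boldsymbol{\beta}$ and existence of the Lie derivatives) hold under the paper's standing assumptions and need no separate treatment.
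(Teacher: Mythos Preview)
Your induction argument is correct and is the standard proof of this factorization. Note, however, that the paper does not supply its own proof of this theorem at all: its proof reads simply ``See \cite{Guo2013ICRA}.'' So there is nothing to compare against in the paper itself; your chain-rule induction on the order of Lie differentiation is precisely the argument one would expect to find in that reference, and the bookkeeping points you flag (well-definedness of $\mathbf{g}_j$, matrix-valued gradients, consistent row ordering) are the right caveats.
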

\begin{proof}
	See \cite{Guo2013ICRA}.
\end{proof}
Note that system \eqref{eq_transformed_system} results by pre-multiplying the process function to system \eqref{eq_general_system} with $\frac{\partial \boldsymbol{\beta}}{\partial \mathbf{x}}$:
\begin{displaymath}
	\left \{
		\begin{array}{l}
			\frac{\partial \boldsymbol{\beta}}{\partial \mathbf{x}}\frac{\partial \mathbf{x}}{\partial t} = 
			\frac{\partial \boldsymbol{\beta}}{\partial \mathbf{x}}\mathbf{f}_0(\mathbf{x})+ 
			\frac{\partial \boldsymbol{\beta}}{\partial \mathbf{x}}\sum_{i=1}^{\ell}\mathbf{f}_i(\mathbf{x})u_i \\
			\mathbf{z} = \mathbf{h}(\mathbf{x})
		\end{array}  
	\right.
	\Rightarrow
	\left \{
		\begin{array}{l}
			\dot{\boldsymbol{\beta}} = \mathbf{g}_{0}(\boldsymbol{\beta}) + \sum_{i=1}^{\ell}\mathbf{g}_{i}(\boldsymbol{\beta})u_{i} \\
			\mathbf{z} = \overline{\mathbf{h}}(\boldsymbol{\beta})
		\end{array} 
	\right.
\end{displaymath}
where $\mathbf{g}_i(\boldsymbol{\beta})\triangleq\frac{\partial \boldsymbol{\beta}}{\partial \mathbf{x}}$ and $\overline{\mathbf{h}}(\boldsymbol{\beta})\triangleq \mathbf{h}(\mathbf{x})$.
\begin{cor}\label{cor_observability}
	If $\Xi$ is of full column rank, i.e., system \eqref{eq_transformed_system} is observable, then the unobservable directions of system \eqref{eq_transformed_system} will be spanned by the null vectors of $\Omega$.
\end{cor}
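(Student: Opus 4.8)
The plan is to reduce the corollary to an elementary fact about a factorized linear map, using the factorization $\mathbf{O}=\Xi\Omega$ established in Theorem~\ref{thm_observability}. Recall that, by the nonlinear observability rank condition of Hermann and Krener~\cite{Hermann1977TAC}, the (locally) unobservable directions of system~\eqref{eq_general_system} at a state $\mathbf{x}$ are exactly the vectors in the right null space of the observability matrix $\mathbf{O}$ — the same criterion already invoked for the point-feature case earlier in the paper. So the statement amounts to showing $\mathrm{null}(\mathbf{O})=\mathrm{null}(\Omega)$ whenever $\Xi$ has full column rank.

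First I would record the two inclusions. For one direction, if $\Omega\mathbf{v}=\mathbf{0}$ then $\mathbf{O}\mathbf{v}=\Xi\Omega\mathbf{v}=\Xi\mathbf{0}=\mathbf{0}$, so $\mathrm{null}(\Omega)\subseteq\mathrm{null}(\mathbf{O})$ with no hypothesis on $\Xi$. For the reverse, suppose $\mathbf{O}\mathbf{v}=\mathbf{0}$; then $\Xi(\Omega\mathbf{v})=\mathbf{0}$, and since $\Xi$ has full column rank — i.e., the only $\mathbf{w}$ with $\Xi\mathbf{w}=\mathbf{0}$ is $\mathbf{w}=\mathbf{0}$ — we get $\Omega\mathbf{v}=\mathbf{0}$, hence $\mathrm{null}(\mathbf{O})\subseteq\mathrm{null}(\Omega)$. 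Combining gives $\mathrm{null}(\mathbf{O})=\mathrm{null}(\Omega)$, so the unobservable directions, which span $\mathrm{null}(\mathbf{O})$, also span $\mathrm{null}(\Omega)$, i.e.\ are spanned by the null vectors of $\Omega$.

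Two points would need a careful sentence rather than a computation. The matrix $\Xi$ (the observability matrix of the transformed system~\eqref{eq_transformed_system}) has in principle infinitely many rows, so ``full column rank'' should be read as injectivity of $\Xi$ as a linear map on the finite-dimensional coordinate space of the basis functions $\boldsymbol{\beta}$ — equivalently, some finite sub-block of its rows already attains rank equal to the number of columns. The cancellation step $\Xi\mathbf{w}=\mathbf{0}\Rightarrow\mathbf{w}=\mathbf{0}$ uses exactly this injectivity and nothing else, so the number of rows is irrelevant. The remaining, purely bookkeeping, point is that $\mathbf{v}$ lies in the tangent space at $\mathbf{x}$ while $\Omega=\partial\boldsymbol{\beta}/\partial\mathbf{x}$ sends it into the $\boldsymbol{\beta}$-tangent space; since $\Xi\Omega$ is well defined by Theorem~\ref{thm_observability}, the dimensions are compatible and the chain of implications above is literal.

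The main ``obstacle,'' such as it is, is not the algebra but stating the identification between \emph{unobservable directions} and the right null space of $\mathbf{O}$ at the correct level of generality (local analysis here, and the global statement for the linearized systems used elsewhere), so that transporting it across the factorization $\mathbf{O}=\Xi\Omega$ is legitimate; once that is pinned down, the corollary follows immediately from Theorem~\ref{thm_observability}.
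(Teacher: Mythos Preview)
Your proof is correct and follows essentially the same approach as the paper: both use the factorization $\mathbf{O}=\Xi\Omega$ from Theorem~\ref{thm_observability} and deduce $\mathrm{null}(\mathbf{O})=\mathrm{null}(\Omega)$ from the injectivity of $\Xi$. The paper phrases the key step via the decomposition $\mathrm{null}(\mathbf{O})=\mathrm{null}(\Omega)\cup(\mathrm{null}(\Xi)\cap\mathrm{range}(\Omega))$ and then observes that full column rank of $\Xi$ kills the second piece, whereas you argue the two inclusions directly; your version is slightly cleaner (the paper's set identity mixes subspaces of different ambient spaces and should really read $\mathrm{null}(\mathbf{O})=\Omega^{-1}(\mathrm{null}(\Xi))$), but the substance is identical.
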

\begin{proof}
	From $\mathbf{O}=\Xi\Omega$, we have $null(\mathbf{O})=null(\Omega)\cup(null(\Xi)\cap range(\Omega))$. Therefore, if $\Xi$ is of full column rank, i.e., system \eqref{eq_transformed_system} is observable, then $null(\mathbf{O})=null(\Omega)$.
\end{proof}
Base on Theorem \ref{thm_observability} and Corollary \ref{cor_observability}, to find the unobservable directions of a system, we first need to define the basis functions, $\boldsymbol{\beta}$, which fulfill the first and second conditions of Theorem \ref{thm_observability}. Then, we should prove that the infinite-dimensional matrix $\Xi$ has full column rank, which satisfies the conditions of Corollary \ref{cor_observability}. 


\subsection{System Propagation Model}
For the aided INS, the IMU measurements are used for state propagation while the measurements from exteroceptive sensor are used for state update.
%
%
The INS state $\mathbf{x}_I$ can be defined as:
\begin{equation}
	\mathbf{x}_{I} = [{^I_{G}\mathbf{s}^{\top}} \quad {\mathbf{b}_g^{\top}} \quad {^G\mathbf{v}_I^{\top}} \quad {\mathbf{b}_a^{\top}} \quad {^G\mathbf{p}_I^{\top}}]^{\top}
\end{equation}
where $^I_G\mathbf{s}$ is the Cayley-Gibbs-Rodriguez parameterization \cite{Shuster1993JAS} representing the orientation of the global frame $\{G\}$ in the IMU frame of reference $\{I\}$. The time-continuous system evolution model:
\begin{eqnarray}
	^I_G\dot{\mathbf{s}}(t)  & =  & \mathbf{D}(^I\boldsymbol{\omega}(t)-\mathbf{b}_g(t))   \nonumber \\
	\dot{\mathbf{b}}_g(t)    & =  & \mathbf{n}_g													  \nonumber \\
	^G\dot{\mathbf{V}}_I(t)  & =  & ^G\mathbf{a}(t)={^G\mathbf{g}}+\mathbf{R}(^I_G\mathbf{s}(t))^{\top}(^I\mathbf{a}(t)-\mathbf{b}_a(t)) \\
	\dot{\mathbf{b}}_a(t)    & =  & \mathbf{n}_a													  \nonumber \\
	^G\dot{\mathbf{P}}_I(t)  & =  & ^G\mathbf{V}_I(t)													  \nonumber
\end{eqnarray} 
where $\mathbf{D}\triangleq \frac{\partial \mathbf{s}}{\partial \boldsymbol{\theta}}=\frac{1}{2}(\mathbf{I}+\lfloor\mathbf{s}\times\rfloor+{\mathbf{s}\mathbf{s}^{\top}})$, $\boldsymbol{\theta} = \alpha \hat{\mathbf{k}}$ represents a rotation by an angle $\alpha$ around the axis $\hat{\mathbf{k}}$, $^I\boldsymbol{\omega}(t) = [\omega_1 \quad \omega_2 \quad \omega_3]^{\top}$ and $^I\mathbf{a}(t)=[a_1 \quad a_2 \quad a_3]^{\top}$ are the rotational velocity and linear acceleration respectively, measured by the IMU and represented in $\{I\}$. $^G\mathbf{g}$ is the gravitational acceleration, $\mathbf{R}(\mathbf{s})$ is the rotation matrix corresponding to $\mathbf{s}$, and $\mathbf{n}_g$ and $\mathbf{n}_a$ are the gyroscope and accelerometer biases driving white Gaussian noises. 
\subsection{Observability Analysis for Point Feature}

Based on the above analysis, the key is to prove $\Xi$ is of full rank and then to find the unobservable direction from the $\Omega$. $\Omega$ is determined by the basis functions $\boldsymbol{\beta}$. That means if we can find the same basis functions set $\boldsymbol{\beta}$ for  aided INS, we can prove that these systems have the same unobservable directions. Therefore, the only job left unfinished is to check the rank of different $\Xi$s for these systems.  

\subsubsection{Basis Functions For Point Measurement}
With the generalized point measurement model and state propagation model, we can define the state vector as:
\begin{equation}
\mathbf{x} = [{^I_{G}\mathbf{s}^{\top}} \quad {\mathbf{b}_g^{\top}} \quad {^G\mathbf{V}_I^{\top}} \quad {\mathbf{b}_a^{\top}} \quad {^G\mathbf{P}_I^{\top}} \quad {^G\mathbf{P}_{\mathbf{f}}^{\top}}]^{\top}
\end{equation}
For simplicity, we retain only a few of the subscripts and superscripts in the state elements and denote the system state vector as:
\begin{equation}
\mathbf{x} = [{\mathbf{s}^{\top}} \quad {\mathbf{b}_g^{\top}} \quad {\mathbf{V}^{\top}} \quad {\mathbf{b}_a^{\top}} \quad {\mathbf{P}^{\top}} \quad {\mathbf{P}_{\mathbf{f}}^{\top}}]^{\top}
\end{equation}
Then the system state equation can be rewritten as:
\begin{equation}
\left[
\begin{array}{c}
\dot{\mathbf{s}}        \\
\dot{\mathbf{b}}_g		\\
\dot{\mathbf{V}}		\\
\dot{\mathbf{b}}_a		\\
\dot{\mathbf{P}}		\\
\dot{\mathbf{P}}_{\mathbf{f}}			
\end{array}
\right]
=
\underbrace{	
	\left[
	\begin{array}{c}
	-\mathbf{D}\mathbf{b}_g   \\
	\mathbf{0}				  \\
	\mathbf{g}-\mathbf{R}^{\top}\mathbf{b}_a	\\
	\mathbf{0}				\\
	\mathbf{v}				\\
	\mathbf{0}
	\end{array}
	\right]}
_{\mathbf{f}_0}
+
\underbrace{	
	\left[
	\begin{array}{c}
	\mathbf{D}		\\
	\mathbf{0}		\\
	\mathbf{0}		\\
	\mathbf{0}		\\
	\mathbf{0}		\\
	\mathbf{0}
	\end{array}
	\right]}_{\mathbf{F}_1}
\boldsymbol{\omega}
+
\underbrace{	
	\left[
	\begin{array}{c}
	\mathbf{0}		\\
	\mathbf{0}		\\
	\mathbf{R}^{\top} \\
	\mathbf{0}		\\
	\mathbf{0}		\\
	\mathbf{0}
	\end{array}
	\right]}_{\mathbf{F}_2}
\mathbf{a}
\end{equation}
where $\mathbf{R}\triangleq\mathbf{\mathbf{R}(\mathbf{s})}$. Note that $\mathbf{f}_0$ is a $18\times1$ vector, while $\mathbf{F}_1$ and $\mathbf{F_2}$ are both $24\times3$ matrices which is a compact form for representing process functions as:
\begin{eqnarray}
\mathbf{F}_1\boldsymbol{\omega} & =  &  \mathbf{f}_{11}\omega_1 + \mathbf{f}_{12}\omega_2 + \mathbf{f}_{13}\omega_3  \\ 
\mathbf{F}_2\mathbf{a} 			& =  &  \mathbf{f}_{21}a_1 + \mathbf{f}_{22}a_2 + \mathbf{f}_{23}a_3  
\end{eqnarray}

Since all the terms in the preceding projections are defined based on the existing basis functions, we have found a complete basis set (see \ref{apd_point_basis}):
\begin{equation}
	\boldsymbol{\beta} =
	\left[
		\begin{array}{c}
			\boldsymbol{\beta}_1	\\
			\boldsymbol{\beta}_2	\\
			\boldsymbol{\beta}_3	\\
			\boldsymbol{\beta}_4	\\
			\boldsymbol{\beta}_5	
		\end{array}
	\right]
	=
	\left[
		\begin{array}{c}
			\mathbf{R}(\mathbf{p}_{\mathbf{f}}-\mathbf{p})	\\
			\mathbf{b}_g						\\
			\mathbf{R}\mathbf{v}				\\
			\mathbf{b}_a						\\
			\mathbf{R}\mathbf{g}
		\end{array}
	\right]
\end{equation}

Therefore, the new system with $\boldsymbol{\beta}$ basis:
\begin{equation}
			\left[
				\begin{array}{c}
					\dot{\boldsymbol{\beta}_1}	\\
					\dot{\boldsymbol{\beta}_2}	\\
					\dot{\boldsymbol{\beta}_3}	\\
					\dot{\boldsymbol{\beta}_4}	\\
					\dot{\boldsymbol{\beta}_5}	\\
				\end{array}
			\right]
		=
\underbrace{\left[
				\begin{array}{c}
					-\lfloor\boldsymbol{\beta}_1\times\rfloor\boldsymbol{\beta}_2-\boldsymbol{\beta}_3	\\
					\mathbf{0}		\\
					-\lfloor\boldsymbol{\beta}_3\times\rfloor\boldsymbol{\beta}_2+\boldsymbol{\beta}_5-\boldsymbol{\beta}_4	\\
					\mathbf{0}			\\
					-\lfloor\boldsymbol{\beta}_5\times\rfloor\boldsymbol{\beta}_2		\\
				\end{array}
			\right]}_{\mathbf{g}_{0}}
		+
\underbrace{\left[
				\begin{array}{c}
					\lfloor\boldsymbol{\beta}_1\times\rfloor	\\
					\mathbf{0}			\\
					\lfloor\boldsymbol{\beta}_3\times\rfloor	\\
					\mathbf{0}			\\
					\lfloor\boldsymbol{\beta}_5\times\rfloor	\\
				\end{array}
			\right]}_{\mathbf{G}_1}\boldsymbol{\omega}
		+
\underbrace{\left[
				\begin{array}{c}
					\mathbf{0}	\\
					\mathbf{0}	\\
					\mathbf{I}_3	\\
					\mathbf{0}	\\
					\mathbf{0}	\\
				\end{array}
			\right]}_{\mathbf{G}_2}\mathbf{a}			
\end{equation}
where $\mathbf{g}_0$ is a $18\times1$ vector, while $\mathbf{G}_1$ and $\mathbf{G}_2$ are both $24\times3$ matrices which is compact form for representing process functions as:
\begin{eqnarray}
	\mathbf{G}_1\boldsymbol{\omega} & = & \mathbf{g}_{11}\omega_1 + \mathbf{g}_{12}\omega_2 + \mathbf{g}_{13}\omega_3  \\
	\mathbf{G}_2\mathbf{a} & = & \mathbf{g}_{21}a_1 + \mathbf{g}_{22}a_2 + \mathbf{g}_{23}a_3  
\end{eqnarray}
Base on the Theorem \ref{thm_observability}, the observability matrix $\mathbf{O}$ of the aided INS is the product of observability matrix $\Xi$ with the derivatives of the basis functions $\Omega$. In what follows, we will first prove that matrix $\Xi$ is of full column rank. Then, the null space of matrix $\Omega$ corresponds to the unobservable directions of the aided INS. 

From the generalized measurement model Eq. \eqref{eq_x_meas}, the $\Xi$ contains two parts:
\begin{equation}
	\Xi=
	\left[
		\begin{array}{c}
		\Xi^{(r)}		\\
		\Xi^{(b)}
		\end{array}
	\right]
\end{equation}
where $\Xi^{(r)}$ and $\Xi^{(b)}$ represents observability matrix from the range measurement and bearing measurement respectively. Therefore, in order to prove that matrix $\Xi$ is of full column rank, we will inspect the column rank of $\Xi^{(r)}$ and $\Xi^{(b)}$ respectively. In Appendix \ref{apd_rank_test_r} and \ref{apd_rank_test_b} we showed that for range measurement and fulling bearing measurement $\Xi^{(r)}$ and $\Xi^{(b)}$ will have full column rank

\subsubsection{Unobservable Direction}
According to the basis set of $\boldsymbol{\beta}$, we have:
\begin{equation}
	\Omega = \frac{\partial \boldsymbol{\beta}}{\partial \mathbf{x}}=
	\left[
		\begin{array}{cccccc}
			\lfloor\mathbf{R}(\mathbf{p}_{\mathbf{f}}-\mathbf{p})\times\rfloor\frac{\partial \boldsymbol{\theta}}{\partial \mathbf{s}} &
			\mathbf{0}  & \mathbf{0}  &  \mathbf{0} & -\mathbf{R} &  \mathbf{R}    \\
			\mathbf{0}  & \mathbf{I}_3  & \mathbf{0}   & \mathbf{0} & \mathbf{0}  & \mathbf{0}   \\
			\lfloor\mathbf{R}\mathbf{v}\times\rfloor\frac{\partial \boldsymbol{\theta}}{\partial \mathbf{s}} & 
			\mathbf{0}  & \mathbf{R} & \mathbf{0} & \mathbf{0} & \mathbf{0}    \\
			\mathbf{0}  & \mathbf{0}  & \mathbf{0}   & \mathbf{I}_3 & \mathbf{0}  & \mathbf{0}   \\
			\lfloor\mathbf{R}\mathbf{g}\times\rfloor\frac{\partial \boldsymbol{\theta}}{\partial \mathbf{s}} & 
			\mathbf{0}  & \mathbf{0} & \mathbf{0} & \mathbf{0} & \mathbf{0}    \\
		\end{array}
	\right]
\end{equation}
Assuming A is the null space of $\Omega$, and A should have the following form:
\begin{equation}
	\mathbf{A} = \left[\mathbf{A}_1^{\top}\quad \mathbf{A}_2^{\top}\quad \mathbf{A}_3^{\top}\quad \mathbf{A}_4^{\top}\quad \mathbf{A}_5^{\top}\quad \mathbf{A}_6^{\top}\right]^{\top}\neq\mathbf{0}
\end{equation}
such that:
\begin{equation}
	\Omega\mathbf{A}=\mathbf{0}
\end{equation}
Hence, the system's unobservable directions can be described as:
\begin{equation}
	\mathbf{A}
	=
	\left[
		\begin{array}{cc}
			\frac{\partial \mathbf{s}}{\partial \mathbf{\theta}}\mathbf{R}\mathbf{g}  &  \mathbf{0}    \\
			\mathbf{0}   &  \mathbf{0}				\\
			-\lfloor\mathbf{v}\times\rfloor\mathbf{g}    &   \mathbf{0}			\\
			\mathbf{0}		&  \mathbf{0}			\\
			-\lfloor\mathbf{p}\times\rfloor\mathbf{g}  &   \mathbf{I}_3		\\
			-\lfloor\mathbf{p}_{\mathbf{f}}\times\rfloor\mathbf{g} &  \mathbf{I}_3	
		\end{array}
	\right]
\end{equation}
Therefore, the unobservable directions are the global position of exteroceptive sensor and the the point landmark, and the rotation about the gravity vector. 

\section{Basis Function and Rank Test for Point Measurement}

\subsection{Basis Functions for Point Measurement} \label{apd_point_basis}

According to the two conditions of Theorem \ref{thm_observability}, we define the system's first basis function according to Eq. \eqref{eq_feat}:
\begin{equation}
\boldsymbol{\beta_1}\triangleq \mathbf{R}(\mathbf{p}_{\mathbf{f}}-\mathbf{p})
\end{equation}
According to the second condition of Theorem \ref{thm_observability}, we will compute:
\begin{eqnarray}
\frac{\partial \boldsymbol{\beta}_1}{\partial \mathbf{x}}
&  =  & 
\left[\frac{\partial \boldsymbol{\beta}_1}{\partial \mathbf{s}} \quad 
\frac{\partial \boldsymbol{\beta}_1}{\partial \mathbf{b}_g} \quad 
\frac{\partial \boldsymbol{\beta}_1}{\partial \mathbf{v}} \quad 
\frac{\partial \boldsymbol{\beta}_1}{\partial \mathbf{b}_a} \quad 
\frac{\partial \boldsymbol{\beta}_1}{\partial \mathbf{p}} \quad 
\frac{\partial \boldsymbol{\beta}_1}{\partial \mathbf{p}_{\mathbf{f}}}\right]		\\
& = &
\left[\lfloor\mathbf{R}(\mathbf{p}_{\mathbf{f}}-\mathbf{p})\times\rfloor\frac{\partial \boldsymbol{\theta}}{\partial \mathbf{s}} \quad
\mathbf{0}  \quad
\mathbf{0}  \quad
\mathbf{0}  \quad
-\mathbf{R} \quad
\mathbf{R}  \right]		\\
\frac{\partial \boldsymbol{\beta}_1}{\partial \mathbf{x}}\mathbf{f}_0  & =  & -\lfloor\mathbf{R}(\mathbf{p}_{\mathbf{f}}-\mathbf{p})\times\rfloor\mathbf{b}_g-\mathbf{R}\mathbf{v} \triangleq 
-\lfloor \boldsymbol{\beta}_1\times\rfloor\boldsymbol{\beta}_2 - \boldsymbol{\beta}_3    \\
\frac{\partial \boldsymbol{\beta}_1}{\partial \mathbf{x}}\mathbf{f}_{1i}  &  =  &
\lfloor \mathbf{R}(\mathbf{p}_{\mathbf{f}}-\mathbf{p})\times \rfloor\mathbf{e}_{i}	\triangleq	\lfloor\boldsymbol{\beta}_1\times\rfloor\mathbf{e}_{i}					\\
\frac{\partial \boldsymbol{\beta}_1}{\partial \mathbf{x}}\mathbf{f}_{2i}  &  =   &  \mathbf{0}
\end{eqnarray}
where $\frac{\partial \boldsymbol{\theta}}{\partial \mathbf{s}}\mathbf{D}=\frac{\partial \boldsymbol{\theta}}{\partial \mathbf{s}}\frac{\partial \mathbf{s}}{\partial \boldsymbol{\theta}}=\mathbf{I}_3$, $i\in\{1,2,3\}$ and we have defined two new basis elements: $\boldsymbol{\beta}_2\triangleq\mathbf{b}_a$, $\boldsymbol{\beta}_3\triangleq\mathbf{R}\mathbf{v}$.

Similarly, for the span of $\boldsymbol{\beta}_2$, we have:
\begin{eqnarray}
\frac{\partial \boldsymbol{\beta}_2}{\partial \mathbf{x}}
&  =  & 
\left[\frac{\partial \boldsymbol{\beta}_2}{\partial \mathbf{s}} \quad 
\frac{\partial \boldsymbol{\beta}_2}{\partial \mathbf{b}_g} \quad 
\frac{\partial \boldsymbol{\beta}_2}{\partial \mathbf{v}} \quad 
\frac{\partial \boldsymbol{\beta}_2}{\partial \mathbf{b}_a} \quad 
\frac{\partial \boldsymbol{\beta}_2}{\partial \mathbf{p}} \quad 
\frac{\partial \boldsymbol{\beta}_2}{\partial \mathbf{p}_{\mathbf{f}}}\right]		\\
& = &
\left[\mathbf{0} \quad
\mathbf{I}_3  \quad
\mathbf{0}  \quad
\mathbf{0}  \quad
\mathbf{0} \quad
\mathbf{0}  \right]	\\
\frac{\partial \boldsymbol{\beta}_2}{\partial \mathbf{x}}\mathbf{f}_0 & = & \mathbf{0}    \\ 
\frac{\partial \boldsymbol{\beta}_2}{\partial \mathbf{x}}\mathbf{f}_{1i}& = &\mathbf{0}   \\
\frac{\partial \boldsymbol{\beta}_2}{\partial \mathbf{x}}\mathbf{f}_{2i}& = &\mathbf{0}
\end{eqnarray}
where $i\in\{1,2,3\}$.

Then, for the span of $\boldsymbol{\beta}_3$, we have:
\begin{eqnarray}
\frac{\partial \boldsymbol{\beta}_3}{\partial \mathbf{x}}
&  =  & 
\left[\frac{\partial \boldsymbol{\beta}_3}{\partial \mathbf{s}} \quad 
\frac{\partial \boldsymbol{\beta}_3}{\partial \mathbf{b}_g} \quad 
\frac{\partial \boldsymbol{\beta}_3}{\partial \mathbf{v}} \quad 
\frac{\partial \boldsymbol{\beta}_3}{\partial \mathbf{b}_a} \quad 
\frac{\partial \boldsymbol{\beta}_3}{\partial \mathbf{p}} \quad 
\frac{\partial \boldsymbol{\beta}_3}{\partial \mathbf{p}_{\mathbf{f}}}\right]		\\
& = &
\left[\lfloor\mathbf{R}\mathbf{v}\times\rfloor\frac{\partial \boldsymbol{\theta}}{\partial \mathbf{s}} \quad
\mathbf{0}  \quad
\mathbf{R}  \quad
\mathbf{0}  \quad
\mathbf{0} \quad
\mathbf{0}  \right]	\\
\frac{\partial \boldsymbol{\beta}_3}{\partial \mathbf{x}}\mathbf{f}_0 & = & 
-\lfloor\mathbf{R}\mathbf{v}\times\rfloor\mathbf{b}_g+\mathbf{R}\mathbf{g}-\mathbf{b}_a \triangleq
-\lfloor\boldsymbol{\beta}_3\times\rfloor\boldsymbol{\beta}_2 + \boldsymbol{\beta}_5 - \boldsymbol{\beta}_4   \\ 
\frac{\partial \boldsymbol{\beta}_3}{\partial \mathbf{x}}\mathbf{f}_{1i}& = &
\lfloor\mathbf{R}\mathbf{v}\times\rfloor\mathbf{e}_i \triangleq
\lfloor\boldsymbol{\beta}\times\rfloor \mathbf{e}_i  \\
\frac{\partial \boldsymbol{\beta}_3}{\partial \mathbf{x}}\mathbf{f}_{2i}& = & \mathbf{I}_3\mathbf{e}_i
\end{eqnarray}
where $i\in \{1,2,3\}$, and we have defined $\boldsymbol{\beta}_4 \triangleq \mathbf{b}_a$ and $\boldsymbol{\beta}_5 \triangleq \mathbf{R}\mathbf{g}$. 

Then, for the span of $\boldsymbol{\beta}_4$ and $\boldsymbol{\beta}_5$ we have:
\begin{eqnarray}
\frac{\partial \boldsymbol{\beta}_4}{\partial \mathbf{x}}
&  =  & 
\left[\frac{\partial \boldsymbol{\beta}_4}{\partial \mathbf{s}} \quad 
\frac{\partial \boldsymbol{\beta}_4}{\partial \mathbf{b}_g} \quad 
\frac{\partial \boldsymbol{\beta}_4}{\partial \mathbf{v}} \quad 
\frac{\partial \boldsymbol{\beta}_4}{\partial \mathbf{b}_a} \quad 
\frac{\partial \boldsymbol{\beta}_4}{\partial \mathbf{p}} \quad 
\frac{\partial \boldsymbol{\beta}_4}{\partial \mathbf{p}_{\mathbf{f}}}\right]		\\
& = &
\left[
\mathbf{0} \quad
\mathbf{0}  \quad
\mathbf{0}  \quad
\mathbf{I}_3  \quad
\mathbf{0} \quad
\mathbf{0}  
\right]	\\
\frac{\partial \boldsymbol{\beta}_4}{\partial \mathbf{x}}\mathbf{f}_0 & = & \mathbf{0}    \\ 
\frac{\partial \boldsymbol{\beta}_4}{\partial \mathbf{x}}\mathbf{f}_{1i}& = &\mathbf{0}   \\
\frac{\partial \boldsymbol{\beta}_4}{\partial \mathbf{x}}\mathbf{f}_{2i}& = &\mathbf{0}
\end{eqnarray}
where $i\in\{1,2,3\}$.
\begin{eqnarray}
\frac{\partial \boldsymbol{\beta}_5}{\partial \mathbf{x}}
&  =  & 
\left[\frac{\partial \boldsymbol{\beta}_5}{\partial \mathbf{s}} \quad 
\frac{\partial \boldsymbol{\beta}_5}{\partial \mathbf{b}_g} \quad 
\frac{\partial \boldsymbol{\beta}_5}{\partial \mathbf{v}} \quad 
\frac{\partial \boldsymbol{\beta}_5}{\partial \mathbf{b}_a} \quad 
\frac{\partial \boldsymbol{\beta}_5}{\partial \mathbf{p}} \quad 
\frac{\partial \boldsymbol{\beta}_5}{\partial \mathbf{p}_{\mathbf{f}}}\right]		\\
& = &
\left[
\lfloor\mathbf{R}\mathbf{g}\times\rfloor\frac{\partial \boldsymbol{\theta}}{\partial \mathbf{s}} \quad
\mathbf{0}  \quad
\mathbf{0}  \quad
\mathbf{0}  \quad
\mathbf{0} \quad
\mathbf{0}  
\right]	\\
\frac{\partial \boldsymbol{\beta}_5}{\partial \mathbf{x}}\mathbf{f}_0 & = & -\lfloor\mathbf{R}\mathbf{g}\times\rfloor\mathbf{b}_g 
\triangleq -\lfloor\boldsymbol{\beta}_5\times\rfloor\boldsymbol{\beta}_2 \\ 
\frac{\partial \boldsymbol{\beta}_5}{\partial \mathbf{x}}\mathbf{f}_{1i}& = &\lfloor\mathbf{R}\mathbf{g}\times\rfloor\mathbf{e}_i 
\triangleq \lfloor\boldsymbol{\beta}_5 \times\rfloor\mathbf{e}_i \\
\frac{\partial \boldsymbol{\beta}_5}{\partial \mathbf{x}}\mathbf{f}_{2i}& = &\mathbf{0}
\end{eqnarray}
where $i\in\{1,2,3\}$.

\subsection{Rank test for $\Xi^{(r)}$} \label{apd_rank_test_r}
Since the for the range measurement:$r=\sqrt{^x\mathbf{P}_{\mathbf{f}}^{\top}{^x\mathbf{P}_{\mathbf{f}}}}$ and $r\geq0$, we take $r^2={^x\mathbf{P}_{\mathbf{f}}^{\top}}{^x\mathbf{P}_{\mathbf{f}}}$ as the equivalent measurement to simplify the mathematical analysis. Hence, the range measurement model can be expressed in terms of basis functions as:
\begin{equation}
\overline{\mathbf{h}}^{(r)}=\boldsymbol{\beta}_1^{\top}\boldsymbol{\beta}_1
\end{equation}

Then we will perform the nonlinear observability rank condition test according to \cite{Hermann1977TAC}. 
\begin{itemize}
	\item The zeroth-order Lie derivatives of the measurement function is:
	\begin{equation}
	\mathcal{L}^0\overline{\mathbf{h}}^{(r)}=\boldsymbol{\beta}_1^{\top}\boldsymbol{\beta}_1
	\end{equation}
	Then, the gradient of the zeroth order Lie derivative is:
	\begin{equation}
	\nabla\mathcal{L}^0\overline{\mathbf{h}}^{(r)}
	=
	\frac{\partial \overline{\mathbf{h}}^{(r)}}{\partial \boldsymbol{\beta}}
	=
	\left[2\boldsymbol{\beta}_1^{\top} \quad \mathbf{0} \quad \mathbf{0} \quad \mathbf{0} \quad \mathbf{0} \right]
	\end{equation}
	\item The first-order Lie derivative of $\overline{\mathbf{h}}^{(r)}$ with respect to $\mathbf{g}_0$, $\mathbf{g}_{1i}$ and $\mathbf{g}_{2i}$ are computed respectively, as:
	\begin{eqnarray}
	\mathcal{L}^1_{\mathbf{g}_0}\overline{\mathbf{h}}^{(r)}
	& = &
	\nabla\mathcal{L}^0\overline{\mathbf{h}}^{(r)}\cdot\mathbf{g}_0
	=
	-2\boldsymbol{\beta}_1^{\top}\boldsymbol{\beta}_3  		\\
	\mathcal{L}^1_{\mathbf{g}_{1i}}\overline{\mathbf{h}}^{(r)}
	& = &
	\nabla\mathcal{L}^0\overline{\mathbf{h}}^{(r)}\cdot\mathbf{g}_{1i}
	=
	\mathbf{0}  								\\
	\mathcal{L}^1_{\mathbf{g}_{2i}}\overline{\mathbf{h}}^{(r)}
	& = &
	\nabla\mathcal{L}^0\overline{\mathbf{h}}^{(r)}\cdot\mathbf{g}_{2i}
	=
	\mathbf{0} 				
	\end{eqnarray}
	while the corresponding gradients are given by:
	\begin{eqnarray}
	\nabla\mathcal{L}^1_{\mathbf{g}_0}\overline{\mathbf{h}}^{(r)}
	& = &
	\frac{\partial \mathcal{L}^1_{\mathbf{g}_0}\overline{\mathbf{h}}^{(r)}}{\partial \boldsymbol{\beta}}
	=
	\left[-2\boldsymbol{\beta}_3^{\top} \quad \mathbf{0} \quad -2\boldsymbol{\beta}_1^{\top} \quad \mathbf{0} \quad \mathbf{0}\right]					\\
	\nabla\mathcal{L}^1_{\mathbf{g}_{1i}}\overline{\mathbf{h}}^{(r)}
	& = &
	\frac{\partial \mathcal{L}^1_{\mathbf{g}_{1i}}\overline{\mathbf{h}}^{(r)}}{\partial \boldsymbol{\beta}}
	= \left[\mathbf{0} \quad \mathbf{0} \quad \mathbf{0} \quad \mathbf{0} \quad \mathbf{0}\right]						\\
	\nabla\mathcal{L}^1_{\mathbf{g}_{2i}}\overline{\mathbf{h}}^{(r)}
	& = &
	\frac{\partial \mathcal{L}^1_{\mathbf{g}_{2i}}\overline{\mathbf{h}}^{(r)}}{\partial \boldsymbol{\beta}}
	= \left[\mathbf{0} \quad \mathbf{0} \quad \mathbf{0} \quad \mathbf{0} \quad \mathbf{0}\right]				
	\end{eqnarray}
	\item The second-order Lie derivatives are as following:
	\begin{eqnarray}
	\mathcal{L}^2_{\mathbf{g}_0\mathbf{g}_0}\overline{\mathbf{h}}^{(r)}
	& = &
	\nabla\mathcal{L}^1_{\mathbf{g}_0}\overline{\mathbf{h}}^{(r)}\cdot\mathbf{g}_0
	=
	2\boldsymbol{\beta}_3^{\top}\boldsymbol{\beta}_3 
	- 2\boldsymbol{\beta}_1^{\top}\boldsymbol{\beta}_5 
	+ 2\boldsymbol{\beta}_1^{\top}\boldsymbol{\beta}_4		
	\\
	\mathcal{L}^2_{\mathbf{g}_0\mathbf{g}_{1i}}\overline{\mathbf{h}}^{(r)}
	& = &
	\nabla\mathcal{L}^2_{\mathbf{g}_0}\overline{\mathbf{h}}^{(r)}\cdot\mathbf{g}_{1i}
	=
	\mathbf{0}  								
	\\
	\mathcal{L}^2_{\mathbf{g}_0\mathbf{g}_{2i}}\overline{\mathbf{h}}^{(r)}
	& = &
	\nabla\mathcal{L}^1_{\mathbf{g}_0}\overline{\mathbf{h}}^{(r)}\cdot\mathbf{g}_{2i}
	=
	-2\boldsymbol{\beta}_1^{\top}\mathbf{e}_{i} 				
	\end{eqnarray}
	while the corresponding gradients are:
	\begin{eqnarray}
	\nabla\mathcal{L}^2_{\mathbf{g}_0\mathbf{g}_0}\overline{\mathbf{h}}^{(r)}
	& = &
	\frac{\partial \mathcal{L}^2_{\mathbf{g}_0\mathbf{g}_0}\overline{\mathbf{h}}^{(r)}}{\partial \boldsymbol{\beta}}
	=
	\left[-2(\boldsymbol{\beta}_5^{\top}-\boldsymbol{\beta}_4^{\top}) \quad \mathbf{0} \quad 4\boldsymbol{\beta}_3^{\top} \quad 2\boldsymbol{\beta}_1^{\top} \quad -2\boldsymbol{\beta}_1^{\top}\right]					\\
	\nabla\mathcal{L}^2_{\mathbf{g}_0\mathbf{g}_{1i}}\overline{\mathbf{h}}^{(r)}
	& = &
	\frac{\partial \mathcal{L}^2_{\mathbf{g}_0\mathbf{g}_{1i}}\overline{\mathbf{h}}^{(r)}}{\partial \boldsymbol{\beta}}
	= \left[\mathbf{0} \quad \mathbf{0} \quad \mathbf{0} \quad \mathbf{0} \quad \mathbf{0}\right]						\\
	\nabla\mathcal{L}^2_{\mathbf{g}_0\mathbf{g}_{2i}}\overline{\mathbf{h}}^{(r)}
	& = &
	\frac{\partial \mathcal{L}^2_{\mathbf{g}_0\mathbf{g}_{2i}}\overline{\mathbf{h}}^{(r)}}{\partial \boldsymbol{\beta}}
	= \left[-2\mathbf{e}_1^{\top} \quad \mathbf{0} \quad \mathbf{0} \quad \mathbf{0} \quad \mathbf{0}\right]				
	\end{eqnarray}	
	\item The third-order Lie derivatives are as following:
	\begin{eqnarray}
	\mathcal{L}^3_{\mathbf{g}_0\mathbf{g}_0\mathbf{g}_0}\overline{\mathbf{h}}^{(r)}
	&=&
	\nabla\mathcal{L}^2_{\mathbf{g}_0\mathbf{g}_0}\overline{\mathbf{h}}^{(r)}\cdot\mathbf{g}_0
	=
	6\boldsymbol{\beta}_3^{\top}\boldsymbol{\beta}_5 - 6\boldsymbol{\beta}_3^{\top}\boldsymbol{\beta}_4 - 2\boldsymbol{\beta}_4^{\top}\lfloor\boldsymbol{\beta}_1\times\rfloor\boldsymbol{\beta}_2  
	\\
	\mathcal{L}^3_{\mathbf{g}_0\mathbf{g}_0\mathbf{g}_{1i}}\overline{\mathbf{h}}^{(r)}
	&=&
	\nabla\mathcal{L}^2_{\mathbf{g}_0\mathbf{g}_0}\overline{\mathbf{h}}^{(r)}\cdot\mathbf{g}_{1i}
	=
	2\boldsymbol{\beta}_4^{\top}\lfloor\boldsymbol{\beta}_1\times\rfloor\mathbf{e}_i
	\\
	\mathcal{L}^3_{\mathbf{g}_0\mathbf{g}_0\mathbf{g}_{2i}}\overline{\mathbf{h}}^{(r)}
	&=&
	\nabla\mathcal{L}^2_{\mathbf{g}_0\mathbf{g}_0}\overline{\mathbf{h}}^{(r)}\cdot\mathbf{g}_{2i}
	=
	4\boldsymbol{\beta}_3^{\top}\mathbf{e}_i
	\\
	\mathcal{L}^3_{\mathbf{g}_0\mathbf{g}_{2i}\mathbf{g}_0}\overline{\mathbf{h}}^{(r)}
	&=&
	\nabla\mathcal{L}^2_{\mathbf{g}_0\mathbf{g}_{2i}}\overline{\mathbf{h}}^{(r)}\cdot\mathbf{g}_{0}
	=
	2\mathbf{e}_i^{\top}\lfloor\boldsymbol{\beta}_1\times\rfloor\boldsymbol{\beta}_2+2\mathbf{e}_i^{\top}\boldsymbol{\beta}_3
	\\
	\mathcal{L}^3_{\mathbf{g}_0\mathbf{g}_{2i}\mathbf{g}_{1j}}\overline{\mathbf{h}}^{(r)}
	&=&
	\nabla\mathcal{L}^2_{\mathbf{g}_0\mathbf{g}_{2i}}\overline{\mathbf{h}}^{(r)}\cdot\mathbf{g}_{1j}
	=
	-2\mathbf{e}_i^{\top}\lfloor\boldsymbol{\beta}_1\times\rfloor\mathbf{e}_j						
	\end{eqnarray}
	while the corresponding gradients are:
	\begin{eqnarray}
	\nabla\mathcal{L}^3_{\mathbf{g}_0\mathbf{g}_0\mathbf{g}_0}\overline{\mathbf{h}}^{(r)}
	&=&
	\frac{\partial \mathcal{L}^3_{\mathbf{g}_0\mathbf{g}_0\mathbf{g}_0}\overline{\mathbf{h}}^{(r)}}{\partial \boldsymbol{\beta}}					
	\\
	&=&
	\scalemath{0.9}{
	\left[
	2\boldsymbol{\beta}_4^{\top}\lfloor\boldsymbol{\beta}_2\times\rfloor \quad -2\boldsymbol{\beta}_4\lfloor\boldsymbol{\beta}_1\times\rfloor \quad
	6(\boldsymbol{\beta}_5^{\top}-\boldsymbol{\beta}_4^{\top}) \quad
	-6\boldsymbol{\beta}_3^{\top} + 2\boldsymbol{\beta}_2^{\top}\lfloor\boldsymbol{\beta}_1\times\rfloor \quad
	6\boldsymbol{\beta}_3^{\top} 
	\right]} \nonumber
	\\
	\nabla\mathcal{L}^3_{\mathbf{g}_0\mathbf{g}_0\mathbf{g}_{1i}}\overline{\mathbf{h}}^{(r)}
	&=&
	\frac{\partial \mathcal{L}^3_{\mathbf{g}_0\mathbf{g}_0\mathbf{g}_{1i}}\overline{\mathbf{h}}^{(r)}}{\partial \boldsymbol{\beta}}	
	=
	\left[
	-2\boldsymbol{\beta}_4^{\top}\lfloor\mathbf{e}_i\times\rfloor \quad 
	\mathbf{0} \quad
	\mathbf{0} \quad
	-2\mathbf{e}_i^{\top}\lfloor\boldsymbol{\beta}_1\times\rfloor \quad
	\mathbf{0} 
	\right]
	\\
	\nabla\mathcal{L}^3_{\mathbf{g}_0\mathbf{g}_0\mathbf{g}_{2i}}\overline{\mathbf{h}}^{(r)}
	&=&
	\frac{\partial \mathcal{L}^3_{\mathbf{g}_0\mathbf{g}_0\mathbf{g}_{2i}}\overline{\mathbf{h}}^{(r)}}{\partial \boldsymbol{\beta}}	
	=
	\left[
	\mathbf{0} \quad 
	\mathbf{0} \quad
	4\mathbf{e}_i^{\top} \quad
	\mathbf{0} \quad
	\mathbf{0} 
	\right]	
	\\
	\nabla\mathcal{L}^3_{\mathbf{g}_0\mathbf{g}_{2i}\mathbf{g}_0}\overline{\mathbf{h}}^{(r)}
	&=&
	\frac{\partial \mathcal{L}^3_{\mathbf{g}_0\mathbf{g}_{2i}\mathbf{g}_0}\overline{\mathbf{h}}^{(r)}}{\partial \boldsymbol{\beta}}	
	=
	\left[
	-2\mathbf{e}_i^{\top}\lfloor\boldsymbol{\beta}_2\times\rfloor \quad 
	2\mathbf{e}_i^{\top}\lfloor\boldsymbol{\beta}_1\times\rfloor \quad
	2\mathbf{e}_i^{\top} \quad
	\mathbf{0} \quad
	\mathbf{0} 
	\right]
	\\
	\nabla\mathcal{L}^3_{\mathbf{g}_0\mathbf{g}_{2i}\mathbf{g}_{1j}}\overline{\mathbf{h}}^{(r)}
	&=&
	\frac{\partial \mathcal{L}^3_{\mathbf{g}_0\mathbf{g}_{2i}\mathbf{g}_{1j}}\overline{\mathbf{h}}^{(r)}}{\partial \boldsymbol{\beta}}	
	=
	\left[
	2\mathbf{e}_i^{\top}\lfloor\mathbf{e}_j\times\rfloor \quad 
	\mathbf{0} \quad
	\mathbf{0} \quad
	\mathbf{0} \quad
	\mathbf{0} 
	\right]										
	\end{eqnarray}
	\item The fourth-order Lie derivatives are as following:
	\begin{eqnarray}
	\mathcal{L}^4_{\mathbf{g}_0\mathbf{g}_0\mathbf{g}_0\mathbf{g}_0}\overline{\mathbf{h}}^{(r)}
	&=&
	\nabla\mathcal{L}^3_{\mathbf{g}_0\mathbf{g}_0\mathbf{g}_0}\overline{\mathbf{h}}^{(r)}\cdot\mathbf{g}_0
	\nonumber
	\\
	&=&
	-2\boldsymbol{\beta}_4^{\top}\lfloor\boldsymbol{\beta}_2\times\rfloor\lfloor\boldsymbol{\beta}_1\times\rfloor\boldsymbol{\beta}_2
	-8\boldsymbol{\beta}_4^{\top}\lfloor\boldsymbol{\beta}_2\times\rfloor\boldsymbol{\beta}_3
	+6(\boldsymbol{\beta}_4^{\top}-\boldsymbol{\beta}_5^{\top})(\boldsymbol{\beta}_4-\boldsymbol{\beta}_5)
	\\
	\mathcal{L}^4_{\mathbf{g}_0\mathbf{g}_0\mathbf{g}_{2i}\mathbf{g}_0}\overline{\mathbf{h}}^{(r)}
	&=&
	\nabla\mathcal{L}^4_{\mathbf{g}_0\mathbf{g}_0\mathbf{g}_{2i}}\overline{\mathbf{h}}^{(r)}\cdot\mathbf{g}_{0}
	=
	-4\mathbf{e}_i^{\top}\lfloor\boldsymbol{\beta}_3\times\rfloor\boldsymbol{\beta}_2
	+4\mathbf{e}_i^{\top}\boldsymbol{\beta}_5-4\mathbf{e}_i^{\top}\boldsymbol{\beta}_4			
	\end{eqnarray}	
	while the corresponding gradients are:
	\begin{small}
		\begin{equation}\label{eq_4th_lie}
		\scalemath{0.9}{
		\nabla\mathcal{L}^4_{\mathbf{g}_0\mathbf{g}_{0}\mathbf{g}_0\mathbf{g}_0}\overline{\mathbf{h}}^{(r)}
		=
		\left[
		\frac{\partial \mathcal{L}^4_{\mathbf{g}_0\mathbf{g}_{0}\mathbf{g}_0\mathbf{g}_0}\overline{\mathbf{h}}^{(r)}}{\partial \boldsymbol{\beta}_1}	 \  
		\frac{\partial \mathcal{L}^4_{\mathbf{g}_0\mathbf{g}_{0}\mathbf{g}_0\mathbf{g}_0}\overline{\mathbf{h}}^{(r)}}{\partial \boldsymbol{\beta}_2}	 \  
		\frac{\partial \mathcal{L}^4_{\mathbf{g}_0\mathbf{g}_{0}\mathbf{g}_0\mathbf{g}_0}\overline{\mathbf{h}}^{(r)}}{\partial \boldsymbol{\beta}_3}	 \  
		\frac{\partial \mathcal{L}^4_{\mathbf{g}_0\mathbf{g}_{0}\mathbf{g}_0\mathbf{g}_0}\overline{\mathbf{h}}^{(r)}}{\partial \boldsymbol{\beta}_4}	 \  
		\frac{\partial \mathcal{L}^4_{\mathbf{g}_0\mathbf{g}_{0}\mathbf{g}_0\mathbf{g}_0}\overline{\mathbf{h}}^{(r)}}{\partial \boldsymbol{\beta}_5}	 \  
		\right]	}
		\end{equation}
	\end{small}
	\begin{equation}
	\scalemath{0.9}{
	\nabla\mathcal{L}^4_{\mathbf{g}_0\mathbf{g}_0\mathbf{g}_{2i}\mathbf{g}_{0}}\overline{\mathbf{h}}^{(r)}
	=
	\frac{\partial \mathcal{L}^4_{\mathbf{g}_0\mathbf{g}_0\mathbf{g}_{2i}\mathbf{g}_{0}}\overline{\mathbf{h}}^{(r)}}{\partial \boldsymbol{\beta}}	
	=
	\left[
	\mathbf{0} \quad 
	-4\mathbf{e}_i^{\top}\lfloor\boldsymbol{\beta}_3\times\rfloor \quad
	4\mathbf{e}_i^{\top}\lfloor\boldsymbol{\beta}_2\times\rfloor \quad
	-4\mathbf{e}_i^{\top} \quad
	4\mathbf{e}_i^{\top} 
	\right]	}		
	\end{equation}
	where the terms in Eq. \eqref{eq_4th_lie} are:
	\begin{eqnarray}
	\frac{\partial \mathcal{L}^4_{\mathbf{g}_0\mathbf{g}_{0}\mathbf{g}_0\mathbf{g}_0}\overline{\mathbf{h}}^{(r)}}{\partial \boldsymbol{\beta}_1}
	&=&
	2\boldsymbol{\beta}_4^{\top}\lfloor\boldsymbol{\beta}_2\times\rfloor^2		
	\\
	\frac{\partial \mathcal{L}^4_{\mathbf{g}_0\mathbf{g}_{0}\mathbf{g}_0\mathbf{g}_0}\overline{\mathbf{h}}^{(r)}}{\partial \boldsymbol{\beta}_2}
	&=&
	-2\boldsymbol{\beta}_4^{\top}\lfloor\boldsymbol{\beta}_2\times\rfloor\lfloor\boldsymbol{\beta}_1\times\rfloor
	+2\boldsymbol{\beta}_2^{\top}\lfloor\boldsymbol{\beta}_1\times\rfloor\lfloor\boldsymbol{\beta}_4\times\rfloor
	+8\boldsymbol{\beta}_4^{\top}\lfloor\boldsymbol{\beta}_3\times\rfloor
	\\
	\frac{\partial \mathcal{L}^4_{\mathbf{g}_0\mathbf{g}_{0}\mathbf{g}_0\mathbf{g}_0}\overline{\mathbf{h}}^{(r)}}{\partial \boldsymbol{\beta}_3}
	&=&
	-8\boldsymbol{\beta}_4^{\top}\lfloor\boldsymbol{\beta}_2\times\rfloor
	\\
	\frac{\partial \mathcal{L}^4_{\mathbf{g}_0\mathbf{g}_{0}\mathbf{g}_0\mathbf{g}_0}\overline{\mathbf{h}}^{(r)}}{\partial \boldsymbol{\beta}_4}
	&=&
	-2\boldsymbol{\beta}_2^{\top}\lfloor\boldsymbol{\beta}_1\times\rfloor\lfloor\boldsymbol{\beta}_2\times\rfloor
	+8\boldsymbol{\beta}_3^{\top}\lfloor\boldsymbol{\beta}_2\times\rfloor
	+12(\boldsymbol{\beta}_4^{\top}-\boldsymbol{\beta}_5^{\top})		
	\\
	\frac{\partial \mathcal{L}^4_{\mathbf{g}_0\mathbf{g}_{0}\mathbf{g}_0\mathbf{g}_0}\overline{\mathbf{h}}^{(r)}}{\partial \boldsymbol{\beta}_5}
	&=&
	12(\boldsymbol{\beta}_5^{\top}-\boldsymbol{\beta}_4^{\top})					
	\end{eqnarray}
	\item The fifth-order Lie derivatives are as following:
	\begin{eqnarray}
	\mathcal{L}^5_{\mathbf{g}_0\mathbf{g}_0\mathbf{g}_{2i}\mathbf{g}_0\mathbf{g}_{1j}}\overline{\mathbf{h}}^{(r)}
	&=&
	\nabla\mathcal{L}^4_{\mathbf{g}_0\mathbf{g}_0\mathbf{g}_{2i}\mathbf{g}_0}\overline{\mathbf{h}}^{(r)}\cdot\mathbf{g}_{1j}
	=
	4\mathbf{e}_i^{\top} \lfloor\boldsymbol{\beta}_2\times\rfloor \lfloor\boldsymbol{\beta}_3\times\rfloor \mathbf{e}_j
	+4\mathbf{e}_i^{\top} \lfloor\boldsymbol{\beta}_5\times\rfloor	 \mathbf{e}_j	
	\end{eqnarray}	
	while the corresponding gradients are:
	\begin{eqnarray}
	\nabla\mathcal{L}^5_{\mathbf{g}_0\mathbf{g}_0\mathbf{g}_{2i}\mathbf{g}_{0}\mathbf{g}_{1j}}\overline{\mathbf{h}}^{(r)}
	&=&
	\frac{\partial \mathcal{L}^5_{\mathbf{g}_0\mathbf{g}_0\mathbf{g}_{2i}\mathbf{g}_{0}\mathbf{g}_{1j}}\overline{\mathbf{h}}^{(r)}}{\partial \boldsymbol{\beta}}	
	\\
	&=&
	\left[
	\mathbf{0} \quad 
	-4\mathbf{e}_j^{\top}\lfloor\boldsymbol{\beta}_3\times\rfloor  \lfloor\mathbf{e}_i\times\rfloor  \quad
	-4\mathbf{e}_i^{\top}\lfloor\boldsymbol{\beta}_2\times\rfloor  \lfloor\mathbf{e}_j\times\rfloor \quad
	\mathbf{0} \quad
	-4\mathbf{e}_i^{\top} \lfloor\mathbf{e}_j\times\rfloor
	\right]	
	\nonumber		
	\end{eqnarray}
\end{itemize}

Therefore, we can construct the $\Xi^{(r)}$ matrix \eqref{eq_Xi_r} and we can find out that $\Xi^{(r)}$ is of full rank. 
\begin{equation} \label{eq_Xi_r}
\resizebox{0.9\hsize}{!}
{$
	\Xi^{(r)}
	=
	\begin{bmatrix}
	\nabla \mathcal{L}^2_{\mathbf{g}_0\mathbf{g}_{21}}\overline{\mathbf{h}}^{(r)} \\
	\nabla \mathcal{L}^2_{\mathbf{g}_0\mathbf{g}_{22}}\overline{\mathbf{h}}^{(r)} \\
	\nabla \mathcal{L}^2_{\mathbf{g}_0\mathbf{g}_{23}}\overline{\mathbf{h}}^{(r)} \\
	\nabla\mathcal{L}^5_{\mathbf{g}_0\mathbf{g}_0\mathbf{g}_{21}\mathbf{g}_{0}\mathbf{g}_{12}}\overline{\mathbf{h}}^{(r)} \\
	\nabla\mathcal{L}^5_{\mathbf{g}_0\mathbf{g}_0\mathbf{g}_{22}\mathbf{g}_{0}\mathbf{g}_{13}}\overline{\mathbf{h}}^{(r)} \\
	\nabla\mathcal{L}^5_{\mathbf{g}_0\mathbf{g}_0\mathbf{g}_{23}\mathbf{g}_{0}\mathbf{g}_{11}}\overline{\mathbf{h}}^{(r)} \\			
	\nabla \mathcal{L}^3_{\mathbf{g}_0\mathbf{g}_0\mathbf{g}_{21}}\overline{\mathbf{h}}^{(r)} \\
	\nabla \mathcal{L}^3_{\mathbf{g}_0\mathbf{g}_0\mathbf{g}_{22}}\overline{\mathbf{h}}^{(r)} \\
	\nabla \mathcal{L}^3_{\mathbf{g}_0\mathbf{g}_0\mathbf{g}_{23}}\overline{\mathbf{h}}^{(r)} \\
	\nabla \mathcal{L}^4_{\mathbf{g}_0\mathbf{g}_0\mathbf{g}_{21}\mathbf{g}_0}\overline{\mathbf{h}}^{(r)}  \\
	\nabla \mathcal{L}^4_{\mathbf{g}_0\mathbf{g}_0\mathbf{g}_{22}\mathbf{g}_0}\overline{\mathbf{h}}^{(r)}  \\
	\nabla \mathcal{L}^4_{\mathbf{g}_0\mathbf{g}_0\mathbf{g}_{23}\mathbf{g}_0}\overline{\mathbf{h}}^{(r)}  \\	
	\nabla\mathcal{L}^5_{\mathbf{g}_0\mathbf{g}_0\mathbf{g}_{21}\mathbf{g}_{0}\mathbf{g}_{13}}\overline{\mathbf{h}}^{(r)} \\
	\nabla\mathcal{L}^5_{\mathbf{g}_0\mathbf{g}_0\mathbf{g}_{22}\mathbf{g}_{0}\mathbf{g}_{11}}\overline{\mathbf{h}}^{(r)} \\
	\nabla\mathcal{L}^5_{\mathbf{g}_0\mathbf{g}_0\mathbf{g}_{23}\mathbf{g}_{0}\mathbf{g}_{12}}\overline{\mathbf{h}}^{(r)}						
	\end{bmatrix}
	=
	\begin{bmatrix}
	-2\mathbf{e}^{\top}_1  &   \mathbf{0}  &  \mathbf{0}  &  \mathbf{0}  &  \mathbf{0}  \\
	-2\mathbf{e}^{\top}_2  &   \mathbf{0}  &  \mathbf{0}  &  \mathbf{0}  &  \mathbf{0}  \\
	-2\mathbf{e}^{\top}_3  &   \mathbf{0}  &  \mathbf{0}  &  \mathbf{0}  &  \mathbf{0}  \\
	\mathbf{0} & 
	-4\mathbf{e}_2^{\top}\lfloor\boldsymbol{\beta}_3\times\rfloor  \lfloor\mathbf{e}_1\times\rfloor  &
	-4\mathbf{e}_1^{\top}\lfloor\boldsymbol{\beta}_2\times\rfloor  \lfloor\mathbf{e}_2\times\rfloor &
	\mathbf{0} &
	-4\mathbf{e}_1^{\top} \lfloor\mathbf{e}_2\times\rfloor	 \\
	\mathbf{0} & 
	-4\mathbf{e}_3^{\top}\lfloor\boldsymbol{\beta}_3\times\rfloor  \lfloor\mathbf{e}_2\times\rfloor  &
	-4\mathbf{e}_2^{\top}\lfloor\boldsymbol{\beta}_2\times\rfloor  \lfloor\mathbf{e}_3\times\rfloor &
	\mathbf{0} &
	-4\mathbf{e}_2^{\top} \lfloor\mathbf{e}_3\times\rfloor	 \\
	\mathbf{0} & 
	-4\mathbf{e}_1^{\top}\lfloor\boldsymbol{\beta}_3\times\rfloor  \lfloor\mathbf{e}_3\times\rfloor  &
	-4\mathbf{e}_3^{\top}\lfloor\boldsymbol{\beta}_2\times\rfloor  \lfloor\mathbf{e}_1\times\rfloor &
	\mathbf{0} &
	-4\mathbf{e}_3^{\top} \lfloor\mathbf{e}_1\times\rfloor  \\
	\mathbf{0}  &   \mathbf{0}  &  4\mathbf{e}^{\top}_1  &  \mathbf{0}  &  \mathbf{0}  \\
	\mathbf{0}  &   \mathbf{0}  &  4\mathbf{e}^{\top}_2  &  \mathbf{0}  &  \mathbf{0}  \\
	\mathbf{0}  &   \mathbf{0}  &  4\mathbf{e}^{\top}_3  &  \mathbf{0}  &  \mathbf{0}  \\
	\mathbf{0} & -4\mathbf{e}^{\top}_1\lfloor\boldsymbol{\beta}_3\times\rfloor	& 4\mathbf{e}^{\top}_1\lfloor\boldsymbol{\beta}_2\times\rfloor	& -4\mathbf{e}^{\top}_1  &  4\mathbf{e}^{\top}_1   \\	
	\mathbf{0} & -4\mathbf{e}^{\top}_1\lfloor\boldsymbol{\beta}_3\times\rfloor	& 4\mathbf{e}^{\top}_2\lfloor\boldsymbol{\beta}_2\times\rfloor	& -4\mathbf{e}^{\top}_2  &  4\mathbf{e}^{\top}_2   \\	
	\mathbf{0} & -4\mathbf{e}^{\top}_1\lfloor\boldsymbol{\beta}_3\times\rfloor	& 4\mathbf{e}^{\top}_3\lfloor\boldsymbol{\beta}_2\times\rfloor	& -4\mathbf{e}^{\top}_3  &  4\mathbf{e}^{\top}_3   \\
	\mathbf{0} & 
	-4\mathbf{e}_3^{\top}\lfloor\boldsymbol{\beta}_3\times\rfloor  \lfloor\mathbf{e}_1\times\rfloor  &
	-4\mathbf{e}_1^{\top}\lfloor\boldsymbol{\beta}_2\times\rfloor  \lfloor\mathbf{e}_3\times\rfloor &
	\mathbf{0} &
	-4\mathbf{e}_1^{\top} \lfloor\mathbf{e}_3\times\rfloor	 \\
	\mathbf{0} & 
	-4\mathbf{e}_1^{\top}\lfloor\boldsymbol{\beta}_3\times\rfloor  \lfloor\mathbf{e}_2\times\rfloor  &
	-4\mathbf{e}_2^{\top}\lfloor\boldsymbol{\beta}_2\times\rfloor  \lfloor\mathbf{e}_1\times\rfloor &
	\mathbf{0} &
	-4\mathbf{e}_2^{\top} \lfloor\mathbf{e}_1\times\rfloor	 \\
	\mathbf{0} & 
	-4\mathbf{e}_2^{\top}\lfloor\boldsymbol{\beta}_3\times\rfloor  \lfloor\mathbf{e}_3\times\rfloor  &
	-4\mathbf{e}_3^{\top}\lfloor\boldsymbol{\beta}_2\times\rfloor  \lfloor\mathbf{e}_2\times\rfloor &
	\mathbf{0} &
	-4\mathbf{e}_3^{\top} \lfloor\mathbf{e}_2\times\rfloor								
	\end{bmatrix}
	$}
\end{equation}
Given random motion, the diagonal block of the $\Xi^{(r)}$ are all of full rank (3). Therefore, $\Xi^{(r)}$ is of full column rank. 
\subsection{Rank test for $\Xi^{(b)}$} \label{apd_rank_test_b}
%
%

%
%

For the analysis, with the generalized point measurement model\eqref{eq_x_meas}, we consider the noise free case, and define  $\boldsymbol{\gamma}=\mathbf{b}_{\perp1}$, $\boldsymbol{\gamma}=\mathbf{b}_{\perp2}$.
%
%
Then we will perform the nonlinear observability rank condition test according to \cite{Hermann1977TAC}. 
\begin{itemize}
	\item The zeroth-order Lie derivatives of the measurement function is:
	\begin{equation}
	\mathcal{L}^{0}\overline{\mathbf{h}}^{(b)}
	=
	\left[
	\begin{array}{c}
	\mathcal{L}^{0}\overline{\mathbf{h}}^{(b)}_1  \\
	\mathcal{L}^{0}\overline{\mathbf{h}}^{(b)}_2
	\end{array}
	\right]
	=
	\left[
	\begin{array}{c}
	\boldsymbol{\gamma}^{\top}_1\boldsymbol{\beta}_1  \\
	\boldsymbol{\gamma}^{\top}_2\boldsymbol{\beta}_1				
	\end{array}
	\right]
	=
	\left[
	\begin{array}{c}
	\boldsymbol{\gamma}^{\top}_1 \\
	\boldsymbol{\gamma}^{\top}_2
	\end{array}
	\right]
	\boldsymbol{\beta}_1		
	\end{equation}
	Then, the gradients of the zeroth-order Lie derivative is:
	\begin{equation}
	\nabla\mathcal{L}^{0}\overline{\mathbf{h}}^{(b)}
	=
	\left[
	\begin{array}{c}
	\nabla\mathcal{L}^{0}\overline{\mathbf{h}}^{(b)}_1  \\
	\nabla\mathcal{L}^{0}\overline{\mathbf{h}}^{(b)}_2				
	\end{array}
	\right]
	=
	\left[
	\begin{array}{c}
	\frac{\partial \overline{\mathbf{h}}^{(b)}_1}{\partial \boldsymbol{\beta}}  \\
	\frac{\partial \overline{\mathbf{h}}^{(b)}_2}{\partial \boldsymbol{\beta}}
	\end{array}
	\right]
	=
	\left[
	\begin{array}{ccccc}
	\boldsymbol{\gamma}^{\top}_1 & \mathbf{0} & \mathbf{0} & \mathbf{0} & \mathbf{0}   \\
	\boldsymbol{\gamma}^{\top}_2 & \mathbf{0} & \mathbf{0} & \mathbf{0} & \mathbf{0} 
	\end{array}
	\right]
	=
	\left[
	\begin{array}{c}
	\boldsymbol{\gamma}^{\top}_1 \\
	\boldsymbol{\gamma}^{\top}_2
	\end{array}
	\right]
	\left[
	\begin{array}{ccccc}
	\mathbf{I}_3 & \mathbf{0} & \mathbf{0} & \mathbf{0} & \mathbf{0}   
	\end{array}
	\right]	\nonumber	
	\end{equation}
	\item The first-order Lie derivative of $\overline{\mathbf{h}}^{(b)}$ with respect to $\mathbf{g}_0$, $\mathbf{g}_{1i}$ and $\mathbf{g}_{2i}$ are computed respectively, as:
	\begin{eqnarray}
	\mathcal{L}^1_{\mathbf{g}_0}\overline{\mathbf{h}}^{(b)} 
	&=&
	\nabla\mathcal{L}^0\overline{\mathbf{h}}^{(b)}\cdot \mathbf{g}_0
	=
	\left[
	\begin{array}{c}
	\nabla\mathcal{L}^0\overline{\mathbf{h}}^{(b)}_1 \cdot \mathbf{g}_0  \\
	\nabla\mathcal{L}^0\overline{\mathbf{h}}^{(b)}_2 \cdot \mathbf{g}_0
	\end{array}
	\right]\\
	&=&
	\left[
	\begin{array}{c}
	-\boldsymbol{\gamma}^{\top}_1\lfloor\boldsymbol{\beta}_1\times\rfloor\boldsymbol{\beta}_2-\boldsymbol{\gamma}^{\top}_1\boldsymbol{\beta}_3  \\
	-\boldsymbol{\gamma}^{\top}_2\lfloor\boldsymbol{\beta}_1\times\rfloor\boldsymbol{\beta}_2-\boldsymbol{\gamma}^{\top}_2\boldsymbol{\beta}_3
	\end{array}
	\right]
	=
	\left[
	\begin{array}{c}
	\boldsymbol{\gamma}^{\top}_1  \\
	\boldsymbol{\gamma}^{\top}_2
	\end{array}
	\right]
	\left[-\lfloor\boldsymbol{\beta}_1\times\rfloor\boldsymbol{\beta}_2-\mathbf{I}_3\boldsymbol{\beta}_3 \right]
	\\
	\mathcal{L}^1_{\mathbf{g}_{1i}}\overline{\mathbf{h}}^{(b)} 
	&=&
	\scalemath{0.9}{
	\nabla\mathcal{L}^0\overline{\mathbf{h}}^{(b)}\cdot \mathbf{g}_{1i}
	=
	\left[
	\begin{array}{c}
	\nabla\mathcal{L}^0\overline{\mathbf{h}}^{(b)}_1 \cdot \mathbf{g}_{1i}  \\
	\nabla\mathcal{L}^0\overline{\mathbf{h}}^{(b)}_2 \cdot \mathbf{g}_{1i}
	\end{array}
	\right]
	=
	\left[
	\begin{array}{c}
	\boldsymbol{\gamma}^{\top}_1\lfloor\boldsymbol{\beta}_1\times\rfloor\mathbf{e}_i  \\
	\boldsymbol{\gamma}^{\top}_2\lfloor\boldsymbol{\beta}_1\times\rfloor\mathbf{e}_i
	\end{array}
	\right]
	=
	\left[
	\begin{array}{c}
	\boldsymbol{\gamma}^{\top}_1 \\
	\boldsymbol{\gamma}^{\top}_2
	\end{array}
	\right]
	\left[\lfloor\boldsymbol{\beta}_1\times\rfloor\mathbf{e}_i\right]}
	\\
	\mathcal{L}^1_{\mathbf{g}_{2i}}\overline{\mathbf{h}}^{(b)} 
	&=&
	\nabla\mathcal{L}^0\overline{\mathbf{h}}^{(b)}\cdot \mathbf{g}_{2i}
	=
	\left[
	\begin{array}{c}
	\nabla\mathcal{L}^0\overline{\mathbf{h}}^{(b)}_1 \cdot \mathbf{g}_{2i}  \\
	\nabla\mathcal{L}^0\overline{\mathbf{h}}^{(b)}_2 \cdot \mathbf{g}_{2i}
	\end{array}
	\right]
	=
	\left[
	\begin{array}{c}
	\mathbf{0} \\
	\mathbf{0}
	\end{array}
	\right]				
	\end{eqnarray}
	while the corresponding gradients are given by:
	\begin{small}
		\begin{eqnarray}
		\nabla\mathcal{L}^1_{\mathbf{g}_0}\overline{\mathbf{h}}^{(b)}
		&=&
		\left[
		\begin{array}{c}
		\nabla\mathcal{L}^1_{\mathbf{g}_0}\overline{\mathbf{h}}^{(b)}_1  \\
		\nabla\mathcal{L}^1_{\mathbf{g}_0}\overline{\mathbf{h}}^{(b)}_2  
		\end{array}
		\right]
		=
		\left[
		\begin{array}{c}
		\frac{\partial \mathcal{L}^1_{\mathbf{g}_0}\overline{\mathbf{h}}^{(b)}_1}{\partial \boldsymbol{\beta}}  \\
		\frac{\partial \mathcal{L}^1_{\mathbf{g}_0}\overline{\mathbf{h}}^{(b)}_2}{\partial \boldsymbol{\beta}}
		\end{array}
		\right]
		=
		\left[
		\begin{array}{ccccc}
		\boldsymbol{\gamma}^{\top}_1\lfloor\boldsymbol{\beta}_2\times\rfloor & -\boldsymbol{\gamma}^{\top}_1\lfloor\boldsymbol{\beta}_1\times\rfloor &
		-\boldsymbol{\gamma}^{\top}_1	&  \mathbf{0} & \mathbf{0}  \\
		\boldsymbol{\gamma}^{\top}_2\lfloor\boldsymbol{\beta}_2\times\rfloor & -\boldsymbol{\gamma}^{\top}_2\lfloor\boldsymbol{\beta}_1\times\rfloor &
		-\boldsymbol{\gamma}^{\top}_2	&  \mathbf{0} & \mathbf{0}									
		\end{array}
		\right]
		\\
		&=&
		\left[
		\begin{array}{c}
		\boldsymbol{\gamma}^{\top}_1 \\
		\boldsymbol{\gamma}^{\top}_2
		\end{array}
		\right]
		\left[
		\begin{array}{ccccc}
		\lfloor\boldsymbol{\beta}_2\times\rfloor  & -\lfloor\boldsymbol{\beta}_1\times\rfloor
		& -\mathbf{I}_3  &  \mathbf{0}  & \mathbf{0}			
		\end{array}
		\right]
		\\
		\nabla\mathcal{L}^1_{\mathbf{g}_{1i}}\overline{\mathbf{h}}^{(b)}
		&=&
		\left[
		\begin{array}{c}
		\nabla\mathcal{L}^1_{\mathbf{g}_{1i}}\overline{\mathbf{h}}^{(b)}_1  \\
		\nabla\mathcal{L}^1_{\mathbf{g}_{1i}}\overline{\mathbf{h}}^{(b)}_2  
		\end{array}
		\right]
		=
		\left[
		\begin{array}{c}
		\frac{\partial \mathcal{L}^1_{\mathbf{g}_{1i}}\overline{\mathbf{h}}^{(b)}_1}{\partial \boldsymbol{\beta}}  \\
		\frac{\partial \mathcal{L}^1_{\mathbf{g}_{1i}}\overline{\mathbf{h}}^{(b)}_2}{\partial \boldsymbol{\beta}}
		\end{array}
		\right]
		=
		\left[
		\begin{array}{ccccc}
		-\boldsymbol{\gamma}^{\top}_1\lfloor\mathbf{e}_i\times\rfloor & \mathbf{0} &
		\mathbf{0}	&  \mathbf{0} & \mathbf{0}  \\
		-\boldsymbol{\gamma}^{\top}_2\lfloor\mathbf{e}_i\times\rfloor & \mathbf{0} &
		\mathbf{0}	&  \mathbf{0} & \mathbf{0}									
		\end{array}
		\right]
		\\
		&=&
		\left[
		\begin{array}{c}
		\boldsymbol{\gamma}^{\top}_1 \\
		\boldsymbol{\gamma}^{\top}_2
		\end{array}
		\right]
		\left[
		\begin{array}{ccccc}
		-\lfloor\mathbf{e}_i\times\rfloor  & \mathbf{0}
		& \mathbf{0}  &  \mathbf{0}  & \mathbf{0}			
		\end{array}
		\right]
		\\
		\nabla\mathcal{L}^1_{\mathbf{g}_{1i}}\overline{\mathbf{h}}^{(b)}
		&=&
		\left[
		\begin{array}{c}
		\nabla\mathcal{L}^1_{\mathbf{g}_{1i}}\overline{\mathbf{h}}^{(b)}_1  \\
		\nabla\mathcal{L}^1_{\mathbf{g}_{1i}}\overline{\mathbf{h}}^{(b)}_2  
		\end{array}
		\right]
		=
		\left[
		\begin{array}{c}
		\frac{\partial \mathcal{L}^1_{\mathbf{g}_{1i}}\overline{\mathbf{h}}^{(b)}_1}{\partial \boldsymbol{\beta}}  \\
		\frac{\partial \mathcal{L}^1_{\mathbf{g}_{1i}}\overline{\mathbf{h}}^{(b)}_2}{\partial \boldsymbol{\beta}}
		\end{array}
		\right]
		=
		\left[
		\begin{array}{c}
		\mathbf{0}  \\
		\mathbf{0}
		\end{array}
		\right]						
		\end{eqnarray}
	\end{small}
	\item The second-order Lie derivatives are as following:
	\begin{eqnarray}
	\mathcal{L}^2_{\mathbf{g}_0\mathbf{g}_0}\overline{\mathbf{h}}^{(b)}
	&=&
	\nabla\mathcal{L}^1_{\mathbf{g}_0}\overline{\mathbf{h}}^{(b)} \cdot \mathbf{g}_0
	=
	\left[
	\begin{array}{c}
	\nabla\mathcal{L}^1_{\mathbf{g}_0}\overline{\mathbf{h}}^{(b)}_1\cdot\mathbf{g}_0 \\ \nabla\mathcal{L}^1_{\mathbf{g}_0}\overline{\mathbf{h}}^{(b)}_2\cdot\mathbf{g}_0
	\end{array}
	\right]
	\\
	&=&
	\left[
	\begin{array}{c}
	\boldsymbol{\gamma}^{\top}_1 \\
	\boldsymbol{\gamma}^{\top}_2
	\end{array}
	\right]
	\left[
	-\lfloor\boldsymbol{\beta}_2\times\rfloor \lfloor\boldsymbol{\beta}_1\times\rfloor\boldsymbol{\beta}_2
	-\lfloor\boldsymbol{\beta}_2\times\rfloor\boldsymbol{\beta}_3
	+\lfloor\boldsymbol{\beta}_3\times\rfloor\boldsymbol{\beta}_2
	-\boldsymbol{\beta}_5
	+\boldsymbol{\beta}_4
	\right]
	\\
	\mathcal{L}^2_{\mathbf{g}_0\mathbf{g}_{1i}}\overline{\mathbf{h}}^{(b)}
	&=&
	\scalemath{0.9}{
	\nabla\mathcal{L}^1_{\mathbf{g}_0}\overline{\mathbf{h}}^{(b)} \cdot \mathbf{g}_{1i}
	=
	\left[
	\begin{array}{c}
	\nabla\mathcal{L}^1_{\mathbf{g}_0}\overline{\mathbf{h}}^{(b)}_1\cdot\mathbf{g}_{1i} \\ \nabla\mathcal{L}^1_{\mathbf{g}_0}\overline{\mathbf{h}}^{(b)}_2\cdot\mathbf{g}_{1i}
	\end{array}
	\right]
	=
	\left[
	\begin{array}{c}
	\boldsymbol{\gamma}^{\top}_1 \\
	\boldsymbol{\gamma}^{\top}_2
	\end{array}
	\right]
	\left[
	\lfloor\boldsymbol{\beta}_2\times\rfloor \lfloor\boldsymbol{\beta}_1\times\rfloor
	-\lfloor\boldsymbol{\beta}_3\times\rfloor
	\right]
	\mathbf{e}_i
    }
	\\
	\mathcal{L}^2_{\mathbf{g}_{1i}\mathbf{g}_0}\overline{\mathbf{h}}^{(b)}
	&=&
	\scalemath{0.85}{
	\nabla\mathcal{L}^1_{\mathbf{g}_{1i}}\overline{\mathbf{h}}^{(b)} \cdot \mathbf{g}_{0}
	=
	\left[
	\begin{array}{c}
	\nabla\mathcal{L}^1_{\mathbf{g}_{1i}}\overline{\mathbf{h}}^{(b)}_1\cdot\mathbf{g}_{0} \\ \nabla\mathcal{L}^1_{\mathbf{g}_{1i}}\overline{\mathbf{h}}^{(b)}_2\cdot\mathbf{g}_{0}
	\end{array}
	\right]
	=
	\left[
	\begin{array}{c}
	\boldsymbol{\gamma}^{\top}_1 \\
	\boldsymbol{\gamma}^{\top}_2
	\end{array}
	\right]
	\left[
	\lfloor\mathbf{e}_i\times\rfloor \lfloor\boldsymbol{\beta}_1\times\rfloor\boldsymbol{\beta}_2
	+\lfloor\mathbf{e}_i\times\rfloor\boldsymbol{\beta}_3
	\right]
	}
	\\
	\mathcal{L}^2_{\mathbf{g}_{1i}\mathbf{g}_{1j}}\overline{\mathbf{h}}^{(b)}
	&=&
	\nabla\mathcal{L}^1_{\mathbf{g}_{1i}}\overline{\mathbf{h}}^{(b)} \cdot \mathbf{g}_{1j}
	=
	\left[
	\begin{array}{c}
	\nabla\mathcal{L}^1_{\mathbf{g}_{1i}}\overline{\mathbf{h}}^{(b)}_1\cdot\mathbf{g}_{1j} \\ \nabla\mathcal{L}^1_{\mathbf{g}_{1i}}\overline{\mathbf{h}}^{(b)}_2\cdot\mathbf{g}_{1j}
	\end{array}
	\right]
	=
	\left[
	\begin{array}{c}
	\boldsymbol{\gamma}^{\top}_1 \\
	\boldsymbol{\gamma}^{\top}_2
	\end{array}
	\right]
	\left[
	-\lfloor\mathbf{e}_i\times\rfloor \lfloor\boldsymbol{\beta}_1\times\rfloor\mathbf{e}_j
	\right]							
	\end{eqnarray}
	with the corresponding gradients as:
	\begin{small}
		\begin{eqnarray}
		\nabla\mathcal{L}^2_{\mathbf{g}_0\mathbf{g}_{0}}\overline{\mathbf{h}}^{(b)}
		&=&
		\left[
		\begin{array}{c}
		\nabla\mathcal{L}^2_{\mathbf{g}_0\mathbf{g}_{0}}\overline{\mathbf{h}}^{(b)}_1  \\
		\nabla\mathcal{L}^2_{\mathbf{g}_0\mathbf{g}_{0}}\overline{\mathbf{h}}^{(b)}_2  
		\end{array}
		\right]
		=
		\left[
		\begin{array}{c}
		\frac{\partial \mathcal{L}^2_{\mathbf{g}_0\mathbf{g}_{0}}\overline{\mathbf{h}}^{(b)}_1}{\partial \boldsymbol{\beta}}  \\
		\frac{\partial \mathcal{L}^2_{\mathbf{g}_0\mathbf{g}_{0}}\overline{\mathbf{h}}^{(b)}_2}{\partial \boldsymbol{\beta}}
		\end{array}
		\right]
		\\
		&=&
		\scalemath{0.9}{
		\left[
		\begin{array}{c}
		\boldsymbol{\gamma}^{\top}_1 \\
		\boldsymbol{\gamma}^{\top}_2
		\end{array}
		\right]
		\left[
		\lfloor\boldsymbol{\beta}_2\times\rfloor^2 \quad 
		-\lfloor\boldsymbol{\beta}_2\times\rfloor\lfloor\boldsymbol{\beta}_1\times\rfloor
		+ \lfloor \lfloor\boldsymbol{\beta}_1\times\rfloor   \boldsymbol{\beta}_2\times\rfloor
		+ 2\lfloor\boldsymbol{\beta}_3\times\rfloor \quad 
		-2\lfloor\boldsymbol{\beta}_2\times\rfloor \quad 
		\mathbf{I}_3 \ 
		-\mathbf{I}_3
		\right]
        } 
        \nonumber
		\\
		\nabla\mathcal{L}^2_{\mathbf{g}_0\mathbf{g}_{1i}}\overline{\mathbf{h}}^{(b)}
		&=&
		\left[
		\begin{array}{c}
		\nabla\mathcal{L}^2_{\mathbf{g}_0\mathbf{g}_{1i}}\overline{\mathbf{h}}^{(b)}_1  \\
		\nabla\mathcal{L}^2_{\mathbf{g}_0\mathbf{g}_{1i}}\overline{\mathbf{h}}^{(b)}_2  
		\end{array}
		\right]
		=
		\left[
		\begin{array}{c}
		\frac{\partial \mathcal{L}^2_{\mathbf{g}_0\mathbf{g}_{1i}}\overline{\mathbf{h}}^{(b)}_1}{\partial \boldsymbol{\beta}}  \\
		\frac{\partial \mathcal{L}^2_{\mathbf{g}_0\mathbf{g}_{1i}}\overline{\mathbf{h}}^{(b)}_2}{\partial \boldsymbol{\beta}}
		\end{array}
		\right]
		\\
		&=&
		\left[
		\begin{array}{c}
		\boldsymbol{\gamma}^{\top}_1 \\
		\boldsymbol{\gamma}^{\top}_2
		\end{array}
		\right]
		\left[
		-\lfloor\boldsymbol{\beta}_2\times\rfloor\lfloor\mathbf{e}_i\times\rfloor \quad 
		-\lfloor\lfloor\boldsymbol{\beta}_1\times\rfloor\mathbf{e}_i\times\rfloor \quad 
		\lfloor\mathbf{e}_i\times\rfloor \quad 
		\mathbf{0} \quad  
		\mathbf{0}
		\right]
		\\
		\nabla\mathcal{L}^2_{\mathbf{g}_{1i}\mathbf{g}_{0}}\overline{\mathbf{h}}^{(b)}
		&=&
		\left[
		\begin{array}{c}
		\nabla\mathcal{L}^2_{\mathbf{g}_{1i}\mathbf{g}_{0}}\overline{\mathbf{h}}^{(b)}_1  \\
		\nabla\mathcal{L}^2_{\mathbf{g}_{1i}\mathbf{g}_{0}}\overline{\mathbf{h}}^{(b)}_2  
		\end{array}
		\right]
		=
		\left[
		\begin{array}{c}
		\frac{\partial \mathcal{L}^2_{\mathbf{g}_{1i}\mathbf{g}_{0}}\overline{\mathbf{h}}^{(b)}_1}{\partial \boldsymbol{\beta}}  \\
		\frac{\partial \mathcal{L}^2_{\mathbf{g}_{1i}\mathbf{g}_{0}}\overline{\mathbf{h}}^{(b)}_2}{\partial \boldsymbol{\beta}}
		\end{array}
		\right]
		\\
		&=&
		\left[
		\begin{array}{c}
		\boldsymbol{\gamma}^{\top}_1 \\
		\boldsymbol{\gamma}^{\top}_2
		\end{array}
		\right]
		\left[
		-\lfloor\mathbf{e}_i\times\rfloor\lfloor\boldsymbol{\beta}_2\times\rfloor \quad 
		\lfloor\mathbf{e}_i\times\rfloor\lfloor\boldsymbol{\beta}_1\times\rfloor \quad 
		\lfloor\mathbf{e}_i\times\rfloor \quad 
		\mathbf{0} \quad  
		\mathbf{0}
		\right]
		\\
		\nabla\mathcal{L}^2_{\mathbf{g}_{1i}\mathbf{g}_{1j}}\overline{\mathbf{h}}^{(b)}
		&=&
		\scalemath{0.9}{
		\left[
		\begin{array}{c}
		\nabla\mathcal{L}^2_{\mathbf{g}_{1i}\mathbf{g}_{1j}}\overline{\mathbf{h}}^{(b)}_1  \\
		\nabla\mathcal{L}^2_{\mathbf{g}_{1i}\mathbf{g}_{1j}}\overline{\mathbf{h}}^{(b)}_2  
		\end{array}
		\right]
		=
		\left[
		\begin{array}{c}
		\frac{\partial \mathcal{L}^2_{\mathbf{g}_{1i}\mathbf{g}_{1j}}\overline{\mathbf{h}}^{(b)}_1}{\partial \boldsymbol{\beta}}  \\
		\frac{\partial \mathcal{L}^2_{\mathbf{g}_{1i}\mathbf{g}_{1j}}\overline{\mathbf{h}}^{(b)}_2}{\partial \boldsymbol{\beta}}
		\end{array}
		\right]
		=
		\left[
		\begin{array}{c}
		\boldsymbol{\gamma}^{\top}_1 \\
		\boldsymbol{\gamma}^{\top}_2
		\end{array}
		\right]
		\left[
		\lfloor\mathbf{e}_i\times\rfloor\lfloor\mathbf{e}_j\times\rfloor \quad 
		\mathbf{0} \quad 
		\mathbf{0} \quad 
		\mathbf{0} \quad 
		\mathbf{0}
		\right]	}											
		\end{eqnarray}
	\end{small}	

	\item The third-order Lie derivatives are as following:
	\begin{eqnarray}
	\mathcal{L}^3_{\mathbf{g}_0\mathbf{g}_0\mathbf{g}_{1i}}\overline{\mathbf{h}}^{(b)}
	&=&
	\nabla\mathcal{L}^2_{\mathbf{g}_0\mathbf{g}_0}\overline{\mathbf{h}}^{(b)} \cdot \mathbf{g}_{1i}
	=
	\left[
	\begin{array}{c}
	\nabla\mathcal{L}^2_{\mathbf{g}_0\mathbf{g}_0}\overline{\mathbf{h}}^{(b)}_1\cdot\mathbf{g}_{1i} \\ \nabla\mathcal{L}^2_{\mathbf{g}_0\mathbf{g}_0}\overline{\mathbf{h}}^{(b)}_2\cdot\mathbf{g}_{1i}
	\end{array}
	\right]
	\\
	&=&
	\left[
	\begin{array}{c}
	\boldsymbol{\gamma}^{\top}_1 \\
	\boldsymbol{\gamma}^{\top}_2
	\end{array}
	\right]
	\left[
	\lfloor\boldsymbol{\beta}_2\times\rfloor^2 \lfloor\boldsymbol{\beta}_1\times\rfloor
	-2\lfloor\boldsymbol{\beta}_2\times\rfloor \lfloor\boldsymbol{\beta}_3\times\rfloor
	-\lfloor\boldsymbol{\beta}_5\times\rfloor
	\right]
	\mathbf{e}_i
	\\
	\mathcal{L}^3_{\mathbf{g}_0\mathbf{g}_0\mathbf{g}_{2i}}\overline{\mathbf{h}}^{(b)}
	&=&
	\nabla\mathcal{L}^2_{\mathbf{g}_0\mathbf{g}_0}\overline{\mathbf{h}}^{(b)} \cdot \mathbf{g}_{2i}
	=
	\left[
	\begin{array}{c}
	\nabla\mathcal{L}^2_{\mathbf{g}_0\mathbf{g}_0}\overline{\mathbf{h}}^{(b)}_1\cdot\mathbf{g}_{2i} \\ \nabla\mathcal{L}^2_{\mathbf{g}_0\mathbf{g}_0}\overline{\mathbf{h}}^{(b)}_2\cdot\mathbf{g}_{2i}
	\end{array}
	\right]
	=
	\left[
	\begin{array}{c}
	\boldsymbol{\gamma}^{\top}_1 \\
	\boldsymbol{\gamma}^{\top}_2
	\end{array}
	\right]
	\left[
	-2\lfloor\boldsymbol{\beta}_2\times\rfloor
	\right]
	\mathbf{e}_i	 \\
	\\
	\mathcal{L}^3_{\mathbf{g}_0\mathbf{g}_{1i}\mathbf{g}_{0}}\overline{\mathbf{h}}^{(b)}
	&=&
	\nabla\mathcal{L}^2_{\mathbf{g}_0\mathbf{g}_{1i}}\overline{\mathbf{h}}^{(b)} \cdot \mathbf{g}_{0}
	=
	\left[
	\begin{array}{c}
	\nabla\mathcal{L}^2_{\mathbf{g}_0\mathbf{g}_{1i}}\overline{\mathbf{h}}^{(b)}_1\cdot\mathbf{g}_{0} \\ \nabla\mathcal{L}^2_{\mathbf{g}_0\mathbf{g}_{1i}}\overline{\mathbf{h}}^{(b)}_2\cdot\mathbf{g}_{0}
	\end{array}
	\right]
	\\
	&=&
	\left[
	\begin{array}{c}
	\boldsymbol{\gamma}^{\top}_1 \\
	\boldsymbol{\gamma}^{\top}_2
	\end{array}
	\right]
	\left[
	\lfloor\boldsymbol{\beta}_2\times\rfloor \lfloor\mathbf{e}_i\times\rfloor \lfloor\boldsymbol{\beta}_1\times\rfloor\boldsymbol{\beta}_2 
	+ \lfloor\boldsymbol{\beta}_2\times\rfloor \lfloor\mathbf{e}_i\times\rfloor\boldsymbol{\beta}_3 
	- \lfloor\mathbf{e}_i\times\rfloor \lfloor\boldsymbol{\beta}_3\times\rfloor\boldsymbol{\beta}_2  
	+ \lfloor\mathbf{e}_i\times\rfloor \boldsymbol{\beta}_5
	-\lfloor\mathbf{e}_i\times\rfloor \boldsymbol{\beta}_4  
	\right]		
	\nonumber			
	\end{eqnarray}
	with the corresponding gradients as:
	\begin{small}
		\begin{eqnarray}
		&  &
		\nabla\mathcal{L}^3_{\mathbf{g}_0\mathbf{g}_{0}\mathbf{g}_{1i}}\overline{\mathbf{h}}^{(b)}
		=
		\left[
		\begin{array}{c}
		\nabla\mathcal{L}^3_{\mathbf{g}_0\mathbf{g}_{0}\mathbf{g}_{1i}}\overline{\mathbf{h}}^{(b)}_1  \\
		\nabla\mathcal{L}^3_{\mathbf{g}_0\mathbf{g}_{0}\mathbf{g}_{1i}}\overline{\mathbf{h}}^{(b)}_2  
		\end{array}
		\right]
		=
		\left[
		\begin{array}{c}
		\frac{\partial \mathcal{L}^3_{\mathbf{g}_0\mathbf{g}_{0}\mathbf{g}_{1i}}\overline{\mathbf{h}}^{(b)}_1}{\partial \boldsymbol{\beta}}  \\
		\frac{\partial \mathcal{L}^3_{\mathbf{g}_0\mathbf{g}_{0}\mathbf{g}_{1i}}\overline{\mathbf{h}}^{(b)}_2}{\partial \boldsymbol{\beta}}
		\end{array}
		\right]
		\\
		&=&
		\left[
		\begin{array}{c}
		\boldsymbol{\gamma}^{\top}_1 \\
		\boldsymbol{\gamma}^{\top}_2
		\end{array}
		\right]
		\left[
		\frac{\partial \mathcal{L}^3_{\mathbf{g}_0\mathbf{g}_{0}\mathbf{g}_{1i}}\overline{\mathbf{h}}^{(b)}}{\partial \boldsymbol{\beta}_1} \  
		\frac{\partial \mathcal{L}^3_{\mathbf{g}_0\mathbf{g}_{0}\mathbf{g}_{1i}}\overline{\mathbf{h}}^{(b)}}{\partial \boldsymbol{\beta}_2} \  
		\frac{\partial \mathcal{L}^3_{\mathbf{g}_0\mathbf{g}_{0}\mathbf{g}_{1i}}\overline{\mathbf{h}}^{(b)}}{\partial \boldsymbol{\beta}_3} \  
		\frac{\partial \mathcal{L}^3_{\mathbf{g}_0\mathbf{g}_{0}\mathbf{g}_{1i}}\overline{\mathbf{h}}^{(b)}}{\partial \boldsymbol{\beta}_4} \  
		\frac{\partial \mathcal{L}^3_{\mathbf{g}_0\mathbf{g}_{0}\mathbf{g}_{1i}}\overline{\mathbf{h}}^{(b)}}{\partial \boldsymbol{\beta}_5}
		\right]
		\\
		&  &
		\nabla\mathcal{L}^3_{\mathbf{g}_0\mathbf{g}_{1i}\mathbf{g}_{0}}\overline{\mathbf{h}}^{(b)}
		=
		\left[
		\begin{array}{c}
		\nabla\mathcal{L}^3_{\mathbf{g}_0\mathbf{g}_{1i}\mathbf{g}_{0}}\overline{\mathbf{h}}^{(b)}_1  \\
		\nabla\mathcal{L}^3_{\mathbf{g}_0\mathbf{g}_{1i}\mathbf{g}_{0}}\overline{\mathbf{h}}^{(b)}_2  
		\end{array}
		\right]
		=
		\left[
		\begin{array}{c}
		\frac{\partial \mathcal{L}^3_{\mathbf{g}_0\mathbf{g}_{1i}\mathbf{g}_{0}}\overline{\mathbf{h}}^{(b)}_1}{\partial \boldsymbol{\beta}}  \\
		\frac{\partial \mathcal{L}^3_{\mathbf{g}_0\mathbf{g}_{1i}\mathbf{g}_{0}}\overline{\mathbf{h}}^{(b)}_2}{\partial \boldsymbol{\beta}}
		\end{array}
		\right]
		\\
		&=&
		\left[
		\begin{array}{c}
		\boldsymbol{\gamma}^{\top}_1 \\
		\boldsymbol{\gamma}^{\top}_2
		\end{array}
		\right]
		\left[
		\frac{\partial \mathcal{L}^3_{\mathbf{g}_0\mathbf{g}_{1i}\mathbf{g}_{0}}\overline{\mathbf{h}}^{(b)}}{\partial \boldsymbol{\beta}_1} \  
		\frac{\partial \mathcal{L}^3_{\mathbf{g}_0\mathbf{g}_{1i}\mathbf{g}_{0}}\overline{\mathbf{h}}^{(b)}}{\partial \boldsymbol{\beta}_2} \  
		\frac{\partial \mathcal{L}^3_{\mathbf{g}_0\mathbf{g}_{1i}\mathbf{g}_{0}}\overline{\mathbf{h}}^{(b)}}{\partial \boldsymbol{\beta}_3} \  
		-\lfloor \mathbf{e}_i \rfloor \  
		\lfloor \mathbf{e}_i \rfloor
		\right]
		\\
		&  &
		\scalemath{0.9}{
		\nabla\mathcal{L}^3_{\mathbf{g}_0\mathbf{g}_{0}\mathbf{g}_{2i}}\overline{\mathbf{h}}^{(b)}
		=
		\left[
		\begin{array}{c}
		\nabla\mathcal{L}^3_{\mathbf{g}_0\mathbf{g}_{0}\mathbf{g}_{2i}}\overline{\mathbf{h}}^{(b)}_1  \\
		\nabla\mathcal{L}^3_{\mathbf{g}_0\mathbf{g}_{0}\mathbf{g}_{2i}}\overline{\mathbf{h}}^{(b)}_2  
		\end{array}
		\right]
		=
		\left[
		\begin{array}{c}
		\frac{\partial \mathcal{L}^3_{\mathbf{g}_0\mathbf{g}_{0}\mathbf{g}_{2i}}\overline{\mathbf{h}}^{(b)}_1}{\partial \boldsymbol{\beta}}  \\
		\frac{\partial \mathcal{L}^3_{\mathbf{g}_0\mathbf{g}_{0}\mathbf{g}_{2i}}\overline{\mathbf{h}}^{(b)}_2}{\partial \boldsymbol{\beta}}
		\end{array}
		\right]
		=
		\left[
		\begin{array}{c}
		\boldsymbol{\gamma}^{\top}_1 \\
		\boldsymbol{\gamma}^{\top}_2
		\end{array}
		\right]
		\left[
		\mathbf{0} \quad 
		\mathbf{0} \quad 
		2\lfloor\mathbf{e}_i\times\rfloor \quad 
		\mathbf{0} \quad 
		\mathbf{0}
		\right]	
	    }		
		\end{eqnarray}
	\end{small}
	where in the equations: 
	\begin{small}
		\begin{eqnarray}
		\frac{\partial \mathcal{L}^3_{\mathbf{g}_0\mathbf{g}_{0}\mathbf{g}_{1i}}\overline{\mathbf{h}}^{(b)}}{\partial \boldsymbol{\beta}_1}
		&=&
		\left[
		\begin{array}{c}
		\frac{\partial \mathcal{L}^3_{\mathbf{g}_0\mathbf{g}_{0}\mathbf{g}_{1i}}\overline{\mathbf{h}}^{(b)}_1}{\partial \boldsymbol{\beta}_1}  \\
		\frac{\partial \mathcal{L}^3_{\mathbf{g}_0\mathbf{g}_{0}\mathbf{g}_{1i}}\overline{\mathbf{h}}^{(b)}_2}{\partial \boldsymbol{\beta}_1}
		\end{array}
		\right]
		=
		\left[
		\begin{array}{c}
		\boldsymbol{\gamma}^{\top}_1 \\
		\boldsymbol{\gamma}^{\top}_2
		\end{array}
		\right]
		\left[
		-\lfloor\boldsymbol{\beta}_2\times\rfloor^2\lfloor\mathbf{e}_i\times\rfloor
		\right]
		\\
		\frac{\partial \mathcal{L}^3_{\mathbf{g}_0\mathbf{g}_{0}\mathbf{g}_{1i}}\overline{\mathbf{h}}^{(b)}}{\partial \boldsymbol{\beta}_2}
		&=&
		\left[
		\begin{array}{c}
		\frac{\partial \mathcal{L}^3_{\mathbf{g}_0\mathbf{g}_{0}\mathbf{g}_{1i}}\overline{\mathbf{h}}^{(b)}_1}{\partial \boldsymbol{\beta}_2}  \\
		\frac{\partial \mathcal{L}^3_{\mathbf{g}_0\mathbf{g}_{0}\mathbf{g}_{1i}}\overline{\mathbf{h}}^{(b)}_2}{\partial \boldsymbol{\beta}_2}
		\end{array}
		\right]
		\\
		&=&
		\left[
		\begin{array}{c}
		\boldsymbol{\gamma}^{\top}_1 \\
		\boldsymbol{\gamma}^{\top}_2
		\end{array}
		\right]
		\left[
		-\lfloor\lfloor\boldsymbol{\beta}_2\times\rfloor\lfloor\boldsymbol{\beta}_1\times\rfloor\mathbf{e}_i\times\rfloor
		-\lfloor\boldsymbol{\beta}_2\times\rfloor\lfloor\lfloor\boldsymbol{\beta}_1\times\rfloor\mathbf{e}_i\times\rfloor
		+2\lfloor\lfloor\boldsymbol{\beta}_3\times\rfloor\mathbf{e}_i\times\rfloor
		\right]	
		\\
		\frac{\partial \mathcal{L}^3_{\mathbf{g}_0\mathbf{g}_{0}\mathbf{g}_{1i}}\overline{\mathbf{h}}^{(b)}}{\partial \boldsymbol{\beta}_3}
		&=&
		\left[
		\begin{array}{c}
		\frac{\partial \mathcal{L}^3_{\mathbf{g}_0\mathbf{g}_{0}\mathbf{g}_{1i}}\overline{\mathbf{h}}^{(b)}_1}{\partial \boldsymbol{\beta}_3}  \\
		\frac{\partial \mathcal{L}^3_{\mathbf{g}_0\mathbf{g}_{0}\mathbf{g}_{1i}}\overline{\mathbf{h}}^{(b)}_2}{\partial \boldsymbol{\beta}_3}
		\end{array}
		\right]
		=
		\left[
		\begin{array}{c}
		\boldsymbol{\gamma}^{\top}_1 \\
		\boldsymbol{\gamma}^{\top}_2
		\end{array}
		\right]
		\left[
		2\lfloor\boldsymbol{\beta}_2\times\rfloor\lfloor\mathbf{e}_i\times\rfloor
		\right]
		\\
		\frac{\partial \mathcal{L}^3_{\mathbf{g}_0\mathbf{g}_{0}\mathbf{g}_{1i}}\overline{\mathbf{h}}^{(b)}}{\partial \boldsymbol{\beta}_4}
		&=&
		\left[
		\begin{array}{c}
		\frac{\partial \mathcal{L}^3_{\mathbf{g}_0\mathbf{g}_{0}\mathbf{g}_{1i}}\overline{\mathbf{h}}^{(b)}_1}{\partial \boldsymbol{\beta}_4}  \\
		\frac{\partial \mathcal{L}^3_{\mathbf{g}_0\mathbf{g}_{0}\mathbf{g}_{1i}}\overline{\mathbf{h}}^{(b)}_2}{\partial \boldsymbol{\beta}_4}
		\end{array}
		\right]
		=
		\left[
		\begin{array}{c}
		\mathbf{0} \\
		\mathbf{0}
		\end{array}
		\right]
		\\
		\frac{\partial \mathcal{L}^3_{\mathbf{g}_0\mathbf{g}_{0}\mathbf{g}_{1i}}\overline{\mathbf{h}}^{(b)}}{\partial \boldsymbol{\beta}_5}
		&=&
		\left[
		\begin{array}{c}
		\frac{\partial \mathcal{L}^3_{\mathbf{g}_0\mathbf{g}_{0}\mathbf{g}_{1i}}\overline{\mathbf{h}}^{(b)}_1}{\partial \boldsymbol{\beta}_5}  \\
		\frac{\partial \mathcal{L}^3_{\mathbf{g}_0\mathbf{g}_{0}\mathbf{g}_{1i}}\overline{\mathbf{h}}^{(b)}_2}{\partial \boldsymbol{\beta}_5}
		\end{array}
		\right]
		=
		\left[
		\begin{array}{c}
		\boldsymbol{\gamma}^{\top}_1 \\
		\boldsymbol{\gamma}^{\top}_2
		\end{array}
		\right]
		\left[
		\lfloor\mathbf{e}_i\times\rfloor
		\right]	
		\\
		\frac{\partial \mathcal{L}^3_{\mathbf{g}_0\mathbf{g}_{1i}\mathbf{g}_{0}}\overline{\mathbf{h}}^{(b)}}{\partial \boldsymbol{\beta}_1}
		&=&
		\left[
		\begin{array}{c}
		\frac{\partial \mathcal{L}^3_{\mathbf{g}_0\mathbf{g}_{1i}\mathbf{g}_{0}}\overline{\mathbf{h}}^{(b)}_1}{\partial \boldsymbol{\beta}_1}  \\
		\frac{\partial \mathcal{L}^3_{\mathbf{g}_0\mathbf{g}_{1i}\mathbf{g}_{0}}\overline{\mathbf{h}}^{(b)}_2}{\partial \boldsymbol{\beta}_1}
		\end{array}
		\right]
		=
		\left[
		\begin{array}{c}
		\boldsymbol{\gamma}^{\top}_1 \\
		\boldsymbol{\gamma}^{\top}_2
		\end{array}
		\right]
		\left[
		-\lfloor\boldsymbol{\beta}_2\times\rfloor \lfloor\mathbf{e}_i\times\rfloor \lfloor\boldsymbol{\beta}_2\times\rfloor
		\right]	
		\\
		\frac{\partial \mathcal{L}^3_{\mathbf{g}_0\mathbf{g}_{1i}\mathbf{g}_{0}}\overline{\mathbf{h}}^{(b)}}{\partial \boldsymbol{\beta}_2}
		&=&
		\left[
		\begin{array}{c}
		\frac{\partial \mathcal{L}^3_{\mathbf{g}_0\mathbf{g}_{1i}\mathbf{g}_{0}}\overline{\mathbf{h}}^{(b)}_1}{\partial \boldsymbol{\beta}_1}  \\
		\frac{\partial \mathcal{L}^3_{\mathbf{g}_0\mathbf{g}_{1i}\mathbf{g}_{0}}\overline{\mathbf{h}}^{(b)}_2}{\partial \boldsymbol{\beta}_1}
		\end{array}
		\right]
		=
		\scalemath{0.9}{
		\left[
		\begin{array}{c}
		\boldsymbol{\gamma}^{\top}_1 \\
		\boldsymbol{\gamma}^{\top}_2
		\end{array}
		\right]
		\left[
			\lfloor\boldsymbol{\beta}_2\times\rfloor \lfloor\mathbf{e}_i\times\rfloor \lfloor\boldsymbol{\beta}_1\times\rfloor
			-\lfloor\lfloor\mathbf{e}_i\times\rfloor \lfloor\boldsymbol{\beta}_1\times\rfloor\boldsymbol{\beta}_2\times\rfloor 
			-\lfloor\mathbf{e}_i\times\rfloor \lfloor\boldsymbol{\beta}_3\times\rfloor 
			-\lfloor\lfloor\mathbf{e}_i\times\rfloor \boldsymbol{\beta}_3\times\rfloor
		\right]	
		}
		\nonumber
		\\
		\frac{\partial \mathcal{L}^3_{\mathbf{g}_0\mathbf{g}_{1i}\mathbf{g}_{0}}\overline{\mathbf{h}}^{(b)}}{\partial \boldsymbol{\beta}_3}
		&=&
		\left[
		\begin{array}{c}
		\frac{\partial \mathcal{L}^3_{\mathbf{g}_0\mathbf{g}_{1i}\mathbf{g}_{0}}\overline{\mathbf{h}}^{(b)}_1}{\partial \boldsymbol{\beta}_1}  \\
		\frac{\partial \mathcal{L}^3_{\mathbf{g}_0\mathbf{g}_{1i}\mathbf{g}_{0}}\overline{\mathbf{h}}^{(b)}_2}{\partial \boldsymbol{\beta}_1}
		\end{array}
		\right]
		=
		\left[
		\begin{array}{c}
		\boldsymbol{\gamma}^{\top}_1 \\
		\boldsymbol{\gamma}^{\top}_2
		\end{array}
		\right]
		\left[
		\lfloor\boldsymbol{\beta}_2\times\rfloor  \lfloor\mathbf{e}_i\times\rfloor 
		+ \lfloor\mathbf{e}_i\times\rfloor \lfloor\boldsymbol{\beta}_2\times\rfloor 
		\right]																		
		\end{eqnarray}
	\end{small}	\noindent
\end{itemize}

Therefore, we can construct the $\Xi^{(b)}$ matrix as \eqref{eq_Xi_b}. Note that the diagonal block are all of full column rank and, hence the matrix $\Xi^{(b)}$ is of full column rank.
\begin{equation} \label{eq_Xi_b}
\resizebox{0.9\hsize}{!}
{$
	\Xi^{(b)}
	=
	\begin{bmatrix}
	\nabla \mathcal{L}^1_{\mathbf{g}_{11}}\overline{\mathbf{h}}^{(b)} \\
	\nabla \mathcal{L}^1_{\mathbf{g}_{12}}\overline{\mathbf{h}}^{(b)} \\
	\nabla \mathcal{L}^1_{\mathbf{g}_{13}}\overline{\mathbf{h}}^{(b)} \\
	\nabla \mathcal{L}^3_{\mathbf{g}_0\mathbf{g}_0\mathbf{g}_{21}}\overline{\mathbf{h}}^{(b)} \\
	\nabla \mathcal{L}^3_{\mathbf{g}_0\mathbf{g}_0\mathbf{g}_{22}}\overline{\mathbf{h}}^{(b)} \\
	\nabla \mathcal{L}^3_{\mathbf{g}_0\mathbf{g}_0\mathbf{g}_{23}}\overline{\mathbf{h}}^{(b)} \\	
	\nabla \mathcal{L}^2_{\mathbf{g}_{11}\mathbf{g}_0}\overline{\mathbf{h}}^{(b)} \\
	\nabla \mathcal{L}^2_{\mathbf{g}_{12}\mathbf{g}_0}\overline{\mathbf{h}}^{(b)} \\
	\nabla \mathcal{L}^2_{\mathbf{g}_{13}\mathbf{g}_0}\overline{\mathbf{h}}^{(b)} \\	
	\nabla \mathcal{L}^3_{\mathbf{g}_0\mathbf{g}_{11}\mathbf{g}_0}\overline{\mathbf{h}}^{(b)} \\
	\nabla \mathcal{L}^3_{\mathbf{g}_0\mathbf{g}_{11}\mathbf{g}_0}\overline{\mathbf{h}}^{(b)} \\
	\nabla \mathcal{L}^3_{\mathbf{g}_0\mathbf{g}_{11}\mathbf{g}_0}\overline{\mathbf{h}}^{(b)} \\			
	\nabla \mathcal{L}^3_{\mathbf{g}_0\mathbf{g}_0\mathbf{g}_{11}}\overline{\mathbf{h}}^{(b)} \\
	\nabla \mathcal{L}^3_{\mathbf{g}_0\mathbf{g}_0\mathbf{g}_{12}}\overline{\mathbf{h}}^{(b)} \\
	\nabla \mathcal{L}^3_{\mathbf{g}_0\mathbf{g}_0\mathbf{g}_{13}}\overline{\mathbf{h}}^{(b)} 
	\end{bmatrix}
	=
	\begin{bmatrix}
	-\boldsymbol{\gamma}^{\top}_1 \lfloor\mathbf{e}_1\times\rfloor & \mathbf{0} & \mathbf{0} & \mathbf{0} & \mathbf{0} \\
	-\boldsymbol{\gamma}^{\top}_2 \lfloor\mathbf{e}_1\times\rfloor & \mathbf{0} & \mathbf{0} & \mathbf{0} & \mathbf{0} \\
	-\boldsymbol{\gamma}^{\top}_1 \lfloor\mathbf{e}_2\times\rfloor & \mathbf{0} & \mathbf{0} & \mathbf{0} & \mathbf{0} \\
	-\boldsymbol{\gamma}^{\top}_2 \lfloor\mathbf{e}_2\times\rfloor & \mathbf{0} & \mathbf{0} & \mathbf{0} & \mathbf{0} \\
	-\boldsymbol{\gamma}^{\top}_1 \lfloor\mathbf{e}_3\times\rfloor & \mathbf{0} & \mathbf{0} & \mathbf{0} & \mathbf{0} \\
	-\boldsymbol{\gamma}^{\top}_2 \lfloor\mathbf{e}_3\times\rfloor & \mathbf{0} & \mathbf{0} & \mathbf{0} & \mathbf{0} \\
	\mathbf{0} &  2\boldsymbol{\gamma}^{\top}_1 \lfloor\mathbf{e}_1\times\rfloor & \mathbf{0} & \mathbf{0} & \mathbf{0} \\
	\mathbf{0} &  2\boldsymbol{\gamma}^{\top}_2 \lfloor\mathbf{e}_1\times\rfloor &  \mathbf{0} & \mathbf{0} & \mathbf{0} \\
	\mathbf{0} &  2\boldsymbol{\gamma}^{\top}_1 \lfloor\mathbf{e}_2\times\rfloor &  \mathbf{0} & \mathbf{0} & \mathbf{0} \\
	\mathbf{0} &  2\boldsymbol{\gamma}^{\top}_2 \lfloor\mathbf{e}_2\times\rfloor &  \mathbf{0} & \mathbf{0} & \mathbf{0} \\
	\mathbf{0} &  2\boldsymbol{\gamma}^{\top}_1 \lfloor\mathbf{e}_3\times\rfloor &  \mathbf{0} & \mathbf{0} & \mathbf{0} \\
	\mathbf{0} &  2\boldsymbol{\gamma}^{\top}_2 \lfloor\mathbf{e}_3\times\rfloor &  \mathbf{0} & \mathbf{0} & \mathbf{0} \\	
	-\boldsymbol{\gamma}^{\top}_1 \lfloor\mathbf{e}_1\times\rfloor \lfloor\boldsymbol{\beta}_2\times\rfloor  & 
	\boldsymbol{\gamma}^{\top}_1 \lfloor\mathbf{e}_1\times\rfloor\lfloor\boldsymbol{\beta}_1\times\rfloor &
	\boldsymbol{\gamma}^{\top}_1 \lfloor\mathbf{e}_1\times\rfloor & \mathbf{0} & \mathbf{0} \\
	-\boldsymbol{\gamma}^{\top}_2 \lfloor\mathbf{e}_1\times\rfloor \lfloor\boldsymbol{\beta}_2\times\rfloor & 
	\boldsymbol{\gamma}^{\top}_2 \lfloor\mathbf{e}_1\times\rfloor\lfloor\boldsymbol{\beta}_1\times\rfloor &
	\boldsymbol{\gamma}^{\top}_2 \lfloor\mathbf{e}_1\times\rfloor & \mathbf{0} & \mathbf{0} \\	
	-\boldsymbol{\gamma}^{\top}_1 \lfloor\mathbf{e}_2\times\rfloor \lfloor\boldsymbol{\beta}_2\times\rfloor  & 
	\boldsymbol{\gamma}^{\top}_1 \lfloor\mathbf{e}_2\times\rfloor\lfloor\boldsymbol{\beta}_1\times\rfloor &
	\boldsymbol{\gamma}^{\top}_1 \lfloor\mathbf{e}_2\times\rfloor & \mathbf{0} & \mathbf{0} \\
	-\boldsymbol{\gamma}^{\top}_2 \lfloor\mathbf{e}_2\times\rfloor \lfloor\boldsymbol{\beta}_2\times\rfloor & 
	\boldsymbol{\gamma}^{\top}_2 \lfloor\mathbf{e}_2\times\rfloor\lfloor\boldsymbol{\beta}_1\times\rfloor &
	\boldsymbol{\gamma}^{\top}_2 \lfloor\mathbf{e}_2\times\rfloor & \mathbf{0} & \mathbf{0} \\	
	-\boldsymbol{\gamma}^{\top}_1 \lfloor\mathbf{e}_3\times\rfloor \lfloor\boldsymbol{\beta}_2\times\rfloor  & 
	\boldsymbol{\gamma}^{\top}_1 \lfloor\mathbf{e}_3\times\rfloor\lfloor\boldsymbol{\beta}_1\times\rfloor &
	\boldsymbol{\gamma}^{\top}_1 \lfloor\mathbf{e}_3\times\rfloor & \mathbf{0} & \mathbf{0} \\
	-\boldsymbol{\gamma}^{\top}_2 \lfloor\mathbf{e}_3\times\rfloor \lfloor\boldsymbol{\beta}_2\times\rfloor & 
	\boldsymbol{\gamma}^{\top}_2 \lfloor\mathbf{e}_3\times\rfloor\lfloor\boldsymbol{\beta}_1\times\rfloor &
	\boldsymbol{\gamma}^{\top}_2 \lfloor\mathbf{e}_3\times\rfloor & \mathbf{0} & \mathbf{0} \\
	\frac{\partial \mathcal{L}^3_{\mathbf{g}_0\mathbf{g}_{11}\mathbf{g}_{0}}\overline{\mathbf{h}}^{(b)}_1}{\partial \boldsymbol{\beta}_1} &
	\frac{\partial \mathcal{L}^3_{\mathbf{g}_0\mathbf{g}_{11}\mathbf{g}_{0}}\overline{\mathbf{h}}^{(b)}_1}{\partial \boldsymbol{\beta}_2} &
	\frac{\partial \mathcal{L}^3_{\mathbf{g}_0\mathbf{g}_{11}\mathbf{g}_{0}}\overline{\mathbf{h}}^{(b)}_1}{\partial \boldsymbol{\beta}_3} &
	-\boldsymbol{\gamma}^{\top}_1 \lfloor\mathbf{e}_1\times\rfloor &
	\boldsymbol{\gamma}^{\top}_1 \lfloor\mathbf{e}_1\times\rfloor  \\
	\frac{\partial \mathcal{L}^3_{\mathbf{g}_0\mathbf{g}_{11}\mathbf{g}_{0}}\overline{\mathbf{h}}^{(b)}_2}{\partial \boldsymbol{\beta}_1} &
	\frac{\partial \mathcal{L}^3_{\mathbf{g}_0\mathbf{g}_{11}\mathbf{g}_{0}}\overline{\mathbf{h}}^{(b)}_2}{\partial \boldsymbol{\beta}_2} &
	\frac{\partial \mathcal{L}^3_{\mathbf{g}_0\mathbf{g}_{11}\mathbf{g}_{0}}\overline{\mathbf{h}}^{(b)}_2}{\partial \boldsymbol{\beta}_3} &
	-\boldsymbol{\gamma}^{\top}_2 \lfloor\mathbf{e}_1\times\rfloor &
	\boldsymbol{\gamma}^{\top}_2 \lfloor\mathbf{e}_1\times\rfloor  \\
	\frac{\partial \mathcal{L}^3_{\mathbf{g}_0\mathbf{g}_{12}\mathbf{g}_{0}}\overline{\mathbf{h}}^{(b)}_1}{\partial \boldsymbol{\beta}_1} &
	\frac{\partial \mathcal{L}^3_{\mathbf{g}_0\mathbf{g}_{12}\mathbf{g}_{0}}\overline{\mathbf{h}}^{(b)}_1}{\partial \boldsymbol{\beta}_2} &
	\frac{\partial \mathcal{L}^3_{\mathbf{g}_0\mathbf{g}_{12}\mathbf{g}_{0}}\overline{\mathbf{h}}^{(b)}_1}{\partial \boldsymbol{\beta}_3} &
	-\boldsymbol{\gamma}^{\top}_1 \lfloor\mathbf{e}_2\times\rfloor &
	\boldsymbol{\gamma}^{\top}_1 \lfloor\mathbf{e}_2\times\rfloor  \\
	\frac{\partial \mathcal{L}^3_{\mathbf{g}_0\mathbf{g}_{12}\mathbf{g}_{0}}\overline{\mathbf{h}}^{(b)}_2}{\partial \boldsymbol{\beta}_1} &
	\frac{\partial \mathcal{L}^3_{\mathbf{g}_0\mathbf{g}_{12}\mathbf{g}_{0}}\overline{\mathbf{h}}^{(b)}_2}{\partial \boldsymbol{\beta}_2} &
	\frac{\partial \mathcal{L}^3_{\mathbf{g}_0\mathbf{g}_{12}\mathbf{g}_{0}}\overline{\mathbf{h}}^{(b)}_2}{\partial \boldsymbol{\beta}_3} &
	-\boldsymbol{\gamma}^{\top}_2 \lfloor\mathbf{e}_2\times\rfloor &
	\boldsymbol{\gamma}^{\top}_2 \lfloor\mathbf{e}_2\times\rfloor  \\
	\frac{\partial \mathcal{L}^3_{\mathbf{g}_0\mathbf{g}_{13}\mathbf{g}_{0}}\overline{\mathbf{h}}^{(b)}_1}{\partial \boldsymbol{\beta}_1} &
	\frac{\partial \mathcal{L}^3_{\mathbf{g}_0\mathbf{g}_{13}\mathbf{g}_{0}}\overline{\mathbf{h}}^{(b)}_1}{\partial \boldsymbol{\beta}_2} &
	\frac{\partial \mathcal{L}^3_{\mathbf{g}_0\mathbf{g}_{13}\mathbf{g}_{0}}\overline{\mathbf{h}}^{(b)}_1}{\partial \boldsymbol{\beta}_3} &
	-\boldsymbol{\gamma}^{\top}_1 \lfloor\mathbf{e}_3\times\rfloor &
	\boldsymbol{\gamma}^{\top}_1 \lfloor\mathbf{e}_3\times\rfloor  \\
	\frac{\partial \mathcal{L}^3_{\mathbf{g}_0\mathbf{g}_{13}\mathbf{g}_{0}}\overline{\mathbf{h}}^{(b)}_2}{\partial \boldsymbol{\beta}_1} &
	\frac{\partial \mathcal{L}^3_{\mathbf{g}_0\mathbf{g}_{13}\mathbf{g}_{0}}\overline{\mathbf{h}}^{(b)}_2}{\partial \boldsymbol{\beta}_2} &
	\frac{\partial \mathcal{L}^3_{\mathbf{g}_0\mathbf{g}_{13}\mathbf{g}_{0}}\overline{\mathbf{h}}^{(b)}_2}{\partial \boldsymbol{\beta}_3} &
	-\boldsymbol{\gamma}^{\top}_2 \lfloor\mathbf{e}_3\times\rfloor &
	\boldsymbol{\gamma}^{\top}_2 \lfloor\mathbf{e}_3\times\rfloor  \\										
	\frac{\partial \mathcal{L}^3_{\mathbf{g}_0\mathbf{g}_{0}\mathbf{g}_{11}}\overline{\mathbf{h}}^{(b)}_1}{\partial \boldsymbol{\beta}_1} &
	\frac{\partial \mathcal{L}^3_{\mathbf{g}_0\mathbf{g}_{0}\mathbf{g}_{11}}\overline{\mathbf{h}}^{(b)}_1}{\partial \boldsymbol{\beta}_2} &
	\frac{\partial \mathcal{L}^3_{\mathbf{g}_0\mathbf{g}_{0}\mathbf{g}_{11}}\overline{\mathbf{h}}^{(b)}_1}{\partial \boldsymbol{\beta}_3} &
	\mathbf{0} &
	\boldsymbol{\gamma}^{\top}_1 \lfloor\mathbf{e}_1\times\rfloor  \\
	\frac{\partial \mathcal{L}^3_{\mathbf{g}_0\mathbf{g}_{0}\mathbf{g}_{11}}\overline{\mathbf{h}}^{(b)}_2}{\partial \boldsymbol{\beta}_1} &
	\frac{\partial \mathcal{L}^3_{\mathbf{g}_0\mathbf{g}_{0}\mathbf{g}_{11}}\overline{\mathbf{h}}^{(b)}_2}{\partial \boldsymbol{\beta}_2} &
	\frac{\partial \mathcal{L}^3_{\mathbf{g}_0\mathbf{g}_{0}\mathbf{g}_{11}}\overline{\mathbf{h}}^{(b)}_2}{\partial \boldsymbol{\beta}_3} &
	\mathbf{0} &
	\boldsymbol{\gamma}^{\top}_2 \lfloor\mathbf{e}_1\times\rfloor  \\
	\frac{\partial \mathcal{L}^3_{\mathbf{g}_0\mathbf{g}_{0}\mathbf{g}_{12}}\overline{\mathbf{h}}^{(b)}_1}{\partial \boldsymbol{\beta}_1} &
	\frac{\partial \mathcal{L}^3_{\mathbf{g}_0\mathbf{g}_{0}\mathbf{g}_{12}}\overline{\mathbf{h}}^{(b)}_1}{\partial \boldsymbol{\beta}_2} &
	\frac{\partial \mathcal{L}^3_{\mathbf{g}_0\mathbf{g}_{0}\mathbf{g}_{12}}\overline{\mathbf{h}}^{(b)}_1}{\partial \boldsymbol{\beta}_3} &
	\mathbf{0} &
	\boldsymbol{\gamma}^{\top}_1 \lfloor\mathbf{e}_2\times\rfloor  \\
	\frac{\partial \mathcal{L}^3_{\mathbf{g}_0\mathbf{g}_{0}\mathbf{g}_{12}}\overline{\mathbf{h}}^{(b)}_2}{\partial \boldsymbol{\beta}_1} &
	\frac{\partial \mathcal{L}^3_{\mathbf{g}_0\mathbf{g}_{0}\mathbf{g}_{12}}\overline{\mathbf{h}}^{(b)}_2}{\partial \boldsymbol{\beta}_2} &
	\frac{\partial \mathcal{L}^3_{\mathbf{g}_0\mathbf{g}_{0}\mathbf{g}_{12}}\overline{\mathbf{h}}^{(b)}_2}{\partial \boldsymbol{\beta}_3} &
	\mathbf{0} &
	\boldsymbol{\gamma}^{\top}_2 \lfloor\mathbf{e}_2\times\rfloor  \\
	\frac{\partial \mathcal{L}^3_{\mathbf{g}_0\mathbf{g}_{0}\mathbf{g}_{13}}\overline{\mathbf{h}}^{(b)}_1}{\partial \boldsymbol{\beta}_1} &
	\frac{\partial \mathcal{L}^3_{\mathbf{g}_0\mathbf{g}_{0}\mathbf{g}_{13}}\overline{\mathbf{h}}^{(b)}_1}{\partial \boldsymbol{\beta}_2} &
	\frac{\partial \mathcal{L}^3_{\mathbf{g}_0\mathbf{g}_{0}\mathbf{g}_{13}}\overline{\mathbf{h}}^{(b)}_1}{\partial \boldsymbol{\beta}_3} &
	\mathbf{0} &
	\boldsymbol{\gamma}^{\top}_1 \lfloor\mathbf{e}_3\times\rfloor  \\
	\frac{\partial \mathcal{L}^3_{\mathbf{g}_0\mathbf{g}_{0}\mathbf{g}_{13}}\overline{\mathbf{h}}^{(b)}_2}{\partial \boldsymbol{\beta}_1} &
	\frac{\partial \mathcal{L}^3_{\mathbf{g}_0\mathbf{g}_{0}\mathbf{g}_{13}}\overline{\mathbf{h}}^{(b)}_2}{\partial \boldsymbol{\beta}_2} &
	\frac{\partial \mathcal{L}^3_{\mathbf{g}_0\mathbf{g}_{0}\mathbf{g}_{13}}\overline{\mathbf{h}}^{(b)}_2}{\partial \boldsymbol{\beta}_3} &
	\mathbf{0} &
	\boldsymbol{\gamma}^{\top}_2 \lfloor\mathbf{e}_3\times\rfloor  										
	\end{bmatrix}
	$}
\end{equation}
%

\end{appendices}
\section*{Acknowledgments}

This work was partially supported by the University of Delaware College of Engineering, 
the NSF (IIS-1566129), and the DTRA (HDTRA1-16-1-0039).


%
%
\newpage
\addcontentsline{toc}{section}{References}

\bibliographystyle{IEEEtran}  
\bibliography{libraries/library}
\newpage

\end{document}